\documentclass[12pt]{amsart}       %was 12pt
\usepackage{amsmath,amssymb,ifthen,url,wasysym,stmaryrd,xfrac,enumerate,etoolbox,graphicx,
  xr,xypic,pict2e,mathtools,algorithm2e}

\newcommand{\heiscitation}[1]{$\llbracket$#1$\rrbracket$}
\newboolean{heispaper}
\setboolean{heispaper}{true}
\ifthenelse{\boolean{heispaper}}{%
  \externaldocument[HEIS::]{Heisenberg-v1}%
  \newcommand{\heiscite}[1]{\heiscitation{\ref{HEIS::#1}}}%
}{%
  \newcommand{\heiscite}[1]{\ref{HEIS::#1}}}  

\usepackage[utf8,latin1]{inputenc}
\usepackage[T3,T1]{fontenc}
\DeclareSymbolFont{tipa}{T3}{cmr}{m}{n}
\DeclareMathAccent{\invbreve}{\mathalpha}{tipa}{16}

\usepackage[cal=boondoxo]{mathalfa}

%\DeclareMathAlphabet{\mathpzc}{OT1}{pzc}{m}{it}
%\DeclareMathAlphabet{\mathpzc}{OT1}{scr}{m}{it}

\DeclareMathAlphabet{\mathbbm}{U}{bbm}{m}{n}% from bbm.sty

% Fetch two arrow glyphs from MnSymbolb:
\DeclareFontFamily{U} {MnSymbolA}{}
\DeclareFontShape{U}{MnSymbolA}{m}{n}{
  <-6> MnSymbolA5
  <6-7> MnSymbolA6
  <7-8> MnSymbolA7
  <8-9> MnSymbolA8
  <9-10> MnSymbolA9
  <10-12> MnSymbolA10
  <12-> MnSymbolA12}{}
\DeclareFontShape{U}{MnSymbolA}{b}{n}{
  <-6> MnSymbolA-Bold5
  <6-7> MnSymbolA-Bold6
  <7-8> MnSymbolA-Bold7
  <8-9> MnSymbolA-Bold8
  <9-10> MnSymbolA-Bold9
  <10-12> MnSymbolA-Bold10
  <12-> MnSymbolA-Bold12}{}
\DeclareSymbolFont{MnSyA} {U} {MnSymbolA}{m}{n}
\DeclareMathSymbol{\lcurvearrowdown}{\mathrel}{MnSyA}{195}
\DeclareMathSymbol{\rcurvearrowdown}{\mathrel}{MnSyA}{187}

\usepackage[colorlinks,final,backref=page,hyperindex]{hyperref}

\newcommand{\mma}{Mathematica$^{\scriptscriptstyle\text{\textregistered}}$}

\newcommand\blnk{%
  \mathord{\mskip1mu
    \mathchoice
      {\squarebullet{.35ex}{.35ex}}%
      {\squarebullet{.35ex}{.35ex}}%
      {\kern.15ex\squarebullet{.30ex}{.30ex}}%
      {\kern.15ex\squarebullet{.30ex}{.30ex}}
    \mskip1mu}
}
\newcommand\squarebullet[2]{\vcenter{\hbox{\rule{#1}{#2}}}}

\newcommand{\gr}{\operatorname{gr}}
\newcommand{\inif}{\operatorname{in}}
\newcommand{\A}{\mathcal{A}}
\newcommand{\B}{\mathcal{B}}
\newcommand{\C}{\mathcal{C}}
\newcommand{\D}{\mathcal{D}}
\newcommand{\E}{\mathcal{E}}
\newcommand{\F}{\mathcal{F}}
\newcommand{\K}{\mathcal{K}}
\newcommand{\G}{\mathcal{G}}

\newcommand{\Db}{\mathcal{D}}
\newcommand{\NN}{\mathbb{N}}
\newcommand{\ZZ}{\mathbb{Z}}
\newcommand{\QQ}{\mathbb{Q}}
\newcommand{\RR}{\mathbb{R}}
\newcommand{\CC}{\mathbb{C}}
\newcommand{\II}{\mathbb{I}}
\newcommand{\nnz}[1]{#1^\times}
\newcommand{\unit}[1]{#1^*}

\newcommand{\Hom}{\mathrm{Hom}}
\newcommand{\End}{\mathrm{End}}
\newcommand{\Aut}{\mathrm{Aut}}

\newcommand{\Ab}{\mathbf{Ab}}
\newcommand{\Rng}{\mathbf{Rng}}

\newcommand{\SES}{\mathbf{SES}}
\newcommand{\SESsg}{\mathbf{SES}_{\mathrm{sg}}}
\newcommand{\Mod}{\mathbf{Mod}}

\newcommand{\Hei}{\mathbf{Hei}}

\newcommand{\Alg}{\mathbf{Alg}}
\newcommand{\CAlg}{\mathbf{C\kern-0.1ex{}Alg}}

\newcommand{\Set}{\mathbf{Set}}

\newcommand{\Bi}{\mathbf{Bi}}
\newcommand{\Du}{\mathbf{Du}}
\newcommand{\Four}{\mathcal{F}}
\newcommand{\four}{\mathcal{f}}
\newcommand{\FFour}{\mathcal{G}}
\newcommand{\Genarr}[1]{\smash{\raisebox{-1.5pt}{$\overset{#1}{\kern2\mu\longrightarrow\kern2\mu}$}}}
\newcommand{\Genbiarr}[1]{\smash{\raisebox{-1.5pt}{$\overset{#1}{\kern2\mu\longleftrightarrow\kern2\mu}$}}}

\newcommand{\Par}{\mathcal{P}}
\newcommand{\ModH}[1]{\ifthenelse{\equal{#1}{}}{\mathbf{ModH}}{\mathbf{ModH}(#1)}}
\newcommand{\AlgH}[1]{\ifthenelse{\equal{#1}{}}{\mathbf{AlgH}}{\mathbf{AlgH}(#1)}}
\newcommand{\TwAlgH}[1]{\ifthenelse{\equal{#1}{}}{\mathbf{AlgH_2}}{\mathbf{AlgH_2}(#1)}}
\newcommand{\Fou}[1]{\mathbf{Fou}(#1)}

\newcommand{\inner}[2]{\langle#1|#2\rangle}

\newcommand{\funcinner}[2]{(#1|#2)}

\newcommand{\cum}{{\textstyle\varint}}
\newcommand{\ocum}{{\textstyle \varoint}}
\newcommand{\cumG}{\cum_{\!G}}

\newcommand{\im}{\operatorname{im}}

\newcommand{\tr}{\operatorname{tr}}
\newcommand{\GF}{\operatorname{GF}}

\newcommand{\dotarrow}{\mathrel{\vbox{\offinterlineskip\ialign{\hfil##\hfil\cr\scalebox{1.2}{%
        \normalfont.}\cr\noalign{\kern-.1ex}$\rightarrow$\cr}}}}
\newcommand{\der}{\partial}
\newcommand{\Cat}{\mathbf{Cat}}

\newcommand{\Schw}{\mathcal{S}}
\newcommand{\Hol}{\mathcal{H}}
\newcommand{\Gau}{\mathcal{G}}

\newcommand{\Tor}{\mathbb{T}}
\newcommand{\pont}{\varpi}
\DeclareMathOperator{\sinc}{sinc}
\newcommand{\trp}{'}
\newcommand{\evl}{\text{\scshape\texttt e}}
\newcommand{\ini}{\text{\scshape\texttt i}}
\newcommand{\QQtr}{\QQ^{\mathrm{tr}}}
\newcommand{\Ktr}{K^{\mathrm{tr}}}
\newcommand{\QQab}{\QQ^{\mathrm{ab}}}
\newcommand{\trdeg}{\mathrm{tr.deg}}
\newcommand{\freesln}{\mathfrak{F}}

\newcommand{\supp}{\operatorname{supp}}
\newcommand{\freepln}{\mathfrak{P}}
\newcommand{\freetwn}{\mathfrak{T}}
\newcommand{\Sym}{\operatorname{Sym}}
\newcommand{\zentrum}{\mathcal{Z}}

\newcommand{\ase}{\mathcal{E}}
\newcommand{\smallmat}[4]{\left(\begin{smallmatrix}#1&#2\\#3&#4\end{smallmatrix}\right)}

\newcommand{\Ext}{\operatorname{Ext}}
\newcommand{\galg}{\mathcal{F}}

\newcommand{\ev}{\mathrm{ev}}

\newcommand{\hotimes}{\mathrel{\hat{\otimes}}}
\newcommand{\heiscocy}{\langle\beta\rangle}
\newcommand{\Ann}{\operatorname{Ann}}
\newcommand{\Tordu}{{\mathsf{Tor}}}
\newcommand{\pathalg}[1]{\llparenthesis#1\rrparenthesis}
\newcommand\tdot[1][.6]{{\mathbin{\,\vcenter{\hbox{\scalebox{#1}{$\bullet$}}}}\,}}
\newcommand{\Lie}{\mathrm{Lie}}

\newcommand{\overbrk}[1]{\overbracket[0.75pt]{\smash{\kern2pt#1\kern2pt}\rule{0mm}{6pt}}}
\newcommand{\sech}{\operatorname{sech}}
\newcommand{\csch}{\operatorname{csch}}

\makeatletter
\newcommand*{\act}{\mathpalette\bigcdot@{.65}}
\newcommand*\bigcdot@[2]{\mathbin{\vcenter{\hbox{\scalebox{#2}{$\m@th#1\bullet$}}}}}
\makeatother

\newcommand{\ffact}[1]{%
  ^{\underline{#1\kern-0.75pt}}}

% Quasi-Integral and Quasi-Derivative
\def\vcum{{\setbox0=%
    \hbox{$\textstyle{\scriptscriptstyle\diagup}{\varint}$}%
    \textstyle{\vcenter{\hbox{$\scriptscriptstyle\diagup$}}\kern-.5\wd0}%
    \!\varint}}

% Hypergeometric function notation
\newcommand*\pFqskip{8mu}
\catcode`,\active
\newcommand*\pFq{\begingroup
        \catcode`\,\active
        \def ,{\mskip\pFqskip\relax}%
        \dopFq
}
\catcode`\,12
\def\dopFq#1#2#3#4#5{%
        {}_{#1}F_{#2}\bigl[\genfrac..{0pt}{}{#3}{#4}\mid#5\bigr]%
        \endgroup
}

% Provides glyphs for asymptotic negligibility relations
\DeclareFontFamily{U}{mathb}{\hyphenchar\font45}
\DeclareFontShape{U}{mathb}{m}{n}{
<-6> mathb5 <6-7> mathb6 <7-8> mathb7
<8-9> mathb8 <9-10> mathb9
<10-12> mathb10 <12-> mathb12
}{}
\DeclareSymbolFont{mathb}{U}{mathb}{m}{n}
\DeclareMathSymbol{\Prec}{\mathrel}{mathb}{"CE}
\DeclareMathSymbol{\Succ}{\mathrel}{mathb}{"CF}

\makeatletter
\DeclareRobustCommand{\pto}{\mathrel{\mathpalette\p@to\to}}
\DeclareRobustCommand{\ppto}{\mathrel{\mathpalette\pp@to\to}}
\DeclareRobustCommand{\pppto}{\mathrel{\mathpalette\ppp@to\to}}
\DeclareRobustCommand{\pgets}{\mathrel{\mathpalette\p@gets\gets}}
\newcommand{\p@to}[2]{%
  \ooalign{\hidewidth$\m@th#1\mapstochar\mkern2mu$\hidewidth\cr$\m@th#1\to$\cr}%
}
\newcommand{\pp@to}[2]{%
  \ooalign{\hidewidth$\m@th#1\mkern-2mu\mapstochar\,\mapstochar\mkern2mu$\hidewidth\cr$\m@th#1\to$\cr}%
}
\newcommand{\ppp@to}[2]{%
  \ooalign{\hidewidth$\m@th#1\mkern-4mu\mapstochar\,\mapstochar\,\mapstochar\mkern2mu$\hidewidth\cr$\m@th#1\to$\cr}%
}
\newcommand{\p@gets}[2]{%
  \ooalign{\hidewidth$\m@th#1\mapstochar\mkern5mu$\hidewidth\cr$\m@th#1\gets$\cr}%
}
\makeatother

\newcommand{\hooklongrightarrow}{\lhook\joinrel\longrightarrow}

\newcommand{\isomarrow}{\xrightarrow{
   \,\smash{\raisebox{-0.65ex}{\ensuremath{\scriptstyle\sim}}}\,}}

%%% Adjunctions (from tex.stackexchange #332088)

% new \oset macro
\makeatletter
\newcommand{\oset}[3][0ex]{%
  \mathrel{\mathop{#3}\limits^{
    \vbox to#1{\kern-2\ex@
    \hbox{$\scriptstyle#2$}\vss}}}}
\makeatother

\makeatletter
\newcommand{\adjunction}[4]{%
  % #1 : #2 <arrows> #3 : #4
  #1\colon #2%
  \mathrel{\vcenter{%
    \offinterlineskip\m@th
    \ialign{%
      \hfil$##$\hfil\cr
      \longrightharpoonup\cr
      \noalign{\kern-.3ex}
      \smallbot\cr
      \longleftharpoondown\cr
    }%
}}%
  #3 \noloc #4%
}
\newcommand{\longrightharpoonup}{\relbar\joinrel\rightharpoonup}
\newcommand{\longleftharpoondown}{\leftharpoondown\joinrel\relbar}
\newcommand\noloc{%
  \nobreak
  \mspace{6mu plus 1mu}
  {:}
  \nonscript\mkern-\thinmuskip
  \mathpunct{}
  \mspace{2mu}
}
\newcommand{\smallbot}{%
  \begingroup\setlength\unitlength{.15em}%
  \begin{picture}(1,1)
  \roundcap
  \polyline(0,0)(1,0)
  \polyline(0.5,0)(0.5,1)
  \end{picture}%
  \endgroup
}
\makeatother

%%% Adaptions of Standard Settings

\let\phi\varphi
\let\epsilon\varepsilon
\let\mathbb\mathbbm
\let\pomega\varpi
\def\boxedplus{{\scriptstyle\boxplus}}

%%% Theorem-like Environments

\newtheorem{theorem}{Theorem}
\newtheorem{proposition}[theorem]{Proposition}
\newtheorem{lemma}[theorem]{Lemma}
\newtheorem{corollary}[theorem]{Corollary}

\theoremstyle{definition}
\newtheorem{definition}[theorem]{Definition}
\newtheorem{example}[theorem]{Example}
\newtheorem{remark}[theorem]{Remark}

\newcommand{\myexend}{\hfill$/\!/$}
\newenvironment{myexample}{\begin{example}}{\myexend\end{example}}
\newenvironment{myremark}{\begin{remark}}{\hfill$\circledcirc$\end{remark}}

\title{An Algebraic Approach to\\ Fourier Transformation}

\makeatletter
\def\@setthanks{\vspace{-0.75\baselineskip}\def\thanks##1{\@par\noindent##1\@addpunct.}\thankses}
\def\@setdate{\hspace*{-\parindent}\datename\ \@date\@addpunct.}
\makeatother

\author{Markus Rosenkranz and G{\"u}nter Landsmann$^*$}%
\address{%
  RISC,
  Johannes Kepler University, A-4040 Linz, Austria}%
\email{marcus@rosenkranz.or.at,landsmann@risc.uni-linz.ac.at}%
\date{\today}
\thanks{$^*$ Research funded by the Austrian Science Fund (FWF) under Grant P30052.}%

\begin{document}

\maketitle

\begin{abstract}
  The notion of Fourier transformation is described from an algebraic perspective that lends itself
  to applications in Symbolic Computation. We build the algebraic structures on the basis of a given
  Heisenberg group (in the general sense of nilquadratic groups enjoying a splitting property); this
  includes in particular the whole gamut of Pontryagin duality. The free objects in the
  corresponding categories are determined, and various examples are given. As a first step towards
  Symbolic Computation, we study two constructive examples in some detail---the Gaussians (with and
  without polynomial factors) and the hyperbolic secant algebra.
\end{abstract}

\newpage
\tableofcontents

% Nice general page on Fourier transforms:
% https://terrytao.wordpress.com/2009/04/06/the-fourier-transform/

% =======================
\section{Introduction}
% =======================

\subsection{Motivation from Algorithmic Analysis.}
In this paper we have developed an algebraic theory of Fourier
transforms, which is intended (but not restricted) to serve as a
convenient framework for Symbolic Computation. The goal we have in
mind is to build up a \emph{symbolic operator calculus} to determine
the Green's operators of certain boundary problems for linear partial
differential equations (LPDE). In this application, the role of the
Fourier operator is roughly analogous to the indefinite
integration operator, denoted by~$\cum$ or~$A$ in our earlier
papers~\cite{Rosenkranz2005,RosenkranzRegensburger2008a,RosenkranzRegensburgerTecBuchberger2009,RosenkranzRegensburgerTecBuchberger2012,BuchbergerRosenkranz2012}. While
it is possible to extend the operator calculus of these papers to the
multivariate case by encoding the substitution rule of integration
into a suitable relation ideal~\cite{RosenkranzGaoGuo2019}, such
operators are usually not sufficiently expressive for capturing
Green's operators for LPDE boundary problems.

In fact, even for constant coefficients, only the degenerate case of
completely reducible characteristic
polynomials~\cite{RosenkranzPhisanbut2013} is amenable to a direct
treatment via~$\cum^x, \cum^y$, \dots in conjunction with
substitutions and multiplication operators. In contrast, Fourier
transforms are well known to provide feasible tools for expressing the
solution operators in the case of \emph{LPDE with constant
  coefficients}, so they form a reasonable basis for developing
algorithms to \emph{compute Green's operators} in an algebraic
operator calculus. Of course, the full specification of such an
algorithm will also fix a suitabe class of boundary conditions with
certain constraints to ensure well-posedness.

Apart from the principal goal of setting up an operator calculus, we
would like to mention two other potential application areas: The first
would be a kind of algebraic \emph{structure theory} for Fourier
transforms, somewhat akin to differential Galois
theory~\cite{PutSinger2003}. Some first steps in this direction may be
perceived in the material of Section~\ref{sec:constructive-schwartz}.

The other major application area in Symbolic Computation would be to
build up an \emph{algorithm for Fourier transforms}, perhaps remotely
reminiscent of the Risch algorithm~\cite{Risch1969}. Guided by the
structure theory addressed earlier, such an algorithm would either
express the Fourier transform of a given function (as an element of
the ``signal space'' according to the terminology in the present
paper) in terms of a fixed target domain (here called ``spectral
space''), or otherwise report that this is not possible. In the latter
case, adjunction of new elements would lead to larger spaces/algebras
that contain the desired functions. Clearly, it will take considerable
effort to build up such a theory, but we believe that the structures
developed in this paper could provide a useful basis for such
an undertaking.

\subsection{Overview of the paper.} The material of this paper is to
some extent coupled with that of its \emph{companion
paper}~\cite{LandsmannRosenkranz2019}, where a more general view of
Heisenberg groups is developed, mainly from the perspective of
homological algebra. In the present paper, we focus on the structures
actually arising in constructive analysis as indicated in the previous
subsection. Whenever we refer to specific places in the companion
paper, we will use the shorthand~\heiscitation{\dots}
for~\cite[\dots]{LandsmannRosenkranz2019}.

The remainder of this paper is divided into three sections. Just as field theory is built up before
exploring differential fields (by introducing the notion of derivation), we first build up the
\emph{theory of Heisenberg modules}, based on a fixed Heisenberg group,
in~\S\ref{sec:heis-mod}. From there we develop the theory of \emph{Fourier doublets} (by introducing
an algebraic notion of Fourier operator in~\S\ref{sec:fourier-operators-algebra}); in this way we
can capture the various function spaces---with or without multiplication---on which various species
of Fourier operators act. Finally, we investigate in~\S\ref{sec:constructive-schwartz} some
particular instances of Fourier doublets from the viewpoint of \emph{Symbolic Computation}, giving
special attention to the important example of Gaussian functions (with or without polynomial factors
adjoined).

Let us now go through these three sections in some more detail. We start
in~\S\ref{sub:review-heisgrp} by introducing the particular \emph{notion of Heisenberg group} that
is explored in this paper, with special emphasis on the crucial examples coming from Pontryagin
duality and the so-called symplectic Heisenberg groups. Fixing some specific Heisenberg group~$H$,
the category of \emph{Heisenberg modules} over~$H$ along with the more enriched categories of
\emph{Heisenberg algebras} (featuring one multiplication) and \emph{Heisenberg twain algebras}
(featuring two multiplications) are set up in~\S\ref{sub:cat-heisalg}. Some basic categorical
properties are revealed (monoidal structure, coproduct). In~\S\ref{sub:heis-twist}, we turn to a
feature of Heisenberg groups that is more directly relevant to the goal of building algebraic
Fourier structures---the \emph{Heisenberg twist}~$J$, an involutive antihomomorphism on a Heisenberg
group. Roughly speaking (confer Remark~\ref{rem:justify-twist}), Fourier operators might be likened
to conjugate-linear maps between complex vector spaces (where the involution~$J$ is complex
conjugation). Returning to categorical considerations in~\S\ref{sub:free-heismod-heisalg}, we
explore next the \emph{free objects} in various categories of Heisenberg modules and (twain)
algebras.

In \S\ref{sub:four-doublet}, we define a \emph{Fourier doublet} as a pair consisting of two
Heisenberg modules or two Heisenberg (twain) algebras---referred to as signal/spectral spaces,
respectively---harnessed together with a Fourier operator between them. The role of the Heisenberg
twist is elucidated by way of the \emph{Heisenberg clock} (Figure~\ref{fig:heis-clock}). We set up
suitable categories and their free objects for the various sorts of Fourier doublets. The
all-important case of \emph{Pontryagin duality} is explored in~\S\ref{sub:class-pont-duality},
including various well-known Fourier operators as special cases (Fourier integral, Fourier series,
discrete-time Fourier transform, discrete Fourier transform). While this includes multivariate
functions, one may alternatively reduce them to univariate functions via the tensor product. The
closely related \emph{symplectic Fourier transform} is also mentioned. The topic of \emph{Fourier
  inversion} is initiated in~\S\ref{sub:inversion}, along with crucial examples such as an
``invertible subdoublet'' of the canonical~$L^1$ setting, an extension to $L^2$ functions, and a
Fourier doublet with measures as signals. For the above-mentioned special cases, Fourier inversion
exhibits the familiar sign change in the ``exponential'' (= character). Arguably the most convenient
setting for Fourier analysis, \emph{Schwartz functions}---along with the corresponding tempered
distributions---are introduced in~\S\ref{sub:schwartz-class} for the general case of Pontryagin
duality (where they are more properly named Schwartz-Bruhat). We sketch the setup of a \emph{ring of
  differential operators} following~\cite{Akbarov1995}. This generalizes the action of the Weyl
algebra on the \emph{classical Schwartz space} over~$\RR^n$, which we investigate
in~\S\ref{sub:alg-schwartz-pol} from the algebraic perspective. This includes in particular the
crucial \emph{differentiation law} underlying applications for differential equations
(``differentiation is multiplication by the symbol''). The relation beteen Fourier and Laplace
transformation---for general Pontryagin duality---is briefly mentioned.

We start the consideration of Symbolic Computation aspects in~\S\ref{sub:min-schwartz-algebra} by
extracting from Schwartz space all the \emph{Gaussian functions}. While this is not yet fully
constructive, it allows to come up with explicit formulae for the twain algebra operations and the
Fourier operator. Linear independence of the Gaussians and related functions in ensured by employing
a general asymptotic scale (we develop this approach in some detail since it may come in handy later
when considering more general cases). The resulting Heisenberg algebra is viewed as a semigroup
extension similar to the group extensions studied in group cohomology (again this may be of value
for extensions to be contemplated at a later stage). For making the Gaussian doublet fully
algorithmic, we set up an effective number field in~\S\ref{sub:gelfond-field}. We call it the
\emph{Gelfond field} since it hinges crucially on Gelfond's famous transcendency result
for~$e^\pi$. Restricting to this field in~\S\ref{sub:rational-gauss}, we trim down the Gaussian
doublet to a fully algorithmic domain that is shown to be generated within Schwartz space by a
\emph{single Gaussian} (essentially the probability density of the normal distribution). It is then
shown to be a quotient of a free twain algebra modulo certain identities relating convolution and
pointwise multiplication of Gaussians. For going beyond the Gaussian doublet in Symbolic
Computation, we look at the standard approach via \emph{holonomic functions} (and holonomic
distributions) in~\S\ref{sub:hol-four-extns}. This has the decisive advantage that the Weyl action
mentioned above is also available. Some specific examples are given, in particular the important
case of adjoining the Gaussian doublet by polynomial factors. The Fourier integral is given in terms
of \emph{Hermite functions}, which are analyzed by umbral algebra. As another example,
in~\S\ref{sub:hyp-four-doublet} we set up a Fourier doublet generated by the \emph{hyperbolic
  secent} (which is its own Fourier transform). They are viewed as subdoublets of Schwartz space,
and explicit formulae are available to some extent. More work will be needed at this point to
obtain a more explicit description.

\subsection{Terminological conventions and notation.}
The category of \emph{abelian groups} is denoted by~$\Ab$, the category of \emph{left and right
  modules} over a ring~$R$ by~$_R \Mod$ and~$\Mod_R$, respectively. \emph{Algebras} generally
assumed commutative but nonunital; the corresponding categories are written~$_R \Alg$
and~$\Alg_R$. By an \emph{LCA group} we mean a locally compact abelian group (where locally compact
includes Hausdorff). An \emph{involution} is an automorphism---of groups or algebras---that is
inverse to itself. The \emph{nonzero elements} of~$R$ are denoted by~$\nnz{R}$, its \emph{group of
  units} by~$\unit{R}$. The \emph{path algebra} for a quiver~$Q$ over~$R$ is denoted
by~$R\pathalg{Q}$. If~$Q$ is not endowed with a quiver structure, it is taken as a discrete quiver
(no arrows). This is to be contrasted to the \emph{group algebra}~$R[G]$ over the ring~$R$, for any
given group~$G$.

A nilpotent group of nilpotency class at most two will be called \emph{nilquadratic}. The
\emph{center of a group}~$G$ is denoted by~$\zentrum G$. If~$\Gamma$ and~$G$ are any abelian groups,
we write their \emph{tensor product} as~$\Gamma \otimes G := \Gamma \otimes_{\ZZ} G$. A
\emph{bilinear form} on these groups is a $\ZZ$-bilinear map~$\beta\colon \Gamma \times G \to T$,
viewing abelian groups as $\ZZ$-modules (hence a $\ZZ$-linear map~$\Gamma \otimes G \to T$). We will
often write the action of such a form as~$\inner{\xi}{x}_\beta = \beta(\xi, x)$, suppressing the
subscript~$\beta$ when it is obvious from the context. We call~$\beta$ or a \emph{duality}
if~$\beta(\xi,-)$ and~$\beta(-,x)$ are injective for all~$\xi \in \Gamma$ and~$x \in G$. The term
\emph{bicharacter} is usually taken as synonymous with the notion of bilinear form (with the
exception of Example~\ref{ex:vecgrp}).

Given a $R$-module~$M$, an $n$-form is a bilinear map~$M^n \to R$; for~$n=2$ one either
suppresses~$n$ or speaks of a \emph{bilinear form}. An \emph{alternating form} is a bilinear
map~$\omega\colon M \oplus M \to R$ such that~$\omega(x,x) = 0$ for all~$x \in M$; it is called a
\emph{symplectic form} if it is moreoever, again in the sense that~$\omega(x,-)\colon M \to M$ is
injective for all~$x \in M$. One refers to the structure~$(M, \omega)$ as an alternating or
symplectic module, respectively. The \emph{symmetric algebra} over~$M$ is denoted
by~$\Sym(M) \cong T(M)/I(M)$, where~$I(M) \trianglelefteq T(M)$ is the ideal generated
by~$\{x \otimes y - y \otimes x \mid x,y \in M\}$. In case of ambiguity, the base ring~$R$ may be
explicated in writing~$\Sym_R(M) \cong T_R(M)/I_R(M)$. Note that for a set~$X$ one
has~$R[X] = \Sym_R(RX)$, where~$RX$ is the free $R$-module over the basis~$X$. Given a
homomorphism~$\sigma\colon R' \to R$, an $R$-module~$M$ becomes an $R'$-module~$M[\sigma]$ via
scalar restriction along~$\sigma$; one also calls~$M[\sigma]$ a \emph{twisted module}.

If~$K$ is a field, we write the vector space of \emph{column
  vectors} as~$K^n$, that of \emph{row vectors} as~$K_n$. The $n \times n$ \emph{identity matrix} is
denoted by~$I_n$. We shall sometimes employ the abbreviation~$x^- := x^{-1}$ for the
\emph{reciprocal} (when it exists) of an element~$x$ in a multiplicatively written monoid.

The \emph{unit interval} is denoted by~$\II = [0,1]$. Given numbers~$\delta_1, \delta_2 \in \RR$, we
use~$x \pm_\delta y$ as an abbreviation for their \emph{weighted sum or difference}
$(\delta_1 x \pm \delta_2 y)/(\delta_1 + \delta_2)$; note that~$+_\delta$ is in general not
commutative. We introduce also the harmonic sum~$\boxedplus$ on~$\RR_{>0}$
as~$x \boxedplus y := xy/(x+y)$ to create the \emph{harmonic semigroup}
$(\RR_{>0}, \boxedplus) \cong (\RR_{>0}, +)$, with the iso\-morphism given
by~$i\colon \RR_{>0} \to \RR_{>0}, x \mapsto x^{-1}$.

% =====================================================================
\section{Heisenberg Modules and Algebras}
\label{sec:heis-mod}

\subsection{Review of Heisenberg Groups.}\label{sub:review-heisgrp}
As outlined in the Introduction, our treatment of Fourier operators is based on Heisenberg modules
and Heisenberg algebras. These in turn are built on the central notion of \emph{Heisenberg
  group}. In the literature~\cite{BonattoDikranjan2017} \cite{LuefManin2009}
\cite{PrasadShapiroVemuri2010} \cite{ParasadVemuri2008} \cite{Vemuri2008} one finds several versions
of this idea, exhibiting various degrees of generality. In the present paper, we shall use a
definition close to~\cite[Def.~3.1]{BonattoDikranjan2017}, since this definition remains entirely
within algebraic confines, admitting a level of generality sufficient for our current goal: to forge
algebraic and algorithmic tools that support calculations with Fourier operators. In
Definition~\heiscite{def:heis-grp}, we provide a somewhat wider notion of Heisenberg group, with a
view towards homological questions.

For preparing our definition, recall that a Heisenberg group---even
under more general conceptions---is always a nilquadratic extension,
meaning a \emph{central extension}~$H$ of an abelian group~$P$ by
another abelian group~$T$. Whereas the \emph{phase space}~$P$ is
written additively, the \emph{torus}~$T$ is conventionally presented
in multiplicative notation (in view of its most prominent
specimen~$T = \mathbb{S}^1 \subset \nnz{\CC}$). The corresponding
short exact
sequence~$E\colon T \oset{\iota}{\rightarrowtail} H
\oset{\pi}{\twoheadrightarrow} P$ gives rise to a symplectic structure
by way of the commutator form~$\omega_E$; we refer the reader to
\S\heiscite{sub:nilquadratic-groups} for some basic terminology in
this area, in particular the notions of (co)symplectic subgroups and
Lagrangian subgroups.

At this point we shall only review the notions immediately entering
the definition of Heisenberg group given below. Recall that
\emph{maximal abelian subgroups}~$\tilde{G} \le H$ are in bijective
correspondence with \emph{Lagrangian subgroups}~$G \le P$ via the
projection~$\pi\colon H \to P$. A pair~$(\tilde{G}, \tilde{\Gamma})$
of maximal abelian subgroups is called an \emph{abelian bisection}
of~$H$ over~$T$ if~$\tilde{G} \tilde{\Gamma} = H$ and
$\tilde{G} \cap \tilde{\Gamma} = \hat{T} := \iota(T)$; it corresponds
under~$\pi$ to a \emph{Lagrangian bisection} of~$H$, meaning a direct
decomposition of~$P$ into Lagrangian subgroups. It is clear that~$\pi$
then descends to isomorphisms~$H/\tilde{G} \cong \Gamma$
and~$H/\tilde{\Gamma} \cong G$.

Note that~$\tilde{G}$ and~$\tilde{\Gamma}$ both contain the embedded
torus~$\hat{T} := \iota(T)$. If the latter splits off as a direct
summand in~$\tilde{G}$ as well as~$\tilde{\Gamma}$, we
call~$(\tilde{G}, \tilde{\Gamma})$ an \emph{abelian splitting}
and~$(G, \Gamma)$ a \emph{Lagrangian splitting}. As a consequence,
$\hat{T}$ has direct complements in~$\tilde{G}$ and~$\tilde{\Gamma}$,
which are easily seen to be isomorphic to~$G$ and~$\Gamma$,
respectively. Hence we shall take the liberty of designating those
direct complements by~$G, \Gamma \le H$ as well. We can now introduce
Heisenberg groups in the form they are used in this paper.

\begin{definition}
  \label{def:abs-heisgrp}
  A \emph{Heisenberg group}~$(H; \tilde{G}, \tilde{\Gamma})$ is a nilquadratic group~$H$ with an
  abelian splitting~$(\tilde{G}, \tilde{\Gamma})$.
\end{definition}

As usual in such cases, the abelian splitting~$(\tilde{G}, \tilde{\Gamma})$ is sometimes suppressed
when speaking of ``the Heisenberg group~$H$'', but~$(\tilde{G}, \tilde{\Gamma})$ must nevertheless
be viewed as part of the data: As we shall see below (Example~\ref{ex:fail-ex-uniq}),
\emph{existence and uniqueness of abelian splittings may fail} for a general nilquadratic group. In
contrast, we shall soon learn that the (implicit) choice of complements~$G, \Gamma \le H$
of~$\hat{T}$ has no influence on the Heisenberg group.

The Heisenberg groups form a \emph{category}~$\Hei$ with objects short exact
sequences~$\mathfrak{H}\colon T \oset{\iota}{\rightarrowtail} H \oset{\pi}{\twoheadrightarrow} G
\oplus \Gamma$.
If~$\mathfrak{H}'\colon T' \oset{\iota'}{\rightarrowtail} H' \oset{\pi'}{\twoheadrightarrow} G'
\oplus \Gamma'$
is an\-other Heisenberg object, a \emph{Heisenberg
  morphism}~$(t, h, g \times \gamma)\colon \mathfrak{H} \to \mathfrak{H}'$ consists of group
homomorphisms~$t\colon T \to T'$ and $h \colon H \to H'$ as well as
$g \oplus \gamma\colon G \oplus \Gamma \to G' \oplus \Gamma'$ such that the diagram
\begin{equation}
  \label{eq:heis-extn}
  \xymatrix @M=0.5pc @R=1pc @C=2pc%
  { 1 \ar[r] & T \ar[r] \ar[d]^t & H \ar[r] \ar[d]^h & G \oplus \Gamma \ar[r] \ar[d]^{g \oplus
      \gamma} & 0\\
    1 \ar[r] & T' \ar[r] & H' \ar[r] & G' \oplus \Gamma' \ar[r] & 0 }
\end{equation}
commutes. Thus a Heisenberg morphism is just a morphism of short exact sequences that respects the
Lagrangian splittings.

The Heisenberg groups~\eqref{eq:heis-extn} with fixed Lagrangian
splitting~$(G, \Gamma)$ form a subclass within all nilquadratic
extensions, corresponding to a certain
subgroup~$H_{G, \Gamma}^2(P,T) \le H^2(P,T)$ under cohomology. As for
all nilquadratic extensions, the Universal Coefficient Theorem induces
a short extact sequence
\begin{equation}
  \label{eq:H2-seq}
  \xymatrix @M=0.5pc @R=1pc @C=2pc%
  { 1 \ar[r] & \Ext^1(P,T) \ar[r]^-{j} & H^2(P, T) \ar[r]^-{q} & \Omega^2(P,T) \ar[r] & 0 }  
\end{equation}
which splits, albeit not naturally. Here~$j$ is the natural embedding of abelian (= symmetric)
cocycles while~$q$ is the skewing map~$[\gamma] \mapsto \omega$
with~$\omega(z, w) = \gamma(z,w)/\gamma(w,z)$. In the (rare) event that~$T$ is uniquely
$2$-divisible~\cite[Thm.~5.4]{Warfield1976} in the sense that for every~$c \in T$ there exists
a unique~$d \in T$ with~$d^2 = c$, one may choose a canonical section~$\Omega^2(P,T) \to H^2(P, T)$
of~$q$ as follows.  Writing~$\sqrt{\;}$ for the inverse of the squaring homomorphism, one
sends~$\omega \in \Omega^2(P,T)$ to~$[\gamma] \in H^2(P, T)$ defined
by~$\gamma(z, w) = \sqrt{\omega(z, w)}$. In other words, one can recover the equivalence class of
the group extension~$H$ of~\eqref{eq:heis-extn} from its commutator form~$\omega$
via~$H \cong [H]_\omega := T \times_{\sqrt{\omega}} P$; see Lemma~\ref{lem:sympl-heisgrp} below for
the case of Heisenberg groups.

There is an alternative perspective on Heisenberg groups that
emphasizes their constructive nature. Given any bilinear
form~$\beta\colon \Gamma \times G \to T$ from abelian groups~$G$
and~$\Gamma$ to another abelian group~$T$, one may construct the
\emph{phase action}
\begin{equation}
  \label{eq:phase-action}
  \vartriangleleft\colon \Gamma \to \Aut(T G),\quad
  \vartriangleleft_\xi \, cx = c \, \inner{x}{\xi} \, x,
\end{equation}
where~$TG$ denotes the direct product of~$T$ with~$G$, whose elements
we choose to write as~$cx \in TG$ with~$c \in T$ and~$x \in G$. It is
clear that the phase action~\eqref{eq:phase-action} is faithful
iff~$\inner{}{}_\beta$ is a duality.

\begin{definition}
  \label{def:assoc-heisgrp}
  Let~$\beta\colon \Gamma \times G \to T$ be a bilinear form. Then the \emph{Heisenberg group}
  assciated to~$\beta$ is the semidirect product~$H(\beta) := TG \rtimes \Gamma$ with respect to the
  phase action~\eqref{eq:phase-action}.
\end{definition}

As has been shown in~\cite[Thm.~3.4]{BonattoDikranjan2017} for a
slightly restricted setting (with torus~$T = \zentrum(H)$ and
with~$\beta$ being a duality) and in
Definition~\heiscite{def:heis-grp} for somewhat more general cases
(requiring only bisections rather than splittings), the Heisenberg
groups are exactly those of the form~$H(\beta)$. More precisely, for
each Heisenberg group~$(H; \tilde{G}, \tilde{\Gamma})$ there is a
\emph{unique bilinear form}~$\beta\colon \Gamma \times G \to T$ such
that~$H \cong H(\beta)$ in~$\Hei$, where~$H(\beta)$ is endowed with
the Lagrangian splitting~$(\tilde{G}, \tilde{\Gamma})$
with~$\tilde{G} := T \times G$
and~$\tilde{\Gamma} := T \times \Gamma$. This also reiterates our
observation above that the abelian splitting is part of the data for a
Heisenberg group as it corresponds to the choice of bilinear form. On
the other hand, the choice of complements of~$\hat{T}$ in
both~$\tilde{G}$ and~$\tilde{\Gamma}$ is immaterial; it merely
determines the particular isomorphism between~$H$ and~$H(\beta)$.

In this paper, we represent Heisenberg groups in the form~$H(\beta)$, with a bilinear
form~$\beta\colon \Gamma \times G \to T$. Applied to arguments~$(\xi, x) \in \Gamma \times G$, the
former is written as~$\inner{\xi}{x}_\beta$, with the index~$\beta$ omitted when the context makes
it clear. Motivated by the fundamental example of symplectic duality (see
Remark~\ref{rem:physint-heisgrp} below), the elements of~$G$ will be called \emph{positions}, those
of~$\Gamma$ as \emph{momenta}. If~$\beta$ is nondegenerate, we can identify positions~$x \in G$ with
their \emph{position characters} \mbox{$\inner{-}{x}\colon \Gamma \to T$} given
by~$\xi \mapsto \inner{\xi}{x}$, and momenta~$\xi \in \Gamma$ with their \emph{momentum characters}
$\inner{\xi}{-}\colon G \to T$, $x \mapsto \inner{\xi}{x}$.

The isomorphism~$H \cong [H]_\omega = T \times_{\sqrt{\omega}} P$ mentioned above takes on the
following form in case of a Heisenberg group~$H = H(\beta)$. Note that~$[H]_\omega$ may be viewed as
a canonical representation of all Heisenberg extensions~\eqref{eq:heis-extn} with fixed commutator
form~$\omega\colon P \times P \to T$, but $[H]_\omega$ is \emph{not} itself a Heisenberg group in
our sense of Definition~\ref{def:abs-heisgrp}. Due to its most important instances (see
Example~\ref{ex:heisgrp-sympl} below), we shall call it the \emph{symplectic Heisenberg group}.

\begin{lemma}
  \label{lem:sympl-heisgrp}
  If~$\beta\colon \Gamma \times G \to T$ is a bilinear form over a uniquely $2$-divisible torus~$T$
  with square root~$\sqrt{\;}$, the Heisenberg extension~\eqref{eq:heis-extn} is equivalent to the
  extension~$T \rightarrowtail [H]_\omega \twoheadrightarrow P$ via the \emph{polarization map}
  $\mathfrak{P}_{G, \Gamma}\colon [H]_\omega \isomarrow H(\beta)$ given
  by~$(c; x, \xi) \mapsto c \, \sqrt{\inner{\xi}{x}} \, (x, \xi)$.
\end{lemma}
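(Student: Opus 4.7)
The plan is to write both $[H]_\omega$ and $H(\beta)$ in the common form of a central extension $T \rightarrowtail T \times_{\gamma} P \twoheadrightarrow P$ with an explicit $2$-cocycle, check that the polarization map is a bijection, and verify the homomorphism property by squaring both sides—exploiting unique $2$-divisibility of $T$.

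First I would fix a presentation of $H(\beta)$ as a crossed-product extension. Unwinding the semidirect product $TG \rtimes \Gamma$ of Definition~\ref{def:assoc-heisgrp}, an element can be written as $(c; x, \xi)$ with $c \in T$, $x \in G$, $\xi \in \Gamma$, and the product reads
\begin{equation*}
  (c_1; x_1, \xi_1)(c_2; x_2, \xi_2) = \bigl(c_1 c_2 \inner{\xi_1}{x_2};\, x_1 + x_2,\, \xi_1 + \xi_2\bigr),
\end{equation*}
so $H(\beta) \cong T \times_{\gamma} P$ with cocycle $\gamma\bigl((x_1,\xi_1),(x_2,\xi_2)\bigr) = \inner{\xi_1}{x_2}$. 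A short calculation then yields the commutator form $\omega\bigl((x_1,\xi_1),(x_2,\xi_2)\bigr) = \inner{\xi_1}{x_2}\,\inner{\xi_2}{x_1}^{-1}$, confirming that $H(\beta)$ has indeed the commutator form that $[H]_\omega$ is built from.

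Next I would verify bijectivity of $\mathfrak{P}_{G,\Gamma}$, which is immediate since the inverse is $(c'; x, \xi) \mapsto \bigl(c'\,\sqrt{\inner{\xi}{x}}^{\,-1};\, x, \xi\bigr)$. The main work is checking that $\mathfrak{P}_{G,\Gamma}$ is a group homomorphism. Applied to a product $(c_1; p_1)(c_2; p_2) = \bigl(c_1 c_2 \sqrt{\omega(p_1,p_2)};\, p_1+p_2\bigr)$ in $[H]_\omega$ and to the corresponding product of images in $H(\beta)$, the required equality reduces to
\begin{equation*}
  \sqrt{\omega(p_1,p_2)}\,\sqrt{\inner{\xi_1+\xi_2}{x_1+x_2}} \;=\; \sqrt{\inner{\xi_1}{x_1}}\,\sqrt{\inner{\xi_2}{x_2}}\,\inner{\xi_1}{x_2}.
\end{equation*}
Unique $2$-divisibility of $T$ makes $\sqrt{\;}$ a group homomorphism, so it suffices to square both sides; after expanding $\inner{\xi_1+\xi_2}{x_1+x_2}$ by bilinearity, this collapses to the commutator identity $\omega(p_1,p_2) = \inner{\xi_1}{x_2}\,\inner{\xi_2}{x_1}^{-1}$ established in the previous step.

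Finally I would verify that $\mathfrak{P}_{G,\Gamma}$ intertwines the two short exact sequences $T \rightarrowtail [H]_\omega \twoheadrightarrow P$ and $T \rightarrowtail H(\beta) \twoheadrightarrow P$. Restricted to $T$ (i.e.\ $x=0,\xi=0$), we have $\sqrt{\inner{0}{0}}=1$, so $\mathfrak{P}_{G,\Gamma}$ acts as the identity on the central torus; and the induced map on quotients is the identity on $P = G \oplus \Gamma$ by construction. Hence the equivalence of extensions is established. The principal obstacle is nothing conceptually deep but the careful cocycle bookkeeping and, above all, the legitimacy of the square-root manipulation—ensuring that passing from a squared identity back to the original one is valid precisely because the unique $2$-divisibility of $T$ makes $\sqrt{\;}$ a well-defined group homomorphism.
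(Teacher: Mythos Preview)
Your proof is correct and follows the same approach as the paper's, which simply states the inverse map, asserts that the homomorphism property ``is checked immediately'', and concludes that $(1_T, \mathfrak{P}_{G,\Gamma}, 1_P)$ is an equivalence of extensions. You have filled in the details the paper omits---in particular the squaring argument for the homomorphism identity and the explicit verification on the flanks---but the strategy is identical.
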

\begin{proof}
  Clearly, $\Phi$ is inverted
  by~$c \, (x, \xi) \mapsto (c \, \sqrt{\inner{\xi}{x}^{-1}}; x,
  \xi)$, and it is a group homomorphism as one checks
  immediately. Hence~$(1_T, \Phi, 1_P)$ is an equivalence from the
  extension~$T \rightarrowtail T \times_{\sqrt{\omega}}\, P
  \twoheadrightarrow P$ to the Heisenberg
  extension~\eqref{eq:heis-extn}.
\end{proof}

As noted above, there is no canonical section of the \emph{entire} skewing
map~$q\colon H^2(P, T) \to \Omega^2(P, T)$ if the torus~$T$ is not uniquely $2$-divisible. In the
presence of a Lagrangian splitting~$P = G \oplus \Gamma$, however, one can still recover the
Heisenberg cocycles~$[\gamma] \in H^2(P, T)$ from their commutator forms~$\omega \in \Omega^2(P, T)$.
In fact, the bilinear cocycles~$\gamma \in Z^2(P, T)$ with~$\gamma(z,w) = 0$ for~$z \in G$
or~$w \in \Gamma$ form a subgroup~$Z^2_{G,\Gamma}(P, T)$ holding canonical representatives
for~$H^2_{G,\Gamma}$; see Proposition~\heiscite{prop:H2-GGamma-Subgroup}. Similarly, one may
introduce the subgroup~$\Omega^2_{G,\Gamma}(P,T)$ of skew-symmetric forms~$\omega \in \Omega^2(P,T)$
that vanish on~$\Gamma \times \Gamma$ and~$G \times G$. Let
\begin{equation*}
  \xymatrix @M=0.5pc @R=1pc @C=2pc%
  { \Gamma \ar@<0.5ex>[r]^{\iota_\Gamma} & P \ar@<0.5ex>[l]^{\pi_\Gamma} 
    \ar@<-0.5ex>[r]_{\pi_G} & G \ar@<-0.5ex>[l]_{\iota_G} }
\end{equation*}
be the corresponding direct sum diagram with associated projectors
$p_\Gamma = \iota_\Gamma \pi_\Gamma, p_G = \iota_G \pi_G\colon P \to P$. Then we have the following
commutative diagram of group isomorphisms:
\begin{equation*}
  \xymatrix @M=0.5pc @R=2pc @C=2pc%
  { & \Omega^2_{G,\Gamma}(P, T) \ar@/_1.5pc/[dl]^-{(p_\Gamma \times p_G)^*} 
    \ar@/^1.5pc/[dr]_-{(\iota_\Gamma \times \iota_G)^*}\\
    Z^2_{G,\Gamma}(P, T) \ar@<0.5ex>[rr]^-{(\iota_\Gamma \times \iota_G)^*}
    \ar@<1ex>@/^1.5pc/[ur]^-{q} && (\Gamma \otimes G)^* \ar@<-1ex>@/_1.5pc/[ul]_-{\tilde{q}}
    \ar@<0.5ex>[ll]^-{(\pi_\Gamma \times \pi_G)^*}}
\end{equation*}
Here~$\tilde{q}$ maps a bilinear map~$\beta\colon \Gamma \times G \to T$ to the skew-symmetric form
$\omega = (\pi_\Gamma \times \pi_G)^*(\beta) - (\pi_G \times \pi_\Gamma)^*(\beta\trp)$.  In other
words, one applies antisymmetrization in the
form~$\omega(x, \xi; y, \eta) = \beta(\xi, y) - \beta(\eta, x)$. According to the above diagram,
\emph{Heisenberg cocycles}~$\gamma \in Z^2_{G,\Gamma}(P, T)$, their \emph{commutator
  forms}~$\omega \in \Omega^2_{G,\Gamma}(P, T)$ and the associated
\emph{dualities} $\beta\colon \Gamma \times G \to T$ all contain the same information once a
fixed Lagrangian splitting~$P = G \oplus \Gamma$ is known.

Before considering some examples of Heisenberg groups, let us first
have a closer look at the group operations. According to
Definition~\ref{def:assoc-heisgrp}, the \emph{multiplication} in a
Heisenberg group is given by
\begin{equation}
  \label{eq:heis-grouplaw}
  c \, (x, \xi) \cdot c' \, (x', \xi') = cc' \, \inner{\xi}{x'} \,
  (x+x', \xi+\xi'),
\end{equation}
the \emph{unit element} by~$1\,(0,0)$ and the \emph{inversion map} by
\begin{equation}
  \label{eq:heis-groupinv}
  \big( c \, (x, \xi) \big)^{-1} = c^{-1} \inner{\xi}{x} \, (-x, -\xi)
\end{equation}
for any $c \, (x, \xi) \in H(\beta)$ and
$c' \, (x', \xi') \in H(\beta)$. The group
law~\eqref{eq:heis-grouplaw} may also be expressed as a kind of
schematic matrix multiplication if one identifies the triples with
$3 \times 3$ matrices such that their products correspond via
\begin{equation}
  \label{eq:heis-matrix}
  c \, (x, \xi) \cdot
  c' \, (x', \xi')\qquad\leftrightarrow\qquad
  \begin{pmatrix}1 & \xi & c\\ 0 & 1 & x\\ 0 & 0 & 1\end{pmatrix} \cdot
  \begin{pmatrix}1 & \xi' & c'\\ 0 & 1 & x'\\ 0 & 0 & 1\end{pmatrix} .
\end{equation}
Here one should keep in mind that the addition of the upper right
matrix elements corresponds to the (multiplicatively written) group
operation in~$T$.  Furthermore, one agrees that a left matrix
element~$\xi \in \Gamma$ multiplies with a right matrix
element~$x' \in G$ to yield~$\inner{\xi}{x'} \in T$; all other
products are trivial since they involve~$0$ or~$1$.

For closer analysis we introduce the \emph{bilinear category} as the comma
category~$\Bi := \mathord{\otimes} \downarrow 1_{\Ab}$ where~$\otimes\colon \Ab \times \Ab \to \Ab$
is the tensor product. By the characteristic property of the latter, we may view the
objects~$\Gamma \otimes G \smash{\overset{\beta}{\to}} T$ of~$\Bi$ as bilinear forms (also known as
\emph{bicharacters}). For brevity, we shall also write these as~$\inner{\Gamma}{G}_\beta$, omitting
the subscript~$\beta$ when it is clear from the context. A morphism
$(\gamma \times g, t)\colon \beta \to \beta'$ between $\beta\colon \Gamma \times G \to T$
and~$\beta'\colon \Gamma' \times G' \to T'$ is given by homomorphisms
$\gamma \times g \colon \Gamma \times G \to \Gamma' \times G'$ and $t \colon T \to T'$ such that
$t \, \inner{\xi}{x}_\beta = \inner{\gamma\xi}{gx}_{\beta'}$ for all $(\xi,x) \in \Gamma \times G$.
One may check that~$\Bi$ is a bicomplete abelian category. Fixing a torus~$T$, we write~$\Bi(T)$ for
the subcategory of~$\Bi$ consisting of bilinear forms~$\Gamma \otimes G \to T$ with
arbitrary~$\Gamma, G \in \Ab$, having morphisms~$(\gamma \times g, 1_T)$ in between them. We shall
subsequently suppress the identity~$1_T$, referring to~$\gamma \times g$ as a morphism in~$\Bi(T)$.

There is a \emph{tensor product} on~$\Bi(T)$ that sends a pair~$\beta\colon \Gamma \times G \to T$
and~$\beta'\colon \Gamma' \times G' \to T$ of bilinear forms to the bilinear form~$\beta \otimes
\beta'\colon (\Gamma \oplus \Gamma') \times (G \oplus G') \to T$ with
\begin{equation*}
  \inner{\xi, \xi'}{x, x'}_{\beta\otimes\beta'} := \inner{\xi}{x}_\beta \, \inner{\xi'}{x'}_{\beta'}
\end{equation*}
and a pair~$\gamma \times g$ and~$\gamma' \times g'$ of morphisms in~$\Bi(T)$
to~$(\gamma \oplus \gamma') \times (g \oplus g')$. As one sees immediately---associators and unitors
just shuffling parentheses---this gives a symmetric monoidal structure on~$\Bi(T)$ with unit object
the trivial bilinear form~$0 \oplus 0 \to T$.

We write $\Du$ for the full subcategory of $\Bi$ consisting of \emph{dualities} (i.e.\@
nondegenerate bilinear forms). It is easy to see that $\Du(T)$ is a symmetric monoidal subcategory
of~$\Bi$. It is sometimes useful to think of~$\Du$ as fibered over~$\Ab$ by sending a
duality~$\beta\colon \Gamma \times G \to T$ to its torus~$T$.

As detailed in Proposition~\heiscite{prop:heis-functor}, the association~$\beta \mapsto H(\beta)$ is
a functor~$H\colon \Du \to \Hei$, which is adjoint to the functor~$B\colon \Hei \to \Du$ extracting
the restriction~$\beta\colon \Gamma \times G \to T$ of the commutator form~$\omega_E$ of a given
Heisenberg extension. In fact, more is true\cite[p.~94]{MacLane1998}: The functor~$B$ is
\emph{left-adjoint left-inverse} to the functor~$H$, so the latter is an isomorphism of~$\Du$ to the
reflective subcategory~$\{ H(\beta) \mid \beta \in \Du\}$ of~$\Hei$, taking license to use curly
braces for class formation. In other words, we have an adjoint equivalence whose counit is trivial
and whose unit may be viewed as a canonical simplifier~\cite{BuchbergerLoos1982}: Every Heisenberg
group~$H$ is isomorphic to exactly one~$H(\beta) \cong H$ that acts as the canonical representative
of its equivalence class. Thinking of~$B\colon \Hei \to \Du$ as a very special fibration (each fiber
consisting of equivalent objects), the unit creates a cross-section through the fibers.

This is why in the following we usually write \emph{Heisenberg groups in canonical
  form}~$H(\beta) = TG \rtimes \Gamma$. The full subcategory of~$\Hei$ consisting of Heisenberg
groups over~$\beta \in \Du$ is denoted by~$\Hei_\beta$. Moreover, we shall identify~$G$ and~$\Gamma$
as subgroups of~$H(\beta)$. Similarly, the torus~$T$ is identified with its embedding~$\hat{T}$
in~$H(\beta)$.

\begin{myexample}
  \label{ex:pont-duality-first}
  The most important examples of Heisenberg groups are based on \emph{Pontryagin duality}. Given a
  locally compact abelian group~$G$, its \emph{characters} in the classical sense are the continuous
  homomorphisms from~$G$ into the complex torus~$\Tor \subseteq \CC$. The collection~$\hat{G}$ of
  all characters is then again an abelian group known as the \emph{dual group} of~$G$, and the
  famous Pontryagin duality theorem implies that the natural
  pairing~$\pont\colon \hat{G} \times G \to \Tor$ with~$\inner{\xi}{x}_\pont := \xi(x)$ is a
  nondegenerate bilinear form; confer for example Theorem~1.7.2 of~\cite{Rudin2017}
  or~\cite[\S4]{Morris1977}. In this way, the Heisenberg group~$H(\pont)$ may be associated to every
  locally compact group~$G$, as in Andr{\'e} Weil's definition~(4) of~\cite[p.~149]{Weil1964}, where
  the Heisenberg group~$H(\pont)$ is denoted by~$\mathbf{A}(G)$.
  
  Note that this also includes plain groups~$G$ \emph{without
    topology} since one may always endow them with the discrete
  topology. In this case, the character group $\hat{G}$ is the group
  of \emph{all} homomorphisms~$G \to T$, no matter what the topology
  on~$T$ may be. Thus one may view the algebraic setting as contained
  in a topological frame (where~$\hat{G}$ is an LCA group). Note that
  a bilinear form~$\beta\colon \Gamma \times G \to T$ is nondegenerate
  iff one has embeddings $G \hookrightarrow \hat{\Gamma}$ and
  $\Gamma \hookrightarrow \hat{G}$ for encoding positions and momenta
  by their characters.
\end{myexample}

\begin{myexample}
  \label{ex:classical-torus-group}
  One of the most famous examples of Pontryagin duality is given by the
  map~$\Tor \times \ZZ \to \Tor$ defined by~$\inner{\xi}{x} := \xi^x$. It is clear that this may be
  extended to~$\Tor^n \times \ZZ^n \to \Tor$
  with~$\inner{\xi}{x} = \xi^x = \xi_1^{x_1} \cdots \xi_n^{x_n}$. We refer to this example as the
  \emph{torus duality}~$\Tordu = \inner{\Tor^n}{\ZZ^n}$. Interchanging positions and momenta, we
  obtain the \emph{conjugate torus duality}~$\Tordu\trp = \inner{\ZZ^n}{\Tor^n}$. Their basic
  difference will be seen in Example~\ref{ex:pont-doublet}\ref{it:four-dtft}, their close
  relationship at the end of Example~\ref{ex:pont-doublet-inverse}.
\end{myexample}

Another basic example of Pontryagin duality is finite-dimensional
\emph{normed vector spaces} (Euclidean vector spaces with norm
topology). In this case, all linear functionals are automatically
continuous, so one may in fact ignore the topology altogether and
consider the general case of vector spaces with bilinear forms.

\begin{myexample}
  \label{ex:vecgrp}
  More precisely, we study now bilinear forms~$\Gamma \times G \to T$
  where~$G = \Gamma$ is an $n$-dimensional vector space~$V$ over the
  common scalar field~$F$. In this setting it is more natural to study
  \emph{additive bilinear forms}
  $\funcinner{}{}\colon V \times V \to (F, +)$ rather than
  multiplicative bilinear
  forms~$\inner{}{}\colon V \times V \to (T, \cdot)$ for a
  torus~$T$. The latter kind is our exclusive object of study in the
  present paper, but for contrast we shall refer to them in this
  context by their alternative name \emph{bicharacter}. Fixing any
  \emph{character} $\chi\colon (F, +) \to (T, \cdot)$, one obtains the
  \emph{associated bicharacter} by
  setting~$\inner{\xi}{x} := \chi \funcinner{\xi}{x}$
  for~$(x,\xi) \in V \times V$. By a character we mean here a group
  homomorphism that we may take to be an epimorphism (shrinking the
  torus~$T$ if necessary).

  The bicharacter~$\inner{}{}$ induced by the additive bilinear form~$\funcinner{}{}$ is clearly
  degenerate if the latter is. As noted above, the converse is true if we restrict ourselves to
  \emph{standard characters}. We define these as epimorphisms~$\chi\colon (F, +) \to (T, \cdot)$
  such that~$\ker \chi = \ZZ_*$, where~$\ZZ_*$ denotes the ``prime ring'' of~$F$, meaning the
  smallest nontrivial subring of~$F$. We have~$\ZZ_* = \ZZ_p$ if the field~$F$ has positive
  characteristic~$p$, otherwise~$\ZZ_* = \ZZ$. The resulting duality~$\inner{V}{V}$ will be called
  an \emph{abstract vector duality}.
\end{myexample}

Using standard characters, nondegeneracy may be viewed equivalently as a property of bilinear forms
or their bicharacters. In the sequel, we may therefore identify nondegenerate bilinear forms with
their associated bicharacters.

\begin{lemma}
  \label{lem:bichar-nondeg}
  For a bilinear form $\funcinner{}{}\colon V \times V \to F$ on a vector space,
  let~$\inner{}{}\colon V \times V \to T$ be its associated bicharacter with respect to any fixed
  standard character~$\chi\colon (F, +) \to (T, \cdot)$. Then~$\inner{}{}$ is nondegenerate
  iff~$\funcinner{}{}$ is nondegenerate.
\end{lemma}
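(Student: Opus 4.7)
The plan is to split the biconditional into two implications. The easy direction---that a degenerate $\funcinner{}{}$ forces the bicharacter $\inner{}{} = \chi\funcinner{}{}$ to be degenerate---has already been observed in the text: a nonzero $x \in V$ with $\funcinner{\blnk}{x} \equiv 0$ gives $\inner{\blnk}{x} = \chi(0) = 1$, and symmetrically on the first argument. I would expand this to a sentence or two and then focus on the converse.

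For the substantive direction I would argue by contrapositive. Suppose $\inner{}{}$ is degenerate; by the symmetry between the two arguments, it suffices to take some nonzero $x \in V$ with $\inner{\xi}{x} = 1$ for every~$\xi \in V$. Translated through~$\chi$, this says the map $\phi_x\colon V \to F$, $\xi \mapsto \funcinner{\xi}{x}$, takes all of its values in $\ker\chi = \ZZ_*$. The goal is to conclude $\phi_x \equiv 0$, for then $x$ witnesses the degeneracy of $\funcinner{}{}$.

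The key structural observation is that $\phi_x$ is $F$-linear, so its image is an $F$-subspace of the one-dimensional $F$-space $F$, and hence either $\{0\}$ or all of~$F$. This is where the hypothesis ``standard character'' earns its keep, and ruling out the second alternative is the main obstacle. If $\im(\phi_x) = F$ held, then $F \subseteq \ZZ_*$ and therefore $F = \ZZ_*$: in characteristic zero this reads $F = \ZZ$, contradicting that $F$ is a field; in characteristic $p$ it gives $F = \ZZ_p$ and hence $\ker\chi = F$, contradicting that $\chi$ is an epimorphism onto the (nontrivial) torus~$T$. Consequently $\im(\phi_x) = \{0\}$, i.e.\ $\funcinner{\xi}{x} = 0$ for every $\xi \in V$, and since $x \neq 0$ this exhibits $\funcinner{}{}$ as degenerate. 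The rest of the proof is pure bookkeeping---the heart of the argument is the interaction between the $F$-linearity of $\phi_x$ and the minimality of the kernel of a standard character.
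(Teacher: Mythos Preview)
The paper does not give its own argument here---it simply defers to the companion paper---so there is no in-paper proof to compare against. Your approach is the natural one and is sound: the $F$-linearity of $\phi_x = \funcinner{\blnk}{x}$ forces $\im\phi_x \in \{0,F\}$, and since $\im\phi_x \subseteq \ker\chi = \ZZ_*$, the alternative $\im\phi_x = F$ would force $F = \ZZ_*$, which in characteristic zero is impossible because $\ZZ$ is not a field.

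One wrinkle in your positive-characteristic branch deserves comment. Having reached $F = \ZZ_p$, you appeal to $\chi$ being an epimorphism onto a \emph{nontrivial} torus $T$; but the paper's definition of standard character (an epimorphism with $\ker\chi = \ZZ_*$) does not by itself rule out $T=\{1\}$. Indeed, if $F=\ZZ_p$ then the only standard character is the trivial map, and the lemma as literally stated fails: the dot product on $\ZZ_p^n$ is nondegenerate while the induced bicharacter is identically $1$. So your parenthetical ``(nontrivial)'' is precisely the tacit hypothesis the statement needs. It would be cleanest to record the standing assumption $\ZZ_* \subsetneq F$ (equivalently, $T$ nontrivial); then the characteristic-$p$ contradiction is simply $F = \ZZ_p = \ZZ_* \subsetneq F$, and the rest of your argument goes through unchanged.
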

\begin{proof}
  See Lemma~\heiscite{lem:bichar-nondeg}.
\end{proof}

\begin{myexample}
  \label{ex:classical-vector-group}
  Typically, the bilinear form on~$V=F^n$ is the standard inner
  product~$\inner{\xi}{x} = \xi \cdot x = \xi_1 x_1 + \cdots + \xi_n
  x_n$, yielding the \emph{$n$-dimensional vector duality}
  over~$F$. Since the inner product is also equivalent to the natural
  paring~$F^n \times F_n \to F$, one may use the dot notation for both.

  The most important special case of Example~\ref{ex:vecgrp} is given
  by the complex field~$K = \CC$ with the classical torus~$T = \Tor$
  and vector spaces~$V = \RR^n$ having their canonical Euclidean
  structure. Via the standard character $\chi(t) = e^{i\tau t}$, one
  obtains the bicharacter~$\inner{\xi}{x} = e^{i \tau x\cdot\xi}$. We
  shall refer to this example as the $n$-dimensional \emph{standard
    vector duality}~$\inner{\RR^n}{\RR^n}$, which may also be
  presented as~$\inner{\RR_n}{\RR^n}$ under the natural pairing.
\end{myexample}

\begin{myexample}
  \label{ex:modular-vector-group}
  If~$V = F^n$ is a vector space over a Galois field~$F = \GF(q)$
  with~$q = p^m \; (m \in \NN)$ elements, we may again use the usual
  dot product as a bilinear form.  Using the multiplicative cyclic
  group~$\langle\zeta_p\rangle \subset \QQ(\zeta_p)$ generated by a
  $p$-th unit root~$\zeta_p$ as in the proof of
  Lemma~\ref{lem:bichar-nondeg}, one gets a standard
  character~$\chi_a\colon \GF(q) \to \langle\zeta_p\rangle$,
  $c \mapsto \zeta_p^{\tr ac}$, for arbitrary $a \in \nnz{\ZZ_p}$ and
  trace map~$\tr\colon \GF(q) \to \ZZ_p$,
  $c \mapsto c + c^p + \cdots + \smash{c^{p^{m-1}}}$; for details
  regarding characters on Galois fields
  see~\cite{BensonRatcliff2008}. With the induced
  duality~$\inner{\xi}{x} = \chi_a(x \cdot \xi)$, we call this example
  the \emph{modular vector duality}~$\inner{\GF(q)^n}{\GF(q)^n}$.
\end{myexample}

As is well known, the torus duality gives rise to Fourier series (see
Example~\ref{ex:pont-doublet}\ref{it:four-ser}) and the vector duality to Fourier integrals (see
Example~\ref{ex:pont-doublet}\ref{it:four-int}). There is another well-known breed of Fourier
transforms, known as the \emph{discrete Fourier transform} (see
Example~\ref{ex:pont-doublet}\ref{it:four-dft}); it is based on the following important duality.

\begin{myexample}
  \label{ex:finite-group}
  Given any $N \in \NN$, let us introduce two concrete realizations of the \emph{cyclic group} of
  order~$N$. The first is the common representation~$\ZZ_N := \{ 0, 1, \dots, N-1 \}$ with addition
  modulo~$N$, the second
  is~$\Tor_N := N^{-1} \, \ZZ_N = \{ 0, \tfrac{1}{N}, \dots, \tfrac{N-1}{N} \}$ with addition
  modulo~$1$. The duality~$\Tor_N^n \times \ZZ_N^n \to \Tor$ is now defined
  by~$\inner{\xi}{x} := e^{i \tau x \cdot \xi}$.  This can be obtained from the torus duality in two
  steps: First one restricts to the bilinear form~$\inner{}{}\colon\Tor_N^n \times \ZZ^n \to \Tor$
  via the embedding~$\Tor_N^n \hookrightarrow \Tor^n, \xi_i \mapsto e^{i\tau \xi_i}$. Then one
  applies the usual universal construction~\cite[\S I.9]{Lang2002} for making bilinear forms into
  dualities by taking the quotient on the right modulo the right kernel~$(N\ZZ)^n$; nothing is
  needed on the left since the left kernel is trivial. The resulting duality will be called the
  \emph{cyclic duality}~$\inner{\Tor_N^n}{\,\ZZ_N^n}$.

  Of course, one may also form the \emph{conjugate cyclic duality}~$\inner{\ZZ_N^n}{\Tor_N^n}$. But
  unlike their infinite relatives, the dualities~$\inner{\Tor_N^n}{\,\ZZ_N^n}$
  and~$\inner{\ZZ_N^n}{\Tor_N^n}$ are not only similar but actually the \emph{same} (i.e.\@
  isomorphic): Since the cyclic groups~$\Tor_N$ and~$\ZZ_N$ are the same abstract group~$\ZZ/N$,
  both are one and the same duality~$\inner{(\ZZ/N)^n}{\,(\ZZ/N)^n}$,
  given~\cite[Thm.~4.5d]{Folland1994} by
  \begin{equation}
    \label{eq:finite-duality}
    \inner{k + N \ZZ^n}{\,l + N \ZZ^n} = e^{i\tau (k \cdot l)/N}
  \end{equation}
  for~$k, l \in \ZZ^n$. Nevertheless, it can be worthwhile to
  distinguish the two realizations of this duality as we shall see
  when introducing the corresponding Fourier operators in
  Example~\ref{ex:pont-doublet}\ref{it:four-dft}.

  The different nature of~$\Tor_N^n$ and~$\ZZ_N^n$ can also be seen
  in the context of \emph{normalizing Haar measure}~$\mu$. While such
  a choice is \emph{per se} immaterial, it must be consistent between
  the position and momentum group for the inversion theorem to
  hold~\cite[\S1.5.1]{Rudin2017}. For compact groups~$G$, the
  canonical choice is to set~$\mu(G) = 1$, while for discrete
  groups~$G$ one sets~$\mu(\{x\}) = 1$ for all points~$x \in G$. But
  since~$(\ZZ/N)^n$ happens to be both discrete and compact, one must
  decide whether to impose the discrete or the compact normalization
  on the position group~$(\ZZ/N)^n$, so that the other normalization is
  then conferred onto its dual, which is again~$(\ZZ/N)^n$. From the
  above construction, it is clear that the natural choice is to
  endow~$\Tor_N^n$ with the compact and~$\ZZ_N^n$ with the discrete
  normalization. In other words, we have~$\mu(\Tor_N^n) = 1$
  and~$\mu(\ZZ_N^n) = N^n$, thus also~$\mu(\{x\}) = 1/N^n$
  for~$x \in \Tor_N^n$ but~$\mu(\{x\}) = 1$ for~$x \in \ZZ_N^n$.
\end{myexample}

\begin{myexample}
  \label{ex:nich-duality}
  Following the nice approach layed out in~\cite{Nicholson1971}, one
  can generalize the cyclic duality from the group~$\ZZ_N$ to an
  arbitrary finite group~$G$. If the latter has order~$n$ and
  exponent~$m$, one may take for the torus any integral domain~$R$
  that contains~$1/n$ and a primitive $m$-th root of unity. These two
  conditions are shown to be sufficient and necessary
  for~$R[G] \cong R^n$, which will be relevant for the corresponding
  Fourier transform sketched in Example~\ref{ex:four-nich} below. Note
  that one may always take~$R = \Tor$, as is done for the cyclic
  duality.

  Writing $R_*$ for the multiplicative group of~$R$, the above
  conditions imply that~$|\hat{G}| = n$ for the ``dual
  group''~$\hat{G} := \Hom(G, R_*)$ and that the bilinear
  form~$\nu\colon \hat{G} \times G \to R_*$
  with~$\inner{\xi}{x}_\nu := \xi(x)$ is a duality;
  confer~\cite[(3.10)]{Nicholson1971}. We will refer to~$\nu$ as the
  \emph{Nicholson duality} of the group~$G$ over the ring~$R$.
\end{myexample}

Before concluding with the last crucial class of examples for
Heisenberg groups, let us briefly mention a rather degenerate example
that is nevertheless occasionally useful when studying
(counter)examples of various properties that may be ascribed to
Heisenberg groups.

\begin{myexample}
  \label{ex:sym-duality}
  If~$R$ is a ring, its multiplication map may be viewed as a bilinear
  map~$\cdot\colon R_+ \times R_+ \to R_+$, where~$R_+$ is the
  additive group of~$R$. As a bilinear form,
  $\cdot\colon R_+ \times R_+ \to R_+$ is degenerate
  iff~$\Ann_R(R) \neq 0$. In that case we call~$\inner{R_+}{R_+}$ the
  \emph{symmetric duality} of the ring~$R$, with associated Heisenberg
  group~$H(R) := R_+ R_+ \rtimes R_+$. If one is willing to generalize
  the setting of Example~\ref{ex:vecgrp} from $F$-vector spaces to
  $R$-modules, the symmetric duality of~$R$ is the ``vector duality''
  on the one-dimensional module~$R^1$ under the trivial (non-standard)
  character $\chi\colon R_+ \to R_+, x \mapsto x$.
\end{myexample}

The one-dimensional cyclic duality~$\Tor_N \times \ZZ_N \to \Tor$ is the multiplication
map~$\ZZ_N \times \ZZ_N \to \ZZ_N$, transported from the additive value
group~$\ZZ_N \cong N^{-1} \ZZ_N = \{ 0, \frac{1}{N}, \dots, \frac{N-1}{N} \} \subset \RR$ into the
multiplicative group~$\Tor$ via the standard
character~$\chi\colon \RR \to \Tor, t \mapsto e^{i\tau t}$ of
Example~\ref{ex:classical-vector-group}. It is interesting to see what one gets for the plain
multiplication map~$\ZZ_N \times \ZZ_N \to \ZZ_N$. In the following example, we explore this
question for the special case~$N=2$.

\begin{myexample}
  \label{ex:dihedral-group}
  So we appply the construction~$R_+ R_+ \rtimes R_+$ to the
  ring $R = \ZZ_2$, corresponding to the
  duality~$\beta\colon \ZZ_2 \times \ZZ_2 \to \ZZ_2$ defined
  by $\beta(m,n) = mn$.  True to our conventions, we write the
  torus multiplicatively via
  $\ZZ_2 \isomarrow \ZZ^\times, [c] \mapsto (-1)^c$,
  meaning~$[0] \leftrightarrow +1$, $[1] \leftrightarrow -1$. As
  usual, we often express the values~$\pm 1$ just by the sign. The
  duality is then given by~$\beta(m,n) = (-1)^{mn}$. We will show
  that~$H(\beta)$ is the dihedral group~$D_4$, the symmetry group of
  the square (which we assume centered in the origin with
  axis-paraellel sides). If~$t$ denotes the counter-clockwise
  $90^\circ$ turn and~$r$ the reflection in the vertical axis, we
  obtain the presentation
  \begin{equation*}
    D_4 = \langle t, r \mid t^4 = r^2 = 1, rt = t^3 r \rangle
    = \{ 1, t, t^2, t^3, r, tr, t^2r, t^3r \},
  \end{equation*}
  Where~$tr, t^2r, t^3r$ may be respectively interpreted as reflections in the
  anti-diagonal~$x+y=0$, horizontal~$y=0$, and diagonal~$x-y=0$. We choose~$Z(D_4) = \{ 1, t^2 \}$
  as our torus~$T = \ZZ_2$, which enforces the identification~$1 \leftrightarrow +1$,
  $t^2 \leftrightarrow -1$. For the position group~$G = \ZZ_2$ we take~$\{ 1, r \}$, leading to the
  identification~$1 \leftrightarrow [0]$, $r \leftrightarrow [1]$; for the momentum
  group~$\Gamma = \ZZ_2$ we use~$\{ 1, tr \}$ with identification~$1 \leftrightarrow [0]$,
  $tr \leftrightarrow [1]$. In these terms, the
  duality~$\beta\colon \ZZ_2 \times \ZZ_2 \to \ZZ_2, (m,n) \mapsto mn$ sends~$(r, tr)$ to~$-1$ and
  all other pairs to~$+1$.

  \medskip
  \noindent\begin{tabular}[h]{||l|l||l|l||}
    \hline
    $D_4$ & $H(\beta)$ & $D_4$ & $H(\beta)$\\\hline
    $1$ & $+00$ & $r$ & $+10$\\
    $t$ & $-11$ & $tr$ & $+01$\\
    $t^2$ & $-00$ & $t^2r$ & $-10$\\
    $t^3$ & $+11$ & $t^3r$ & $-01$\\\hline
  \end{tabular}
  \medskip

  \noindent For defining a group isomorphism~$D_4 \isomarrow H(\beta)$, we construct first the
  unique homomorphism on the free group with~$1 \mapsto +00$, $t \mapsto -11$, $r \mapsto +10$. As
  one sees immediately, this homomorphism annihilates the relators~$t^4, r^2, trtr$ and thus yields
  a homomorphism~$\iota\colon D_4 \to H(\beta)$. Computing all other elements in terms of the
  generators~$t$ and~$r$, one will verify that~$\iota$ is given as in the table above. Since this is
  obviously a bijection, it provides us with the required isomorphism~$D_4 \isomarrow H(\beta)$.
\end{myexample}

For the standard vector duality~$\inner{\RR^n}{\RR^n}$ of
Example~\ref{ex:classical-vector-group}, the link to the matrix
group~\eqref{eq:heis-matrix} can be made more precise by reframing
the Heisenberg group in terms of a \emph{symplectic vector
  space}~$V$. This is the formulation commonly used in more advanced
treatments of the Heisenberg group~\cite[\S5.1]{BinzPods2008},
\cite{Folland2016}, \cite{LibermannMarle2012}.

\begin{myexample}
  \label{ex:heisgrp-sympl}
  As we have seen above (Lemma~\ref{lem:sympl-heisgrp}), all Heisenberg extensions associated with a
  fixed commutator form can be represented by the so-called symplectic Heisenberg group, provided
  the torus is uniquely $2$-divisible. The most important instance arises for a finite-dimensional
  symplectic vector space~$(Z, \Omega)$ over a field~$F$. Here one takes the additive group~$(F, +)$
  as torus and~$\Omega\colon Z \times Z \to F$ as commutator form to yield the \emph{symplectic
    Heisenberg group}~$[H]_{\Omega/2}$ via the nilquadratic
  extension~$F \rightarrowtail [H]_{\Omega/2} \twoheadrightarrow Z$. Note that we write
  here~$\Omega/2$ for the ``square root'' of the commutator form since the torus is written
  additively.  Taking~$Z = \RR^{2n}$ with its canonical symplectic form, this agrees
  with~\cite[p.~19]{Folland2016}, where~$[H]_\Omega$ is written~$\mathbf{H}_n$. Dissociated from the
  underyling symplectic structure, the factor~$1/2$ in~$[H]_{\Omega/2}$ may look arbitrary (it can
  obviously be eliminated by rescaling), but it also ensures that exponentiation yields the
  canonical commutators of Hamiltonian mechanics~\cite[(1.15)]{Folland2016}.

  We have already mentioned earlier that the symplectic Heisenberg
  group is not a Heisenberg group \emph{stricto sensu} (as per
  Definition~\ref{def:abs-heisgrp}). But it is closely related to a
  whole bunch of them via \emph{polarization}. For a symplectic vector
  space~$(Z, \Omega)$, a polarization by itself is just a choice of
  Lagrangian subspace~$G \le Z$, so that a Lagrangian
  splitting~$(G, \Gamma)$ of~$Z$ could be viewed as a ``duplex
  polarization'' (noting that any Lagrangian bisection is a splitting
  for vector spaces). The polarizations~$G$ make up the so-called
  Lagrange-Grassmann manifold, where each~$G$ has plenty of Lagrangian
  complements~$\Gamma$;
  see~\cite[Prop.~A6.1.6]{LibermannMarle2012}. Any choice of
  Lagrangian splitting~$(G, \Gamma)$ induces a natural symplectic
  isomorphism~$\iota_{G,\Gamma}\colon Z \isomarrow G \oplus G^*$
  fixing~$G$ and sending~$\xi \in \Gamma$ to~$\Omega(\xi, -) \in G^*$;
  here injectivity follows from the Lagrangian nature of~$G$ while
  surjectivity needs the finite dimensionality of~$Z$.
  Here~$G \oplus G^*$ has the \emph{canonical symplectic structure}
  given by
  $\Omega_G(x, \xi; \tilde x, \tilde\xi) = \funcinner{\xi}{\tilde x} -
  \funcinner{\tilde\xi}{x}$, where~$\funcinner{-}{-}$ denotes the
  natural pairing on~$G^* \times G$. Via~$\iota_{G,\Gamma}$,
  symplectic bases of~$Z$ are in bijective correspondence with bases
  of~$G$.

  As mentioned after Definition~\ref{def:assoc-heisgrp}, each Lagrangian splitting~$(G, \Gamma)$
  of~$Z$ yields a unique \emph{Heisenberg group}~$H(\beta_\Omega)$; its associated bilinear
  form~$\beta_\Omega\colon \Gamma \times G \to F$ is given by restriction
  of~$\Omega\colon Z \times Z \to F$. In the standard case $Z = \RR^{2n} = \RR^n \oplus \RR^n$ one
  has~$\beta_\Omega(\xi, x) = \xi \cdot x$, and the Heisenberg group~$H(\beta_\Omega)$ agrees
  with~$\mathbf{H}_n^{\mathrm{pol}}$ in~\cite[p.~19]{Folland2016}. Hence the matrix multiplication
  law~\eqref{eq:heis-matrix} applies literally, yielding the familiar matrix Lie group. By
  Lemma~\ref{lem:sympl-heisgrp}, it is iso\-morphic to the symplectic Heisenberg group via the
  polarization map $\mathfrak{P}_{G,\Gamma}\colon [H]_\Omega \isomarrow H(\beta_\Omega)$
  with~$(\lambda; x, \xi) \mapsto (\lambda + \xi \cdot x/2) \, (x, \xi)$.

  If a standard character~$\chi\colon (F, +) \to (T, \cdot)$ is
  available, one can construct yet another version of
  both~$[H]_\Omega$ and~$H(\beta_\Omega)$ by pushing forward the
  $F$-valued commutator form~$\Omega$ to its $T$-valued
  cousin~$\omega := \chi \circ \Omega$. Even if~$T$ is not (uniquely)
  $2$-divisible, the commutator form~$\omega$ has a \emph{standard
    square root},
  namely~$\sqrt{\omega} = \chi \circ \tfrac{1}{2} \circ \Omega$,
  where~$\tfrac{1}{2}\colon F \to F$ is division by two (so the
  torus~$T$ is supposed to have characteristic different from~$2$).
  In the classical setting~$F = \CC$, the
  torus~$T = \Tor \subset \CC$ is $2$-divisible but not uniquely so;
  one may here use the standard character $\chi(t) = e^{i\tau t}$,
  which is tantamount to choosing the principal branch of the
  multivalued square-root function.

  In fact, the strategy of forging square roots from the halving
  isomorphism~$\tfrac{1}{2}\colon F \to F$ may be generalized as follows
  by~\heiscite{prop:skewing-section-2div}. If one has an (additively written) uniquely $2$-divisible
  torus~$(\hat{T}, +)$ above the torus~$(T, \cdot)$ via~$\chi\colon \hat{T} \twoheadrightarrow T$,
  the pushforward $\chi_*\colon \Omega^2(P,\hat{T}) \to \Omega^2(P,T)$ induces a \emph{section of
    the skewing map~$q$} in the short exact sequence~\eqref{eq:H2-seq}
  over~$\Omega^2_{\chi}(P,T) := \im(\chi_*) \le \Omega^2(P,T)$, namely the
  map~$\chi_*(\Omega) \mapsto \chi_*(\Omega/2)$. Lemma~\ref{lem:sympl-heisgrp} can be generalized to
  yield an isomorphism $[H]_\omega \isomarrow H(\beta)$ for~$\omega \in \Omega^2_{\chi}(P,T)$.

  Back to vector spaces, we can now construct the \emph{little
    symplectic Heisenberg
    group}~$[H]_\omega = T \times_{\sqrt{\omega}} Z$, along with the
  corresponding \emph{little Heisenberg group}~$H(\beta_\omega)$. The
  latter has the bilinear form~$\beta_\omega = \chi \circ \Omega$,
  which reads~$\inner{x}{\xi} = \chi \funcinner{x}{\xi}$ in our usual
  bracket notation. In terms of the
  projections~$\pi_\Gamma\colon Z \to \Gamma$
  and~$\pi_G\colon Z \to G$, we
  have~$\beta_\omega = \omega \circ (\pi_\Gamma \times \pi_G)$ as well
  as~$\beta_\Omega = \Omega \circ (\pi_\Gamma \times \pi_G)$ for the
  big symplectic Heisenberg group
  $[H]_\Omega = F \times_{\Omega/2} Z$. From the generalized
  Lemma~\ref{lem:sympl-heisgrp} we obtain the little polarization
  map~$\mathfrak{p}_{G,\Gamma}\colon [H]_\omega \isomarrow
  H(\beta_\omega)$ with
  $\mathfrak{p}_{G,\Gamma}(c; x, \xi) = c \,
  \sqrt{\inner{\xi}{x}}$. The canonical
  epimorphism~$\pi_Z := \chi \times 1_Z\colon [H]_\Omega
  \twoheadrightarrow [H]_\omega$ has kernel
  $\ker(\chi) \times 0 \cong \ZZ_*$, so the little symplectic
  Heisenberg group is the big one modulo the prime ring; a similar
  statement holds for the Heisenberg groups via the
  (set-theoretically) same
  map~$\pi_Z\colon H(\beta_\Omega) \to H(\beta_\omega)$.
  \begin{figure}[h]
    \label{fig:heisgrp-sympl}
    \begin{equation*}
      \xymatrix @M=0.5pc @R=1pc @C=2pc%
      {   [H]_\Omega \ar[r]^{\mathfrak{P}_{G,\Gamma}} \ar[d]_{\pi_Z} & H(\beta_\Omega) \ar[d]^{\pi_Z}\\
        [H]_\omega \ar[r]^{\mathfrak{p}_{G,\Gamma}} & H(\beta_\omega) }
    \end{equation*}
    \caption{Symplectic Heisenberg Group}
    \label{fig:syml-heisgrp}
  \end{figure}

  The ``Heisenberg group'' given
  in~\cite[Exc.~5.1-4]{AbrahamMarsdenRatiu1983} is
  essentially~$[H]_\omega$ for the symplectic space~$Z = \RR^{2n}$
  with standard character~$\chi\colon \RR \to \Tor$ as above, except
  that they shun the factor~$1/2$ so effectively rescale~$\Omega$ by
  the factor~$2$. As a consequence of this, they incur the factor~$2$
  in the canonical
  commutator~\cite[Exc.~5.1-4b]{AbrahamMarsdenRatiu1983}.

  We have given references for all but one of the Heisenberg flavors
  in Figure~\ref{fig:syml-heisgrp}. The remaining
  group~$H(\beta_\omega)$ coincides with the Heisenberg group of the
  abstract vector duality~$\inner{V}{V}$ for~$V = G = \Gamma$ and the
  chosen standard character~$\chi\colon F \to T$, where as
  before~$\funcinner{\xi}{x} = \beta_\Omega(\xi, x)$
  with~$\beta_\Omega = \Omega \circ (\pi_\Gamma \times \pi_G)$. The
  general case of vector dualities is treated in
  Example~\ref{ex:vecgrp}. In the important special case~$V = \RR^n$,
  this yields the standard vector duality~$\inner{\RR^n}{\RR^n}$ or
  isomorphically~$\inner{\Gamma}{G} = \inner{\RR_n}{\RR^n}$; see
  Example~\ref{ex:classical-vector-group}. It is this case which leads
  to the famous classical Fourier integral, as we shall see
  in~Example~\ref{ex:pont-doublet}\ref{it:four-int} below.
\end{myexample}

\begin{myremark}
  \label{rem:physint-heisgrp}
  Let us add a few remarks on the \emph{physical interpretation} of
  Example~\ref{ex:heisgrp-sympl}. The symplectic vector space~$T^*V = V \oplus V^*$ is nothing but
  the Hamiltonian phase space~\cite[\S1.1]{MarsdenRatiu1994}, whereas the abstract vector
  duality~$\inner{V}{V}$ of Example~\ref{ex:vecgrp} is linked to the Lagrangian phase
  space~$TV = V \oplus V$. In both cases, the elements of~$V$ denote positions while the
  tangent/cotangent vectors are the \emph{velocities/momenta}. This generalizes to the nonlinear
  case where the configuration space is a manifold~$M$ rather than a vector space~$V$, with
  Lagrangian phase space~$TM$ and Hamiltonian phase space~$T^*M$.

  The Hamiltonian case is important since it leads to
  \emph{quantization} via replacing the commutative algebra of
  classical observables~$C^\infty(T^*V)$ by the noncommutative
  algebra~$\mathcal{H}\big(L^2(V)\big)$ of self-adjoint Hermitian
  operators on the Hilbert space~$L^2(V)$. The observables position
  and momentum are quantized~\cite[\S3.5]{Hall2013} as position
  operator~$f(x) \mapsto x \, f(x)$ and canonically conjugate momentum
  operator~$f(x) \mapsto \der f/\der x$). This is intimately linked to
  the unitary irrep (= irreducible representation) of the Heisenberg
  group (see Remark~\ref{rem:pmech} below).
\end{myremark}

In closing this short investigation of Heisenberg groups, let us corroborate our earlier claim about
the \emph{failure of existence/uniqueness of abelian splittings} for general nilquadratic
groups. Regarding existence, we refer to Example~\heiscite{ex:free-nil}, which makes it clear that
the free nilquadratic group~$N_3$ fails to have an abelian splitting.

\begin{myexample}
  \label{ex:fail-ex-uniq}
  For seeing that one and the same nilquadratic group may be equipped with \emph{different abelian
    splittings}, take the standard vector duality~$\beta = \inner{\RR^1}{\RR^1}$ of
  Example~\ref{ex:classical-vector-group}. The corresponding Heisenberg
  group~$H(\beta) = \Tor \RR \rtimes \RR$ with the classical torus~$\Tor \subset \nnz{\CC}$ and
  phase space~$\RR^2$ comes endowed with the standard Lagrangian
  splitting~$(\RR \times 0, 0 \times \RR)$. But it is easy to see that any other direct
  decomposition~$G \dotplus \Gamma = \RR^2$ with one-dimensional subspaces~$G, \Gamma \le \RR^2$
  also yields a Lagrangian splitting~$(G, \Gamma)$, which is however distinct from the standard
  one. Since Lagrangian splittings of~$P$ are in bijective correspondence with abelian splittings
  of~$H$ by Theorem~\heiscite{thm:sympl-corr}, this establishes that the former are not uniquely
  determined for the given Heisenberg
  extension~$\Tor \oset{\iota}{\rightarrowtail} H \oset{\pi}{\twoheadrightarrow} \RR^2$.
\end{myexample}

\subsection{The Category of Heisenberg Algebras.}\label{sub:cat-heisalg}
Analysis deals with Fourier operators on real or complex function spaces, which are endowed with
certain \emph{Heisenberg actions} (which we define here as linear representations of Heisenberg
groups). Our task in this subsection is to capture this idea in a suitable algebraic setting.

A Heisenberg group~$H(\beta)$ typically comes with an action on some function space on which the
torus~$T \le H(\beta)$ ``acts naturally via scalars'' (see Example~\ref{ex:pont-doublet} below for
classical cases). For making this precise, we view the function spaces as modules or algebras over a
fixed scalar ring~$K$, and the latter equipped with a \emph{torus
  action}~$\ast\colon T \to \Aut_K(K)$. Thus we have the action laws $1 \ast \lambda = \lambda$ and
$(cd) \ast \lambda = c \ast (d \ast \lambda)$ as well as
linearity~$c \ast (\lambda + \mu) = c \ast \lambda + c \ast \mu$ and
$c \ast (\lambda \mu) = (c \ast \lambda) \mu = \lambda (c \ast \mu)$, for all~$c, d \in T$
and~$\lambda, \mu \in K$. Equivalently, the torus action~$\ast$ may be described by the
map~$\epsilon_T\colon (T, \cdot) \to (\nnz{K}, \cdot)$ with~$\epsilon_T(c) := c \ast 1_K$ since we
have~$c \ast \lambda = \epsilon_T(c) \, \lambda$. In the sequel, we shall refer to both~$\ast$
and~$\epsilon_T$ as a torus action.

Any $K$-module~$S$ is naturally a $T$-module since,
if~$\Delta\colon K \to \End_\ZZ(S)$ is the structure map of~$S$ with
induced action~$\nnz{\Delta}\colon \nnz{K} \to \Aut_K(S)$, we obtain
an action~$\Delta_T$ of~$T$ on the abelian group~$(S, +)$
by~$\Delta_T = \nnz{\Delta} \circ \epsilon_T$. We can now give a
precise meaning to the informal phrase used in the previous paragraph:
If~$\eta\colon H(\beta) \to \Aut_K(S)$ is any $K$-linear action, we
say the \emph{torus acts naturally via~$\epsilon_T$} if~$\eta$ factors
through~$\Delta_T$. In other words, we require the following diagram
to commute:

\begin{equation*}
  \xymatrix @M=0.75pc @H=1pc @R=1pc @C=2pc%
  { T \ar[r]^{\epsilon_T} \ar@{^{(}->}@<-0.5ex>[d] & \nnz{K} \ar[d]^-{\nnz{\Delta}}\\
    H(\beta) \ar[r]_{\eta} & \Aut_K(S) }
\end{equation*}

If we have a Heisenberg action on a $K$-algebra~$(S, +, \diamond)$, we shall always designate the
action of~$u \in H(\beta)$ on an element~$s \in S$ by~$u \act s$, thus avoiding confusion with the
pointwise product~$\cdot$ to be introcued later. In relation to Fourier structures, we encounter two
crucially distinct flavors of elements in~$H(\beta)$ in relation to the multiplicative
structure\footnote{Later on, we will consider mainly two multiplicative structures given defined by
  a Pontryagin duality---the convolution~$\star$ and the pointwise product~$\cdot$ just mentioned.}
of~$S$:
\begin{itemize}
\item We call~$u \in H(\beta)$ a \emph{Heisenberg scalar}
  if~$u \act (s \diamond \tilde{s}) = (u \act s) \diamond \tilde{s}$ for
  all~$s, \tilde{s} \in S$.
\item We call~$u \in H(\beta)$ a \emph{Heisenberg operator}
  if~$u \act (s \diamond \tilde{s}) = (u \act s) \diamond (u \act \tilde{s})$ for
  all~$s, \tilde{s} \in S$.
\end{itemize}
When the torus acts naturally via~$\epsilon_T$, the torus elements are
clearly Heisenberg scalars.

We are now in a position to give a concise algebraic description of
the function spaces underlying Fourier transforms. Following the
terminology of~\cite{Walters2004} and~\cite{Rieffel1988}, we call such
spaces \emph{Heisenberg modules} since they are modules over some
Heisenberg group~$H(\beta)$. In a natural---though less
conventional---extension, we shall speak of a \emph{Heisenberg
  algebra} if the module is additionally equipped with a compatible
multiplication.\footnote{This should not be confused with the
  classical Lie algebra of~$H_n(\RR)$,
  properly~\cite[p.~18]{Folland2016} called ``Heisenberg Lie
  algebra'', but occasionally~\cite[p.~18, 21]{Folland2016} shortened
  to ``Heisenberg algebra''.}

\begin{definition}
  \label{def:heis-alg}
  Let~$\beta\colon \Gamma \times G \to T$ be a duality, and let~$K$ be
  a ring with torus action~$\epsilon_T$.
  \begin{itemize}
  \item A \emph{Heisenberg module} over~$\beta$ is a $K$-module~$S$
    with a linear action~$H(\beta) \times S \to S$ where the torus
    acts naturally via~$\epsilon_T$.
  \item A \emph{Heisenberg algebra} over~$\beta$ is a $K$-algebra and a Heisenberg module where all
    elements of $TG \le H(\beta)$ are Heisenberg scalars while all elements of $\Gamma \le H(\beta)$
    are Heisenberg operators.
  \end{itemize}
  A \emph{Heisenberg morphism} from a Heisenberg module/algebra~$S$
  over~$\beta$ to another Heisenberg module/algebra $S'$ over~$\beta$
  is defined to be an equivariant $K$-module/algebra
  homomorphism~$S \to S'$.
\end{definition}

For avoiding tedious repetitions, we shall from now on regard
\emph{Heisenberg modules as degenerate Heisenberg algebras}, in the
sense of having trivial multiplication. We extend also the
scalar/operator terminology to Heisenberg groups~$H(\beta)$ per se,
regardless of any Heisenbeg algebras on which they might act (dropping
the qualification ``Heisenberg'' where it is obvious): Elements
of~$TG \le H(\beta)$ will be called \emph{scalars}, elements
of~$\Gamma \le H(\beta)$ \emph{operators}, and generic elements
\emph{actors}. (This will not lead to any confusion since we shall
never encounter algebras that are Heisenberg modules but not
Heisenberg algebras.)

Let us spell out the definition of Heisenberg algebras in more detail,
splitting the Heisenberg action into the three
actions~$G \times S \to S$ and $\Gamma \times S \to S$ and
$T \times S \to S$, while writing multiplication in the algebra as
juxtaposition. In addition to the $K$-algebra axioms, for
arbitrary~$s, s' \in S$ and~$c \in T$ and~$x, x' \in G$
and~$\xi, \xi' \in \Gamma$, we require the \emph{following
  identities}:

\medskip
\begingroup
  \centering
  \renewcommand{\baselinestretch}{1.1}
  \def\leqcol#1{\begin{minipage}{0.49\textwidth}\begin{equation}#1\end{equation}\end{minipage}}
  \def\reqcol#1{\begin{minipage}{0.49\textwidth}\begin{equation}#1\end{equation}\end{minipage}}
  \def\beqcol#1{\multicolumn{2}{@{\hspace{0.255\textwidth}}p{.49\textwidth}}{%
      \begin{minipage}{0.49\textwidth}%
        \begin{equation}#1\end{equation}\end{minipage}}}
  \makeatletter\setbool{@fleqn}{true}\makeatother
  \begin{tabular}{@{\hspace{0.05\textwidth}}p{.46\textwidth}p{.52\textwidth}}
    \leqcol{1_G \act s = s\tag{H$_1$}\label{eq:left-unit}} &
    \reqcol{1_\Gamma \act s = s\tag{H$_2$}\label{eq:right-unit}}\\
    \leqcol{(xx') \act s = x \act (x' \act s)\tag{H$_3$}\label{eq:left-assoc}} &
    \reqcol{(\xi \xi') \act s = \xi \act (\xi' \act s)\tag{H$_4$}\label{eq:right-assoc}}\\
    \leqcol{c \act s = \epsilon_T(c) \, s\tag{H$_5$}\label{eq:torus-act}} &
    \reqcol{\xi \act (x \act s) = \inner{\xi}{x} \; x \act (\xi \act s)\tag{H$_6$}\label{eq:twisted-bimod}}\\
    \leqcol{x \act (ss') = (x \act s) s'\tag{H$_7$}\label{eq:left-scal}} &
    \reqcol{\xi \act (ss') = (\xi \act s) (\xi \act s')\tag{H$_8$}\label{eq:right-op}}\smallskip
  \end{tabular}
\endgroup
\bigskip

Dropping the last two axioms (and requiring~$S$ to be a $K$-module instead of a $K$-algebra), one
obtains a Heisenberg module instead of a Heisenberg algebra over~$\beta$. It is easy to check that
the above identities are \emph{equivalent} to the requirements of Definition~\ref{def:heis-alg}. For
example, the \emph{twist axiom}~\eqref{eq:twisted-bimod} is a consequence of the composition law
in~$H(\beta)$. The Heisenberg action may be decomposed into the ``scalar action'' of~$TG$ and the
``operator action'' of~$\Gamma$; taking this view, $S$ is a twisted bimodule
under~\eqref{eq:twisted-bimod}. On another view, Heisenberg modules over~$\beta$ are linear
representations of~$H(\beta)$, with Heisenberg morphisms as intertwiners.

At any rate, the \emph{category of Heisenberg modules} over~$\beta$ is denoted by~$\ModH\beta$, and
the \emph{category of Heisenberg algebras} by~$\AlgH\beta$. By our convention of regarding
Heisenberg modules as degnerate Heisenberg algebras, $\ModH{\beta}$ is a full subcategory
of~$\AlgH{\beta}$. The assignments $\beta \mapsto \ModH\beta$ and~$\beta \mapsto \AlgH\beta$ may be
seen as contravariant functors $\ModH{}\colon \Du \to \Cat$ and~$\AlgH{}\colon \Du \to \Cat$ in the
following way: If $(g \times \gamma, t)$ is a morphism between dualities
$\beta\colon G \times \Gamma \to T$ and $\beta'\colon G' \times \Gamma' \to T'$, the functor
$\AlgH{}(g \times \gamma, t)\colon \AlgH{\beta'} \to \AlgH\beta$ is defined as follows.
\begin{itemize}
\item On objects, it acts by sending $S' \in \AlgH{\beta'}$ to the
  twisted module~$S := S'[\sigma] \in \AlgH\beta$, with the twist map
  given
  by~$\sigma := H(g \times \gamma, t)\colon H(\beta) \to H(\beta')$.
\item On morphisms, the functor~$\AlgH{}(g \times \gamma, t)$ acts trivially: A
  morphism~$\phi'\colon S_1' \to S_2'$ in~$\AlgH{\beta'}$ stays the same since~$\phi=\phi'$ respects
  the $\sigma$-twisted action of~$H(\beta)$.
\end{itemize}
It is easy to see that $\AlgH{}\colon \Du \to \Cat$ is then indeed a contravariant functor, and the
same holds for~$\ModH{}\colon \Du \to \Cat$.

By the well-known Grothendieck construction~\cite[\S12.2]{BarrWells1995}, the functor
$\AlgH{}\colon \Du \to \Cat$ yields a \emph{split
  fibration}~$\pi\colon \AlgH{} \rtimes \Du \to \Du$, where the category~$\AlgH{} \rtimes \Du$ has
the objects~$(\beta, S)$ with~$\beta \in \Du$, $S \in \AlgH\beta$, and
morphisms~$(\phi, \Phi)\colon (\beta, S) \to (\beta', S')$ with~$\phi\colon \beta \to \beta'$
in~$\Du$ and $\Phi\colon S \to \AlgH{}(\phi) \, S'$ in~$\AlgH{\beta}$.
Writing~$\phi = (g \times \gamma, t)$ for suitable
morphisms~$g \times \gamma\colon G \times \Gamma \to G' \times \Gamma'$ and~$t\colon T \to T'$, this
requires the conditions~$\Phi(x \act s) = g(x) \act \Phi(s)$
and~$\Phi(\xi \act s) = \gamma(\xi) \act \Phi(s)$ as well
as~$\Phi(c \act s) = t(c) \act \Phi(s)$ for $s \in S$ and~$c \, (x,\xi) \in H(\beta)$. Composition
of morphisms is defined by
\begin{equation*}
  (\phi_2, \Phi_2) \circ (\phi_1, \Phi_1) = (\phi_2 \circ \phi_1, \AlgH{}(\phi_1) \, \Phi_2 \circ
  \Phi_1),
\end{equation*}
as usual for semidirect products. As a result, the \emph{fibered category}
$\AlgH{\blnk} := \AlgH{} \rtimes \Du$ of Heisenberg algebras is a disjoint union
\begin{equation*}
  \AlgH{\blnk} = \biguplus_{\beta \in \Du} \AlgH\beta
\end{equation*}
similar to the well-known fibration~$\mathbf{Alg}_{\blnk} = \biguplus\limits_{R \in \Rng} \Alg_R$.

\smallskip

As noted above, $\AlgH\beta$ includes~$\ModH\beta$. It is clear that the
category~$\ModH{\blnk} := \ModH{} \rtimes \Du$ of \emph{Heisenberg modules} has an analogous
fibration over~$\Du$, similar to the fibration of~$\Mod{}$ over~$\Rng$.

The category~$\AlgH\beta$ has \emph{products}, namely the direct product of $K$-algebras with
componentwise Heisenberg action. The commutative diagram for products carries over from~$\Alg_K$
to~$\AlgH\beta$ since the projection maps~$\pi_1\colon S_1\times S_2 \to S_1$
and~$\pi_2\colon S_1\times S_2 \to S_2$ are Heisenberg morphisms.

The category~$\AlgH\beta$ also has a \emph{tensor product}. Given Heisenberg
algebras~$S, S' \in \AlgH\beta$, we view them as $KG$-algebras and endow
$S \otimes_{KG} S' \in \Alg_{KG}$ with an action of~$H(\beta) = TG \rtimes \Gamma$ as follows: The
action of~$T$ is via~$\epsilon_T$, that of~$G$ via the $KG$-module structure, while Heisenberg
operators~$\xi \in \Gamma$ act via
\begin{equation*}
  \xi \act s \otimes s' := (\xi \act s) \otimes (\xi \act s') .
\end{equation*}
One checks immediately that we have in fact~$S \otimes_{KG} S' \in \AlgH\beta$. For brevity, we
refer to this Heisenberg algebra as $S \otimes S'$.

\begin{theorem}
  \label{thm:algh-monoidal}
  Let~$\beta$ be any duality. Then~$\AlgH\beta$ is a symmetric monoidal category.
\end{theorem}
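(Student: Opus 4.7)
The approach is to transport the symmetric monoidal structure on $\Alg_{KG}$ (under the relative tensor product $\otimes_{KG}$) to $\AlgH{\beta}$. The object-level bifunctor $\otimes$ has already been defined immediately above, so the work is threefold: verify that the operator action on $S \otimes_{KG} S'$ is well-defined and that $S \otimes S'$ satisfies the Heisenberg axioms; identify a unit object with its left and right unitors; and check that the associator and braiding inherited from $\Alg_{KG}$ are Heisenberg morphisms, so that the pentagon, triangle, and hexagon coherence diagrams in $\AlgH{\beta}$ reduce to those in $\Alg_{KG}$.

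The crux is the well-definedness of the rule $\xi \act (s \otimes s') := (\xi \act s) \otimes (\xi \act s')$ on $S \otimes_{KG} S'$. Since the torus is central in $H(\beta)$ and acts via scalars, each $\xi \in \Gamma$ is $K$-linear on both factors, so the rule induces a $K$-linear endomorphism of $S \otimes_K S'$; it remains to check invariance under the $KG$-balance $(x \act s) \otimes s' = s \otimes (x \act s')$ for $x \in G$. Applying the twist axiom~\eqref{eq:twisted-bimod} to each side yields $\inner{\xi}{x}\,(x \act \xi \act s) \otimes (\xi \act s')$ and $\inner{\xi}{x}\,(\xi \act s) \otimes (x \act \xi \act s')$ respectively, which agree after sliding $x$ across the $KG$-tensor and cancelling the common torus factor. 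Axioms~\eqref{eq:left-unit}--\eqref{eq:right-op} for $S \otimes S'$ then follow by componentwise application of the corresponding axioms in $S$ and $S'$; only~\eqref{eq:right-op} requires a brief computation, while~\eqref{eq:left-scal} is immediate since the $G$-action operates on the left factor alone via the $KG$-module structure.

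For the unit object we take $KG$ itself, equipped with the torus action~$\epsilon_T$, the regular $G$-action by left multiplication, and the $\Gamma$-action $\xi \act x := \inner{\xi}{x}\,x$ extended $K$-linearly; bilinearity of~$\beta$ yields $\xi \act (xy) = (\xi \act x)(\xi \act y)$, so~\eqref{eq:right-op} holds, and the remaining axioms (H$_1$)--(H$_8$) are routine. The standard left unitor $\lambda_S \colon KG \otimes_{KG} S \isomarrow S$ sending $x \otimes s \mapsto x \act s$ intertwines the $\Gamma$-action because $\xi \act (x \otimes s) = \inner{\xi}{x}\,x \otimes (\xi \act s)$ maps under $\lambda_S$ to $\inner{\xi}{x}\,(x \act \xi \act s) = \xi \act (x \act s)$ by the twist; equivariance under $G$ and $T$ is immediate. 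The associator $(S_1 \otimes S_2) \otimes S_3 \isomarrow S_1 \otimes (S_2 \otimes S_3)$ and the symmetry $S_1 \otimes S_2 \isomarrow S_2 \otimes S_1$ from $\Alg_{KG}$ are manifestly diagonally $H(\beta)$-equivariant, hence lift to $\AlgH{\beta}$, and the coherence axioms descend along the faithful forgetful functor $\AlgH{\beta} \to \Alg_{KG}$. The only genuine obstacle was the well-definedness of the operator action on $\otimes_{KG}$ above; everything else is bookkeeping.
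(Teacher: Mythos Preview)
Your proof is correct and follows essentially the same approach as the paper: transport the symmetric monoidal structure of $\Alg_{KG}$ to $\AlgH\beta$, take $KG$ with the action $\xi \act x = \inner{\xi}{x}\,x$ as unit object, and verify that the standard associator, unitors, and braiding are Heisenberg isomorphisms so that coherence descends along the forgetful functor. The paper's own proof is much terser---it relegates the well-definedness of the diagonal $\Gamma$-action on $S \otimes_{KG} S'$ to a remark (``One checks immediately'') preceding the theorem and omits your explicit verification via the twist axiom---but the strategy is identical.
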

\begin{proof}
  It is clear that~$\AlgH\beta$ is a monoidal category since the usual
  associators~$\alpha_{ABC}\colon (A \otimes B) \otimes C \to A \otimes (B \otimes C)$ as well as
  the unitors $\lambda_A\colon (KG) \otimes A \to A$ and~$\rho_A\colon A \otimes (KG) \to A$ of the
  category~$\Alg_{KG}$ are easily seen to be Heisenberg isomorphisms. For the unitors, it should be
  noted that~$KG$ is a Heisenberg algebra with action defined via~$\epsilon_T$
  and~$\xi \act y := \inner{y}{\xi} y$. Finally, $\AlgH\beta$ is a symmetric monoidal
  category since the braiding~$\gamma_{AB}\colon A \otimes B \to B \otimes A$ of~$\Alg_{KG}$ is also
  a Heisenberg isomorphism.
\end{proof}

Unlike in $\Alg_K$, the tensor product in~$\AlgH\beta$ is \emph{not a coproduct}, however. The
problem is that Heisenberg algebras are typically without unit element so that the tensor product
generally lacks the injection maps~$S \to S \otimes S'$ and~$S' \to S \otimes S'$ required of
coproducts.

This may be remedied by taking recourse to the \emph{coproduct of commutative nonunital
  algebras}. The construction is straightforward but difficult to locate in the literature. If~$A$
and~$B$ are commutative nonunital algebras over a commutative unital ring~$R$, their coproduct is
defined by~$A \hotimes_R B := A \oplus B \oplus (A \otimes_R B)$ with multiplication given by
setting~$(a_1, b_1, a_1'\otimes b_1') \cdot (a_2, b_2, a_2'\otimes b_2')$ equal to
\begin{equation}
  \label{eq:heisalg-coprod}
  \begin{aligned}
    (a_1a_2, b_1b_2, a_1\otimes b_2 & + a_2\otimes b_1 + a_1a_2'\otimes b_2' + a_1'a_2\otimes b_1'\\
    & + a_2'\otimes b_1b_2' + a_1'\otimes b_1'b_2 + a_1'a_2'\otimes b_1'b_2') .
  \end{aligned}
\end{equation}
Then we have the coproduct injections $\iota_1\colon A \to A \hotimes_R B$, $a\mapsto(a,0,0)$ and
$\iota_2\colon B \to A \hotimes_R B$, $b\mapsto(0,b,0)$.

Using this structure, it is straightforward to define the coproduct of~$S, S' \in \AlgH\beta$.
Viewing~$S$ and~$S'$ again as $KG$-algebras, we
equip~$S \hotimes_{KG} S' = S \oplus S' \oplus (S \otimes_{KG} S')$ with the componentwise
Heisenberg action induced by those on~$S$, $S'$ and~$S \otimes_{KG} S'$, where the latter is the
Heisenberg module~$S \otimes S'$ introduced above. It is easy to see
that~$\iota_1\colon S \to S \hotimes_{KG} S'$ and~$\iota_2\colon S' \to S \hotimes_{KG} S'$ are
Heisenberg morphisms, hence~$S \hotimes_{KG} S'$ is indeed a coproduct in~$\AlgH\beta$, which we
also abbreviate by~$S \hotimes S'$.

A Heisenberg algebra is a Heisenberg module whose structure is
enriched by tacking on a nontrivial product (replacing the trivial
default product). In Fourier theory (see
\S\ref{sec:constructive-schwartz}), one encounters modules with
an even richer structure---modules that carry \emph{two} products. To
make things precise, let us call~$(A, \star, \cdot)$ a \emph{twain
  algebra} over the commutative and unital ring~$K$ if~$A$ is a
$K$-module with two bilinear products~$\star$ and~$\cdot$. The
corresponding algebras~$(A, \star)$ and~$(A, \cdot)$ will be denoted
by~$A_\star$ and~$A_{\tdot}$. If one of the products is trivial, we
have a \emph{plain algebra}---if both are trivial, we retain the naked
$K$-module, which we might then call a \emph{slain algebra}. Clearly,
there are corresponding notions of twain/plain/slain homomorphisms,
depending on how many multiplication maps need to be preserved.

A twain algebra over~$K$ may be described by giving two $K$-algebras $(A, \star)$ and~$(B, \cdot)$
together with a $K$-linear isomorphism~$\iota\colon A \isomarrow B$. This yields the twain
algebra~$(A, \star, \cdot)$, where we write its transferred product
$x \cdot y := \iota^{-1}(\iota x \cdot \iota y)$ for~$x, y \in A$ with the same symbol. In such a
case we shall use the notation~$A \divideontimes_\iota B$ for the corresponding twain algebra
(suppressing the subscript~$\iota$ when the isomorphism is understood), which we call the
\emph{overlay of~$A$ on top of~$B$}.

We shall also employ the casual twain/plain/slain jargon in conjunction with Heisenberg actions
of~$H(\beta) = TG \rtimes \Gamma$. To this end, let us recall that a Heisenberg module over~$\beta$
was called a Heisenberg algebra over~$\beta$ if it is endowed with a bilinear multiplication such
that the positions~$x \in G \le H(\beta)$ act as scalars while the
momenta~$\xi \in \Gamma \le H(\beta)$ act as operators (of course the~$c \in T \le H(\beta)$ always
act as scalars). Borrowing typographic terminology, we shall also call such a structure a
\emph{recto Heisenberg algebra}. In contrast, a Heisenberg module over~$\beta$ will be called a
\emph{verso Heisenberg algebra} if it has a bilinear multiplication where the positions are
operators and the moments scalars. (In the sequel, the qualification ``recto'' will only be used
for emphasis or symmetry.)

Now a twain algebra~$A$ is called a \emph{Heisenberg twain algebra}
if~$A_\star$ is a recto Heisenberg algebra while~$A_{\tdot}$ is a
verso Heisenberg algebra. Of course, the terms \emph{Heisenberg plain
  algebra} and \emph{Heisenberg slain algebra} are just synonyms for
``Heisenberg module'' and ``Heisenberg algebra'', respectively. This
somewhat flippant terminology will come in handy when dealing with
Fourier structures. Note also that a Heisenberg twain morphism is just
a Heisenberg morphism that is at the same time a twain
homomorphism. We denote the category of Heisenberg twain algebras
over~$\beta$ by~$\TwAlgH\beta$.

Let us now look at a rather \emph{simple specimen} of a Heisenberg twain algebra, here formulated in
entirely algebraic terms. Its analytic significance as a Fourier structure will be recognized in
Example~\ref{ex:pont-doublet-inverse}\ref{itt:four-dtft} below.

\begin{myexample}
  \label{ex:tor-twainalg}
  The \emph{toroidal twain algebra} is defined as~$\CC[\ZZ^n] \divideontimes \CC\pathalg{\ZZ^n}$,
  where~$\big( \CC[\ZZ^n], \star \big)$ denotes the group algebra (with~$\ZZ^n$ the free abelian
  group on $n$ generators) while~$\big( \CC\pathalg{\ZZ^n}, \cdot \big)$ is the path algebra
  (with~$\ZZ^n$ viewed as a discrete quiver). In other words, the two products are defined on the
  generators~$z^a \; (a \in \ZZ^n)$ as
  \begin{equation*}
    z^a \star z^{a'} = z^{a+a'},\quad
    z^a \cdot z^{a'} = \delta_{a,a'} \, z^a .
  \end{equation*}
  Thus~$\CC[\ZZ^n]$ is the Laurent polynomial algebra while~$\CC\pathalg{\ZZ^n}$ is the complex
  algebra generated by the orthogonal idempotents~$z^a$. Viewing Laurent
  polynomials~$\sum_a c_a z^a$ as multivariate sequences~$(c_a)_{a \in \ZZ^n}$, their
  product~$\star$ is the discrete convolution (see also
  Example~\ref{ex:pont-doublet}\ref{it:four-dtft} below).

  Let~$\inner{}{}_\Tordu\colon \Tor^n \times \ZZ^n \to \Tor$ be the torus
  duality~$\inner{\xi}{x}_\Tordu = \xi^x$ introduced in Example~\ref{ex:classical-torus-group} with
  Heisenberg group~$H(\Tordu) = \Tor \ZZ^n \rtimes \Tor^n$. We define the Heisenberg
  action~$H(\Tordu) \times \CC[\ZZ^n] \to \CC[\ZZ^n]$ via
  \begin{equation*}
    x \cdot z^a = z^{a+x},\qquad
    \xi \cdot z^a = \xi^a \, z^a
  \end{equation*}
  for~$(x, \xi) \in \ZZ^n \times \Tor^n$, and where the
  torus~$\Tor \subset \CC$ acts trivially. It is easy to see
  that~$(\CC[\ZZ^n], \star, \cdot)$ is then a Heisenberg twain
  algebra. For later reference, let us also note the \emph{interchange
    law}
  \begin{equation}
    \label{eq:intch-law}
    (z^a \cdot z^b) \star (z^c \cdot z^d) = \delta_{a+d,b+c} \, (z^a \star z^c) \cdot (z^b \star z^d),
  \end{equation}
  which is an immediate consequence of the composition laws given
  above. Used from left to right, its effect is similar to the
  distributivity axiom. The induced normal form of $(\star, \cdot)$
  terms is then a complex linear combination of pointwise products of
  convolutions.
\end{myexample}

\subsection{The Heisenberg Twist.}
\label{sub:heis-twist}

We now turn our attention to an interesting feature of Heisenberg groups that is also important for
understanding the nature of Fourier operators, in particular when iterated.

Up to now we have been speaking of \emph{left} Heisenberg algebras,
omitting the qualification ``left'' since we have not yet considered
their \emph{right} counterparts. For introducing Fourier operators,
though, the distinction between left and right Heisenberg algebras
turns out to be crucial. Referring to Definition~\ref{def:heis-alg}, a
\emph{right Heisenberg algebra}~$S$ is exactly the same except that
one has a linear right action~$S \times H(\beta) \to S$. The latter
induces a map~$\eta\colon H(\beta)^o \to \Aut_K(S)$, which is again
required to factor through the restricted torus action~$\Delta_T$.

In terms of axioms, a right Heisenberg action is also characterized
by~\eqref{eq:left-unit}--\eqref{eq:right-op}, except that the phase
factor~$\inner{\xi}{x}$ in~\eqref{eq:twisted-bimod} changes
sides. Thus Heisenberg scalars~$u \in TG \le H(\beta)$ act as
$(s \star \tilde{s}) \act u = s \star (\tilde{s} \act u)$,
Heisenberg operators~$u \in \Gamma$ as
$(s \star \tilde{s}) \act u = (s \act u) \star (\tilde{s} \act u)$.
In the sequel, the unqualified term ``Heisenberg algebra'' is meant to
refer to left Heisenberg algebras, which we continue to denote
by~$\AlgH\beta$.

We recall the \emph{restriction of scalars} for modules: Given a ring
homomorphism~$\phi\colon R \to S$, any left $S$-module~$N$ can be turned into a left
$R$-module~$\phi^*(N)$ by precomposing its scalar action~$S \to \Aut_\ZZ(N)$ with~$\phi$. If~$S$ is
instead a right $S$-module with scalar action~$S^o \to \Aut_\ZZ(N)$, one gets a right
$R$-module~$\phi^*(N)$ by precomposing. We use the same definition and notation when~$\phi$ is an
antihomomorphism, but now the sides are swapped: If~$N$ is a left/right $S$-module, $\phi^*(N)$ is a
right/left $R$-module.

Let~$M$ be a left $R$-module, $N$ a left/right $S$-module and~$\phi\colon R \to S$ a
homomorphism/antihomomorphism of rings. Then we call~$f\colon M \to N$ a \emph{homomorphism
  over}~$\phi$ if~$f\colon M \to \phi^*(N)$ is a homomorphism of $R$-modules. Thus we
have~$f(\lambda \act x) = \phi(\lambda) \act f(x)$ in the homomorphic
and~$f(\lambda \act x) = f(x) \act \phi(\lambda)$ in the antihomomorphic case (adopting $\act$ for
the scalar action).

The rings in question may be group rings~$\ZZ H$ over arbitrary groups~$H$. In that case, the
modules are called $H$-modules in the terminology of representation theory, namely abelian groups on
which~$H$ acts via automorphisms~\cite[p.~95]{Cohn2003a}. For such group modules, one often uses the
inversion in~$H$ as a preferred antihomomorphism~$\ZZ H \to \ZZ H$ for switching between left and
right $H$-modules. For \emph{Heisenberg groups}~$H=H(\beta)$, however, two other antihomorphisms are
more important---at least in the context of Fourier analysis---and both are involutions (``twists'')
in the sense of \emph{involutive antihomomorphism}. In addition, there is also an \emph{involutive
  homomorphism} (``flip''), which plays an important role in Fourier operators. Before investigating
their signficance in some detail, let us first list their definitions:
\begin{align}
  \label{eq:fwd-twist}
  & \text{Forward Twist} & \hat{J}\colon H(\beta) &\to H(\beta)^o, & 
  c\,(x,\xi) &\mapsto \tfrac{c}{\inner{\xi}{x}} \, (x, -\xi)\\
  \label{eq:bwd-twist}
  & \text{Backward Twist} & \check{J}\colon H(\beta) &\to H(\beta)^o, &
  c\,(x,\xi) &\mapsto \tfrac{c}{\inner{\xi}{x}} \, (-x, \xi)\\
  \label{eq:par-flip}
  & \text{Parity Flip} & \bar{J}\colon H(\beta) &\to H(\beta), &
  c\,(x,\xi) &\mapsto c\,(-x, -\xi)
\end{align}
Note that the twists~$\hat{J}$ and~$\check{J}$ can be squared since each of them may also be viewed
as an antihomomorphism~$H(\beta)^o \to H(\beta)$, and they combine to the flip
as~$\hat{J} \check{J} = \bar{J} = \check{J} \hat{J}$. One may picture the twist-and-flip actions as
a cyclic process of periodicity four, intimately linked with the action of Fourier operators (see
Figure~\ref{fig:heis-clock} below and \S\ref{sub:inversion}).

For understanding the significance of those involutions, we remark
that each duality~$\beta\colon \Gamma \times G \to T$ may be
\emph{transposed} to yield its mirror
image~$\beta\trp\colon G \times \Gamma \to T$ such
that~$\beta\trp(x,\xi) = \beta(\xi,x)$. From the definition of the
Heisenberg group~$H(\beta) = TG \rtimes \Gamma$ one sees
that~$H(\beta\trp) = T\Gamma \rtimes G$ yields the \emph{dual
  Heisenberg group}, which reverses the roles of positions and
momenta. The crucial fact to observe now is
that~$H(\beta\trp) \cong H(\beta)^o$ via the
isomorphism~$c\,(\xi,x) \leftrightarrow c\,(x,\xi)$. For this reason
we shall denote the category of right Heisenberg algebras
by~$\AlgH{\beta\trp}$.

Under the identification~$H(\beta\trp) \cong H(\beta)^o$, the
anti-isomorphism~$\hat{J}$ becomes an
\emph{isomorphism}~$J\colon H(\beta) \to H(\beta\trp)$
with~$c\,(x,\xi) \mapsto \tfrac{c}{\inner{\xi}{x}} \, (-\xi,
x)$. Projected onto the phase spaces $P = G \times \Gamma$
and~$P\trp = \Gamma \times G$, this yields the
map~$j\colon P \to P\trp$ with~$(x,\xi) \mapsto (-\xi, x)$. This map
is geometrically signficant in at least two important examples:
\begin{enumerate}[(a)]
\item If~$G = \Gamma$ is a finite-dimensional vector space over~$F$, the phase
  space~$P = V \times V$ may be viewed as its complexification with~$j$ as its canonical complex
  structure. Taking~$\funcinner{}{}\colon V \times V \to F$ to be an additive bilinear form on~$V$,
  the map~$j$ is like a rotation (it is literally so if~$\beta$ is symmetric and positive definite
  over~$F = \RR$). Taking~$V = \RR^n$, one recognizes~$j$ as the usual block
  matrix~$\smallmat{0}{-I_n}{I_n}{0}$, so for~$n=1$ the action of~$j$ is multiplication
  by~$\sqrt{-1}$. Taking a standard character (Example~\ref{ex:vecgrp}), all this is seen to be an
  instance of \emph{abstract vector duality}.
\item In the case of the \emph{symplectic Heisenberg
    group}~$H(\beta_\Omega)$ of Example~\ref{ex:heisgrp-sympl}, we
  have a finite-dimensional vector space~$G$ over~$F$ with
  dual~$\Gamma = G^*$. The canonical symplectic form~$\Omega_G$ is
  then a skew-symmetric bilinear form on the phase
  space~$P = G \times G^*$. Interpreted as a linear map into its dual,
  $\Omega_G$ is just~$j\colon P \to P^* \cong P\trp$, and its matrix is
  again~$\smallmat{0}{-I_n}{I_n}{0}$.
\end{enumerate}
The second example can be generalized to arbitrary Heisenberg groups $H(\beta) = TG \rtimes
\Gamma$. It points to the right way of understanding the \emph{significance of the tilt
  map}~$j\colon P \to P\trp$, namely as an alternative encoding of the symplectic structure
expressed by~$\omega$. Indeed, multiplying~$\beta\colon \Gamma \times G \to T$
and~$\beta j\colon G \times \Gamma \to T$, we obtain the
duality~$\omega\trp = \beta \times \beta j\colon P\trp \times P \to T$, so~$j$ describes how the
symplectic structure arises from the given duality~$\beta$.

Once the tilt map~$j\colon P \to P\trp$ is fixed, its \emph{extension
  to a homomorphism}~$J\colon H(\beta) \to H(\beta\trp)$ is unique
provided~$J$ is required to leave~$T$ invariant: Using the fact
that~$J$ should agree with~$j$ on~$G$ and~$\Gamma$ one immediataly
obtains
$J\big(c \, (x,\xi)\big) = \tfrac{c}{\inner{\xi}{x}} \, (x, -\xi)$
from the homomorphism property. In the same way, the inverse
tilt~$j^*\colon P\trp \to P$ with~$(\xi, x) \mapsto (-x, \xi)$ can be
extended to a unique homomorphism
$J^*\colon H(\beta\trp) \to H(\beta)$, which is \emph{not} the inverse
of $J\colon H(\beta) \to H(\beta\trp)$. Under the above-mentioned
identification~$H(\beta\trp) \cong H(\beta)^o$, the homomorphism~$J^*$
corresponds to $\check{J}\colon H(\beta)^o \to H(\beta)$, which is the
same as~$\check{J}\colon H(\beta) \to H(\beta)^o$. Finally, the
inversion map~$(x,\xi) \mapsto (-x,-\xi)$ is a homomorphism on~$P$
since the latter is an abelian group. Similar to~$j$ and~$j^*$, it has
a unique \emph{extension to a
  homomorphism}~$\bar{J}\colon H(\beta) \to H(\beta)$ that leaves~$T$
invariant, unlike the inverion map on~$H(\beta)$.

Summing up, there are three natural (anti)homomorphisms for changing a left $H(\beta)$-module~$S$
into a left/right $H(\beta)$-module, which we designate by the same terminology as the maps
themselves:
\begin{itemize}\setlength\itemsep{0.25em}
\item The module~$S^{\land} := \hat{J}^*(S)$ is called the \emph{forward twist} of~$S$.
\item The module~$S^{\lor} := \check{J}^*(S)$ is called the \emph{backward twist} of~$S$.
\item The module~$S^- := \bar{J}^*(S)\vphantom{\check{J}^*(S)}$ is called the \emph{parity flip} of~$S$.
\end{itemize}
If~$S$ is moreover a \emph{Heisenberg algebra}, the torus acts
naturally via~$\epsilon_T$, so the three derived modules above will
also have the torus acting naturally via~$\epsilon_T$. Thus (left and
right) Heisenberg algebras can be flipped as well as twisted forward
and backward amongst themselves.

It is well known~\cite[p.~95]{Cohn2003a} that the \emph{category of $P$-modules} over a group~$P$ is
isomorphic to the \emph{category of left modules over the group ring~$\ZZ[P]$}, and this fact is
much exploited in areas such as group homology~\cite[\S6.1]{Weibel1994}. The same works for
identifying $K$-modules (instead of plain abelian groups) having a linear $P$-action with left
modules over the group algebra~$K[P]$. We shall exploit a similar isomorphism for the Heisenberg
category~$\ModH\beta$, especially in the next subsection for the construction of various free
objects. But in our case we have to account for one slight complication that requires Heisenberg
modules to be more than just modules over the group ring~$K[H(\beta)]$, namely the naturality of the
torus action.

A straightforward way to incorporate this requirement is a slight
generalization of~$K[P]$, namely the so-called \emph{$\gamma$-twisted
  group algebra}~$K^\gamma[P]$ as described
in~\cite[\S3.6]{Kelarev2002}. Given any cocycle~$\gamma \in Z^2(P, \unit{K})$
with~$K$ a trivial $P$-module, the definition of~$K^\gamma[P]$ is the
same as for~$K[P]$ as $K$-module, thus consisting of all
maps~$P \to K$ with finite support. But the multiplication is given by
setting~$h_z \, h_{z'} = \gamma(z, z') \, h_{zz'}$ for
generators~$h_z, h_{z'} \: (z, z' \in P)$ and extending by
$K$-linearity. The cocycle condition~\cite[(3.7)]{Kelarev2002}
for~$\gamma$ is equivalent to the associativity of~$K^\gamma[P]$.

We shall now apply this construction to the phase space~$P = G \oplus \Gamma$ of the Heisenberg
group~$H(\beta)$, endowed by the \emph{Heisenberg cocycle}
\begin{equation*}
  \heiscocy(x, \xi; x', \xi') := \epsilon_T\inner{\xi}{x'}.
\end{equation*}
One may view~$\heiscocy\colon P \times P \to K$ as an extension
of~$\beta\colon \Gamma \times G \to T$ via the natural embeddings~$\Gamma, G \hookrightarrow P$
and~$\epsilon_T\colon T \to K$. The cocycle condition may either be checked by a routine calculation
or inferred from general facts about central group extensions~\cite[Cor.~5.2]{MacLane1995}
since~$\heiscocy$ is just the $\epsilon_T$-image of the factor
set~$\beta\circ (\pi_\Gamma \times \pi_G) \in H^2(P, T)$ that describes Heisenberg
extensions~$T \rightarrowtail H \twoheadrightarrow P$ by Lemma~\heiscite{lem:factorset-from-beta}.

\begin{definition}
  Let~$H(\beta) = TG \rtimes \Gamma$ be a Heisenberg group over a duality~$\beta$. The
  \emph{Heisenberg group algebra} is the twisted group algebra~$K^{\heiscocy}[G \oplus \Gamma]$,
  which we denote by~$H_K(\beta)$.
\end{definition}

The Heisenberg group algebra is really almost the same as the plain group algebra, except that it
works to \emph{identify the torus with the scalar ring}
via~$\epsilon_T\colon (T, \cdot) \to (\nnz{K}, \cdot)$.

\begin{lemma}
  \label{lem:char-heisgrpalg}
  For any duality~$\beta$, we have $H_K(\beta) \cong K[H(\beta)]/\mathfrak{I}_T$ as an isomorphism
  of $K$-algebras, where the ideal~$\mathfrak{I}_T$ is generated by all
  $h_{c(x,\xi)} - \epsilon_T(c) \, h_{1(x,\xi)}$ with~$c(x,\xi) \in H(\beta)$.
\end{lemma}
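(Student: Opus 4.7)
The plan is to construct an explicit $K$-algebra isomorphism between the two sides by a standard ``universal property + basis count'' argument.

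First, I would define a $K$-linear map $\Phi\colon K[H(\beta)] \to H_K(\beta)$ by specifying it on the basis $\{h_u : u \in H(\beta)\}$ via $\Phi(h_{c(x,\xi)}) := \epsilon_T(c) \, h_{(x,\xi)}$, where on the right $\{h_{(x,\xi)} : (x,\xi) \in G\oplus\Gamma\}$ denotes the standard $K$-basis of the twisted group algebra $H_K(\beta) = K^{\heiscocy}[G\oplus\Gamma]$.

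Next I would verify that $\Phi$ is multiplicative. On the one hand, using the Heisenberg group law \eqref{eq:heis-grouplaw} and multiplicativity of $\epsilon_T$, one gets
\begin{equation*}
  \Phi(h_{c(x,\xi)} \cdot h_{c'(x',\xi')})
  = \Phi(h_{cc'\inner{\xi}{x'}(x+x',\xi+\xi')})
  = \epsilon_T(c)\,\epsilon_T(c')\,\epsilon_T\inner{\xi}{x'}\,h_{(x+x',\xi+\xi')}.
\end{equation*}
On the other hand, the twisted multiplication in $H_K(\beta)$ with cocycle $\heiscocy(x,\xi;x',\xi') = \epsilon_T\inner{\xi}{x'}$ gives exactly the same product when one computes $\Phi(h_{c(x,\xi)}) \cdot \Phi(h_{c'(x',\xi')})$. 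So $\Phi$ is a homomorphism; the cocycle condition on $\heiscocy$ (which was already noted in the text) is precisely what makes this check work, so no separate associativity issue arises.

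Finally I would pass to the quotient and show the induced map is a bijection. For each generator of $\mathfrak{I}_T$ one has $\Phi(h_{c(x,\xi)} - \epsilon_T(c)\,h_{1(x,\xi)}) = \epsilon_T(c)\,h_{(x,\xi)} - \epsilon_T(c)\,\epsilon_T(1)\,h_{(x,\xi)} = 0$, so $\Phi$ factors through a $K$-algebra homomorphism $\bar\Phi\colon K[H(\beta)]/\mathfrak{I}_T \to H_K(\beta)$. The relations in $\mathfrak{I}_T$ already force every class $[h_{c(x,\xi)}]$ to equal $\epsilon_T(c)\,[h_{1(x,\xi)}]$, so the quotient is spanned as a $K$-module by $\{[h_{1(x,\xi)}] : (x,\xi) \in G\oplus\Gamma\}$. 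Under $\bar\Phi$ these classes are sent to the $K$-basis $\{h_{(x,\xi)}\}$ of $H_K(\beta)$, which both proves surjectivity and---since any $K$-linear dependence among the $[h_{1(x,\xi)}]$ would descend to one among the $h_{(x,\xi)}$---proves that this spanning set is actually a $K$-basis of the quotient and $\bar\Phi$ is injective.

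There is no real obstacle here; the only substantive step is the multiplicativity check, and this reduces to the observation that the Heisenberg cocycle $\heiscocy$ is, by design, the $\epsilon_T$-pushforward of the factor set encoding the group law of $H(\beta)$, so the two multiplications match term by term.
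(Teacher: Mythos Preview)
Your proof is correct and uses the same surjective $K$-algebra map $\Phi\colon K[H(\beta)]\to H_K(\beta)$, $h_{c(x,\xi)}\mapsto \epsilon_T(c)\,h_{(x,\xi)}$, as the paper does. The only difference is in how the kernel is identified: the paper shows $\ker\Phi\subseteq\mathfrak{I}_T$ by an induction on the ``$T$-degree'' (the number of basis elements with $c\neq 1$ in the support), reducing one nontrivial torus component at a time, whereas you bypass this induction by observing directly that the $[h_{1(x,\xi)}]$ span the quotient and map to a $K$-basis of $H_K(\beta)$, forcing them to be a basis themselves. Your argument is a bit more economical; the paper's induction makes the reduction process explicit but is not strictly needed once one sees that the relations immediately collapse every $[h_{c(x,\xi)}]$ to a scalar multiple of $[h_{1(x,\xi)}]$.
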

\begin{proof}
  We define the evident group homomorphism~$\epsilon\colon H(\beta) \to \nnz{H_K(\beta)}$ by
  $c\,(x,\xi) \mapsto \epsilon_T(c) \, h_{x,\xi}$ and extend it, via the universal property of the
  group algebra, to a $K$-algebra homorphism~$\epsilon\colon K[H(\beta)] \to
  H_K(\beta)$.
  Since~$\epsilon$ is surjective, we obtain $H_K(\beta) \cong K[H(\beta)]/\mathfrak{K}_T$ as
  $K$-algebras, with~$\mathfrak{K}_T := \ker(\epsilon)$ being an ideal of~$K[H(\beta)]$.  It is
  clear that~$\mathfrak{I}_T \subseteq \mathfrak{K}_T$, so it only remains to show the reverse
  inclusion.

  We introduce the $T$-\emph{degree} of~$U = \sum_{u \in H(\beta)} \lambda_u h_u \in K[H(\beta)]$ as
  the number of nontrivial occurrences of~$c \in T$ for each~$u = c(x,\xi)$ involved in~$U$. More
  precisely, we set
  \begin{equation*}
    \deg_T(U) := \# \{ c(x,\xi) \in \supp(U) \mid c \neq 1 \},
  \end{equation*}
  where~$\supp(U)$ denotes the support of~$U\colon H(\beta) \to K$, namely the set of
  those~$u \in H(\beta)$ for which~$\lambda_u = U(u) \neq 0$. We
  prove~$\mathfrak{K}_T \subseteq \mathfrak{I}_T$ by induction over $T$-degree.

  For the base case, let us take~$U \in \mathfrak{K}_T$ with~$\deg_T(U) = 0$. In that case we
  have~$U = \sum_{x,\xi} \lambda_{1(x,\xi)} \, h_{1(x,\xi)}$,
  hence~$\epsilon(U) = \sum_{x,\xi} \lambda_{1(x,\xi)} \, h_{x,\xi} = 0$ and
  $\lambda_{1(x,\xi)} = 0$ for all $(x,\xi) \in G \oplus \Gamma$. Now
  assume~$\mathfrak{K}_T \subseteq \mathfrak{I}_T$ for all~$U$ having $T$-degree below a
  fixed~$n > 0$. Taking any~$U \in \mathfrak{K}_T$ with $\deg_T(U) = n$ we must show
  that~$U \in \mathfrak{I}_T$. By the $T$-degree hypothesis, we may
  write~$U = \lambda \, h_{c(x,\xi)} + U'$ with~$\lambda, c \neq 0$ and~$\deg_T(U') < n$.  Denoting
  the generators of~$\mathfrak{I}_T$
  by~$i_{c,x,\xi} := h_{c(x,\xi)} - \epsilon_T(c) \, h_{1(x,\xi)} \in \mathfrak{K}_T$, it is clear
  that we have
  also~$U'' := U - \lambda \, i_{c,x,\xi} = U' + \lambda \, \epsilon_T(c) \, h_{1(x,\xi)} \in
  \mathfrak{K}_T$.
  Since~$\deg_T(U'') < n$, the induction hypothesis implies~$U'' \in \mathfrak{I}_T$ and hence
  also~$U = U'' + \lambda \, i_{c,x,\xi} \in \mathfrak{I}_T$; this completes the induction step.
\end{proof}

As announced, the crucial fact is that we may identify Heisenberg modules with \emph{modules over
  the twisted group algebra}. (This does \emph{not} generalize to Heisenberg algebras since in an
algebra over~$H_K(\beta)$, any element~$h_{1,\xi} \in H_K(\beta)$ acts as scalar instead of an
operator.)

\begin{lemma}
  \label{lem:grp-ring-isom}
  For any duality~$\beta$, we have~$\ModH\beta \cong {}_{H_K(\beta)}\Mod$ as an isomorphism of
  categories.
\end{lemma}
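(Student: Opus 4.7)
The plan is to lift the classical isomorphism ``$K$-linear representations of a group $\cong$ left modules over its group algebra'' to the Heisenberg setting, then cut down to the torus-natural representations by means of Lemma~\ref{lem:char-heisgrpalg}. The first half is completely standard: for any group~$P$, a $K$-module~$S$ together with a homomorphism $\eta\colon P \to \Aut_K(S)$ is the same datum as a left $K[P]$-module; one extends~$\eta$ $K$-linearly to~$K[P]$, and $K$-linear equivariant maps transfer verbatim to $K[P]$-linear maps. Applying this with $P = H(\beta)$, we obtain an isomorphism between the category of $K$-linear $H(\beta)$-representations and~${}_{K[H(\beta)]}\Mod$.

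The second step is to observe that a Heisenberg module is precisely a $K[H(\beta)]$-module on which the ideal $\mathfrak{I}_T$ of Lemma~\ref{lem:char-heisgrpalg} acts trivially. Indeed, the generators $h_{c(x,\xi)} - \epsilon_T(c)\,h_{1(x,\xi)}$ of $\mathfrak{I}_T$ annihilate~$S$ iff $c(x,\xi) \act s = \epsilon_T(c)\,\bigl(1(x,\xi) \act s\bigr)$ for all $c \in T$ and $(x,\xi) \in G \oplus \Gamma$, and in the presence of the axioms \eqref{eq:left-unit}--\eqref{eq:right-assoc} this is equivalent to the naturality axiom~\eqref{eq:torus-act}. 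Hence the full subcategory of $K[H(\beta)]$-modules killed by~$\mathfrak{I}_T$ coincides with~$\ModH\beta$, and by the standard correspondence between modules over a ring and modules over its quotient by a two-sided ideal, this subcategory is isomorphic to~${}_{K[H(\beta)]/\mathfrak{I}_T}\Mod$, which by Lemma~\ref{lem:char-heisgrpalg} is~${}_{H_K(\beta)}\Mod$.

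The only step that is not purely formal is the reconstruction in the opposite direction: given a left $H_K(\beta)$-module~$N$, one sets $c(x,\xi) \act n := \epsilon_T(c)\,h_{(x,\xi)}\cdot n$ and must verify this is a $K$-linear action of the \emph{group}~$H(\beta)$. The key observation making this routine is that the cocycle $\heiscocy(x,\xi;x',\xi') = \epsilon_T\inner{\xi}{x'}$ used to twist the group algebra is exactly the $\epsilon_T$-image of the phase factor~$\inner{\xi}{x'}$ appearing in the Heisenberg multiplication law~\eqref{eq:heis-grouplaw}, so the composition rules of $H(\beta)$ and the generator products in $H_K(\beta)$ match on the nose; $K$-linearity of each $h_{(x,\xi)}$ follows because~$K$ sits centrally inside~$H_K(\beta)$. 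On morphisms, $K$-linear $H(\beta)$-equivariance, $K[H(\beta)]$-linearity, and $H_K(\beta)$-linearity are all mutually equivalent once $\mathfrak{I}_T$ is killed, so the functors are the identity on underlying set-valued maps. I do not anticipate a genuine obstacle; the proof is a careful identification of the three layers of structure (naturality of~$\epsilon_T$, the ideal $\mathfrak{I}_T$, the twist cocycle $\heiscocy$), each of which encodes the same ``phase'' information in a different disguise.
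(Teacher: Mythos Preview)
Your proof is correct but takes a different route from the paper's. The paper constructs the two functors $\Phi\colon \ModH\beta \to {}_{H_K(\beta)}\Mod$ and $\Psi\colon {}_{H_K(\beta)}\Mod \to \ModH\beta$ directly and explicitly, without passing through the untwisted group algebra $K[H(\beta)]$: given a Heisenberg module $(S,\Delta,\eta)$ it writes down the structure map $\tilde\Delta$ on generators $h_{x,\xi}$ and checks multiplicativity by hand using the twist cocycle, and conversely precomposes a given $\tilde\Delta$ with the embeddings $\iota_K\colon K \hookrightarrow H_K(\beta)$ and $\epsilon\colon H(\beta) \to H_K(\beta)$ to recover $\Delta$ and $\eta$. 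Your approach instead factors through ${}_{K[H(\beta)]}\Mod$ and then invokes Lemma~\ref{lem:char-heisgrpalg} to pass to the quotient, identifying $\ModH\beta$ with the full subcategory of $K[H(\beta)]$-modules annihilated by~$\mathfrak{I}_T$. This is conceptually cleaner and makes the role of the preceding lemma transparent---the paper proves Lemma~\ref{lem:char-heisgrpalg} but its proof of the present statement does not actually cite it. The paper's direct approach, on the other hand, exhibits the explicit formula for the $H_K(\beta)$-action in terms of $\Delta$ and $\eta$, which is convenient for later manipulations, and is self-contained in that it does not rely on the quotient identification.
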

\begin{proof}
  We write Heisenberg modules as~$(S, \Delta, \eta)$, where~$S$ is the underlying abelian group, the
  ring homomorphism $\Delta\colon K \to \End_\ZZ(S)$ is the structure map,
  and~$\eta\colon H(\beta) \to \Aut_K(S)$ is the Heisenberg action. We
  have~$\nnz{\Delta} \circ \epsilon_T = \eta \circ \iota_T$ by the definition of Heisenberg modules
  with~$\iota_T\colon T \hookrightarrow H(\beta)$ the natural embedding. Similarly,
  $H_K(\beta)$-modules can be represented by~$(S, \tilde\Delta)$,
  where~$\tilde\Delta\colon H_K(\beta) \to \End(S)$ is the corresponding structure map. The
  isomorphism~$\ModH\beta \to {}_{H_K(\beta)}\Mod$ sends a Heisenberg module~$(S, \Delta, \eta)$
  to~$(S, \tilde\Delta)$ with
  \begin{equation*}
    \tilde\Delta\Big(\sum_{(x,\xi)\in G \oplus \Gamma} \lambda_{x,\xi} \, h_{x, \xi}\Big) := 
    \sum_{(x,\xi)\in G \oplus \Gamma} \Delta(\lambda_{x,\xi}) \circ \eta\big(1\,(x,\xi)\big),
  \end{equation*}
  where~$h_{x,\xi}$ are the generators of~$H_K(\beta) = K^{\heiscocy}[G \oplus \Gamma]$. Its inverse
  maps the $H_K(\beta)$-module~$(S, \tilde\Delta)$ to the Heisenberg module~$(S, \Delta, \eta)$ with
  structure map~$\Delta := \tilde\Delta \circ \iota_K$ and Heisenberg
  action~$\eta := \tilde\Delta \circ \epsilon$, where~$\epsilon\colon H(\beta) \to H_K(\beta)$ is
  defined by~$c\,(x,\xi) \mapsto \epsilon_T(c) \, h_{x,\xi}$ and
  where $\iota_K\colon K \hookrightarrow H_K(\beta)$ is the
  embedding~$\lambda \mapsto \lambda \, h_{0,0}$.

  Let us first check that the
  map~$\Phi\colon \ModH\beta \to {}_{H_K(\beta)}\Mod$ is
  well-defined. It is clear that~$\tilde\Delta$ is additive and even
  $K$-linear in the
  sense~$\tilde\Delta(\lambda \tilde{h}) = \Delta(\lambda) \circ
  \tilde\Delta(\tilde{h}) = \tilde\Delta(\tilde{h}) \circ
  \Delta(\lambda)$ for all~$\lambda \in K$
  and~$\tilde{h} \in H_K(\beta)$. For ensuring that~$\tilde\Delta$ is
  a homomorphism of rings, it is thus sufficient to
  check~$\tilde\Delta(h_{x,\xi} \, h_{x',\xi'}) =
  \tilde\Delta(h_{x,\xi}) \, \tilde\Delta(h_{x',\xi'})$.  The
  left-hand side comes out
  as~$\Delta(\epsilon_T \inner{\xi}{x'}) \circ \eta\big(1 (x+x',
  \xi+\xi') \big)$ by the definition of the $\heiscocy$-twisted
  multiplication in~$H_K(\beta)$. The right-hand side
  is~$\eta\big(\inner{\xi}{x'} \, (x+x', \xi+\xi') \big) =
  \eta(\iota_T \inner{x'}{\xi}) \circ \eta \big( 1 (x+x', \xi+\xi')
  \big)$, and this coincides with the left-hand side
  by~$\Delta \circ \epsilon_T = \eta \circ \iota_T$.  We conclude
  that~$\Phi$ is well-defined.

  Next we consider the
  map~$\Psi\colon {}_{H_K(\beta)}\Mod \to \ModH\beta$. Since~$\iota_K$
  is clearly a homomorphism of rings, the same is true
  of~$\tilde\Delta$. One may also check that~$\epsilon$ and
  thus~$\eta$ is a group homomorphism. We have now that~$S$ is a
  $K$-module with an action~$\eta\colon H(\beta) \to \Aut_K(S)$, and
  it remains to
  show~$\nnz{\Delta} \circ \epsilon_T = \eta \circ \iota_T$. But this
  follows from~$\iota_K \circ \epsilon_T = \epsilon \circ \iota_T$,
  which is evident. Thus~$\Psi$ is also well-defined.

  Finally, we prove that~$\Phi \circ \Psi = 1_{{}_{H_K(\beta)}\Mod}$
  and~$\Psi \circ \Phi = 1_{\ModH\beta}$. The former follows
  from~$\Delta(\lambda) \circ \eta\big( 1 (x,\xi) \big) =
  \tilde\Delta(\lambda \, h_{0,0}) \circ \tilde\Delta(h_{x,\xi}) =
  \tilde\Delta(\lambda \, h_{x,\xi})$ and the fact that~$\tilde\Delta$
  is additive. The other identity is true because we
  have~$\tilde{\Delta}(\lambda \, h_{0,0}) = \Delta(\lambda) \circ
  \eta\big( 1 (0,0) \big) = \Delta(\lambda)$ for all
  scalars~$\lambda \in K$ and
  also~$\tilde\Delta( \epsilon_T(c) \, h_{x,\xi} ) =
  \Delta\big(\epsilon_T(c)\big) \circ \eta\big( 1(x,\xi) \big) =
  \eta\big( c(x,\xi) \big)$ for all Heisenberg
  operators~$c (c,\xi) \in H(\beta)$, where the last equality
  uses~$\Delta \circ \epsilon_T = \eta \circ \iota_T$.

  It is easy to check that this yields the desired pair of isomorphic functors, which actually leave
  the morphisms---viewed as set-theoretic maps---invariant, since a map is $H_K(\beta)$-linear
  precisely when it is a Heisenberg morphism.
\end{proof}

The reason why we have introduced the Heisenberg group algebra in this subsection is that it
exhibits the \emph{forward and backward twists} as well as the \emph{parity flip} in a transparent
manner. Indeed, it is easy to check that~\eqref{eq:fwd-twist}, \eqref{eq:bwd-twist}
and~\eqref{eq:par-flip} all stabilize the ideal~$\mathfrak{I}_T$ of Lemma~\ref{lem:char-heisgrpalg};
thus one may pass to the quotient to obtain the following maps:
\begin{align*}
  & \text{Forward Twist} & \hat{J}\colon H_K(\beta) &\to H_K(\beta)^o, & 
  h_{x,\xi} &\mapsto \epsilon_T\inner{\xi}{x}^{-1} \, h_{x, -\xi}\\
  & \text{Backward Twist} & \check{J}\colon H_K(\beta) &\to H_K(\beta)^o, &
  h_{x,\xi} &\mapsto \epsilon_T\inner{\xi}{x}^{-1} \, h_{-x, \xi}\\
  & \text{Parity Flip} & \bar{J}\colon H_K(\beta) &\to H_K(\beta), &
  h_{x,\xi} &\mapsto h_{-x, -\xi}
\end{align*}
Now~$\hat{J}, \check{J}$ are involutions (involutive anti-automorphisms) of the
$K$-algebra~$H_K(\beta)$ while~$\bar{J} = \hat{J} \check{J} = \check{J} \hat{J}$ is an involutive
automorphism. Endowing~$K$ with the trivial involution, we see
that~$\smash{\big( H_K(\beta), \hat{J} \big)}$ as well
as~$\smash{\big( H_K(\beta), \check{J} \big)}$ is an \emph{involutive algebra} (also known as a
\emph{$^*$-algebra}) over~$K$. One may refer
to~$\smash{\big( H_K(\beta), \smash{\hat{J}}, \check{J} \big)}$ as a bi-involutive $K$-algebra.

Let us conclude this subsection on the Heisenberg twist with the
remark that, qua extensions, the Heisenberg groups~$H(\beta)$
and~$H(\beta\trp)$ are the same: Indeed, using the notation from above,
we have the equivalence
\begin{equation}
  \label{eq:twist-eqv}
  \xymatrix @M=0.75pc @H=1pc @R=1.5pc @C=2pc%
  { 1 \ar[r] & T \ar@{^{(}->}[r] \ar@2{-}[d] & TG \rtimes \Gamma \ar@{->>}[r] \ar[d]^J & G \times \Gamma \ar[r] \ar[d]^j & 0\\
    1 \ar[r] & T \ar@{^{(}->}[r] & T\Gamma \rtimes G \ar@{->>}[r] & \Gamma \times G \ar[r] & 0}
\end{equation}
of central extensions, where both projections are omission of the torus component. (If one prefers
to have identity for both marginal morphisms, one may alter the second extension by using the
projection $T\Gamma \rtimes G \twoheadrightarrow G \times \Gamma$ with~$c\,(\xi,x) \mapsto (x,\xi)$
instead.) But note that~\eqref{eq:twist-eqv} is not a Heisenberg morphism in the sense
of~\eqref{eq:heis-extn}. So, $H(\beta)$ and~$H(\beta\trp)$ are indeed \emph{distinct as Heisenberg
  groups}, despite forming equivalent central extensions.

\subsection{Free Heisenberg Modules and Algebras.}
\label{sub:free-heismod-heisalg}
It is always good to have at one's disposal various free objects, meaning \emph{left adjoints} to
various kinds of forgetful functor. Throughout this section, we fix a commutative and unital scalar
ring~$K$.

Let us start with the \emph{free Heisenberg module}~$\freesln_\beta(F)$ over a set~$F$ and the free
Heisenberg module~$\freesln_\beta(M)$ over a $K$-module~$M$. In other words (overloading the same
notation), we ask for the left adjoints~$\freesln_\beta$ of the forgetful
functors~$\ModH\beta \to \Set$ and~$\ModH\beta \to {}_K\Mod$. The latter
functor~$\ModH\beta \to {}_{H_K(\beta)}\Mod$ is scalar restriction along
$\iota\colon K \hookrightarrow H_K(\beta)$, so its left adjoint is scalar extension along~$\iota$
and we have~$\freesln_\beta(M) \cong H_K(\beta) \otimes_K M$. Since adjoints respect composition of
functors~\cite[Thm.~IV.8.1]{MacLane1998}, we have~$\freesln_\beta(F) \cong \freesln_\beta(K^{(F)})$
with~$K^{(F)}$ the free $K$-module over the set~$F$.

For the sake of later reference, we state these free objects \emph{in
  explicit terms}. Here and subsequently we shall not write out the
action of the torus~$T \le H(\beta) = TG \rtimes \Gamma$ since it is
always required to act via~$\epsilon_T$; we need only the
range~$(x,\xi) \in P = G \oplus \Gamma \le H(\beta)$. But note that
here~$P$ is also written multiplicatively since we view it as a
subgroup of~$H(\beta)$.

\begin{proposition}
  \label{prop:free-heismod}
  Let~$\beta$ be a duality, $M$ a $K$-module and~$F$ a set.
  \begin{enumerate}
  \item We
    have~$$\freesln_\beta(M) \cong \bigoplus\limits_{(y,\eta) \in P} yM^\eta\qquad \Big( yM^\eta
    \equiv \{y\} \times M \times \{ \eta \} \Big)$$
    with $(x, \xi) \act yf^\eta = \epsilon_T\inner{\xi}{y} \, (xy)f^{\xi\eta}$
    for~$yf^\eta \equiv (y, f, \eta) \in yM^\eta$.
  \item We have~$\freesln_\beta(F) \cong K^{(G \times F \times \Gamma)}$
    with the same action law but for $K$-basis
    elements~$yf^\eta \equiv h_{y, f, \eta} \in \freesln_\beta(F)$.
  \end{enumerate}
  The embeddings~$\iota\colon M \to \freesln_\beta(M)$
  and~$\iota\colon F \to \freesln_\beta(F)$ are~$a \mapsto 1a^1$.
\end{proposition}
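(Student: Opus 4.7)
The plan is to reduce the statement to the standard scalar-extension adjunction, once one identifies Heisenberg modules with modules over the Heisenberg group algebra.

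First, by Lemma~\ref{lem:grp-ring-isom} we have $\ModH\beta \cong {}_{H_K(\beta)}\Mod$, and under this isomorphism the forgetful functor $\ModH\beta \to {}_K\Mod$ becomes scalar restriction along the embedding $\iota_K\colon K \hookrightarrow H_K(\beta)$, $\lambda \mapsto \lambda\,h_{0,0}$. Since scalar extension is left adjoint to scalar restriction, I obtain $\freesln_\beta(M) \cong H_K(\beta) \otimes_K M$. Because $H_K(\beta) = K^{\heiscocy}[G \oplus \Gamma]$ is, as a $K$-module, free on the generators $\{h_{y,\eta} \mid (y,\eta) \in P\}$, this extension splits as
$$H_K(\beta) \otimes_K M \;\cong\; \bigoplus_{(y,\eta) \in P} h_{y,\eta} \otimes_K M,$$
and I identify the summand $h_{y,\eta} \otimes_K M$ with the labelled copy $yM^\eta$ by $h_{y,\eta}\otimes f \leftrightarrow yf^\eta$.

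Next I compute the induced $H(\beta)$-action. Under $\Phi$ of Lemma~\ref{lem:grp-ring-isom}, a Heisenberg element $1(x,\xi) \in H(\beta)$ acts on the left $H_K(\beta)$-module via left multiplication by $h_{x,\xi}$. Using the defining relation $\heiscocy(x,\xi;y,\eta) = \epsilon_T\inner{\xi}{y}$ of the twisted group algebra, this becomes
$$(x,\xi) \act (h_{y,\eta} \otimes f) \;=\; h_{x,\xi}\,h_{y,\eta} \otimes f \;=\; \epsilon_T\inner{\xi}{y}\,h_{xy,\xi\eta} \otimes f,$$
which translates back as $\epsilon_T\inner{\xi}{y}\,(xy)f^{\xi\eta}$, matching the claim. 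The action of the torus $T \le H(\beta)$ needs no separate verification, since it is forced to act naturally via $\epsilon_T$ by the definition of a Heisenberg module. The universal embedding $M \to \freesln_\beta(M)$ is the unit of the scalar-extension adjunction, $a \mapsto h_{0,0} \otimes a$, which under the above identification is $a \mapsto 1 a^1$.

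For part (ii), I compose adjoints: the forgetful functor $\ModH\beta \to \Set$ factors through ${}_K\Mod$, and the free $K$-module functor sends $F$ to $K^{(F)}$. By~\cite[Thm.~IV.8.1]{MacLane1998} (or a direct check), $\freesln_\beta(F) \cong \freesln_\beta(K^{(F)}) \cong \bigoplus_{(y,\eta)} y(K^{(F)})^\eta \cong K^{(G \times F \times \Gamma)}$, with basis elements $yf^\eta \equiv h_{y,f,\eta}$ and the same action formula; the unit of the composite adjunction sends $f \in F$ to $1 f^1$. The only genuinely computational step is the cocycle verification in the preceding paragraph, which is a one-line matter given the definition of $\heiscocy$; the main pitfall is simply maintaining the multiplicative notation for $P \le H(\beta)$ throughout, so that $(x+y,\xi+\eta)$ appears as $xy\,\xi\eta$ in the formulas.
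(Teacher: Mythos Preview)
Your proof is correct and follows essentially the same approach as the paper: the identification $\ModH\beta \cong {}_{H_K(\beta)}\Mod$ via Lemma~\ref{lem:grp-ring-isom}, scalar extension $H_K(\beta)\otimes_K M$ as the left adjoint, the cocycle computation for the action, and composition of adjoints for the set case are all exactly what the paper does (the paragraph preceding the proposition already records the scalar-extension step, and the proof itself just unpacks the direct-sum decomposition and the action formula as you have).
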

\begin{proof}
  The notation $yf^\eta$ for elements in the free Heisenberg modules is meant to convey the
  intuition of ground functions~$f$ modified by the action of a scalar~$y$ and an
  operator~$\eta$. Naturally, scalars are written multiplicatively and operators additively.
  \begin{enumerate}
  \item\label{it:free-heismod-over-module} Note that~$\freesln_\beta(M)$ is a $K$-module since each
    $\{y\} \times M \times \{ \eta \} \cong M$ is a distinct copy of the $K$-module~$M$ indexed by a
    particular phase point~$(y,\eta) \in P$.

    For proving~$\freesln_\beta(M) \cong H_K(\beta) \otimes_K M$, consider the bilinear map
    $H_K(\beta) \times M \to \freesln_\beta(M)$ that
    sends~$\sum_{(y,\eta)\in P} \lambda_{y,\eta} \, h_{y,\eta} \in H_K(\beta)$ and~$f \in M$
    to~$\sum_{(y,\eta)\in P} \lambda_{y,\eta} \,\, y\!f^\eta$. It descends to a $K$-linear map
    $j\colon H_K(\beta) \otimes_K M \to \freesln_\beta(M)$, which is clearly surjective. For seeing
    that~$j$ is injective, assume~$\sum_{(y,\eta)\in P} \lambda_{y,\eta} \,\, y\!f^\eta = 0$. By the
    definition of the direct sum, $\lambda_{y,\eta} = 0$ for all~$(y,\eta) \in P$, and
    $\big(\sum_{(y,\eta)\in P} \lambda_{y,\eta} \, h_{y,\eta}\big) \otimes f = 0$. We conclude
    that~$j$ is a $K$-linear isomorphism. The canonical Heisenberg action
    of~$H_K(\beta) \otimes_k M$ migrates to~$\freesln_\beta(M)$ via~$j$, yielding the action law as
    stated.
  \item\label{it:free-heismod-over-set} Writing the generators by~$h_f$, we
    have~$K^{(F)} \cong \oplus_{f \in F} \; [h_f]$, hence the previous item implies
    $$\freesln_\beta(F) \cong \bigoplus\limits_{(y,\eta)\in P} \bigoplus\limits_{f \in F} \; \{ y \}
    \times [h_f] \times \{\eta\},$$
    where the $K$-module on the right-hand side is free on the generators~$(y, h_f, \eta)$. Mapping
    these to~$h_{y,f,\eta} \in K^{(G \times F \times \Gamma)} = \freesln_\beta(F)$ yields the
    desired $K$-isomorphism and the corresponding action law. (The notational convention is the same
    as before but restricted to the generators~$h_f$, which are shortened to~$f$.)
  \end{enumerate}
  There is a simple alternative proof of item~\eqref{it:free-heismod-over-set}, which is not based
  on item~\eqref{it:free-heismod-over-module}. Identifying Heisenberg module with modules over the
  Heisenberg group algebra, the free Heisenberg module over~$F$ is clearly given
  by~$H_K(\beta)^{(F)}$. But as a $K$-module, we have~$H_K(\beta) = K^{(G \times \Gamma)}$, hence
  the $K$-isomorphism~$H_K(\beta)^{(F)} \cong K^{(G \times F \times \Gamma)}$
  with~$h_{y,\eta} \, h_f \leftrightarrow h_{y,f,\eta}$. It is easy to see that the natural action
  of~$H_K(\beta)^{(F)}$ then induces the action law stated.
\end{proof}

The action laws in Proposition~\ref{prop:free-heismod} may be
summarized by characterizing Heisenberg scalars~$x \in G$ as
\emph{formal multipliers} and Heisenberg operators~$\xi \in \Gamma$ as
\emph{formal exponents}.

Now we take the next step, constructing the \emph{free Heisenberg
  algebra} $\freepln(S)$ over a given Heisenberg
module~$S \in \ModH\beta$.  Note that this is \emph{not} simply the
free $H_K(\beta)$-algebra over the $H_K(\beta)$-module~$S$, due to the
distinct roles of Heisenberg scalars and operators. Indeed, one must
take the symmetric algebra~$\Sym_{KG}(S) \equiv T_{KG}(S)/I_{KG}(S)$
with~$S$ considered as~$KG$-module. By analogy with
Propostion~\ref{prop:free-heismod}, we write the given Heisenberg
action of~$S$ as exponents.

\begin{proposition}
  \label{prop:free-heisalg}
  Given a Heisenberg module~$S \in \ModH\beta$, the induced free Heisenberg
  algebra is~$S \hookrightarrow \freepln(S) := \Sym_{KG}(S)$, with action
  \begin{equation}
    \label{eq:free-heisalg-action}
    (x,\xi) \,\act\, s_1 \cdots s_k = x \; (s_1^{\xi}) \cdots ( s_k^{\xi})
  \end{equation}
  for~$s_1, \dots, s_k \in S$.
\end{proposition}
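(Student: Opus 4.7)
The plan is to equip $\Sym_{KG}(S)$ with a natural Heisenberg action extending that of~$S$, and then to verify the universal property against the forgetful functor $\AlgH\beta \to \ModH\beta$. The canonical $KG$-algebra structure of $\Sym_{KG}(S)$ supplies the action of $TG \le H(\beta)$ for free (with~$T$ acting through~$\epsilon_T$), so only the action of~$\Gamma$ needs to be constructed. The key observation I would exploit is that the twist axiom~\eqref{eq:twisted-bimod} on~$S$ expresses precisely the fact that each $\xi\act\colon S \to S$ is \emph{semilinear} over~$KG$ relative to the $K$-algebra automorphism $\chi_\xi\colon KG \to KG$ defined by $x \mapsto \epsilon_T\inner{\xi}{x}\,x$, which itself comes from conjugation by~$\xi$ in $H(\beta)$.

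Next I would extend each $\xi\act$ to a $\chi_\xi$-semilinear $K$-algebra endomorphism of $\Sym_{KG}(S)$ by the prescription~\eqref{eq:free-heisalg-action}. The main obstacle is the well-definedness of this extension: one must check that the rule $\xi \act (x \, s_1 \cdots s_k) := \chi_\xi(x) \, (s_1^\xi) \cdots (s_k^\xi)$ descends through the symmetric and $KG$-balanced relations defining $\Sym_{KG}(S)$ as a quotient of $T_K(S)$. Symmetry is preserved since $\xi$ acts componentwise on tensors, while the $KG$-balanced relations $(x s) \otimes t - s \otimes (x t)$ are preserved precisely by the semilinearity just noted. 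Granted this, axioms (H1)--(H8) of Definition~\ref{def:heis-alg} follow mechanically: (H1), (H3), (H5), (H7) from the $KG$-algebra structure; (H8) because each $\xi\act$ is a ring homomorphism by construction; (H6) by unwinding the semilinearity; and (H2), (H4) from uniqueness of $K$-algebra extensions once checked on the generating set $S \cup KG$ (trivially on~$S$ by the given Heisenberg action, and on~$KG$ by bilinearity of~$\inner{\blnk}{\blnk}$).

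For the universal property, I would take any $A \in \AlgH\beta$ together with a Heisenberg module morphism $\phi\colon S \to A$. Since $\phi$ is in particular a $KG$-module morphism into the commutative $KG$-algebra~$A$, the universal property of $\Sym_{KG}$ yields a unique $KG$-algebra morphism $\hat\phi\colon \Sym_{KG}(S) \to A$ extending $\phi$. To verify $\hat\phi$ is $\Gamma$-equivariant, I would observe that both $\hat\phi \circ (\xi\act)$ and $(\xi\act) \circ \hat\phi$ are $\chi_\xi$-semilinear $K$-algebra maps $\Sym_{KG}(S) \to A$ that agree on the generating set $S \cup KG$---on~$S$ because $\phi$ is a Heisenberg morphism, and on~$KG$ because both reduce to $\chi_\xi$ there. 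Hence they coincide everywhere. Uniqueness of $\hat\phi$ as a Heisenberg morphism is automatic since $S$ already generates $\Sym_{KG}(S)$ as a $KG$-algebra. The whole argument thus reduces, once the twist axiom is translated into $\chi_\xi$-semilinearity, to standard universal-algebra bookkeeping around the symmetric algebra functor.
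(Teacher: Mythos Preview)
Your proof is correct and follows essentially the same approach as the paper's: both construct the $\Gamma$-action on $\Sym_{KG}(S)$ by extending each $\xi\act$ to a $K$-algebra endomorphism, then verify the universal property via the universal property of $\Sym_{KG}$ together with axioms~\eqref{eq:left-scal} and~\eqref{eq:right-op} on the target. Your explicit reformulation of the twist axiom~\eqref{eq:twisted-bimod} as $\chi_\xi$-semilinearity is a clean way to handle the well-definedness of the $\Gamma$-action on the quotient, a point the paper leaves as ``easy to see.''
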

\begin{proof}
  Since a Heisenberg module~$S$ may be viewed as a module over the Heisenberg group
  algebra~$H_K(\beta)$, the inclusion~$\iota_G\colon KG \hookrightarrow H_K(\beta)$ induces the
  structure of $KG$-module on~$S$. Hence we may form the symmetric
  algebra~$\freepln(S) = \Sym_{KG}(S)$, and it is easy to see that~\eqref{eq:free-heisalg-action}
  yields a $K$-linear action on~$\freepln(S)$. By definition, the torus~$T \le H(\beta)$ acts
  naturally via~$\epsilon_T$. Indeed, $\freepln(S)$ is a Heisenberg algebra since the action
  of~$G \le H(\beta)$ coincides with the $KG$-scalar action on~$\freepln(S)$ while
  each~$\xi \in \Gamma \le H(\beta)$ acts as $K$-algebra endomorphism.

  For seeing that~$\freepln(S)$ is free over~$S$, we take a arbitrary Heisenberg algebra~$U$ and
  Heisenberg morphism~$\phi\colon S \to U$, where~$U$ is viewed as a Heisenberg module. We must show
  that~$\phi$ extends to a unique $\AlgH\beta$-morphism~$\tilde\phi\colon \freepln(S) \to U$. Being
  an algebra homomorphism, it must
  satisify~$\tilde\phi(s_1 \cdots s_k) = \phi(s_1) \cdots \phi(s_k)$, which
  fixes~$\tilde\phi$ uniquely. It remains to show that~$\tilde\phi\colon \freepln(S) \to U$ defined
  in this way is actually an $\AlgH\beta$-morphism. Using~\eqref{eq:left-scal} for the Heisenberg
  algebra~$U$, it is easy to see that~$\tilde{\phi}$ is a $KG$-algebra homomorphism. For seeing that
  it respects the action of~$\Gamma \le H(\beta)$, one similarly applies~\eqref{eq:right-op}
  for~$U$.
\end{proof}

Note that the elements of the free Heisenberg algebra~$\freepln(S)$ are essentially polynomials,
hence the notation~$\freepln$. If~$S$ has generators~$\bar{S}$, the symmetric algebra $\Sym_{KG}(S)$
is generated by~$\bar{s}_1 \cdots \bar{s}_k \: (\bar{s}_i \in \bar{S})$ as $KG$-module, and the
action law may be expanded to
\begin{equation*}
      (x,\xi) \act x_0 \: \bar{s}_1 \cdots \bar{s}_k = \epsilon_T
    \inner{x_0}{\xi} \: x_0 x \; (\bar{s}_1^{\xi}) \cdots (\bar{s}_k^{\xi}),
\end{equation*}
which follows from~\eqref{eq:free-heisalg-action} via~\eqref{eq:twisted-bimod}.

We have proved that~$\freepln\colon \ModH\beta \to \AlgH\beta$ is left adjoint to the forgetful
functor~$\AlgH\beta \to \ModH\beta$. We can combine the latter with the left adjoints of
Proposition~\ref{prop:free-heismod} to obtain the \emph{free Heisenberg algebra~$\freepln_\beta(F)$
  over a set}~$F$ and the \emph{free Heisenberg algebra~$\freepln_\beta(M)$ over a $K$-module~$M$},
which we describe now in more explicit terms.

\begin{corollary}
  \label{cor:free-heisalg}
  Let~$\beta$ be a duality, $M$ a $K$-module and~$F$ a set.
  \begin{enumerate}
  \item\label{it:free-heisalg-mod} We
    have~$\freepln_\beta(M) \cong \Sym_{KG}\Big(\bigoplus_{\eta \in \Gamma} M^\eta\Big)$
    with action
    $$(x, \xi) \act f_1^{\eta_1} \cdots f_k^{\eta_k} = x \: f_1^{\xi\eta_1}
    \cdots f_k^{\xi\eta_k}$$ for~$(f_1^{\eta_1}, \dots, f_k^{\eta_k}) \in
    M^{\eta_1} \times \cdots \times M^{\eta_k}$.
  \item\label{it:free-heisalg-set} We
    have~$\freepln_\beta(F) \cong KG[F \times \Gamma]$ with action
      $$(x,\xi) \act (f_1^{\eta_1})^{\nu_1} \cdots (f_k^{\eta_k})^{\nu_k} =
      x \; (f_1^{\xi\eta_1})^{\nu_1} \cdots (f_k^{\xi\eta_k})^{\nu_k}$$
      for~$f_i^{\eta_i} \equiv (f_i, \eta_i) \in F \times \Gamma \; (i = 1, \dots, k)$
      and~$\nu_1, \dots, \nu_k \in \NN$.
    \end{enumerate}
    The embeddings~$M \to \freesln_\beta(M)$
    and~$F \to \freesln_\beta(F)$ are~$a \mapsto a^1$.
\end{corollary}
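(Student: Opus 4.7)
The plan is to exploit the fact that $\freepln_\beta$ is the composition of the left adjoints constructed in Proposition~\ref{prop:free-heismod} and Proposition~\ref{prop:free-heisalg}. Since left adjoints compose, we have $\freepln_\beta(M) = \freepln(\freesln_\beta(M))$ and $\freepln_\beta(F) = \freepln(\freesln_\beta(F))$. Both items then reduce to re-expressing $\Sym_{KG}$ applied to the Heisenberg modules already exhibited.

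For item~\ref{it:free-heisalg-mod}, Proposition~\ref{prop:free-heismod}\ref{it:free-heismod-over-module} gives $\freesln_\beta(M) \cong \bigoplus_{(y,\eta) \in P} yM^\eta$ with the action law $(x,\xi) \act yf^\eta = \epsilon_T\inner{\xi}{y}\,(xy)f^{\xi\eta}$. Restricting to the subgroup $G \le H(\beta)$ (where $\xi = 0$ and the phase factor is trivial), we see that $G$ permutes the summands $yM^\eta$ freely by left translation in the $y$ coordinate. Hence, viewed as a $KG$-module, $\freesln_\beta(M)$ is the scalar extension $KG \otimes_K \bigoplus_{\eta \in \Gamma} M^\eta$ of the $K$-module $\bigoplus_\eta M^\eta$. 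Applying $\Sym_{KG}$ yields $\Sym_{KG}(\bigoplus_\eta M^\eta)$, with $x \in G$ surviving as a $KG$-coefficient. The stated action formula is now read off from~\eqref{eq:free-heisalg-action}: the operator $\xi$ distributes across products by~\eqref{eq:right-op}, sending each $f_i^{\eta_i} = 1 f_i^{\eta_i}$ to $f_i^{\xi \eta_i}$, and the scalar $x$ pulls out in front of the product.

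For item~\ref{it:free-heisalg-set} one proceeds in parallel, using Proposition~\ref{prop:free-heismod}\ref{it:free-heismod-over-set}: $\freesln_\beta(F) \cong K^{(G \times F \times \Gamma)}$, which is clearly free as a $KG$-module on the index set $F \times \Gamma$. Taking $\Sym_{KG}$ of a free $KG$-module on a set produces the commutative polynomial algebra $KG[F \times \Gamma]$. The corresponding action law is again derived from~\eqref{eq:free-heisalg-action} combined with~\eqref{eq:right-op}: the operator $\xi$ merely shifts the $\eta$-exponent of each generator from $\eta_i$ to $\xi\eta_i$ while preserving the multiplicity $\nu_i$, and the scalar $x$ enters as a $KG$-coefficient.

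The main obstacle is simply the bookkeeping for the $KG$-module identification $\freesln_\beta(M) \cong KG \otimes_K \bigoplus_\eta M^\eta$; once this is visible, both action formulas fall out mechanically from the Heisenberg axioms~\eqref{eq:left-scal} and~\eqref{eq:right-op}. The embeddings $M \to \freepln_\beta(M)$ and $F \to \freepln_\beta(F)$ send $a \mapsto a^1$ because they factor as the embedding into the free Heisenberg module (Proposition~\ref{prop:free-heismod}, mapping $a$ to $1 a^1$) composed with the embedding into the symmetric algebra, under which $1$ becomes the unit $KG$-coefficient.
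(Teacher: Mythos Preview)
Your proposal is correct and follows essentially the same route as the paper: both arguments identify $\freesln_\beta(M)$ as a free $KG$-module on $\bigoplus_\eta M^\eta$ (the paper phrases this as ``$M^\Gamma$ generates $\freesln_\beta(M)$ as $KG$-module''), then apply $\Sym_{KG}$ and read off the action law from~\eqref{eq:free-heisalg-action}; for item~\ref{it:free-heisalg-set} both note that $K^{(G\times F\times\Gamma)}\cong (KG)^{(F\times\Gamma)}$ is free on $F\times\Gamma$, whence $\Sym_{KG}$ yields the polynomial ring $KG[F\times\Gamma]$. Your exposition is slightly more explicit about the $KG$-module identification via scalar extension, but the argument is the same.
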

\begin{proof}
  Let~$\beta$, $M$ and~$F$ be as stated.
  \begin{enumerate}
  \item Writing~$M^\eta := 1M^\eta$
    and~$M^\Gamma := \bigoplus_{\eta \in \Gamma} M^\eta \le \freesln_\beta(M)$ as abbreviations, it
    is clear that~$M^\Gamma$ generates~$\freesln_\beta(M)$ as $KG$-module, hence
    the~$f_1^{\eta_1} \cdots f_k^{\eta_k}$ generate~$\freepln_\beta(M)$ as $KG$-module; the
    isomorphism consists in multiplying out. The action law follows by combining that
    of~$\freesln_\beta(M)$ with~\eqref{eq:free-heisalg-action}.
  \item The isomorphism~$\Phi\colon \freepln_\beta(F) \isomarrow KG[F \times \Gamma]$ is constructed
    by noting
    that~$M := \freesln_\beta(F)= K^{(G \times F \times \Gamma)} \cong (KG)^{(F \times \Gamma)}$ is
    a free $KG$-module with basis~$F \times \Gamma$ so that~$\freepln_\beta(F) = S_{KG}(M)$ is a
    polynomial ring over~$KG$ with indeterminates~$F \times \Gamma$. Again, the action law follows
    by combining that of~$\freesln_\beta(F)$ with~\eqref{eq:free-heisalg-action}.\qedhere
  \end{enumerate}
\end{proof}

The action law of Item~\ref{it:free-heisalg-set} may be assimilated
further to that of Item~\ref{it:free-heisalg-mod} in
Corollary~\ref{cor:free-heisalg} by using \emph{formal
  products}~$\eta_i \nu_i \in \Gamma \times \NN$ in the exponents: In
that case, Heisenberg operators act on their left components while
iterated multiplication acts on their right components.

Up to now, we have now obtained the following five functors for
generating the free slain and plain Heisenberg algebras:
\begin{align*}
  & \freesln_\beta\colon \Set \to \ModH\beta,\quad
    \freesln_\beta\colon \Mod_K \to \ModH\beta,\text{ and}\\
  & \freepln\colon \ModH\beta \to \AlgH\beta,\text{ and}\\
  & \freepln_\beta = \freepln\freesln_\beta\colon \Set \to \AlgH\beta,\quad
    \freepln_\beta = \freepln\freesln_\beta\colon \Mod_K \to \AlgH\beta.
\end{align*}
On morphisms, these functors act in the usual way of free functors. For example, given any set
map~$\zeta\colon F \to \Phi$, the \emph{induced morphism}
$\freepln_\beta(\zeta)\colon \freepln_\beta(F) \to \freepln_\beta(\Phi)$ in~$\AlgH\beta$ is obtained
by sending the $KG$-basis
element~$(f_1^{\xi_1})^{\nu_1} \cdots (f_k^{\xi_k})^{\nu_k} \in \freepln_\beta(F)$
to~$(\phi_1^{\xi_1})^{\nu_1} \cdots (\phi_k^{\xi_k})^{\nu_k} \in \freepln_\beta(\Phi)$, where
~$\phi_1 := \zeta(f_1), \dots, \phi_k := \zeta(f_k)$ are the assigned image elements.

The description of the corresponding free Heisenberg twain algebras is somewhat more cumbersome. It
is easier to describe first the general situation: Given an $R$-module~$M$, we want to construct the
\emph{free twain algebra}~$F$ over~$M$, which is the smallest twain algebra~$(F, +, \star, \cdot)$
such that~$(M, +)$ is a submodule of~$(F, +)$. We define a sequence of product
algebras~$(C_k \times D_k)_{k>0}$ starting with $C_0 \times D_0 := M \oplus M$ and constructing
recursively~$C_{k+1} \times D_{k+1} := \Sym_R(D_k)_\star \times \Sym_R(C_k)_{\tdot}$, where the
subscripts on the symmetric algebras serve to keep the products apart. Using inducion, one checks
immediately that~$C_k \subset C_{k+1} \land D_k \subset D_{k+1}$, so the~$(C_k \times D_k)_{k>0}$
form an ascending sequence of $R$-algebras whose direct limit we denote by
\begin{equation}
  \label{eq:dirlim}
  C_\infty \times D_\infty := \bigcup_{k>0} C_k \times D_k .
\end{equation}
Obviously, $(C_\infty, \star)$ and~$(D_\infty, \cdot)$ are both algebras over~$R$, and their
carriers coincide since~$C_k \subset \Sym_R(C_k) = D_{k+1} \subset D_\infty$ and
conversely $D_k \subset C_\infty$.  Thus~$F:= C_\infty = D_\infty$ is a twain
algebra~$(F, \star, \cdot)$ over~$R$, and it is easy to see that it satisfies the required universal
property for the free twain algebra over the $R$-module~$M$.

The construction of the \emph{free Heisenberg twain algebra} follows
similar lines, but the role of the ring~$R$ in the intertwined
recursion steps for~$C_{k+1} \times D_{k+1}$ is more subtle, reflecting
the alternating scalar/operator roles for the Heisenberg action on the
recto/verso algebras. We shall make use of the free functor~$\freepln$
of Proposition~\ref{prop:free-heisalg}, using an overbar when
referring to its verso variant (where~$G$ act as scalars and~$\Gamma$
as operators).

\begin{proposition}
  \label{prop:free-heis-twn}
  Given a Heisenberg module~$S \in \ModH\beta$, the free Heisenberg
  twain algebra $S \hookrightarrow \freetwn(S)$ is defined as the
  direct limit~\eqref{eq:dirlim} for the ascending sequence
  \begin{equation}
    \label{eq:twn-heis-asc}
    C_0 \times D_0 := S \oplus S,\qquad
    C_{k+1} \times D_{k+1} := \freepln(D_k)_\star \times \bar{\freepln}(C_k)_{\tdot}
  \end{equation}
  with induced Heisenberg action.
\end{proposition}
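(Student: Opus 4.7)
The plan is to imitate the classical construction of a free twain algebra over an $R$-module (leading to~\eqref{eq:dirlim}), but to alternate at each stage between the recto free Heisenberg algebra functor $\freepln$ of Proposition~\ref{prop:free-heisalg} and its verso counterpart $\bar{\freepln}$, so that positions and momenta play the correct scalar/operator roles with respect to each of the two multiplications simultaneously. The universal property will then be obtained level-by-level, using the universal properties of $\freepln$ and $\bar{\freepln}$ on each rung of the ladder.

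First I would verify that the sequence in~\eqref{eq:twn-heis-asc} really is ascending. The base case $C_0 = D_0 = S$ is trivial. Inductively, if $D_{k-1} \hookrightarrow D_k$ as Heisenberg modules, then applying Proposition~\ref{prop:free-heisalg} to the composition $D_{k-1} \hookrightarrow D_k \hookrightarrow \freepln(D_k)$ produces a recto Heisenberg morphism $C_k = \freepln(D_{k-1}) \to \freepln(D_k) = C_{k+1}$ that agrees with the inclusion on the generators $D_{k-1}$, and an analogous statement holds for $\bar{\freepln}$ and the $D_k$. Moreover, $D_k$ embeds into $C_{k+1} = \freepln(D_k)$ as its canonical set of generators, and dually $C_k \hookrightarrow D_{k+1}$, so the two chains interleave and their union satisfies $C_\infty = D_\infty$ as $K$-modules; call this common module $\freetwn(S)$.

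Next I would transfer the structure onto $\freetwn(S)$. The $\star$-product is defined by picking, for given $u,v$, an index $k$ with $u, v \in C_k$ and setting $u \star v$ to be their product in $\freepln(D_{k-1})_\star$; this is independent of $k$ because the embedding $\freepln(D_{k-1}) \hookrightarrow \freepln(D_k)$ is a $\star$-algebra morphism by construction. The $\cdot$-product is defined analogously through the $D_k$. The Heisenberg action of $H(\beta)$ extends in the same fashion, and by Proposition~\ref{prop:free-heisalg} its restriction to each $C_k$ gives positions a scalar and momenta an operator action, while on each $D_k$ it yields the dual pattern; consequently $\freetwn(S)_\star$ is recto and $\freetwn(S)_{\tdot}$ is verso, so $\freetwn(S)$ is a Heisenberg twain algebra. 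For the universal property, given a Heisenberg twain algebra $U$ and a Heisenberg module morphism $\phi\colon S \to U$, set $\tilde\phi_0 := \phi$ and extend inductively: once Heisenberg morphisms $\tilde\phi_k\colon C_k \to U_\star$ and $\tilde\phi_k\colon D_k \to U_{\tdot}$ are known to agree on $S$, Proposition~\ref{prop:free-heisalg} applied to $\tilde\phi_k|_{D_k}$ (viewed as a morphism of Heisenberg modules into $U$) extends it uniquely to $C_{k+1} = \freepln(D_k) \to U_\star$, and symmetrically for $D_{k+1}$. Passing to the direct limit produces the required Heisenberg twain morphism $\tilde\phi\colon \freetwn(S) \to U$; uniqueness follows since $S$ generates $\freetwn(S)$ under the two products together with the $H(\beta)$-action.

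The main obstacle will be keeping the recto/verso bookkeeping consistent across levels. A priori each inclusion $C_k \hookrightarrow C_{k+1}$ is only guaranteed to be compatible with $\star$ and with the recto $H(\beta)$-action, whereas elements of $C_k$ also live in some $D_\ell$ and are thus subject to the verso structure; one must check that the two independently constructed sets of operations on $\freetwn(S) = C_\infty = D_\infty$ fit together. This is precisely what forces the cross-over between $C$- and $D$-steps in~\eqref{eq:twn-heis-asc}: the fact that $C_{k+1}$ is built from $D_k$ (and not from $C_k$) is what ensures that every element of $C_k$ already carries an unambiguous verso structure inherited from some earlier $D_\ell$, so that the two limits agree as twain algebras rather than merely as $K$-modules.
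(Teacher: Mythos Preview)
Your proposal is correct and follows essentially the same strategy as the paper's primary proof: verify the sequence~\eqref{eq:twn-heis-asc} is ascending, identify $C_\infty = D_\infty$ as the carrier of the twain structure, and then derive the universal property by induction on levels. The paper phrases the identification $C_\infty = D_\infty$ slightly more explicitly via an overlay isomorphism $\iota$ defined by rank (tracking when an element first appears as a nontrivial product), whereas you obtain it directly from the interleaving $C_k \hookrightarrow D_{k+1}$ and $D_k \hookrightarrow C_{k+1}$; both arguments rest on the same identification of singleton products $[s_1]$ with their underlying element. The paper also sketches a second, independent proof via universal algebra (term algebras over a signature with both products and Heisenberg homotheties), which you do not attempt, but this is offered as an alternative rather than as the main argument.
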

\begin{proof}
  Let us first reassure ourselves that~\eqref{eq:twn-heis-asc} is
  indeed an ascending sequence: It is clear that each
  $C_{k+1} = \Sym_{KG}(D_k)$ is a recto plain Heisenberg algebra
  over~$\beta$ since Proposition~\ref{prop:free-heisalg} is applicable
  to~$D_k$ viewed as a Heisenberg module. Similarly,
  each~$D_{k+1} = \Sym_{K\Gamma}(C_i)$ is a verso plain Heisenberg
  algebra over~$\beta$. It follows by joint induction
  that~$C_k \le C_{k+1}$ in~$\Alg_{KG}$ and~$D_k \le D_{k+1}$
  in~$\Alg_{K\Gamma}$. Moreover, the operator actions for each are
  compatible, so the $(C_k)_{k>0}$ and~$(D_k)_{k>0}$ are ascending
  sequences of (recto and verso) plain Heisenberg algebras. Hence we
  obtain in the direct limit~\eqref{eq:dirlim} a recto plain
  Heisenberg algebra~$C_\infty$ and a verso plain Heisenberg
  algebra~$D_\infty$.

  For making these into one twain Heisenberg algebra, we use the
  overlay~$C_\infty \divideontimes_\iota D_\infty$ introduced before Example~\ref{ex:tor-twainalg},
  with the transfer isomorphism~$\iota\colon C_\infty \isomarrow D_\infty$ defined as follows. We
  keep~$C_0 = M = D_0$ invariant. Then let~$s \in C_\infty$ be an element of rank~$k>0$ so that~$s$
  is contained in~$C_k$ but in no earlier stage. We can write such elements as $K$-linear
  combinations of~$s = [s_1] \star \cdots \star [s_m]$ for~$s_1, \dots, s_m \in D_{k-1}$, with~$[-]$
  denoting the generator embedding. We set~$\iota(s) = [s] \in D_{k+1} \le D_\infty$ if~$m>1$ and
  $\iota(s) = s_1 \in D_{k-1} \le D_\infty$ if~$m=1$. It is easy to see that~$\iota$ is bijective
  with~$\iota^{-1}$ having an analogous description.\footnote{This presupposes that embedded
    elements~$[s_1] \in C_k$ embed into~$D_{k+1}$ as their original, thus
    identifying~$[[s_1]] = s_1 \in D_{k-1}$; the same is assumded for embeddings into~$C_{k+1}$. We
    have implicitly used this for establishing~$C_k \le C_{k+1}$ and~$D_k \le D_{k+1}$.} The
  resulting product works in the expected manner, for example
  \begin{equation*}
    ([s_1] \star [s_2]) \cdot ([t_1] \star [t_2]) = [[[s_1] \star [s_2]] \cdot{} [[t_1] \star [t_2]]],
  \end{equation*}
  expressed without embeddings by the apparently vacuous statement (similar to the
  corresponding statement for multiplying polynomials): The pointwise product of~$s_1 \star s_2$
  and~$t_1 \star t_2$ yields~$(s_1 \star s_2) \cdot (t_1 \star t_2)$.

  For checking that~$\freetwn(S)$ is the free Heisenberg twain algebra over~$S$,
  let~$\phi\colon S \to T$ be a Heisenberg morphism to an arbitrary Heisenberg twain algebra~$T$. We
  must determine a unique map~$\tilde\phi\colon \freetwn(S) \to T$ that factors throught the
  embedding~$S \hookrightarrow \freetwn(S)$. Obviously, we must set~$\tilde\phi(s) = \phi(s)$ for
  all~$s \in S$, and this in fact determines~$\tilde\phi$ on all of~$\freetwn(S)$ since it must be a
  homomorphism (with respect to both multiplication maps). In effect, one recursively replaces
  brackets by~$\tilde\phi$ until hitting on elements of~$S$. We have thus established uniqueness,
  and we know that~$\tilde\phi$ is a twain homomorphism. Finally, one checks that~$\tilde\phi$
  respects the Heisenberg action using induction on rank: While the action of~$cx \in TG$ involves
  tracking down one path to a single leaf~$s \in S$, the action of~$\xi \in \Gamma$ precipitates
  down to all the leaves.
\end{proof}
\begin{proof}
  Let us also sketch an \emph{alternative proof via universal algebra}. To this end, consider the
  signature~$[\TwAlgH\beta]$ defined over the signature of abelian groups (binary plus, unary minus,
  nullary zero), together with the two binary products~$\star$ and~$\cdot$, the unary
  homotheties~$u \cdot$ for all Heisenberg actors~$u \in H(\beta)$, and the unary
  homotheties~$\lambda \cdot$ for the scalars~$\lambda \in K$. The signature~$[{\TwAlgH\beta}_S]$ is
  obtained by adding as constants (nullary operations) all elements of~$S$ to capture the embedding
  of~$S$. The variety of Heisenberg twain algebras~$\TwAlgH\beta$ is defined over the
  signature~$[\TwAlgH\beta]$ subject to the $K$-algebra axioms (distributivity doubled for~$\star$
  and~$\cdot$), the laws~\eqref{eq:left-unit}--\eqref{eq:twisted-bimod}, and two copies (one for
  each product) of~\eqref{eq:left-scal} and~\eqref{eq:right-op}. We add to this the Heisenberg
  action for each element of~$S$, to obtain the variety~${\TwAlgH\beta}_S$ over the
  signature~$[{\TwAlgH\beta}_S]$.

  Note that we do not have generators since the embedding of~$S$
  serves this purpose. Our task now is to establish an
  ${\TwAlgH\beta}_S$ isomorphism~$i$ from the term algebra~$T$ of the
  variety~${\TwAlgH\beta}_S$ to the free Heisenberg twain
  algebra~$\freetwn(S)$ as defined in the previous proof. We do this
  by orienting the laws of~${\TwAlgH\beta}_S$ in such a way that the
  corresponding normal forms can be identified with elements
  of~$\freetwn(S)$.

  Representing $0$ by the empty sum, it is clear that distributivity over the products~$\star$
  and~$\cdot$ as well as the homotheties of~$H(\beta)$ and~$K$ may be oriented in the usual manner
  to reduce each element in~$T$ to a sum of \emph{terms} (meaning elements of~$T$ that do not
  contain $+, -, 0$). Moreover, the linearity of the Heisenberg action allows us to write the
  $K$-homotheties on the very front, so everything is reduced to $K$-linear combinations of
  \emph{monomials} (terms that do not contain scalars from~$K$). Using~\eqref{eq:torus-act}, we can
  furthermore eliminate Heisenberg actors involving~$c \in T$. Applying both copies
  of~\eqref{eq:left-scal} and~\eqref{eq:right-op} allows us to move the remaining Heisenberg
  actors~$x \in G$ and~$\xi \in \Gamma$ all the way to the embedded elements~$s \in S$, where we can
  apply the imported Heisenberg action.

  Thus it remains to show how to identify Heisenberg-free monomials
  (elements of~$T$ involving only~$\star$ and~$\cdot$ as well as the
  embedded elements~$s \in S$). But it should be clear how to do this
  from the informal example mentioned in our earlier proof above. It
  is also straightforward to check that the resulting map~$i$ is
  indeed an isomorphism in the variety~${\TwAlgH\beta}_S$.
\end{proof}

The reader will agree that the above construction if full of tedious
technicalities but utterly simple from the structural perspective: As
usual, the elements of the free Heisenberg twain algebra are built up
by \emph{schematically applying} the available operations.

\section{Fourier Operators in Algebra}
\label{sec:fourier-operators-algebra}

\subsection{The Notion of Fourier Doublets.}\label{sub:four-doublet}
%Given a duality~$\beta$, we have introduced forward and backward
\emph{Heisenberg twists}~$\hat{J}\colon H(\beta) \to H(\beta)^o$
and~$\check{J}\colon H(\beta) \to H(\beta)^o$ and the parity
flip~$\bar{J}\colon H(\beta) \to H(\beta)$ in
\S\ref{sub:heis-twist}. They induce
functors~$S \mapsto S^{\land}$ and~$S \mapsto S^{\lor}$ from the
category of left/right to the category of right/left Heisenberg
algebras, and an endofunctor~$S \mapsto S^-$ on the category of
left/right Heisenberg algebras. Note that while the latter functor may
be interpreted as~$\AlgH{-_\Gamma \times -_G, 1_T}$
with~$-_\Gamma\colon \Gamma \to \Gamma$ and~$-_G\colon G \to G$ being
the negation maps, this does not work for the former functors since
the twists are not induced by morphisms of~$\Du$.

We will introduce \emph{forward/backward Fourier operators} as Heisenberg morphisms over the
forward/backward twists and \emph{reversal operators} as Heisenberg morphisms over the parity
flip. We can express this via the corresponding modules as follows (reversal is only mentioned for
right modules~$\Sigma$ but is defined in the same way for left modules~$S$).

\begin{definition}
  \label{def:four-op}
  For a fixed duality~$\beta$, let $S$ be a left and~$\Sigma$ a right
  Heisenberg algebra (slain or plain or twain).
  \begin{itemize}
  \item A Heisenberg morphism
    $\Four^{\land}\colon S \to \Sigma^{\land}$ is called a
    \emph{forward Fourier operator} from~$S$ to~$\Sigma$,
  \item and a Heisenberg morphism
    $\Four^{\lor}\colon S \to \Sigma^{\lor}$ a
    \emph{backward Fourier operator} from~$S$ to~$\Sigma$,
  \item a Heisenberg morphism~$\Par\colon \Sigma \to \Sigma^-$ a \emph{reversal
      operator} on~$\Sigma$.
  \end{itemize}
  By default, the term \emph{Fourier operator} refers to the forward kind.
\end{definition}

\begin{figure}[h]
  \includegraphics[width=\textwidth]{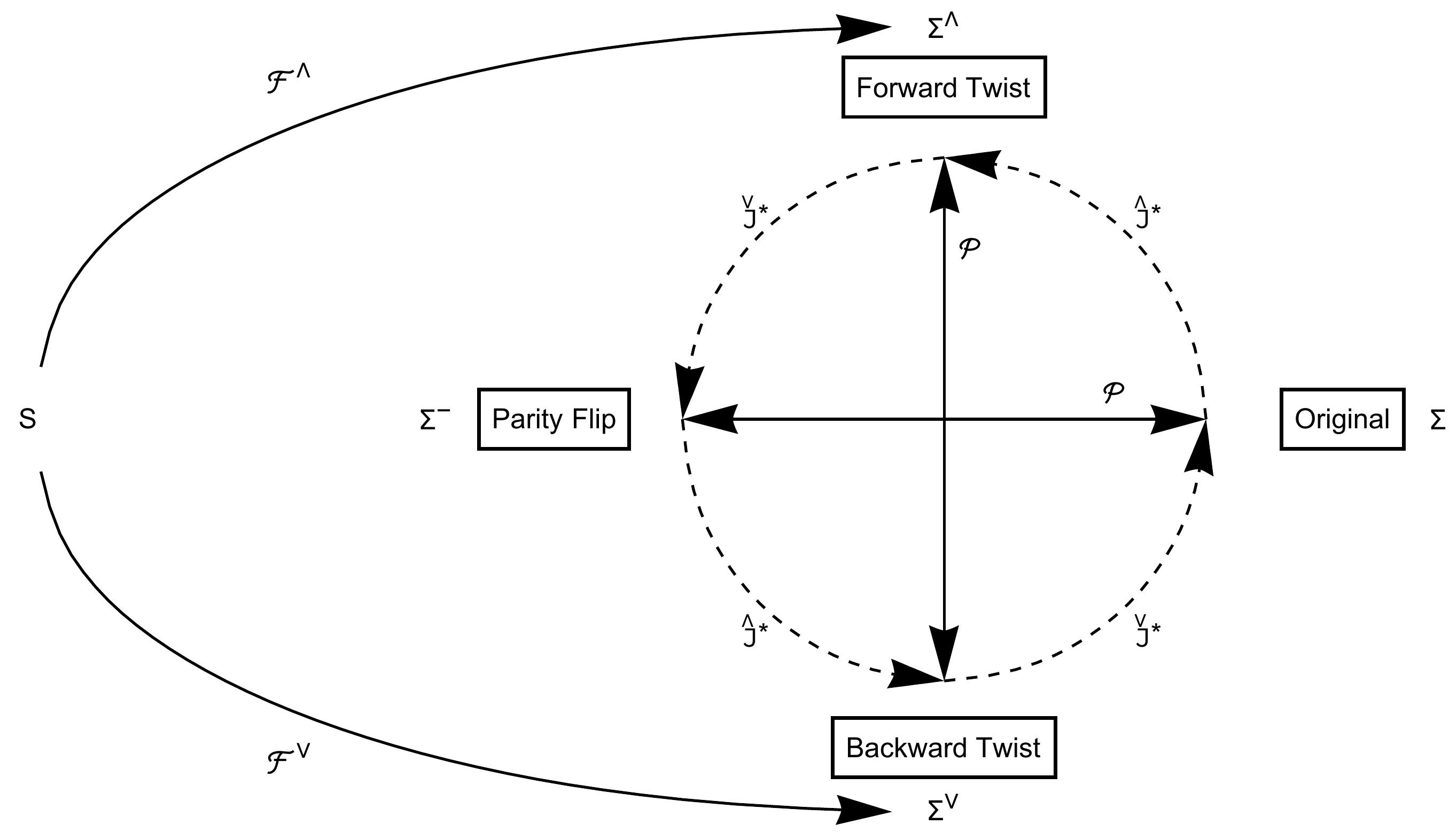}
  \caption{The Heisenberg Clock}
  \label{fig:heis-clock}
\end{figure}

Since the twists concur to cycles of periodicity four, their action may be visualized on the
\emph{Heisenberg clock} of Figure~\ref{fig:heis-clock}. One may think of this as a period-four
(``complexified'') analog of duality in finite-dimensional vector spaces. Topologically more
accurate, we may put a M\"obius strip around a clock face (left
half~$\lcurvearrowdown\!\!{12\atop6}$ standing for left modules, right
half~${12\atop6}\!\!\rcurvearrowdown$ for right modules), moving in six-hour steps: One starts
with~$S$ at~$9h$ to reach~$S^\land$ at~$15h$, moving on to~$S^-$ opposite of~$S$ at~$21h$, stopping
at~$S^\lor$ opposite of~$S^\land$ at~$3h$, finally returning to~$S$ next day at~$9h$.

\begin{myremark}
  \label{rem:justify-twist}
  Though the complexified duality structure \emph{seems to be an unnecessary complication}, it will
  be crucial for understanding Fourier inversion (\S\ref{sub:inversion}). Indeed, a single
  (forward) Fourier operator may be characterized in a much simpler way (taking all modules left):

  A \emph{conjugate-linear map} between modules~$M$ and~$M'$ over an involutive algebra~$A$ is a
  group homomorphisms~$\phi\colon (M,+) \to (M', +)$ such that~$\phi(a \cdot m) = a^* \, \phi(m)$
  for all~$a \in A$ and~$m \in M$, where~$a \mapsto a^*$ denotes the involution. Viewing Heisenberg
  modules~$S \in \ModH\beta$ as modules over the involutive algebra~$\big(H_K(\beta), \hat{J}\big)$,
  the Fourier operators from~$S$ to~$\Sigma$ are precisely the conjugate-linear maps. (Of course,
  one obtains backward Fourier operators by taking the backward twist~$\check{J}$ instead of the
  forward twist~$\hat{J}$.)

  One might be tempted to simplify matters even more by taking the codomain to be
  just~$S' := \Sigma^\land$; then a (forward) Fourier operator is an $H_K(\beta)$-linear
  map~$S \to S'$ \emph{simpliciter}. While this may be done on an adhoc basis for isolated examples,
  it does not mesh smoothly when considering forward and backward Fourier operators in tandem: The
  inverse operator is necessarily defined on a module with modified Heisenberg action (see
  Definition~\ref{def:singlet}). Moroever, in the all-important Pontryagin setting
  (\S\ref{sub:class-pont-duality}), the definition of the
  actions~\eqref{eq:pont-actions-G}--\eqref{eq:pont-actions-Gamma} would incur spurious signs
  concealing the twists.
\end{myremark}

We will sometimes write~$\hat{s} := \Four^\land(s) \in \Sigma^\land$
and~$\check{s} := \Four^\lor(s) \in \Sigma^\lor$ for the forward/backward Fourier transform
of~$s \in S$. While the \emph{hat notation}~$\hat{s}$ for the (forward) Fourier transform is very
commonly used in engineering practice, one may also find the \emph{check notation}~$\check{s}$ for
the backward transform in some places like~\cite[Def.~31.16]{HewittRoss1997}, where it is
specifically introduced in the $L^2$ setting (confer Proposition~\ref{prop:L2-doublet} below). For
the sake of uniformity, we write also~$\bar{s} := \Par(s)$ and~$\bar{\sigma} := \Par(\sigma)$ for
the reversal of~$s \in S$ and~$\sigma \in \Sigma$.

Writing~$\hat{h} = \hat{J}(h)$, $\check{h} = \check{J}(h)$ for the \emph{twists}
and~$\bar{h} = \bar{J}(h)$ for the \emph{flips} of \emph{Heisenberg actors}~$h \in H(\beta)$, we can
characterize forward/backward Fourier operators and reversal operators as follows:
\begin{equation}
  \label{eq:fourop-par-id}
  \Four^{\land} (h \act s) = \hat{s} \act \hat{h},
  \quad
  \Par(h \act s) = \bar{h} \act \bar{s}
  \qquad
  \Four^{\lor} (h \act s) = \check{s} \act \check{h},
\end{equation}
Using the hat/check notation on Heisenberg actors, the basic cycle in the Heisenberg clock
(Figure~\ref{fig:heis-clock}) reads $h \mapsto \hat{h} \mapsto h^- \mapsto \check{h} \mapsto h$.  To
avoid confusion with honest Heisenberg morphisms, we use cycle markers in
\begin{equation*}
  \Four^\land\colon S \pto \Sigma,\quad
  \Par\colon S \ppto S,\quad
  \Four^\lor\colon S \pppto \Sigma
\end{equation*}
for expressing the morphisms of Definition~\ref{def:four-op}. In fact, we will mostly need the
$\pto$ notation in the sequel.

In terms of the \emph{bimodule structure} mentioned after~\eqref{eq:left-unit}--\eqref{eq:right-op}
above, the conditions~\eqref{eq:fourop-par-id} decompose into the requirement of respecting the
torus action together with
\begin{align}
  \Four^\land (x \act s) &= \Four^\land(s) \act x,&
  \Four^\land (\xi \act s) &=  \Four^\land(s) \act \xi^-,\label{eq:cofourop}\\
  \Par (x \act s) &= x^- \act \Par(s),&
  \Par (\xi \act s) &= \xi^- \act \Par(s),\label{eq:parop}\\
  \Four^\lor (x \act s) &= \Four^\lor(s) \act x^-,&
  \Four^\lor (\xi \act s) &= \Four^\lor(s) \act \xi\label{eq:contfourop}
\end{align}
for~$(x, \xi) \in G \times \Gamma$ and~$s \in S$. Here~$x \mapsto x^-$ and~$\xi \mapsto \xi^-$
denote the negation maps of~$G$ and~$\Gamma$, respectively.

The parallel treatment of forward and backward Fourier operators, while appealing from an aesthetic
viewpoint, is not economic for algorithmic purposes. In the classical scenario described in
\S\ref{sub:class-pont-duality}, the distinction between~$\Four^{\land}$ and~$\Four^{\lor}$
hinges on the sign in the exponential. One can generate one from the other by applying a sign
change, which is incorporated in a \emph{distinguished reversal operator}.

\begin{definition}
  \label{def:par-op}
  Let~$\beta$ be a duality. We call~$S \in \AlgH\beta$ \emph{symmetric} if it is endowed with an
  involutive reversal operator~$\Par\colon S \to S$.
\end{definition}

In this case $\Four \Par$ is a backward/forward Fourier operator iff~$\Four$ is a forward/backward
Fourier operator. It is then preferrable to distinguish, say, some \emph{forward Fourier
  operator}~$\Four$ and retain~$\Four^{\lor} := \Four\Par$ as an abbreviation for the derived
\emph{backward Fourier operator}. For emphasizing the underlying symmetry, one may still
employ~$\Four^{\land} := \Four$ as a notational variant for the given Fourier operator.

A Fourier operator~$\Four$ from a symmetric Heisenberg algebra~$S$ to
another symmetric Heisenberg algebra~$\Sigma$ is called
\emph{symmetric} if it commutes with the reversal operators in the
sense that~$\Four \Par_S = \Par_\Sigma \Four$. We shall henceforth
suppress the domain of the reversal operators, writing again~$\Par$
when no confusion arises. Moreover, we assume all Heisenberg algebras
and Fourier operators as symmetric (see
Definitions~\ref{def:fourier-doublet} below), because all natural
examples appear to be like this.

Symmetric Heisenberg algebras also suggest the following convenient
jargon. We call an operator~$\FFour\colon \Sigma \to S$ \emph{sign
  inverse} to~$\Four$ if~$\FFour \Four = \Par_S$
and~$\Four \FFour = \Par_\Sigma$. In terms of forward/backward Fourier
operators: The inverse of~$\Four^\land$ is the sign inverse
of~$\Four^\lor$, and the inverse of~$\Four^\lor$ the sign inverse
of~$\Four^\land$.

\begin{myremark}
  \label{rem:Fourier-is-not-endo}
  Before we now introduce the central object of our algebraic approach to Fourier analysis, let us
  make a brief comparison with differential algebra~\cite{Kolchin1973,Ritt1966}. Faithful to its
  name as a discipline, its central algebraic objects are differential algebras, viz.\@ algebras
  with distinguished derivations. Likewise, in our case we will introduce \emph{Heisenberg algebras}
  with distinguished \emph{Fourier operators}. There are, however, two noteworthy differences:
  \begin{enumerate}
  \item In general we cannot expect Fourier operators to have the \emph{same domain and codomain},
    even if the signal and spectral spaces coincide: In the presence of an algebra structure,
    Fourier operators are only $K$-linear endomorphisms but not as algebra endomorphisms (see
    Definition~\ref{def:singlet}).
  \item While one typically has a great variety of derivations on any given ring (they form a Lie
    algebra!), the variety of \emph{Fourier operators between fixed Heisenberg algebras} appears to
    be rather restricted, at least under the usual topological constraints.
  \end{enumerate}
  In typical cases (see Remark~\ref{rem:four-op-from-sigspc}), the Heisenberg structure of the
  domain spawns the Fourier operator with its codomain---but this is something that the algebra
  ``does not see''.
\end{myremark}

With these qualifications in mind, we can now proceed to defining an appropriate algebraic notion of
Fourier structures. Since it harbors a pair of Heisenberg algebras, we will call such an object a
\emph{Fourier doublet}. As we shall see in the sequel, a Fourier doublet may occasionally coalesce
into a Fourier singlet (Definition~\ref{def:singlet}). Considering the great role of spectroscopy
as an early motivation for classical Fourier analysis, the doublet/singlet metaphor does not seem to
be out of place (see also Remark~\ref{rem:spectrum}).

\begin{definition}
  \label{def:fourier-doublet}
  Let~$\beta$ be a duality. Then $(S, \Sigma, \Four)$ is a \emph{Fourier doublet} over~$\beta$ if
  $\Four\colon S \pto \Sigma$ is a Fourier operator between the left Heisenberg
  algebra~$S \in \AlgH\beta$ and the right Heisenberg algebra~$\Sigma \in \AlgH{\beta}$.
\end{definition}

Going back to the definition of (plain) Heisenberg algebras, it will be seen that the essential
property required in a Fourier doublet is the so-called \emph{convolution
  theorem}~$\Four(s \star s') = \Four s \cdot \Four s'$. The choice of axioms for the axiomatization
in this paper is vindicated by the results derived in~\cite[Thm.~2.1]{Lavanya2016a}. It is shown
there that Fourier operators are essentially characterized uniquely by the (forward and backward)
convolution theorems, at least in the important case of the Schwartz-Bruhat functions to be treated
below (Theorem~\ref{thm:Schwartz-Bruhat}).

In the sequel, Fourier doublets will be written $\Db = [\Four\colon S \pto \Sigma]$. When referring
to \emph{elements} of the doublet~$\Db$, we identify the latter with their graphs. In other words,
every $d \in \Db$ is a pair~$d = (s, \sigma) \in S \times \Sigma$ such that~$\sigma = \Four s$.
Following classical usage~\cite[\S2; Prob.~6.30]{Bracewell1986}, we call~$d$ a \emph{Fourier
  pair}. Unlike Bracewell (who writes them~$s \subset \sigma$), we prefer the suggestive
notation~$d = [s \mapsto \sigma]$ for Fourier pairs in~$\Db = [\Four\colon S \pto \Sigma]$. Building
on widespread conventions in signal theory~\cite{BeerendsMorscheBergVrie2003,Bracewell1986}, we
refer to the~$s \in S$ as \emph{signals} and to the~$\sigma \in \Sigma$ as
\emph{spectra}. Accordingly, we call $S$ the \emph{signal space} and~$\Sigma$ the \emph{spectral
  space} of the Fourier doublet~$\Db$.

\begin{myremark}
  \label{rem:spectrum}
  While the ``spectrum of a signal'' has an immediate physical
  interpretation (in optics and acoustics), there is also a deep
  relation between Fourier analysis in the Pontryagin setting
  (Theorem~\ref{thm:pont-doublet}) and \emph{classical spectral
    theory}: As detailed in~Proposition~1.15 and Theorem~1.30
  of~\cite{Folland1994}, the Fourier
  transformation~$\Four\colon L^1(G) \to C_0(\Gamma)$ extends
  canonically to the group $C^*$-algebra~$\mathcal{A} = C^*(G)$ and
  its unitalization, yielding the Gelfand
  transformation~$\mathcal{A} \to C_0\big(\sigma(\mathcal{A})\big), a
  \mapsto \hat{a}$.  Here~$\sigma(\mathcal A)$ is the algebra spectrum
  of~$\mathcal A$, meaning its maximal ideal space, and $\hat a$ the
  homeomorphism from~$\sigma(\mathcal A)$ to the classical operator
  spectrum~$\sigma(a) = \{ \lambda \in \CC \mid \lambda 1_{\mathcal A}
  - a \text{ is singular} \}$.
\end{myremark}

The \emph{category of Fourier doublets} over a duality~$\beta$, denoted by~$\Fou\beta$, is defined
as the full subcategory of the arrow category~$\AlgH{\blnk}^\to$ generated by Fourier doublets
over~$\beta$; the corresponding morphisms are called \emph{Fourier morphisms over $\beta$}. In
detail, given two Fourier doublets $\Db = [\Four\colon S \pto \Sigma]$
and~$\Db' = [\Four'\colon S' \pto \Sigma']$, a Fourier morphism from~$\Db$ to~$\Db'$ has the
form~$(a, \alpha)$ with a left Heisenberg morphism~$a\colon S \to S'$ and a right Heisenberg
morphism~$\alpha\colon \Sigma \to \Sigma'$ such that~$\alpha \Four = \Four' a$. We refer to~$a$
and~$\alpha$, respectively, as the \emph{signal map} and \emph{spectral map} of the Fourier
morphism. Following a similar procedure as for the category~$\AlgH{\blnk}$, we have the fibration
\begin{equation}
  \label{eq:all-dbl}
  \Fou\blnk = \biguplus_{\beta \in \Du} \Fou\beta
\end{equation}
making up the category of \emph{all} Fourier doublets. Note that the Heisenberg algebras~$S$
and~$\Sigma$ in a Fourier doublet~$[\Four\colon S \pto \Sigma]$ may be slain, plain or
twain---depending on how many nontrivial multiplications they come with. Naturally, we shall also
write~$\Fou{T}$ for the full subcategory of~$\Fou\blnk$ obtained by restricting the disjoint union
in~\eqref{eq:all-dbl} to~$\beta \in \Du(T)$.

As noted in \S\ref{sub:cat-heisalg}, the category~$\AlgH{\beta}$ has products. It is then easy
to see that the same is true of~$\Fou\beta$. Indeed, given doublets~$\Db$ and~$\Db'$ as above, it is
easy to see that~$\Four \times \Four'\colon S \times S' \to \Sigma \times \Sigma'$ is a Fourier
operator so that the \emph{product doublet}~$\Db \times \Db'$ is the Fourier
doublet~$[\Four \times \Four'\colon S \times S' \pto \Sigma \times \Sigma']$.

In \S\ref{sub:free-heismod-heisalg} we have constructed the free Heisenberg
module/algebra. We shall now package them to create \emph{free Fourier doublets}. To this end, we
use the following basic result in category theory whose proof is routine.

\begin{lemma}
  \label{lem:free-arrowcat}
  Let~$\C$ be a concrete category with free functor~$\mathfrak{Z}\colon \Set \to \C$. Then the
  functor~$\mathfrak{Z}^\to\colon \Set^\to \to \C^\to$ with
  \begin{equation*}
    \mathfrak{Z}^\to(X_1 \overset{x}{\to} X_2) := \Big( \mathfrak{Z}(X_1)
    \overset{\mathfrak{Z}(x)}{\longrightarrow} \mathfrak{Z}(X_2) \Big)
  \end{equation*}
  is free, being left adjoint to the forgetful functor~$\mathfrak{U}^\to\colon \C^\to \to \Set^\to$
  that sends~$C_1 \overset{c}{\to} C_2$ to the set
  map~$\mathfrak{U}(C_1) \overset{\mathfrak{U}(c)}{\longrightarrow} \mathfrak{U}(C_2)$.
\end{lemma}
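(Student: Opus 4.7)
The plan is to establish the adjunction via a natural bijection of hom-sets. Let $\eta\colon 1_{\Set} \to \forget\mathfrak{Z}$ and $\epsilon\colon \mathfrak{Z}\forget \to 1_\C$ be the unit and counit of the assumed adjunction $\mathfrak{Z} \dashv \forget$, so that we have the standard bijection $\theta_{X,C}\colon \Hom_\C(\mathfrak{Z}(X), C) \isomarrow \Hom_\Set(X, \forget(C))$ natural in $X$ and $C$, given by $f \mapsto \forget(f) \circ \eta_X$ with inverse $g \mapsto \epsilon_C \circ \mathfrak{Z}(g)$.

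I would then lift this bijection componentwise to the arrow categories. Given arrows $x\colon X_1 \to X_2$ in $\Set$ and $c\colon C_1 \to C_2$ in $\C$, a morphism $\mathfrak{Z}^\to(x) \to c$ in $\C^\to$ is a pair $(f_1, f_2)$ with $f_i\colon \mathfrak{Z}(X_i) \to C_i$ in $\C$ such that $c \circ f_1 = f_2 \circ \mathfrak{Z}(x)$; dually, a morphism $x \to \forget^\to(c)$ in $\Set^\to$ is a pair $(g_1, g_2)$ with $g_i\colon X_i \to \forget(C_i)$ such that $\forget(c) \circ g_1 = g_2 \circ x$. The candidate bijection is $(f_1, f_2) \mapsto (\theta_{X_1,C_1}(f_1), \theta_{X_2,C_2}(f_2))$, with the obvious inverse built from $\theta^{-1}$.

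The main technical step is to verify that commutative squares map to commutative squares under this assignment. Starting from $c \circ f_1 = f_2 \circ \mathfrak{Z}(x)$ and setting $g_i := \theta_{X_i,C_i}(f_i) = \forget(f_i) \circ \eta_{X_i}$, I compute
\begin{equation*}
  \forget(c) \circ g_1 = \forget(c) \circ \forget(f_1) \circ \eta_{X_1} = \forget(f_2) \circ \forget(\mathfrak{Z}(x)) \circ \eta_{X_1} = \forget(f_2) \circ \eta_{X_2} \circ x = g_2 \circ x,
\end{equation*}
where the penultimate equality is naturality of the unit $\eta$. The reverse direction is entirely analogous, exploiting naturality of the counit $\epsilon$ on the square $c \circ \epsilon_{C_1} \circ \mathfrak{Z}(g_1) = \epsilon_{C_2} \circ \mathfrak{Z}(\forget(c) \circ g_1)$.

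Finally, naturality of the lifted bijection in both arguments of $\Hom_{\C^\to}(-, -)$ is a routine consequence of the naturality of $\theta$ in each slot: composing on the left or right with morphisms of arrow-category objects acts componentwise, and the componentwise naturality of $\theta$ then gives what we need. I expect the only real obstacle to be notational bookkeeping in the commutative square calculation above; once the naturality of $\eta$ (or equivalently $\epsilon$) is invoked at the right moment, everything else is formal, and no additional hypotheses beyond the existence of $\mathfrak{Z} \dashv \forget$ are required.
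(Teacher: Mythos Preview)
Your proof is correct and follows the standard hom-set bijection argument. The paper does not actually supply a proof of this lemma; it simply declares the result to be ``routine'' and moves on, so there is nothing to compare against beyond noting that your argument is exactly the routine one the authors have in mind.
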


With this lemma, one can establish the \emph{free doublets} generated
by an arrow in~$\Set$ or in~$\Mod_K$.

\begin{proposition}
  \label{prop:free-doublets}
  Let~$\beta$ be a duality and~$f\colon L \to \Lambda$ a set map or
  $K$-module homomorphism.
  \begin{enumerate}
  \item Setting~$\tilde{f} := \freesln_\beta(f)$, the free slain
    doublet over~$f\colon L \to \Lambda$ is
    $[\tilde{f}\colon \freesln_\beta(L) \pto
    \freesln_\beta(\Lambda)^\land]$.
  \item Setting~$\tilde{f} := \freepln_\beta(f)$, the free plain
    doublet over~$f\colon L \to \Lambda$ is
    $[\tilde{f}\colon \freepln_\beta(L) \pto
    \freepln_\beta(\Lambda)^\land]$.
  \item Setting~$\tilde{f} := \freetwn_\beta(f)$, the free twain
    doublet over~$f\colon L \to \Lambda$ is
    $[\tilde{f}\colon \freetwn_\beta(L) \pto
    \freetwn_\beta(\Lambda)^\land]$.
  \end{enumerate}
\end{proposition}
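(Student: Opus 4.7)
The plan is to exhibit $\Fou\beta$ as equivalent to an arrow category, after which Lemma~\ref{lem:free-arrowcat} will immediately deliver the free Fourier doublets. The key observation is that the forward twist $\hat{J}$ is involutive in the sense $\hat{J}^2 = \mathrm{id}_{H(\beta)}$, as one checks at once from~\eqref{eq:fwd-twist}. Consequently the functor $(-)^\land$ is a self-inverse equivalence between categories of left and right Heisenberg modules, and likewise---once it is shown to lift---between categories of left and right recto/verso/twain Heisenberg algebras. By Definition~\ref{def:four-op}, a Fourier operator $\Four\colon S \pto \Sigma$ is exactly a left Heisenberg morphism $\Four\colon S \to \Sigma^\land$, and a Fourier morphism $(a, \alpha)$ between doublets corresponds to a commuting square in the arrow category with arrows $a$ and $\alpha^\land$, the latter being the left Heisenberg morphism derived from the right Heisenberg morphism $\alpha$ via $\hat J^*$. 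This will yield category equivalences $\Fou\beta \simeq \ModH\beta^\to$ in the slain case, with evident algebraic analogues $\AlgH\beta^\to$ and $\TwAlgH\beta^\to$ in the plain and twain cases.

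Under each of these equivalences, the forgetful functors $\Fou\beta \to \Set^\to$ and $\Fou\beta \to \Mod_K^\to$ will correspond to the ordinary forgetful functors on arrow categories, since $\Sigma$ and $\Sigma^\land$ share the same underlying set and $K$-module. Writing $\mathfrak Z$ for whichever of $\freesln_\beta$, $\freepln_\beta$, $\freetwn_\beta$ is appropriate in the slain/plain/twain case, I will then apply Lemma~\ref{lem:free-arrowcat} (and its evident $\Mod_K$-variant) to the free functors established in Propositions~\ref{prop:free-heismod}--\ref{prop:free-heis-twn} and Corollary~\ref{cor:free-heisalg}. The induced free functor $\mathfrak Z^\to$ sends $f\colon L \to \Lambda$ to the arrow $\mathfrak Z(f)\colon \mathfrak Z(L) \to \mathfrak Z(\Lambda)$, and translating this back through the equivalence will give precisely the Fourier doublet $[\tilde f\colon \mathfrak Z(L) \pto \mathfrak Z(\Lambda)^\land]$ asserted in each item of the proposition.

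The main point requiring genuine verification is the lifting of $(-)^\land$ from modules to plain and twain Heisenberg algebras, i.e.\ that $\Sigma^\land$ inherits the structure of a right Heisenberg algebra when $\Sigma$ is a left one. Writing the right action on $\Sigma^\land$ as $s \act h = \hat J(h) \act s$ with the left action on $\Sigma$ on the right-hand side, the operator axiom $(s \star \tilde s) \act \xi = (s \act \xi) \star (\tilde s \act \xi)$ translates via $\hat J(\xi) = -\xi$ into an identity that follows directly from~\eqref{eq:right-op} applied to $-\xi$. The scalar axiom $(s \star \tilde s) \act x = s \star (\tilde s \act x)$ translates, via $\hat J(x) = x$, to the identity $x \act (s \star \tilde s) = s \star (x \act \tilde s)$; its left-hand side equals $(x \act s) \star \tilde s$ by~\eqref{eq:left-scal}, and the commutativity of $\star$ (standing assumption on algebras, stated in the terminological conventions) will equate this to $s \star (x \act \tilde s)$. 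The twain situation is the evident combination of the recto and verso arguments, and naturality guarantees that the resulting equivalences commute with the forgetful functors. This verification---together with the essentially formal invocation of Lemma~\ref{lem:free-arrowcat}---will be the most subtle step and constitutes the main obstacle, after which the proposition follows.
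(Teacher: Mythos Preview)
Your approach is correct and is precisely what the paper intends: it states the proposition immediately after Lemma~\ref{lem:free-arrowcat} without any explicit proof, leaving the application of that lemma to the reader. Your sketch fills in exactly the gap the paper leaves implicit, namely the identification of $\Fou\beta$ with an arrow category $\C^\to$ (for $\C = \ModH\beta$, $\AlgH\beta$, or $\TwAlgH\beta$) via the involutive twist $(-)^\land$, together with the verification that $(-)^\land$ carries right Heisenberg algebras to left ones so that the equivalence lifts from the slain to the plain and twain cases.
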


\subsection{Classical Pontryagin Duality.}\label{sub:class-pont-duality}%
Now is a good time to contemplate the most crucial example of a Fourier doublet in classical Fourier
analysis---the \emph{Fourier transform} on LCA groups. This is in fact a very extensive class of
examples since one may start from an arbitrary Pontryagin duality.

Recall from Example~\ref{ex:pont-duality-first} that for any LCA group~$G$ there is an LCA
group~$\Gamma = \hat{G}$, called the group dual to~$G$, such that the natural pairing
\begin{equation}
  \label{eq:pont-duality}
  \pomega_G\colon G \times \Gamma \to \Tor,\quad
  (x, \xi) \mapsto \inner{x}{\xi} \equiv \xi(x)
\end{equation}
is a duality, known as the \emph{Pontryagin duality} for~$G$ and~$\Gamma$. As long as no
confusion is likely, we shall suppress the index and just write~$\pomega$ for the Pontryagin duality
in question.

Both LCA groups~$G$ and~$\Gamma$ give rise to natural algebras, which
are connected by the Fourier transform. In detail, we have the complex
vector space~$L^1(G)$ consisting of all functions on the topological
group~$G$ that are absolutely integrable with respect to Haar
measure. It is well known that they form a complex algebra under
convolution~$\star$, which we call the \emph{convolution
  algebra}. Some sources~\cite[p.~vi]{Rudin2017} dub it the
(topological) group algebra\footnote{Clearly, this reduces to the
  plain group algebra~\cite[\S II.3]{Lang2002} when~$G$ is considered
  from a purely algebraic viewpoint, i.e.\@ given the discrete
  topology).}  of~$\pomega$.

We define a left \emph{Heisenberg action}~$H(\pomega) \times L^1(G) \to L^1(G)$ by letting the
torus~$\Tor \hookrightarrow \nnz{\CC}$ act naturally via the embedding while setting
\begin{align}
  \label{eq:pont-actions-G}
    (x \act s)(y) &= s(y-x),& (\xi \act s)(y) &= \inner{\xi}{y} \, s(y)
\end{align}
for all~$x \in G$, $\xi \in \Gamma$ and~$s \in L^1(G)$.
Here~$G \act L^1(G) \subseteq L^1(G)$ follows from translation
invariance of Haar measure
while~$\Gamma \act L^1(G) \subseteq L^1(G)$ is clear
because~$|\inner{\xi}{y}| \le 1$. We refer to the two actions
of~\eqref{eq:pont-actions-G}, respectively, as \emph{translation and
  modulation} on~$G$ because of their most important instantiation
(Example~\ref{ex:pont-doublet}\ref{it:four-int}).

On the dual group~$\Gamma$, we set up the space~$C_0(\Gamma)$ of
bounded continuous functions~$\Gamma \to \CC$ vanishing at infinity as
in~\cite[A11]{Rudin2017}. Clearly, this is a complex algebra
$\big(C_0(\Gamma), \cdot\big)$ under pointwise multiplication, which
we call the \emph{pointwise algebra} of~$\pomega$. Setting up the
right Heisenberg action $C_0(\Gamma) \times H(\pomega) \to C_0(\Gamma)$
in complete analogy to the convolution algebra, we define translation
and modulation on~$\Gamma$ by
\begin{align}
  \label{eq:pont-actions-Gamma}
    (\sigma \act \xi)(\eta) &= \sigma(\eta-\xi), & (\sigma \act x)(\eta) &= \inner{\eta}{x} \, \sigma(\eta).
\end{align}
The closure properties are again evident: We
have~$\Gamma \act C_0(\Gamma) \subseteq C_0(\Gamma)$ by the
continuity of~$\eta \mapsto \xi+\eta$ and
$G \act C_0(\Gamma) \subseteq C_0(\Gamma)$ by that
of~$\inner{x}{}$. The torus action is of course again via the
embedding~$\Tor \hookrightarrow \nnz{\CC}$.

With the algebras~$L^1(G)$ and~$C_0(\Gamma)$ in place, we can now
define the \emph{forward and backward Fourier transform}
\begin{equation}
  \label{eq:fourier-transform}
  \left\{
  \begin{array}{r@{\hspace{0.75ex}}c@{\hspace{0.75ex}}lr@{\hspace{0.75ex}}c@{\hspace{0.75ex}}l}
  \Four^{\land}\colon L^1(G) &\pto& C_0(\Gamma),& \Four\! s(\xi) &:=& \cumG \, \inner{\xi}{\mathord+y} \, s(y)
  \, dy,\\[1.5ex]
  \Four^{\lor}\colon L^1(G) &\pppto& C_0(\Gamma),& \Four\! s(\xi) &:=& \cumG \, \inner{\xi}{\mathord-y} \, s(y)
  \, dy.
  \end{array}
  \right.
\end{equation}
Of course, we have to ensure that they are indeed Fourier operators
and thus deserve their name (Theorem~\ref{thm:pont-doublet} below).

\begin{myremark}
  \label{rem:four-signs}
  Fourier transforms are plagued with a multitude of arbitrary
  conventions (overall factors, signs and factors in the exponent,
  signs in the Heisenberg actions, etc.), and there seems to be no
  compelling \emph{a priori} reason as to which transform
  in~\eqref{eq:fourier-transform} should be chosen forward and which
  backward. We may refer to the two possibilities as the
  \emph{forward-positive} and the \emph{forward-negative} sign
  conventions. Using this jargon, we have thus adopted the
  forward-positive convention in this paper. In the Chapter ``A Plus
  or Minus Sign in the Fourier Transform?\@'' of the applied
  monograph~\cite{VoelklAllardJoy1999} on electron holography, the
  authors have made the same choice:

  \begin{quotation}
    The sign of the exponential in the Fourier transform is something
    that we have been concerned with for many years. Of course, there
    are two conventions that have been used with almost equal
    frequency [...] we have used the convention of the positive sign
    in the exponential for the forward transform which represents the
    Fraunhofer diffraction pattern for a real-space object [...]
    If the other convention is to be used for the Fourier transform
    exponent sign, then all authors should be advised of all these
    other implications, which are not immediately obvious. Otherwise,
    we might find ourselves producing a treatment of positron
    holography!
  \end{quotation}

  While there may be physical reasons for preferring one or the other convention, there is little
  ground for preference outside applications. In \emph{classical Fourier analysis}
  (Example~\ref{ex:pont-doublet}\ref{it:four-int}), both sign conventions are to be found---see for
  example~\cite{Bracewell1986} versus~\cite{Strichartz1994}, and note the \emph{Warning} on page~29
  of~\cite{Strichartz1994}. In \emph{abstract Fourier analysis}, however, the forward-minus
  convention appears to be more common~\cite{Folland1994}, \cite{Rudin2017}, \cite{Loomis2013}.

  We have picked the forward-plus
  convention~\eqref{eq:fourier-transform} since it meshes nicely with
  our \emph{abstract approach} (see the Heisenberg clock in
  Figure~\ref{fig:heis-clock}): After fixing the Heisenberg group in
  the form~$H(\beta) = TG \rtimes \Gamma$, the tilt
  map~$j\colon P \to P\trp$ is the natural choice to march ``forward''
  (example~(b) in \S\ref{sub:heis-twist}), which induces the
  forward twist in the form~\eqref{eq:fwd-twist}. But of course this
  does not mean our setup is written in stone: In different
  circumstances, other combinations of the various
  conventions---Heisenberg group, Heisenberg action, Heisenberg
  twists---may prove to be better suited.
\end{myremark}

As explained after Definition~\ref{def:par-op}, we can also
define~$\Four^\lor$ from~$\Four := \Four^\lor$ since we have the
natural \emph{reversal operators}~$\Par\colon L^1(G) \ppto L^1(G)$ as
well as $\Par\colon C_0(\Gamma) \ppto C_0(\Gamma)$
with~$(\Par s)(y) = s(-y)$ and~$(\Par \sigma)(\eta) =
\sigma(-\eta)$. Together with these, the convolution algebra~$L^1(G)$
and the pointwise algebra~$C_0(\Gamma)$ make up the prototypical
example of a \emph{Fourier doublet}.

\begin{theorem}
  \label{thm:pont-doublet}
  Let~$G$ and~$\Gamma$ be LCA groups under Pontryagin duality
  $\pomega\colon \Gamma \times G \to \Tor$.
  Then~$[\Four\colon L^1(G) \pto C_0(\Gamma)]$ is a Fourier doublet
  with reversal operators~$\Par\colon L^1(G) \ppto L^1(G)$
  and~$\Par\colon C_0(\Gamma) \ppto C_0(\Gamma)$.
\end{theorem}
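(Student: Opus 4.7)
The plan is to verify each structural claim in turn, using the explicit definitions of the Heisenberg actions~\eqref{eq:pont-actions-G}--\eqref{eq:pont-actions-Gamma} and of~$\Four$ in~\eqref{eq:fourier-transform}, and checking the axioms~\eqref{eq:left-unit}--\eqref{eq:right-op} together with the equivariance laws~\eqref{eq:cofourop} and~\eqref{eq:parop} by direct calculation. The only genuinely analytic ingredient that I would invoke as a black box is the Riemann--Lebesgue lemma (e.g.\@ \cite[1.2.4]{Rudin2017}), which ensures~$\Four s \in C_0(\Gamma)$ for~$s \in L^1(G)$; everything else is a change of variable under an integral, using translation invariance of Haar measure on~$G$.

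First I would establish that~$L^1(G)$ is a left Heisenberg algebra over~$\pomega$. The module axioms~\eqref{eq:left-unit}--\eqref{eq:right-assoc} for the $G$- and $\Gamma$-actions are immediate from~\eqref{eq:pont-actions-G}, and the torus axiom~\eqref{eq:torus-act} is built into the definition. The twist axiom~\eqref{eq:twisted-bimod} is the one-line computation
\begin{equation*}
    \big(\xi \act (x \act s)\big)(y) = \inner{\xi}{y}\,s(y-x) = \inner{\xi}{x}\inner{\xi}{y-x}\,s(y-x) = \inner{\xi}{x}\,\big(x \act (\xi \act s)\big)(y).
\end{equation*}
The scalar law~\eqref{eq:left-scal} follows from translation invariance of Haar measure (shift the integration variable in~$(s \star s')(y-x)$ by~$x$), and the operator law~\eqref{eq:right-op} from distributing~$\inner{\xi}{y} = \inner{\xi}{z}\inner{\xi}{y-z}$ under the convolution integral. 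An entirely analogous verification, with sides swapped, shows that~$C_0(\Gamma)$ is a right Heisenberg algebra over~$\pomega$; here closure under the actions uses only boundedness and continuity, and the pointwise product is trivially preserved under modulation and shift.

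Next I would verify that~$\Four$ is a Fourier operator. Well-definedness as a map $L^1(G) \to C_0(\Gamma)$ is Riemann--Lebesgue; $K$-linearity and respect of the torus action are obvious. The two equivariance conditions in~\eqref{eq:cofourop} reduce to substitutions in the defining integral:
\begin{align*}
  \Four(x\act s)(\xi) &= \cumG \inner{\xi}{y}\,s(y-x)\,dy = \inner{\xi}{x}\,\Four s(\xi) = (\Four s \act x)(\xi),\\
  \Four(\xi\act s)(\eta) &= \cumG \inner{\eta+\xi}{y}\,s(y)\,dy = \Four s(\eta+\xi) = (\Four s \act \xi^-)(\eta).
\end{align*}
The convolution theorem~$\Four(s \star s') = \Four s \cdot \Four s'$, which is really the content of the statement, follows from Fubini (applicable by~$\|s \star s'\|_1 \le \|s\|_1\|s'\|_1$) together with the identity~$\inner{\xi}{y} = \inner{\xi}{z}\inner{\xi}{y-z}$; this is the step where the characters of~$\pomega$ do their essential work.

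Finally I would dispatch the reversal operators. The maps~$(\Par s)(y) = s(-y)$ and~$(\Par \sigma)(\eta) = \sigma(-\eta)$ are clearly involutive $\CC$-linear endomorphisms of~$L^1(G)$ and~$C_0(\Gamma)$ respectively; checking the two relations in~\eqref{eq:parop} is a one-line substitution each, for instance~$\Par(\xi \act s)(y) = \inner{\xi}{-y}\,s(-y) = (\xi^- \act \Par s)(y)$. The anti-multiplicativity-free condition~$\Par(s\star s') = \Par s \star \Par s'$ on the signal side (and the analogous pointwise identity on the spectral side) follows by the substitution~$z \mapsto -z$ under the convolution integral. The main---though still routine---obstacle in this entire verification is purely bookkeeping: making certain that every sign produced by the tilt map, the twists~$\hat J$, $\check J$, and the parity flip~$\bar J$ (as catalogued in~\eqref{eq:fwd-twist}--\eqref{eq:par-flip}) matches the sign produced by the integral substitution, so that the Heisenberg clock of Figure~\ref{fig:heis-clock} closes up consistently in this concrete instantiation.
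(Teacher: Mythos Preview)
Your proposal is correct and follows essentially the same route as the paper: a direct axiom-by-axiom verification that~$L^1(G)$ and~$C_0(\Gamma)$ are (left/right) Heisenberg algebras, that~$\Four$ satisfies the equivariance laws~\eqref{eq:cofourop} and the convolution theorem, and that the reversal operators behave as required. The only cosmetic difference is that the paper outsources the convolution theorem and the Banach algebra structure to~\cite{Rudin2017} whereas you sketch the Fubini argument yourself; the paper also records explicitly that~$\Four$ commutes with~$\Par$ (via the substitution~$y \mapsto -y$), which you might add for completeness.
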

\begin{proof}
  All these facts are very simple or otherwise well-known, so for the most part it will suffice to
  provide suitable pointers to the literature. We have to check the following facts:
  \begin{enumerate}
  \item\emph{Convolution algebra:} We refer to Theorems~1.1.6/1.1.7
    in~\cite{Rudin2017} for the well-known fact
    that~$\big(L^1(G), \star\big)$ is a Banach
    algebra. Since~\eqref{eq:pont-actions-G} are group actions, the
    laws~\eqref{eq:left-unit}--\eqref{eq:right-assoc} are satisfied,
    and it suffices to verify
    conditions~\eqref{eq:torus-act}--\eqref{eq:right-op}. The
    relation~\eqref{eq:left-scal} between convolution and translation
    is well-known~\cite[p.~51]{Folland1994}. For
    checking~\eqref{eq:right-op}, we evaluate the right-hand
    side~$(s \act \xi) \star (\tilde{s} \act \xi)$ at a
    point~$y \in G$ to obtain
    \begin{align*}
      & \qquad \cumG \, \inner{\xi}{z-y} \, s(y-z) \, \inner{\xi}{-z} \, \tilde{s}(z) \, dz\\
      & \quad\qquad = \inner{\xi}{-y} \, \cumG \, s(y-z) \, \tilde{s}(z) \, dy
        = \inner{\xi}{-y} \, (s \star \tilde{s})(y),
    \end{align*}
    which is the left-hand side~$(s \star \tilde{s}) \act \xi$
    evaluated at~$y$. The torus action law~\eqref{eq:torus-act} holds
    trivially for the embedding~$\Tor \hookrightarrow \nnz{\CC}$.
    Finally, \eqref{eq:twisted-bimod} follows from the fact
    that~$\inner{\xi}{}$ is a homomorphism. We have thus verified
    that~$L^1(G) \in \AlgH{\pomega}$.
  \item\emph{Pointwise algebra:} Again, it is well known
    that~$\big(C_0(\Gamma), \cdot\big)$ is a Banach algebra; see for
    example Appendix A12 in~\cite{Rudin2017}. It is again clear
    that~\eqref{eq:pont-actions-Gamma} constitute group actions, so it
    suffices to check~\eqref{eq:torus-act}--\eqref{eq:right-op}. This
    time, \eqref{eq:left-scal} follows directly from the associativity
    of~$(\CC, \cdot)$ while~\eqref{eq:right-op} is the statement that
    translation is a homomorphism and~\eqref{eq:torus-act} is again
    trivial. It remains to show the transposed version
    of~\eqref{eq:twisted-bimod},
    namely~$(\sigma \act \xi) \act x = \inner{\xi}{x} \, (\sigma
    \act x) \act \xi$, which now follows from~$\inner{}{x}$ being a
    homomorphism. This
    establishes~$C_0(\Gamma) \in \AlgH{\pomega\trp}$.
  \item\emph{Fourier transform:} It is well-known that the Fourier
    transform $\Four = \Four^\land$ of~\eqref{eq:fourier-transform} is
    a
    homomorphism~$\big( L^1(G), \star \big) \to \big( C_0(\Gamma),
    \cdot \big)$ of $\CC$-algebras; see for example Theorems~1.2.2
    and~1.2.4(b) of~\cite{Rudin2017}. Hence $\Four$ respects also the
    trivial torus action, and it remains to show the two
    relations~\eqref{eq:cofourop}. They follow from the homomorphism
    property, respectively, of~$\inner{\eta}{}$ and~$\inner{}{y}$,
    employing a linear substitution in the integral and appealing to
    translation invariance of Haar measure on~$G$. The corresponding
    relations~\eqref{eq:contfourop} for~$\Four^\lor$ are automatic
    since~$\Four$ is symmetric (Item~\ref{it:rev-op} below), but they
    can be established directly in an analogous manner.
  \item\label{it:rev-op}\emph{Reversal operators:} It is easy to see
    that~$\Par\colon L^1(G) \to L^1(G)$ and~$\Par\colon C_0(\Gamma) \to C_0(\Gamma)$ are involutive
    reversal operators, so both~$L^1(G)$ and~$C_0(\Gamma)$ are symmetric Heisenberg algebras. For
    seeing that~$\Four$ commutes with the reversal operators, one uses the
    substitution~$y \mapsto -y$ in the integral.
  \end{enumerate}
  It is easy to see that pre- or postcomposing by the reversal
  operators~$\Par$ exchanges~$\Four^\land$ and~$\Four^\lor$.
\end{proof}

\begin{myremark}
  \label{rem:four-op-from-sigspc}
  Using the operator~$\evl$ of evaluation at~$0 \in G$, just as in Example~1
  of~\cite{RosenkranzSerwa2019} where~$G = \RR$, the definition of the Fourier
  transformation~\eqref{eq:fourier-transform} may be written in the concise form
  $\Four^\land \! s (\xi) = \evl(s \star \Par\xi)$ and~$\Four^\lor \! s (\xi) = \evl(s \star
  \xi)$.
  But note the following provisos: In general, the characters $\xi \in \Gamma$ are not elements in
  $L^1(G)$; they are only when $G$ is compact. Nevertheless, they are always in $L^\infty(G)$ so
  that the convolution $s \star \xi$ is continuous by Proposition (2.39d)
  of~\cite{Folland1994}. This is why one may apply the evaluation operator $\evl$, which is not
  normally possible on functions in~$L^1(G)$.  In fact, we follow here the \emph{purely algebraic
    setting} as in differential algebra (where functions are viewed as elements in a ring carrying a
  derivation), so evaluation is not available: neither for the continuous elements of the signal
  space~$S = L^1(G)$ nor for those of the spectral space~$\Sigma = C_0(\Gamma)$, so also the
  left-hand side~$\Four s (\xi)$ of the above definition is not feasible in our present setting.
\end{myremark}

\begin{myremark}
  It should also be mentioned that Fourier operators are sometimes used like \emph{quantifiers}. So
  if~$\mathfrak{T}$ is a term containing the free variable~$x$ such that~$x \mapsto \mathfrak{T}$
  constitutes a signal in~$L^1(G)$, we shall write~$\Four_x \mathfrak{T}$ for the
  spectrum~$\Four(x \mapsto \mathfrak{T})$. This may be somewhat pedantic (in practical applications
  the subscript~$x$ is often suppressed), but it may prevent ambiguities. 
\end{myremark}

We refer to~$[\Four\colon L^1(G) \pto C_0(\Gamma)]$ as the \emph{classical Fourier doublet} of the
Pontryagin dualty~$\pomega\colon \Gamma \times G \to \Tor$. As a shorthand, we shall also write this
doublet as~$L^1\inner{\Gamma}{G}_\pomega$ or briefly~$L^1\inner{\Gamma}{G}$ when the Pontryagin
duality is clear from the context. It is easy to see
that~$\pomega \mapsto L^1\inner{\Gamma}{G}_\pomega$ is a functor~$\Du(\Tor) \to \Fou{\Tor}$. We
transfer the symmetric monoidal structure of~$\Du(\Tor)$ to its essential image (generally defined
as the full subcategory generated by the image objects). Up to isomorphism, we thus define the
\emph{tensor product doublet}
\begin{equation*}
  L^1\inner{\Gamma}{G}_\pomega \otimes L^1\inner{\Gamma'}{G'}_{\pomega'} := L^1\inner{\Gamma
  \oplus \Gamma'}{G \oplus G'}_{\pomega\otimes\pomega'},
\end{equation*}
with Fourier
operator~$\Four \otimes \Four' \colon L^1(G \times G') \pto C_0(\Gamma \times \Gamma')$. Obviously,
we may extend this to tensor products with finitely many factors.

Using Fubini's theorem~\cite[Prop.~I.46]{Nachbin1976}, it is easy to see that \emph{Fourier operators
  are multiplicative}. To make this precise, let~$\pomega_i\colon \Gamma_i \times G_i \to \Tor$
for~$i \in [n] := \{1, \dots, n\}$ be Pontryagin dualities with corresponding Fourier
operators~$\Four_i\colon L^1(G_i) \to C_0(\Gamma_i)$. Writing the product duality
as~$\pomega\colon \Gamma \times G \to \Tor$ with~$G := G_1 \oplus \cdots \oplus G_n$
and~$\Gamma := \Gamma_1 \oplus \cdots \oplus \Gamma_n$, the induced Fourier
operator~$\Four_1 \otimes \cdots \otimes \Four_n$ on the tensor product is denoted
by~$\Four\colon L^1(G) \to C_0(\Gamma)$. For any~$a \subseteq [n]$ with complement
$a' \subseteq [n]$ and any~$i \in [n]$, we consider the \emph{hybrid groups}
\begin{equation*}
  F_a := \bigoplus_{j \in [n]} F_j,\quad
  F_a(i) := \bigoplus_{j \in [n] \setminus \{ i \}} F_j\quad\text{with}\quad
  F_j = \begin{cases} G_j & \text{if $j \in a$,}\\\Gamma_j & \text{if $j \in a'$,} \end{cases}
\end{equation*}
which clearly satisfy~$F_a \cong F_a(i) \oplus F_i$. This
yields~$\CC^{F_a} \isomarrow (\CC^{F_i})^{F_a(i)}$ as currying isomorphism, which we
write~$f \mapsto f_i$. Then we define the \emph{hybrid function spaces}~$\mathrm{LC}_a$ as the set
of all~$f \colon F_a \to \CC$ such that $f_i(z) \in L^1(G_i)$ for all $i \in a$, $z \in F_a(i)$
and~$f_i(z) \in C_0(\Gamma_i)$ for all $i \in a'$, $z \in F_a(i)$. Note
that~$\mathrm{LC}_{[n]} = L^1(G)$ and~$\mathrm{LC}_{\emptyset} = C_0(\Gamma)$. Given~$i \in a$, we
set~$\Four_i'\colon \mathrm{LC}_a \to \mathrm{LC}_{a \setminus \{i\}}$
by~$\Four_i'(f)(z) := \Four(f_i(z))$. Then we have
\begin{equation}
  \label{eq:fourop-mult}
  \Four = \Four_{\sigma_1}' \circ \ldots \circ \Four_{\sigma_n}
\end{equation}
for all permutations~$\sigma \in S_n$, by Fubini's theorem as quoted above. In practice, one often
selects special variables~$\mathsf{x}_1, \dots, \mathsf{x}_n$ ranging over the positions
groups~$G_1, \dots, G_n$ and~$\mathsf{x} = (\mathsf{x}_1, \dots, \mathsf{x}_n)$ ranging over~$G$;
then one can use~$\Four_{\mathsf{x}_i} \mathsf{T} := \Four_i (\mathsf{x_i} \mapsto \mathsf{T})$
and~$\Four_{\mathsf{x}} \mathsf{T} := \Four (\mathsf{x} \mapsto \mathsf{T})$ like quantifiers on
terms~$\mathsf{T}$ containing free occurrences of~$\mathsf{x}_1, \dots, \mathsf{x}_n$. These
conventions are similar to those for differential and integral operators.

\begin{myexample}
  \label{ex:pont-doublet}
  At this point, it may be useful to review the four most important incarnations of Pontryagin
  duality---the standard Fourier operators of analysis (note that~\eqref{eq:fourop-mult} is
  applicable in each of these cases):
  \begin{enumerate}[(a)]
  \item\label{it:four-int} The classical \emph{Fourier integral} (FI) arises when considering the
    duality given by the \emph{standard vector duality}~$\inner{G}{\Gamma} = \inner{\RR^n}{\RR_n}$
    of Example~\ref{ex:classical-vector-group}, where~$\inner{\xi}{x} = e^{i \tau x \cdot \xi}$. In
    this case, we have
    \begin{equation}
      \label{eq:four-int}
      \Four s \, (\xi) = \int_{\RR^n} e^{i \tau x \cdot \xi} \, s(x) \, dx
    \end{equation}
    for the Fourier transform. (See the remarks in
    Example~\ref{ex:pont-doublet-inverse}\ref{itt:four-int} on the topic of alternative
    normalizations.)

    In this context, the mapping property~$\Four\colon L^1(G) \to C_0(\Gamma)$ is known as the
    Riemann-Lebesgue lemma~\cite[Prop.~6.6.1]{Stade2011}, in particular the fact
    that~$\hat{s}(\xi) \to 0$ as~$|\xi| \to \infty$. Moreover, the homomorphism
    property~$\Four(s \star s') = \Four s \cdot \Four s'$ is called the \emph{convolution theorem}.
    Writing~$s_a (x) := s(x+a)$ for the translates of a signal~$s$ by an offset $a \in \RR^n$, the
    two equivariance properties
    \begin{equation}
      \label{eq:mod-shift-thm}
      \quad
      \Four_x\big( s(x+a) \big) = e^{i\tau a \cdot \xi} \, \hat{s}(\xi)
      \quad\text{and}\quad
      \Four_x\big( e^{i\tau x \cdot \alpha} s(x) \big) = \hat{s}(\xi-\alpha)
    \end{equation}
    are known as the \emph{shift theorem} and the \emph{modulation theorem},
    respectively~\cite[\S6]{Bracewell1986} since translations are obviously also called shifts while
    multiplying with exponentials~$e^{i\tau a \cdot \xi}$ and~$e^{i\tau x \cdot \alpha}$ is known as
    modulating in engineering parlance. More precisely, this would be frequency modulation (FM):
    Taking\footnote{While~$s_\nu \not\in L^1(\RR^n)$ is technically not a signal in our present
      setting, it can be approximated by $L^1$ signals.} a sinusoidal
    signal~$s_\nu(x) = e^{i\tau x\cdot \nu}$ of frequency~$\nu$, one obtains the modulated
    signal~$e^{i \tau x \cdot \alpha} s_\nu(x) = s_{\nu+\alpha}(x)$ with altered
    frequency~$\nu+\alpha$.
  \item\label{it:four-ser} Taking the \emph{conjugate torus
      duality}~$\inner{\Gamma}{G} = \inner{\ZZ^n}{\Tor^n}$ of Example~\ref{ex:classical-torus-group}
    for the Pontryagin duality, we obtain \emph{Fourier series} (FS). More precisely, the
    multivariate sequence~$\Four s \, (\xi)_{\xi \in \ZZ^n}$ formed by the so-called Fourier
    coefficients
    \begin{equation}
      \label{eq:four-ser}
      \Four s \, (\xi) = \int_{\II^n} e^{i \tau x \cdot \xi} \, s(x) \, dx
    \end{equation}
    will be seen to constitute the Fourier series of~$s$; see
    Example~\ref{ex:pont-doublet-inverse}\ref{itt:four-ser}. Here we identify
    signals~$s \in L^1(\Tor^n)$ with periodic functions defined on~$\II^n$ rather than the more
    usual~$[0,\tau]^n$, where the change of variables~$y = \tau x$ yields an additional
    factor~$\tau^{-n}$. The advantage of this choice is to achieve a more uniform expression for the
    Fourier transformation: One sees immediately that~\eqref{eq:four-int} and~\eqref{eq:four-ser}
    differ only in their integration bounds. Correlated to these signals, their spectral
    space~$C_0(\Gamma)$ is the space~$c_0(\ZZ^n)$ of multivariate null sequences~$\ZZ^n \to \CC$;
    this is again an instance of the Riemann-Lebesgue lemma~\cite[Cor.~6.45]{Knapp2005b}. Note,
    however, that~$\Four$ is injective but not surjective~\cite[p.~547]{Knapp2005b}. There are again
    convolution, shift and modulation theorems.
  \item\label{it:four-dtft} Interchanging the roles of position and momenta, we obtain the
    \emph{torus duality}~$\inner{\Gamma}{G} = \inner{\Tor^n}{\ZZ^n}$; its associated Fourier
    transform is called~\cite[\S18.5]{BeerendsMorscheBergVrie2003} the \emph{discrete-time Fourier
      transform} (DTFT), with the corresponding Fourier operator
    $\Four\colon l^1(\ZZ^n) \to C(\Tor^n)$ given by
    \begin{equation}
      \label{eq:four-dtft}
      \Four s \, (\xi) = \sum_{x \in \ZZ^n} e^{i \tau x \cdot \xi} \, s(x),
    \end{equation}
    which may also be viewed as a discretized version of the Fourier
    integral~\eqref{eq:four-int}. In this case, the Riemann-Lebesgue lemma is void (as the
    torus~$\Tor^n$ is compact every continuous function vanishes at infinity). Of course one has the
    usual convolution, shift and modulation theorems. Note that convolution takes its usual form by
    writing the sequences~$s \in l^1(\ZZ^n)$ as multivariate series~$\sum_{x \in \ZZ^n} s_n x^n$.
    
    At this point it should also be clear why it makes sense---from a purely mathematical point of
    view---to distinguish the two \emph{``mirror images''} of the torus duality (confer
    Example~\ref{ex:classical-torus-group}): We obtain different Fourier
    operators~$L^1(\Tor^n) \to c_0(\ZZ^n)$ and~$l^1(\ZZ^n) \to C(\Tor^n)$, whose mapping spaces are
    obviously quite different. It is only on suitable subspaces that we may subsequently identify
    them as essentially inverse to each other
    (Example~\ref{ex:pont-doublet-inverse}\ref{itt:four-dtft}), linking continuous periodic with
    discrete aperiodic signals (see below).
  \item\label{it:four-dft} Finally, let us take the \emph{conjugate cyclic
      duality}~$\inner{\Gamma}{G} = \inner{\ZZ_N^n}{\Tor_N^n}$ from
    Example~\ref{ex:finite-group}. We recall that both~$\ZZ_N = \{ 0, \dots, N-1 \}$ and
    $\Tor_N = N^{-1} \ZZ_N$ are the cyclic group~$\ZZ/N$, but while~$\Tor_N \hookrightarrow \Tor$ is
    naturally embedded, its dual partner is canonically endowed with a
    projection~$\ZZ \twoheadrightarrow \ZZ_N$. In this case, \eqref{eq:fourier-transform} will be
    the \emph{discrete Fourier series} (DFS) given in detail by
    \begin{equation}
      \label{eq:four-dfs}
      \Four s \, (\xi) = \frac{1}{N^n} \, \sum_{x \in \Tor_N^n} e^{i \tau x \cdot \xi} \, s(x)
    \end{equation}
    where~$\xi$ ranges over~$\ZZ_N^n$. It is obvious that~\eqref{eq:four-dfs} is the uniformly
    sampled form of the Fourier coefficient~\eqref{eq:four-ser} associated with Fourier series. Note
    that the factor~$1/N^n$ arises here from discretizing the constituent integrals
    of~\eqref{eq:four-ser} via
    $\cum_0^1 \dots \, dx_i \rightsquigarrow \sum_{x_i \in \Tor_N} \dots \, N^{-1}$; this is in
    harmony with the chosen normalization of the Haar measure (see the concluding remark in
    Example~\ref{ex:finite-group}). It is well known that uniform sampling in one domain corresponds
    to periodic repetition in the other~\cite[\S7.4]{Roberts2012}, so the resulting spectrum
    under~\eqref{eq:four-ser} will be determined by its values on~$\xi \in \ZZ_N^n$. Altogether we
    obtain a transform of type~$\Four\colon L^1(\Tor_N^n) \to C_0(\ZZ_N^n)$.

    Changing to the cyclic duality~$\inner{\Tor_N^n}{\ZZ_N^n}$, we obtain now the \emph{discrete
      Fourier transform} (DFT) given by
    \begin{equation}
      \label{eq:four-dft}
      \Four s \, (\xi) = \sum_{x \in \ZZ_N^n} e^{i \tau x \cdot \xi} \, s(x),
    \end{equation}
    with~$\xi$ ranging over~$\Tor_N^n$. We may
    view~\eqref{eq:four-dft} as a sampled form of the discrete-time
    Fourier transform~\eqref{eq:four-dtft}. Restricting the latter to
    signals~$s \in L^1(\ZZ^n)$ supported
    within~$\{0, \dots, N-1\}^n \subset \ZZ^n$, the infinite
    series~\eqref{eq:four-dtft} collapses to the finite
    sum~\eqref{eq:four-dft}. The resulting
    spectrum~$\Four s \in C_0(\Tor^n)$ is subsequently sampled at
    unit-root coordinates~$\Tor_N^n \subset \Tor^n$ as these are
    sufficient to reconstruct the signal: The original signal~$s$ is
    again periodically replicated because of the uniform sampling
    of~$\Four s$, but due to the support hypothesis no aliasing occurs
    and exact reconstruction is ensured. It should be noted, however,
    that the original signal~$s$ has now been identified as
    \emph{periodic}, which is inconsistent with our prior assumption
    of finite support. The contradiction arises only from the group
    structure---on the set level, we are free to choose between
    interpreting complex tuples as representing periodic signals (as
    for the DFS) or finite signals (as for the DFT).

    Indeed, all the spaces~$L^1(\ZZ_N^n), L^1(\Tor_N^n), C_0(\ZZ_N^n), C_0(\Tor_N^n)$ are in fact
    the same plain vector space~$(\CC_N)^n$, and the transformation is the tensor
    power~$\Four_1^{\otimes n}$ of a linear map~$\Four_1\colon \CC^N \to \CC^N$. Up to scaling, the
    matrix of~$\Four_1$ with respect to the canonical basis is~\cite[Thm.~39.2]{Howell2016} the
    Vandermonde matrix generated by the $N$-th roots of unity~$\Tor_N \subset \CC$. This is the form
    commonly used~\cite[\S16.2]{BeerendsMorscheBergVrie2003} for the DFS or DFT, with plain integer
    tuples~$k, l \in \{0, \dots, N-1\}^n$ in the exponential $e^{i\tau (k \cdot l)/N}$ mentioned
    earlier~\eqref{eq:finite-duality}.
  \end{enumerate}
  For physical signals (where~$x$ is time and hence~$\xi$ is
  frequency), the characteristics of the four transform types may be
  read off from their domains and codomains: Signals on the compact
  domains~$\Tor^n, \ZZ_N^n$ are considered \emph{periodic}, those on
  the noncompact domains~$\RR^n, \ZZ^n$ accordingly
  \emph{aperiodic}. In a similar fashion, signals on the discrete
  domains~$\ZZ^n, \ZZ_N^n$ are of course called \emph{discrete}, those
  on the nondiscrete domains~$\RR^n, \Tor^n$ accordingly
  \emph{continuous}. Using this terminology, the various Fourier
  operators are classified in Figure~\ref{fig:class-four}, where the
  box around each domain signifies a suitable function space (like
  $L^1$ or $C_0$), and the labels on the arrows refer to the
  corresponding Fourier transform (using the abbreviations given in
  the text above). Similar diagrams are often found in the pertinent
  literature; see for example Table~5.3
  in~\cite[p.~396]{OppenheimWillsky2013} or Figure~8.2
  in~\cite[p.~145]{Smith1997}.

  \begin{figure}
    \centering\vspace{1.6em}
    $\xymatrix @M=0.5pc @R=1pc @C=2pc%
    { \ar@{-}[]+<3.75em,0.75em>;[dd]+<3.75em,-1em> \ar@{-}[]+<-3em,-0.75em>;[rr]+<6.75em,-0.75em> 
      & \text{\kern-1em CONTINUOUS} & \text{\kern3.5em DISCRETE}\\
      \text{APERIODIC} & *!R{\text{FI}\:\text{\Large$\circlearrowright$}\:\boxed{\RR}\quad}
      & *!L{\quad\boxed{\ZZ}} \ar@<0.75ex>^-{\text{DTFT}}[dl]\\
      \text{PERIODIC} & *!R{\boxed{\Tor}\quad} \ar@<0.75ex>^-{\text{FS}}[ur]
      & *!L{\quad\boxed{\ZZ_N}\:\text{\Large$\circlearrowleft$}\:\text{DFT}}}$
    \medskip
    \caption{Classical Fourier Operators}
    \label{fig:class-four}
  \end{figure}
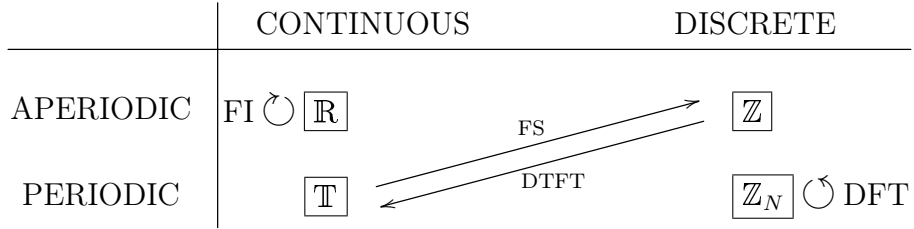

  Note that in Figure~\ref{fig:class-four} we have chosen to
  \emph{conflate DFS and DFT}, using the latter term for both. As
  mentioned earlier (Example~\ref{ex:finite-group}), their underlying
  dualities are in fact the same apart from the inessential
  normalization factor. So from a purely \emph{mathematical}
  viewpoint, no distinction is needed. (As alluded to above, one might
  even argue against finite-duration signals as inconsistent with the
  underlying group structure.) Many texts on digital signal processing
  such as~\cite{Smith1997} have therefore also chosen to neglect the
  difference. As we have seen, there are nevertheless strong
  \emph{physical} arguments in favor of upholding the distinction
  between discretized bounded but aperiodic signals (construed
  over~$\ZZ_N^n$ in our setting) and discretized periodic signals
  (correspondingly construed over~$\Tor_N^n$); some sources such
  as~\cite{OppenheimSchafer2010} or~\cite{Smith1997} follow this
  line. Indeed, the former has even chosen to adapt the normalizations
  so that the DFS formula~\cite[(8.11/12)]{OppenheimSchafer2010}
  becomes \emph{identical} with the corresponding DFT
  one~\cite[(8.65/66)]{OppenheimSchafer2010}, except for the
  truncation enforced in the latter. (These formulae also include the
  inverse transformations, which we will encounter in
  Example~\ref{ex:pont-doublet-inverse}\ref{itt:four-dft}.)
\end{myexample}

It should also be mentioned that the classical Fourier doublet~$L^1\inner{\Gamma}{G}$ for a
Pontryagin duality~$\pomega\colon \Gamma \times G \to \Tor$ can be extended via the \emph{measure
  algebra} $M(G) \supseteq L^1(G)$ consisting of all bounded regular Borel measure on~$G$. It is a
standard fact~\cite[Cor.~1.3.2]{Rudin2017} of Fourier analysis that~$M(G)$ is indeed a \emph{unital
  algebra} (actually a Banach algebra) over~$\CC$. The Fourier transform can be extended
from~$L^1(G)$ to~$M(G)$ with values in the unital algebra~$BC(\Gamma)$ of bounded and uniformly
continuous functions, and the resulting map~$\Four\colon M(G) \to BC(\Gamma)$ is again a
homomorphism that represent the natural action of~$H(\pomega)$; see~\cite[Thm.~1.3.3]{Rudin2017} and
its proof. It is usually called the \emph{Fourier-Stieltjes transformation}. Moreover, it is
straightforward~\cite[\S1.3.4]{Rudin2017} that $L^1(G)$ is a subalgebra of~$M(G)$,
while~$BC(\Gamma)$ is clearly a superalgebra of~$C_0(\Gamma)$. Taken together, this yields the
\emph{measure doublet}~$[\Four\colon M(G) \pto BC(\Gamma)]$.

\begin{proposition}
  \label{prop:measure-alg}
  Let~$G$ and~$\Gamma$ be LCA groups under Pontryagin duality
  $\pomega\colon \Gamma \times G \to \Tor$. Then the measure doublet
  $[\Four\colon M(G) \pto BC(\Gamma)]$ is an extension doublet of $L^1\inner{\Gamma}{G}$.
\end{proposition}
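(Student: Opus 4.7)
The plan is to mirror the four-step verification carried out in the proof of Theorem~\ref{thm:pont-doublet}, replacing $L^1(G)$ and $C_0(\Gamma)$ by their ``Stieltjes enlargements'' $M(G)$ and $BC(\Gamma)$, and then to check that the inclusions $L^1(G) \hookrightarrow M(G)$ and $C_0(\Gamma) \hookrightarrow BC(\Gamma)$ yield a Fourier morphism in the sense of \S\ref{sub:four-doublet}. All algebra-theoretic ingredients that we need---associativity of convolution on $M(G)$, continuity of the Fourier--Stieltjes transform, the homomorphism property $\Four(\mu \star \nu) = \Four\mu \cdot \Four\nu$, and uniform continuity of $\Four\mu$---are already assembled in~\cite[\S1.3]{Rudin2017}, so the proof reduces to installing the two Heisenberg actions and checking a handful of identities.

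First I extend translation and modulation from $L^1(G)$ to $M(G)$ by setting $(x \act \mu)(E) := \mu(E - x)$ for $x \in G$ and $(\xi \act \mu) := \xi \cdot \mu$ (i.e.\@ the measure with density $\xi$ with respect to $\mu$) for $\xi \in \Gamma$; on $BC(\Gamma)$ one takes the obvious formulas~\eqref{eq:pont-actions-Gamma} verbatim, noting that translates of bounded continuous functions remain bounded continuous and that multiplication by the bounded continuous character $\inner{\blnk}{x}$ preserves $BC(\Gamma)$. With the trivial torus action via $\Tor \hookrightarrow \nnz{\CC}$, each of the Heisenberg axioms~\eqref{eq:left-unit}--\eqref{eq:right-op} reduces to a short verification: \eqref{eq:left-unit}--\eqref{eq:right-assoc} are group-action laws for the translation/modulation actions, \eqref{eq:torus-act} is trivial, \eqref{eq:twisted-bimod} is the bicharacter property of $\inner{\blnk}{\blnk}$, and \eqref{eq:left-scal}/\eqref{eq:right-op} are the familiar compatibilities of translation and modulation with convolution on $M(G)$ (resp.\ pointwise multiplication on $BC(\Gamma)$). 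This gives $M(G) \in \AlgH{\pomega}$ and $BC(\Gamma) \in \AlgH{\pomega\trp}$.

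Next I check that the Fourier--Stieltjes transform is a Fourier operator by verifying the two equivariance identities~\eqref{eq:cofourop}. Both follow from Fubini applied to the defining integral $\Four\mu(\xi) = \int_G \inner{\xi}{y}\, d\mu(y)$: translation invariance of $\mu \mapsto \mu_{-x}$ together with the bicharacter identity $\inner{\xi}{y+x} = \inner{\xi}{y}\inner{\xi}{x}$ yield the first identity, while multiplying the integrand by $\inner{\eta}{y}$ shifts the spectral argument by~$-\eta$. Together with the homomorphism property quoted from~\cite[Thm.~1.3.3]{Rudin2017}, this packages into a Fourier morphism $\Four \colon M(G) \pto BC(\Gamma)$. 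The reversal operators $\Par\colon \mu \mapsto \bar\mu$ on $M(G)$ (pushforward along $y \mapsto -y$) and $\Par\colon \sigma \mapsto \sigma(-\blnk)$ on $BC(\Gamma)$ are involutive, satisfy the conditions~\eqref{eq:parop}, and intertwine $\Four$ with itself via a linear substitution exactly as in Theorem~\ref{thm:pont-doublet}, giving the desired Fourier doublet.

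Finally, the extension claim: identify $s \in L^1(G)$ with the absolutely continuous measure $s\, d\mu_G \in M(G)$. This inclusion is a homomorphism of convolution algebras and a Heisenberg morphism (translation of the density corresponds to translation of the measure, and multiplication of the density by $\xi$ corresponds to the modulation of the measure). On the spectral side the inclusion $C_0(\Gamma) \hookrightarrow BC(\Gamma)$ is obviously a Heisenberg morphism. The square commutes by definition of the Fourier--Stieltjes transform, so $(\iota_{L^1}, \iota_{C_0})$ is a Fourier morphism from $L^1\inner{\Gamma}{G}$ into the measure doublet. The only step that needs more than routine verification is the Fubini argument showing~\eqref{eq:cofourop} for measures rather than integrable functions, since one must be slightly careful about the translation action on measures; this is the main (minor) obstacle, and it is handled by observing that the pushforward $y \mapsto y - x$ is a homeomorphism preserving regularity and total variation norm.
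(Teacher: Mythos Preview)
Your proof sketch is correct and follows the same approach as the paper. In fact, the paper does not give a formal proof of this proposition at all: it is stated as a summary of the paragraph preceding it, which already records that $M(G)$ is a Banach algebra under convolution~\cite[Cor.~1.3.2]{Rudin2017}, that the Fourier--Stieltjes transform $\Four\colon M(G)\to BC(\Gamma)$ is a homomorphism representing the Heisenberg action~\cite[Thm.~1.3.3]{Rudin2017}, and that $L^1(G)\hookrightarrow M(G)$ is a subalgebra~\cite[\S1.3.4]{Rudin2017}. Your sketch simply spells out these citations by rerunning the four-step verification of Theorem~\ref{thm:pont-doublet} in the enlarged setting, which is exactly what the paper's discussion is pointing to.
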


\begin{myexample}
  \label{ex:four-nich}
  The discrete Fourier transform generalizes to the Nicholson
  duality~$\nu\colon \hat{G} \times G \to R_*$ mentioned in
  Example~\ref{ex:nich-duality}. Under the hypotheses stipulated
  there, we have the identification~$\Hom(G, R_*) \cong R^n$, which
  then leads~\cite[(3.11)]{Nicholson1971} to define
  $\Four\colon R[G] \to R^n, s \mapsto \hat{s}$ by
  \begin{equation}
    \label{eq:four-nich}
    \hat{s}(\xi) = \sum_{x \in G} \inner{\xi}{x} \, s(x) .
  \end{equation}
  It will be noted that this specializes to the discrete Fourier
  transform~\eqref{eq:four-dft}. By the fundamental theorem of finite
  abelian groups, $G$ decomposes as a product of cyclic groups;
  thus~\eqref{eq:four-nich} may be viewed as a multivariate
  DFT. Additionally, \eqref{eq:four-nich} specializes to the Gelfand
  transform when taking~$R = \CC$.

  The Heisenberg action is defined just as in the Pontryagin case. In
  fact, the whole setting is almost subsumed by
  Theorem~\ref{thm:pont-doublet}, the only difference being the
  torus~$R_* \neq \Tor$. It is easily checked that everything
  nevertheless goes through, so that~$[\Four\colon R[G] \pto R^n]$ is
  indeed a Fourier doublet (note that Definition~\ref{def:heis-alg}
  allows for Heisenberg algebras over rings).
\end{myexample}

\begin{myremark}
  \label{rem:sympl-Four}
  Insisting on the larger category of nilquadratic rather than Heisenberg groups, one can resort to
  the \emph{symplectic Fourier transform}~\cite[Def.~6.6]{Gosson2006}, \cite[p.~7]{Folland2016}. In
  this case, one would work with a symplectic group~$P$ without a Lagrangian splitting. For example,
  in the classical case of~$P = \RR^{2n}$, the symplectic Fourier transform is given by
  \begin{equation*}
    \Four^\# s \, (x,\xi) = \int_{\RR^{2n}} \inner{x, \xi}{y, \eta}_\omega \, s(y,
    \eta) \, dy \, d\eta
  \end{equation*}
  for ``hybrid signals'' $s \in L^1(\RR^{2n})$ that depend on \emph{position}~$x \in \RR^n$ as well
  as \emph{momentum}~$\xi \in \RR^n$. The underlying \emph{symplectic duality}
  is~$\inner{x,\xi}{y,\eta}_\omega := \inner{\eta}{x}_\beta/\inner{\xi}{y}_\beta$,
  where~$\inner{\xi}{x}_\beta = e^{i\tau x \cdot \xi}$ is the standard vector duality of
  Example~\ref{ex:classical-vector-group}. This is the multiplicative symplectic form corresponding
  to the (additive) canonical symplectic form~$\Omega_G$ for~$G=\RR^n$ under the standard
  character~$\chi\colon \RR \to \Tor$. See Example~\ref{ex:heisgrp-sympl} for its relation to
  the little Heisenberg groups~$[H]_\omega$ and~$H(\beta_\omega)$.

  If~$\iota_n\colon \RR^n \to \RR^{2n}$ and~$\pi_n\colon \RR^{2n} \to \RR^n$ are, respectively, the
  \emph{standard injections and projections} of the direct sum decomposition of the phase
  space~$P = \RR^n \oplus \RR^n$, one obtains the following commutative diagram:
  \begin{equation*}
    \xymatrix @M=1pc @R=1.5pc @C=1pc%
    { L^1(\RR^n) \ar[r]^{\Four} \ar[d]_{\iota_n^*} & C_0(\RR^n)\\
      L^1(\RR^{2n}) \ar[r]^{\Four^\#} & C_0(\RR^{2n}) \ar[u]_{\pi_n^*}}
  \end{equation*}
  This allows one to recast the standard Fourier transform in terms of the symplectic
  one. Conversely, the symplectic Fourier transform may be recovered from the standard Fourier
  transform~$\Four_2$ on~$L^1(\RR^{2n})$ via~$\Four^\# = \Four_2 \circ J^*$,
  where~$J\colon \RR^{2n} \to \RR^{2n}$ is the canonical symplectic
  matrix~$\smallmat{0}{I_n}{-I_n}{0}$. Unlike its more common counterpart~$\Four_2$, the symplectic
  Fourier transform~$\Four^\#$ is involutive.

  All these ideas generalize to the setting of arbitrary LCA groups~\cite[Ex.~5.2v,
  p.~26]{Jakobsen2016}. Having a nilquadratic
  extension~$E\colon T \rightarrowtail H \twoheadrightarrow P$ with commutator
  form~$\omega := \omega_E$, one sets~$\Four^\# s(z) = \cum_P \, \omega(z, w) \, s(w) \, dw$.
  Choosing a Lagrangian splitting~$P = G \oplus \Gamma$ induces the split exact
  sequence~$G \oset{\iota}{\rightarrowtail} P \oset{\pi}{\twoheadrightarrow} \Gamma$ and a
  factorization~$\Four = \pi^* \Four^\# \iota^*$ generalizing the above diagram. Since the direct
  sum~$P$ is an LCA group~\cite[p.~362]{Moskowitz1967}, it has its own Fourier operator~$\Four_2$,
  and one may check that~$\Four^\# = \Four_2 \circ J^*$ with~$J\colon P \to P$ defined as in the
  special case above.
\end{myremark}

Classical Pontryagin duality not only provides the prototypical example of a Fourier doublet, it
also gives rise to an important class of Fourier morphisms in the following way. Recall first that
every \emph{topological automorphism} (= homeomorphism + homomorphism) of an LCA group~$G$ is
associated with a unique positive number known as the \emph{modulus} $\delta_A$ of the given
automorphism~$A$, and the association $\Aut(G) \to \RR_{>0}, A \mapsto \delta_A$ is a group
homomorphism~\cite[Prop.~17]{Nachbin1976}. The best known case is when the group is the vector
space~$\RR^n$ and the automorphism is an invertible matrix~$A \in \RR^{n \times n}$; then the
modulus is just~$\delta_A = |\det(A)|$; see Example~2 of~\cite[p.~84]{Nachbin1976}. Every
automorphism induces a contravariant action~$L^1(G) \to L^1(G)$ via the
pullback~$A^* s := s \circ A$ and a covariant action~$C_0(\Gamma) \to C_0(\Gamma)$ sending~$\sigma$
to the map~$A_* \sigma$ given by~$\xi \mapsto \sigma(A^* \xi)$. (We write the pullback of characters
in the same way as that of~$L^1(G)$ functions since they are defined analogously.)

\begin{proposition}
  \label{prop:pont-automorphism}
  Let~$G$ and~$\Gamma$ be LCA groups under Pontryagin duality
  $\pomega\colon G \times \Gamma \to \Tor$. Then every topological automorphism~$A\colon G \to G$
  gives rise to a Fourier automorphism~$(a, \alpha)$ of~$[\Four\colon L^1(G) \pto C_0(\Gamma)]$
  having signal and spectral maps
  \begin{alignat*}{2}
    a\colon& L^1(G) \to L^1(G),\quad& s &\mapsto \delta_A \: A^* s,\\
    \alpha\colon& C_0(\Gamma) \to C_0(\Gamma),\quad& \sigma &\mapsto A^{-1}_*
    \sigma,
  \end{alignat*}
  where~$\delta_A > 0$ is the modulus of the topological automorphism~$A$.
\end{proposition}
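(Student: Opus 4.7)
The plan is to verify, in turn, the four kinds of condition that comprise ``Fourier automorphism'' in the fibered sense: (i) identify the underlying automorphism of the duality $\pomega$; (ii) check that $a$ is a Heisenberg (twain) automorphism of~$L^1(G)$ over this base automorphism; (iii) do the same for~$\alpha$ on $C_0(\Gamma)$; (iv) verify the Fourier intertwining $\alpha \Four = \Four a$. The single computational fact driving everything is the modulus identity $\int_G f(Ay)\,dy = \delta_A^{-1} \int_G f(y)\,dy$, i.e.\@ $A^*$ rescales Haar measure by $\delta_A^{-1}$; modulo this, all steps are routine bookkeeping.

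For step~(i), the topological automorphism $A$ of $G$ induces a dual automorphism $A^*\colon \Gamma \to \Gamma$ via $(A^*\xi)(y) := \xi(Ay)$, and the pair $\phi := (A^{-1}, A^*)$ is an automorphism of $\pomega$ in~$\Du(\Tor)$, since $\inner{A^*\xi}{A^{-1}x} = \xi(A A^{-1} x) = \inner{\xi}{x}$. This is the base morphism in the fiber bundle along which our Fourier automorphism will live.

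For step~(ii), linearity and bijectivity of~$a$ are clear; for convolution, the substitution $z = Aw$ together with $dz = \delta_A\, dw$ gives $a(s \star s')(y) = \delta_A (s \star s')(Ay) = \delta_A^2 \int (A^*s)(y-w)(A^*s')(w)\,dw = a(s) \star a(s')(y)$. For equivariance, one computes directly that $a(x \act s)(y) = \delta_A\, s(Ay - x) = \delta_A\, s(A(y - A^{-1}x)) = (A^{-1}x \act a(s))(y)$ and $a(\xi \act s)(y) = \delta_A\, \xi(Ay)\, s(Ay) = (A^*\xi)(y)\, a(s)(y) = (A^*\xi \act a(s))(y)$; this matches the fibered-morphism requirement through $\phi = (A^{-1}, A^*)$. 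For step~(iii), $\alpha$ is a pullback, so it automatically preserves pointwise products, linearity, and is bijective with inverse $A_*$; the equivariance computations $\alpha(\sigma \act \xi)(\eta) = \sigma(A^{-*}\eta - \xi) = \alpha(\sigma)(\eta - A^*\xi) = (\alpha(\sigma) \act A^*\xi)(\eta)$ and $\alpha(\sigma \act x)(\eta) = \eta(A^{-1}x)\, \alpha(\sigma)(\eta) = (\alpha(\sigma) \act A^{-1}x)(\eta)$ dualize those for~$a$.

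Finally, for step~(iv), one again just changes variables:
\begin{equation*}
\alpha(\Four s)(\xi) = (\Four s)(A^{-*}\xi) = \int_G \xi(A^{-1}y)\, s(y)\, dy = \delta_A \int_G \xi(z)\, s(Az)\, dz = \Four(a s)(\xi),
\end{equation*}
using $y = Az$ in the inner integral. The only real obstacle will be keeping consistent conventions for $A^*$ versus $A^{-*}$ across the two sides and making sure the $\delta_A$ factor appears in the signal map (not the spectral map)---this is exactly what is needed to cancel the Jacobian against the pullback of the character. No topological subtleties arise beyond the standard modulus formula and the fact that $A^*$ is a topological automorphism of~$\Gamma$ (a standard consequence of Pontryagin duality).
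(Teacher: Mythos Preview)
Your proof is correct and uses the same core computation as the paper: the intertwining $\alpha\Four=\Four a$ is established via the substitution $y=Az$ together with the modulus identity $dy=\delta_A\,dz$, and the algebra-homomorphism properties of $a$ and $\alpha$ follow by the same change of variables. Your version is in fact more thorough than the paper's, which checks only the intertwining and the multiplicative structure; you additionally identify the base automorphism $(A^{-1},A^*)$ of~$\pomega$ and verify Heisenberg equivariance over it, which is what the fibered definition of $\Fou\blnk$ strictly requires but which the paper leaves implicit.
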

\begin{proof}
  We have to verify~$\Four(s \circ A)(\xi) = \delta_A^{-1} \, \Four s \, (\xi \circ A^{-1})$
  for~$s \in L^1(G)$ and~$\xi \in \Gamma$. According to~\eqref{eq:fourier-transform}, the left-hand
  side is given by
  \begin{align*}
    \cum_G \inner{-y}{\xi} \, s(&A(y)) \, dy = \cum_G \inner{-A(y)}{\xi \circ A^{-1}}
    \, s(A(y)) \, dy\\
    &= \delta_A^{-1} \, \cum_G \inner{y}{\xi \circ A^{-1}} s(y) \, dy 
      = \delta_A^{-1} \, \Four s \, (\xi \circ A^{-1}),
  \end{align*}
  where the second equality uses~\cite[Prop.~II.16]{Nachbin1976}. For
  checking that~$a$ is an endomorphism on~$\big(L^1(G), \star\big)$,
  one appeals to~\cite[Prop.~II.16]{Nachbin1976} again, and one sees
  immediately that~$\alpha$ is an endomorphism
  on~$\big(C_0(\Gamma), \cdot\big)$ since $A_*^{-1}$ effects a
  substitution. It is clear that both~$a$ and~$\alpha$ are bijective,
  hence~$(a, \alpha)$ is an automorphism
  of~$[\Four\colon L^1(G) \pto C_0(\Gamma)]$.
\end{proof}

Coming back to the vector space case~$G = \RR^n$ of Proposition~\ref{prop:pont-automorphism}, the
simplest example of an automorphism is the action of a nonvanishing scalar~$a \in \RR$.
Writing~$S_a$ for both induced actions on signals and spectra, we arrive at the famous
\emph{similarity theorem} $\Four S_a = |a|^{-1} \, S_{1/a} \, \Four$ of the classical Fourier
transform~\cite[p.~108]{Bracewell1986}.

\begin{remark}
  The setting of Pontryagin duality~$\pomega\colon G \times \Gamma \to \Tor$ with its Fourier
  transform~$\Four\colon L^1(G) \to C_0(\Gamma)$ also provides a sort of \emph{integral
    operator}~$\ocum\colon \big(L^1(G), \star\big) \to \CC$ acting
  as~$s \mapsto \cum_G \, s(x) \, dx$. This is a $\CC$-algebra homomorphism since we
  have~$\ocum = \evl \circ \Four$, where the \emph{evaluation}
  $\evl\colon \big(C_0(\Gamma), \cdot\big) \to \CC$ with~$\sigma \mapsto \sigma(0)$ is itself a
  homomorphism. (The associated \emph{initialization}~$\ini := 1_{C_0(\Gamma)} - \evl$ acts as a
  ``deletion operator'' when applied to~$\Gamma = \ZZ^n$.)
\end{remark}

\subsection{Fourier Inversion.}\label{sub:inversion}
In classical as well as abstract harmonic analysis, Fourier operators are \emph{always injective}:
As we shall soon show, the operator~$\Four^\land$ as well as~$\Four^\lor$ in
Theorem~\ref{thm:pont-doublet} is in fact a monomorphism. Therefore it is reasonable to try and
adapt the function spaces in some suitable way so as to obtain a \emph{bijective Fourier operator}.

\begin{definition}
  A \emph{Fourier doublet}~$\Db = [\Four\colon S \pto \Sigma]$ over a duality~$\beta$ is called
  \emph{regular} if the Fourier operator~$\Four$ is bijective. Otherwise, the doublet $\Db$ is
  called \emph{singular}.
\end{definition}

And for avoiding cumbersome terminology, we shall from now on take the
liberty of abbreviating the term ``Fourier doublet'' by just
\emph{doublet}, in particular when qualifying it by terms such as
regular/singular or slain/plain/twain. The same applies to the term
``Fourier singlet'' to be introduced later in this section (Definition~\ref{def:singlet}).

As with many other algebraic structures, a bijective Fourier operator $\Four\colon S \pto \Sigma$ is
automatically an \emph{isomorphism} in the appropriate sense: Its inverse~$\tilde\Four$ is then a
Heisenberg morphism~$\Sigma^\land \to S$ of left Heisenberg modules over~$\beta$, or equivalently a
Heisenberg morphism~$\Sigma \to S^\land$ of right Heisenberg modules over~$\beta$. Moreover,
$\tilde\Four$ also respects the respective product(s) in the case of plain/twain
algebras~$S, \Sigma$. It is then natural to denote this situation
by~$\tilde\Four\colon \Sigma \pto S$. While all this pertains to forward
operators~$\Four = \Four^\land$, analogous statements obviously hold for backward
operators~$\Four^\lor$, using respectively~$S^\lor, \Sigma^\lor$ in place
of~$S^\land, \Sigma^\land$.

Under Pontryagin duality~$\pomega\colon \Gamma \times G \to \Tor$, one
immediately obtains a regular doublet by restricting the codomain of
the Fourier operator to the so-called \emph{Fourier
  algebra}~$A(\Gamma) := \Four \, L^1(G) \le C_0(\Gamma)$ as
in~\cite[\S1.2.3]{Rudin2017}. Keeping the same notation for the
restricted operator, we have a regular
doublet~$[\Four\colon L^1(G) \pto A(\Gamma)]$ under the hypotheses of
Theorem~\ref{thm:pont-doublet}. While this is algebraically trivial,
it should be kept in mind that it is a difficult problem to find a
suitable analytic description of the Fourier group. While general
characterizations remain elusive, there are important results for
certain classes of LCA groups such
as~\cite{RudinKatznelsonKahaneHelson1959}.

There is an analogous construction for the measure algebra~$M(G)$ of
Propostion~\ref{prop:measure-alg}. Following~\cite[\S1.3.3]{Rudin2017}, we denote the image
of~$M(G)$ under the Fourier-Stieltjes transformation~$\Four$ by~$B(\Gamma) \le BC(\Gamma)$.  This
so-called the \emph{Fourier-Stieltjes algebra} is unital, and one obtains a regular
doublet~$[\Four\colon M(G) \pto B(\Gamma)]$ since~$\Four$ is
injective~\cite[Thm.~1.3.6]{Rudin2017}. In applications, this doublet is not very important since
the inclusion of distributions such as Dirac measures~$\delta_a \in M(G)$ is preferrably achieved
via tempered distributions (see Example~\ref{prop:temp-distr} below). For the theoretical development,
however, this doublet is important because of results such as Bochner's characterization of
positive-definite functions as Fourier-Stieltjes transforms of nonnegative
measures~\cite[Thm.~1.4.3]{Rudin2017}.

The Fourier algebra gives rise to just a regular plain doublet, but it may be restricted further to
obtain a \emph{regular twain doublet}. These facts appear to be well-known in analysis folklore,
though proper references are difficult to find. We follow here the hints given
in~\cite[Ex.~6.4.5]{Stade2011}.  Assume~$\pomega\colon \Gamma \times G \to \Tor$ is a Pontryagin
duality with the classical Fourier doublet~$[\Four_\pomega\colon L^1(G) \pto C_0(\Gamma)]$
over~$\pomega$ in Theorem~\ref{thm:pont-doublet}. Then we may form the function space
\begin{equation}
  \label{eq:bij-domain}
  L^{1/1}(G, \Gamma) := L^1(G) \cap \Four_\pomega^{-1} L^1(\Gamma) = \{ s \in L^1(G) \mid \hat{s}
  \in L^1(\Gamma) \}
\end{equation}
as a subspace of~$L^1(G)$. Here the space~$L^1(\Gamma)$ in~\eqref{eq:bij-domain} is of course
defined via the conjugate duality~$\pomega\trp\colon G \times \Gamma \to \Tor$. Reversing the roles
of~$G$ and~$\Gamma$ and that of~$\pomega$ and~$\pomega\trp$ in~\eqref{eq:bij-domain}, we obtain
$L^{1/1}(\Gamma, G)$ as a subspace of~$L^1(\Gamma)$, but now with the Fourier
operator~$\Four_{\pomega\trp}$ of the corresponding doublet
$[\Four_{\pomega\trp}\colon L^1(\Gamma) \pto C_0(G)]$ over~$\pomega\trp$. As we shall see presently,
the original Fourier operator restricts to a twain doublet
$[\Four_\pomega\colon L^{1/1}(G, \Gamma) \pto L^{1/1}(\Gamma, G)]$ such that~$\Four_{\pomega\trp}$
restricts to a sign inverse of~$\Four_\pomega$.  We call
\begin{equation*}
 L^{1/1}\inner{\Gamma}{G} := [\Four_\pomega\colon L^{1/1}(G, \Gamma) \pto L^{1/1}(\Gamma, G)]
\end{equation*}
the \emph{classical twain doublet} since it is cut out from the two
classical Fourier doublets~$[\Four_\pomega\colon L^1(G) \pto C_0(\Gamma)]$
and~$[\Four_{\pomega\trp}\colon L^1(\Gamma) \pto C_0(G)]$.

\begin{lemma}
  \label{lem:L11-basics}
  Let~$G$ and~$\Gamma$ be LCA groups under Pontryagin duality
  $\pomega\colon \Gamma \times G \to \Tor$. Then we
  have~$L^{1/1}(G, \Gamma) = L^1(G) \cap A(G)$ and the inclusion
  $L^{1/1}(G, \Gamma) \subseteq L^2(G)$, which is in general strict.
\end{lemma}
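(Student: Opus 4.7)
The plan is to prove the identity and the inclusion separately, relying on two classical pillars of abstract harmonic analysis---the Fourier inversion theorem on $L^1 \cap \Four^{-1} L^1$ and Plancherel's theorem---both of which are available for any LCA pair $(G, \Gamma)$ under Pontryagin duality.

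For the identity $L^{1/1}(G, \Gamma) = L^1(G) \cap A(G)$, I would argue by mutual inclusion. Given $s \in L^{1/1}(G, \Gamma)$, so that both $s \in L^1(G)$ and $\hat{s} \in L^1(\Gamma)$, the Fourier inversion theorem (e.g.\ Rudin, Thm.~1.5.1) yields, almost everywhere, $s = \Four_{\pomega\trp}(\Par \hat{s})$. Since $\Par$ preserves $L^1(\Gamma)$, we have $\Par \hat{s} \in L^1(\Gamma)$, hence $s \in A(G) = \Four_{\pomega\trp} L^1(\Gamma)$. Conversely, if $s \in L^1(G) \cap A(G)$, write $s = \Four_{\pomega\trp} t$ with $t \in L^1(\Gamma)$; applying $\Four_\pomega$ and using a Fubini computation (or, more cleanly, the clock relation $\Four_\pomega \circ \Four_{\pomega\trp} = \Par$ on the appropriate subdomain, which is again Fourier inversion in disguise) gives $\hat{s} = \Par t \in L^1(\Gamma)$, placing $s$ in $L^{1/1}(G, \Gamma)$.

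For the inclusion $L^{1/1}(G, \Gamma) \subseteq L^2(G)$, I would first observe that any $s \in L^{1/1}(G, \Gamma)$ has $\hat{s} \in L^1(\Gamma) \cap C_0(\Gamma) \subseteq L^1(\Gamma) \cap L^\infty(\Gamma)$, since $C_0 \subseteq L^\infty$. The elementary interpolation bound $\|\hat{s}\|_2^2 \le \|\hat{s}\|_\infty \, \|\hat{s}\|_1$ then places $\hat{s}$ in $L^2(\Gamma)$. By Plancherel's theorem---which extends $\Four_\pomega$ to a unitary isomorphism $L^2(G) \isomarrow L^2(\Gamma)$ agreeing with the $L^1$ definition on $L^1(G) \cap L^2(G)$---the preimage $s$ of $\hat{s}$ under this extension lies in $L^2(G)$, and since it coincides with our original $s \in L^1$ on the overlap, we conclude $s \in L^2(G)$.

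Strictness, finally, requires only a single explicit counterexample. Taking $G = \RR$ with the standard vector duality, the indicator $s = \chi_{[-1,1]}$ lies in $L^1(\RR) \cap L^2(\RR)$, but its Fourier transform is (a constant multiple of) $\sinc$, which is manifestly not in $L^1(\RR)$; hence $s \in L^2(\RR) \setminus L^{1/1}(\RR, \RR)$. None of the steps present a serious obstacle---the only place where one must be careful is the bookkeeping of sign conventions in the first paragraph, where the forward-positive convention adopted in \eqref{eq:fourier-transform} must be tracked through the composition $\Four_\pomega \circ \Four_{\pomega\trp}$ to confirm that it realizes the parity flip $\Par$ rather than the identity.
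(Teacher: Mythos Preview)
Your argument is correct throughout, and for the identity $L^{1/1}(G,\Gamma) = L^1(G) \cap A(G)$ it matches the paper's proof essentially verbatim (Fourier inversion in both directions, with the parity flip doing the bookkeeping).

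The genuine divergence is in the inclusion $L^{1/1}(G,\Gamma) \subseteq L^2(G)$. You work on the $\Gamma$ side: $\hat s \in L^1(\Gamma) \cap C_0(\Gamma) \subseteq L^1 \cap L^\infty \subseteq L^2(\Gamma)$, then pull back via Plancherel. The paper instead exploits the identity just proved to stay on the $G$ side: from $s \in A(G) = \Four_{\pomega\trp} L^1(\Gamma)$ and Riemann--Lebesgue one gets $s \in C_0(G) \subset L^\infty(G)$ directly, whence $s \in L^1(G) \cap L^\infty(G) \subseteq L^2(G)$ by the same interpolation bound---no Plancherel needed. The paper's route is thus strictly more elementary; yours is perfectly valid but invokes a heavier theorem and requires a moment's care to see that the Plancherel preimage of $\hat s$ really is $s$ (the point being that $\hat s \in L^1(\Gamma) \cap L^2(\Gamma)$, so the inverse Plancherel map agrees there with the $L^1$ inverse transform, which returns $s$ by inversion).

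For strictness, you and the paper give dual witnesses: the paper takes $\sinc \in L^2(\RR) \setminus L^1(\RR)$ (hence outside $L^{1/1}$ trivially), while you take $\chi_{[-1,1]} \in L^1 \cap L^2$ whose transform is $\sinc \notin L^1$. Both work; the paper's choice is marginally more direct since it needs no transform computation.
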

\begin{proof}
  Note that~$A(G) = \Four_{\pomega\trp} L^1(\Gamma)$ is the Fourier algebra, now on the signal
  side. We have~$L^{1/1}(G, \Gamma) \subseteq L^1(G) \cap \Four_{\pomega\trp} L^1(\Gamma)$ since the
  Fourier inversion theorem~\cite[(4.32)]{Folland1994} ensures
  $s = \Four_{\pomega\trp} \Par \Four_\pomega s \in \Four_{\pomega\trp} L^1(\Gamma)$ whenever
  $s \in L^{1/1}(G, \Gamma)$. For the converse, assume~$s = \Four_{\pomega\trp} \sigma \in L^1(G)$
  for $\sigma \in L^1(\Gamma)$. By virtue of the same inversion theorem (applied on the other side),
  we have $\Par \sigma = \Four_\pomega s$ and hence~$s \in \Four_\pomega^{-1} L^1(\Gamma)$. This
  establishes the other inclusion and
  thus~$L^{1/1}(G, \Gamma) = L^1(G) \cap \Four_{\pomega\trp} L^1(\Gamma)$.

  We prove now~$L^{1/1}(G, \Gamma) \subseteq L^2(G)$.  Given~$s \in L^{1/1}(G, \Gamma)$, we have
  $s \in L^1(G)$ and~$s \in \Four_{\pomega\trp} L^1(\Gamma)$ by the identity just proved. Then we
  obtain $s \in C_0(G) \subset L^\infty(G)$ by the fundamental mapping property
  $\Four_{\pomega\trp}\colon L^1(\Gamma) \to C_0(\Gamma)$ of the Fourier transform, also known as
  abstract Riemann-Lebesgue lemma~\cite[Thm.~1.2.4a]{Rudin2017}.
  From~$s \in L^1(G) \cap L^\infty(G)$ we have
  \begin{equation*}
    ||s||_2^2 = \cum_G |s|^2 \, dx \le ||s||_\infty \, \cum_G |s| \, dx
    = ||s||_\infty ||s||_1 < \infty
  \end{equation*}
  and thus~$s \in L^2(G)$. For the important special case $L^{1/1}(G, \Gamma) \subseteq L^2(G)$
  with~$G = \RR$, see also~\cite[Prop.~6.4.1]{Stade2011}.

  For seeing that the inclusion is in general strict, consider the normalized cardinal sine
  function, $\sinc x := \tfrac{\sin \pi x}{\pi x}$. We have~$\sinc \in L^2(\RR)$; in fact,
  $\cum_\RR \sinc^2 x \, dx = 1$. Moreover, $\sinc$ is also integrable, again with the
  value~$\cum_\RR \sinc x \, dx = 1$. However, it is \emph{not absolutely integrable} since
  $\cum_\RR |\sinc x| \, dx = \infty$.  Hence~$\sinc \not\in L^{1/1}(\RR) \subseteq L^1(\RR)$.
\end{proof}

This lemma implies that the
restrictions~$\Four_\pomega\colon L^{1/1}(G, \Gamma) \to L^{1/1}(\Gamma, G)$
and~$\Four_{\pomega\trp}\colon L^{1/1}(\Gamma, G) \to L^{1/1}(G, \Gamma)$ are well-defined. We shall
now prove that~$L^{1/1}\inner{\Gamma}{G}$ is a subdoublet of~$L^1\inner{\Gamma}{G}$ having the
alleged properties. Here we define the notion of a
\emph{subdoublet}~$[\Four'\colon S' \pto \Sigma']$ of a doublet $[\Four\colon S \pto \Sigma]$ by
requiring the inclusions~$i\colon S' \hookrightarrow S$
and~$\iota\colon \Sigma' \hookrightarrow \Sigma$ to constitute a Fourier morphism~$(i, \iota)$. In
other words, we have~$S' \subseteq S$ and~$\Sigma' \subseteq \Sigma$, and the Fourier operator
$\Four\colon S \to \Sigma$ restricts to the Fourier operator~$\Four'\colon S' \to \Sigma'$. 

The \emph{twain algebra structure} on~$L^{1/1}(G, \Gamma) \subset L^1(G) \cap C_0(G)$ comprises the
products~$\star$ and~$\cdot$ inherited from~$L^1(G)$ and~$C_0(G)$, respectively; for the twain
algebra~$L^{1/1}(\Gamma, G)$ the situation is analogous but now~$\cdot$ is the first and~$\star$ the
second product. Thus for~$\Four_\pomega$ to be Fourier operator, it must be a twain homomorphism
\begin{equation*}
  \Four_\pomega\colon \big( L^{1/1}(G, \Gamma), \star, \cdot \big)
  \to
  \big( L^{1/1}(\Gamma, G), \cdot, \star \big) .
\end{equation*}
that respects the Heisenberg action. However, in claiming the subdoublet
relation~$L^{1/1}\inner{G}{\Gamma} \subseteq L^1\inner{G}{\Gamma}$ we
take~$L^{1/1}\inner{\Gamma}{G}$ as a plain doublet (discarding the second products on both
sides). Thus we do \emph{not} think of~$L^1\inner{G}{\Gamma}$ as a twain doublet with trivial second
product. In other words, we think of it as a \emph{plain subdoublet} and not as a twain subdoublet
(as in computer science---prefering downcast to upcast). Let us now prove these claims.

\begin{proposition}
  \label{prop:pont-doublet}
  Let~$G$ and~$\Gamma$ be LCA groups under Pontryagin duality
  $\pomega\colon G \times \Gamma \to \Tor$.
  Then~$L^{1/1}\inner{G}{\Gamma} \subseteq L^1\inner{G}{\Gamma}$ is a
  regular twain doublet.
\end{proposition}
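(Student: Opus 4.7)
The plan is to check four things: (i) $L^{1/1}(G,\Gamma)$ and $L^{1/1}(\Gamma,G)$ are closed under both inherited products and hence form twain algebras; (ii) the restriction $\Four_\pomega\colon L^{1/1}(G,\Gamma) \to L^{1/1}(\Gamma,G)$ is well-defined and Heisenberg-equivariant; (iii) it is a twain homomorphism, exchanging $\star$ and $\cdot$; (iv) it is bijective, with $\Four_{\pomega\trp}$ itself acting as a sign inverse. The whole argument will rest on two standard tools applied within $L^1$: the convolution theorem in both directions, and the Fourier inversion theorem.

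For (i), the most convenient handle is the alternative description $L^{1/1}(G,\Gamma) = L^1(G) \cap A(G)$ from Lemma~\ref{lem:L11-basics}, together with the supplementary containment $L^{1/1}(G,\Gamma) \subseteq L^1(G) \cap C_0(G)$ noted in the same proof. Closure under convolution is easy: $L^1(G)$ is a convolution algebra, and by the convolution theorem $\widehat{s \star t} = \hat s \cdot \hat t \in L^1(\Gamma)$, since $\hat s \in L^\infty(\Gamma) \cap L^1(\Gamma)$ majorizes $\hat t \in L^1(\Gamma)$. Closure under pointwise multiplication is more subtle: membership in $L^1 \cap C_0$ gives $|s \cdot t| \le \|s\|_\infty \, |t|$, placing $s \cdot t$ in $L^1(G)$; on the spectral side, $\hat s \star \hat t \in L^1(\Gamma)$ because $L^1(\Gamma)$ is a convolution algebra, and Fourier inversion identifies this convolution with $\widehat{s \cdot t}$ (the dual convolution theorem within $L^{1/1}$), so $s \cdot t \in L^{1/1}(G,\Gamma)$. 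The situation for $L^{1/1}(\Gamma, G)$ is entirely symmetric.

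For (ii) and (iii), well-definedness of the restriction to $L^{1/1}(\Gamma, G)$ is immediate from the characterization: $\hat s \in L^1(\Gamma)$ by hypothesis, and $\Four_{\pomega\trp}(\hat s) = \Par s \in L^1(G)$ by the inversion theorem, hence $\hat s \in L^{1/1}(\Gamma, G)$. The two identities $\Four_\pomega(s \star t) = \hat s \cdot \hat t$ and $\Four_\pomega(s \cdot t) = \hat s \star \hat t$ already established in step~(i) say precisely that $\Four_\pomega$ is a twain homomorphism from $\big(L^{1/1}(G,\Gamma), \star, \cdot\big)$ to $\big(L^{1/1}(\Gamma, G), \cdot, \star\big)$. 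Heisenberg equivariance carries over verbatim from the classical doublets of Theorem~\ref{thm:pont-doublet}, since $L^{1/1}(G,\Gamma)$ is plainly stable under translation, modulation, and reversal.

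For (iv), applying Fourier inversion on both sides yields $\Four_{\pomega\trp} \Four_\pomega = \Par$ on $L^{1/1}(G,\Gamma)$ and, symmetrically, $\Four_\pomega \Four_{\pomega\trp} = \Par$ on $L^{1/1}(\Gamma, G)$; since $\Par$ is an involution, this makes $\Four_\pomega$ a bijection and exhibits $\Four_{\pomega\trp}$ as its sign inverse in the sense of Definition~\ref{def:par-op}. The main obstacle I expect is the closure under pointwise multiplication in step~(i), where one really does need the extra $L^\infty$-control supplied by the Fourier algebra factor $A(G)$ in $L^{1/1}(G,\Gamma)$: the dual convolution theorem for integrable functions is typically stated precisely under the assumption that both factors have $L^1$ Fourier transforms, which is exactly what membership in $L^{1/1}$ guarantees.
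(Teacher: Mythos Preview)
Your proof is correct and follows the same overall architecture as the paper's: Fourier inversion for bijectivity, the characterization $L^{1/1}(G,\Gamma)=L^1(G)\cap A(G)$ for the closure properties, and the convolution theorem (in both directions) for the twain structure. The one noteworthy variation is how you verify that pointwise products land in $L^1(G)$: you use the bound $|s\cdot t|\le\|s\|_\infty|t|$ coming from $L^{1/1}\subseteq C_0\subseteq L^\infty$, whereas the paper invokes the inclusion $L^{1/1}\subseteq L^2$ from Lemma~\ref{lem:L11-basics} together with H\"older's inequality $\|st\|_1\le\|s\|_2\|t\|_2$. Your route is arguably more direct, since the $C_0$-containment is already used in the lemma's own proof and avoids the extra $L^2$ detour; correspondingly, your convolution-closure argument ($\hat s\cdot\hat t\in L^1$ via $\hat s\in L^\infty$, $\hat t\in L^1$) is self-contained, while the paper proves pointwise closure first and then bootstraps convolution closure from it on the dual side. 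One small point of phrasing: when you say ``Fourier inversion identifies this convolution with $\widehat{s\cdot t}$'', what you are really using is that $\Four_{\pomega\trp}(\hat s\star\hat t)=\Par(s\cdot t)$ by the convolution theorem plus inversion, which places $s\cdot t$ in $A(G)$; the equality $\widehat{s\cdot t}=\hat s\star\hat t$ then follows \emph{a posteriori} by injectivity, rather than serving as the premise. The paper makes this order of dependence explicit.
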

\begin{proof}
  In the proof of Lemma~\ref{lem:L11-basics} we have seen
  that~$\Four_{\pomega\trp} \Four_\pomega s = \Par s$ for~$s \in L^{1/1}(G, \Gamma)$. By symmetry,
  $\Four_\pomega \Four_{\pomega\trp} s = \Par \sigma$ for $\sigma \in L^{1/1}(\Gamma, G)$, which
  means~$\Four_\pomega$ has~$\Four_{\pomega\trp} \Par
  = \Par \Four_{\pomega\trp}$ as its inverse.

  Closure under the action of~$H(\beta)$ follows immediately from equivariance of~$\Four_\pomega$
  and~$\Four_{\pomega\trp}$. By showing
  $\big(L^{1/1}(G, \Gamma), \cdot\big) \subseteq \big(C_0(G), \cdot\big)$ and
  $\big(L^{1/1}(G, \Gamma), \star\big) \subseteq \big(L^1(G), \star \big)$ as Heisenberg
  subalgebras, we will at once ensure the subdoublet relation and the closure conditions for the
  twain doublet (the two other closure conditions follow by symmetry).

  We prove that~$L^{1/1}(G, \Gamma)$ is \emph{closed under pointwise multiplication}, generalizing
  the hints for~\cite[Ex.~6.4.5]{Stade2011}. Let~$s, s' \in L^{1/1}(G, \Gamma)$ be
  arbitrary. Then~$s, s' \in L^2(G)$ by Lemma~\ref{lem:L11-basics}, so that H{\"o}lder's
  inequality~\cite[Thm.~9.2]{Knapp2005b} yields $||s s'||_1 \le ||s||_2 \: ||s'||_2 < \infty$ and
  thus~$s s' \in L^1(G)$.  On the other hand, $s, s' \in \Four_{\pomega\trp} L^1(\Gamma)$ means we
  may write~$s = \Four_{\pomega\trp} \sigma$ and~$s' = \Four_{\pomega\trp} \sigma'$
  for~$\sigma, \sigma' \in L^1(\Gamma)$. Now the convolution theorem (the fact
  that~$\Four_{\pomega\trp}\colon \big(L^1(\Gamma), \star\big) \to \big(C_0(G), \cdot\big)$ is a
  homomorphism, stated in Theorem~\ref{thm:pont-doublet})
  implies~$\Four_{\pomega\trp}(\sigma \star \sigma') = (\Four_{\pomega\trp} \sigma)
  (\Four_{\pomega\trp} \sigma') = s s' \in \Four_{\pomega\trp} L^1(\Gamma)$.
  Altogether, we have
  established~$s s' \in L^{1/1}(G, \Gamma) = L^1(G) \cap \Four_{\pomega\trp} L^1(\Gamma)$.

  Finally we show that~$L^{1/1}(G, \Gamma)$ is also \emph{closed under convolution}.
  Taking~$s, s' \in L^{1/1}(G, \Gamma)$ arbitrary, it is obvious that~$s \star s' \in L^1(G)$. But
  we have also $\Four_\pomega(s \star s') = (\Four_\pomega s) (\Four_\pomega s') \in L^1(\Gamma)$ by
  applying the previous argument now
  to~$\Four_\pomega s, \Four_\pomega s' \in L^{1/1}(\Gamma, G) \subseteq L^2(\Gamma)$. Thus we
  have~$s \star s' \in L^1(G) \cap \Four_\pomega^{-1} L^1(\Gamma)$.
\end{proof}

Note that both~$\Four_{\pomega} = \Four_{\pomega}^\land$
and~$\Four_{\pomega\trp} = \Four_{\pomega\trp}^\land$ in Propostion~\ref{prop:pont-doublet} are
forward Fourier operators, which have backward analogs~$\Four_{\pomega}^\lor$
and~$\Four_{\pomega\trp}^\lor$. As it becomes tedious to track all these distinctions, we shall
content ourselves with
\begin{equation*}
  \Four^\land := \Four_{\pomega}^\land
  \quad\text{and}\quad
  \Four^\lor := \Four_{\pomega\trp}^\lor,
\end{equation*}
which we call the \emph{forward and backward Fourier operators}\footnote{Also known as the
  \emph{direct and inverse} Fourier operators in the literature.}
of~$L^{1/1}\inner{G}{\Gamma}$. These two operators are inverse to each other, as we have seen in the
proof of Propostion~\ref{prop:pont-doublet}. We transfer the notation introduced after
Remark~\ref{rem:justify-twist} to the corresponding images, $\hat{s} := \Four^\land(s)$ for the
\emph{direct transform} of a signal~$s \in S$ and~$\check{\sigma} := \FFour^\lor(\sigma)$ for the
\emph{inverse transform} of a spectrum~$\sigma \in \Sigma$; no confusion is likely to result from
this.

Under Pontryagin duality, the position group~$G$ is \emph{discrete} iff the moment group~$\Gamma$ is
\emph{compact}~\cite[Thm.~1.7.3(a)]{Rudin2017}. In this case, one can always isolate a distinguished
twain doublet within the $L^{1/1}$ twain doublet. For describing it, note first that each
position~$a \in G$ induces a function~$d_a \in L^1(G)$ with~$d_a(x) = \delta_{a,x}$ being the
Kronecker delta; we write~$\mathfrak{D}(G) \le L^1(G)$ for the $\CC$-linear span of all the~$d_a$. On
the other hand, the position~$a \in G$ induces a character~$\chi_a \in C_0(\Gamma) = C(\Gamma)$
with~$\chi_a(\xi) = \inner{\xi}{a}_\pomega$; we write~$\mathfrak{C}(\Gamma) \le C_0(\Gamma)$ for the
$\CC$-linear span of those characters~$\chi_a$.

\begin{proposition}
  \label{prop:char-doublet}
  Let~$G$ and~$\Gamma$ be LCA groups under Pontryagin duality
  $\pomega\colon G \times \Gamma \to \Tor$, with $G$ discrete.
  Then~$[\mathfrak{D}(G) \pto \mathfrak{C}(\Gamma)]$ is a regular twain subdoublet
  of~$[\Four\colon L^{1/1}(G, \Gamma) \pto L^{1/1}(\Gamma, G)]$ with~$\Four d_a = \chi_a$.
\end{proposition}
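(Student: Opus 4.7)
The plan is to verify, in order: (i) the claimed formula $\Four d_a = \chi_a$; (ii) the two containments $\mathfrak{D}(G) \subseteq L^{1/1}(G,\Gamma)$ and $\mathfrak{C}(\Gamma) \subseteq L^{1/1}(\Gamma,G)$; (iii) closure of each space under both products and under the Heisenberg action; (iv) bijectivity, whence regularity. Throughout, I would exploit the canonical Haar normalization enforced by the compact/discrete dichotomy~\cite[\S1.5.1]{Rudin2017}: since $G$ is discrete and $\Gamma$ compact, Haar measure on $G$ is counting and $\mu(\Gamma) = 1$.

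First, integrating $d_a$ against a character collapses to a single term: $\Four d_a(\xi) = \sum_{y\in G}\inner{\xi}{y}_\pomega\,\delta_{a,y} = \inner{\xi}{a}_\pomega = \chi_a(\xi)$. This simultaneously gives $\hat{d_a} = \chi_a$ and shows that $\chi_a$ is in $L^1(\Gamma)$ (bounded and continuous on a compact group), so $d_a \in L^{1/1}(G,\Gamma)$. For the symmetric containment, the Fourier inversion theorem invoked in the proof of Lemma~\ref{lem:L11-basics} gives $\Four_{\pomega\trp}^\lor \chi_a = d_a$, which is trivially $L^1(G)$; equivalently, one may compute $\Four_{\pomega\trp}^\lor \chi_a(x) = \int_\Gamma \inner{\eta}{a-x}_\pomega\,d\eta = \delta_{a,x}$ by orthogonality of characters on the compact group $\Gamma$ together with $\mu(\Gamma)=1$.

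Next I would verify the twain algebra structure by computing on the spanning families. On $\mathfrak{D}(G)$, convolution with counting measure gives $d_a \star d_b = d_{a+b}$, and pointwise multiplication gives $d_a \cdot d_b = \delta_{a,b}\,d_a$; both stay in $\mathfrak{D}(G)$. On $\mathfrak{C}(\Gamma)$, pointwise multiplication yields $\chi_a \cdot \chi_b = \chi_{a+b}$, and convolution yields $\chi_a \star \chi_b = \delta_{a,b}\,\chi_a$, again by character orthogonality (this is the step I expect to be most error-prone, as one must track the sign convention carefully). The Fourier operator then exchanges $\star$ with $\cdot$ on these generators: $\Four(d_a \star d_b) = \chi_{a+b} = \chi_a \cdot \chi_b$ and $\Four(d_a \cdot d_b) = \delta_{a,b}\chi_a = \chi_a \star \chi_b$. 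For the Heisenberg action, \eqref{eq:pont-actions-G} gives $x \act d_a = d_{a+x}$ and $\xi \act d_a = \inner{\xi}{a}_\pomega\,d_a$, and symmetrically \eqref{eq:pont-actions-Gamma} gives $\chi_a \act x = \chi_{a+x}$ and $\chi_a \act \xi = \inner{\xi}{a}_\pomega^{-1}\,\chi_a$, so both spaces are $H(\pomega)$-stable.

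Finally, for bijectivity: the map $a \mapsto d_a$ is injective and its image is a Hamel basis of $\mathfrak{D}(G)$ by construction, and linear independence of distinct characters~\cite[\S IV.5]{Lang2002} shows the $\chi_a$ are independent in $\mathfrak{C}(\Gamma)$; hence the linear extension of $d_a \mapsto \chi_a$, which coincides with $\Four$ by step~(i), is a $\CC$-linear isomorphism $\mathfrak{D}(G) \to \mathfrak{C}(\Gamma)$. Combined with the preceding steps, this exhibits $(\iota,\kappa)\colon [\mathfrak{D}(G) \pto \mathfrak{C}(\Gamma)] \hookrightarrow [\Four\colon L^{1/1}(G,\Gamma) \pto L^{1/1}(\Gamma,G)]$ as a subdoublet morphism of twain doublets, regular by bijectivity of $\Four$ on the subspaces.
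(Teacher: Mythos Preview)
Your proof is correct and follows essentially the same route as the paper's. The only minor differences are that the paper obtains $\chi_a \in L^{1/1}(\Gamma,G)$ by invoking the identity $L^{1/1}(\Gamma,G) = L^1(\Gamma) \cap A(\Gamma)$ from Lemma~\ref{lem:L11-basics} (since $\chi_a = \Four d_a \in \Four L^1(G)$) rather than by direct orthogonality computation, and it checks closure only on the $\mathfrak{D}(G)$ side, leaving the $\mathfrak{C}(\Gamma)$ side and regularity implicit via the ambient regular doublet.
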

\begin{proof}
  We
  have~$\Four d_a(\xi) = \sum_{x \in G} \inner{\xi}{x}_\pomega \, d_{a,x} = \inner{\xi}{x}_\pomega$,
  hence~$\Four d_a = \chi_a$. Of course we have~$d_a \in L^1(G)$ as noted above.
  Since~$\| \chi_a \|_1 \le |\Gamma|$, we have also~$\chi_a \in L^1(\Gamma)$ and thus
  indeed~$d_a \in L^{1/1}(G, \Gamma)$. Applying Lemma~\ref{lem:L11-basics} with~$G$ and~$\Gamma$
  interchanged, we obtain~$\chi_a \in L^{1/1}(\Gamma, G)$ since
  $\chi_a = \Four d_a \in \Four L^1(G)$. Thus we have
  established~$[\mathfrak{D}(G) \pto \mathfrak{C}(\Gamma)]$ as a subobject of the $L^{1/1}$ doublet
  on the set-theoretic level. It remains only to show closure under convolution, pointwise
  multiplication and the Heisenberg action. It suffices to check this on one side,
  say~$\mathfrak{D}(G)$.

  It is easy so check that~$d_a \star d_b = d_{a+b}$ and~$d_a \cdot d_b = \delta_{a,b} \, d_a$. Thus
  we ob\-tain~$\big( \mathfrak{D}(G), \star \big) \cong \CC[G]$
  and~$\big( \mathfrak{D}(G), \cdot) \cong \CC\pathalg{G}$, which ensures closure under both product
  structures. Finally, closure under the Heisenberg action follows since~$x \act d_a = d_{a+x}$
  for~$x \in G$ and~$\xi \act d_a = \inner{\xi}{a}_\pomega \, d_a$ for~$\xi \in \Gamma$, while the
  torus acts trivially via the inclusion~$\Tor \subset \CC$.
\end{proof}

We call~$[\mathfrak{D}(G) \pto \mathfrak{C}(\Gamma)]$ the \emph{character doublet} of the Pontryagin
duality~$\pomega\colon G \times \Gamma \to \Tor$. Note that in this case the action of the discrete
group~$G$ on both signals~$s \in L^1(G)$ and spectra~$\sigma \in C_0(\Gamma) = C(\Gamma)$ is already
contained in the twain algebra structure since~$x \act s = d_x \star s$
and~$x \act \sigma = \chi_x \cdot \sigma$. Furthermore, it also encodes evaluation and the Fourier
transform since~$d_a \act s = s(a) \, d_a$
and~$\chi_a \star \sigma = \chi_a \act \Four^\lor \sigma \, (a)$.
Since~$1 = \chi_0 \in \mathfrak{C}(\Gamma) \subseteq C(\Gamma)$ is a neutral element
in~$\big( C(\Gamma), \act \big)$, such algebras are always unital
with~$\CC \hookrightarrow C(\Gamma)$ and with unital Fourier transform~$\Four(\delta_0) = 1$. We shall
see an important instance of a character doublet in
Example~\ref{ex:pont-doublet-inverse}\ref{itt:four-dtft}.

If~$G$ fails to be discrete, the~$d_a$ are in general distributions while the functions~$\chi_a$ do
not vanish at infinity. In the crucial example~$G = \RR$, we get the \emph{Dirac
  deltas}~$d_a = \delta_a \in M(\RR)$ and the \emph{oscillating exponentials}
$\chi_a = e_a \in BC(\RR)$ with~$e_a(\xi) = e^{i\tau a \xi}$. As this example suggests, the
character doublet~$[\mathfrak{D}(G) \pto \mathfrak{C}(\Gamma)]$ is no longer a twain subdoublet of
$[\Four\colon L^{1/1}(G, \Gamma) \pto L^{1/1}(\Gamma, G)]$ but just a slain subdoublet of the
measure doublet~$[\Four\colon M(G) \pto BC(\Gamma)]$. We do not pursue these matters here, but we
shall need the following application of the character algebra~$\mathfrak{C}(\Gamma)$ for describing
the \emph{Heisenberg closure}~$\bar{\Sigma}$ of a subalgebra~$\Sigma \le C_0(\Gamma)$, defined as
the smallest Heisenberg subalgebra of~$C_0(\Gamma)$ that extends~$\Sigma$. It is obtained by
tensoring~$\CC[\Sigma\Gamma] \le C_0(\Gamma)$, the complex algebra generated by the translates
of~$\Sigma$, with the character space.

\begin{proposition}
  \label{prop:heis-closure}
  Let~$G$ and~$\Gamma$ be LCA groups under Pontryagin duality
  $\pomega\colon \Gamma \times G \to \Tor$. The Heisenberg closure~$\bar{\Sigma}$ of a
  subalgebra~$\Sigma \le C_0(\Gamma)$ is isomorphic
  to~$\mathfrak{C}(\Gamma) \otimes_{\CC} \CC[\Sigma\Gamma]$.
\end{proposition}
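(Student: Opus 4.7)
The plan is to realize $\bar{\Sigma}$ as the image of the natural multiplication map
\[
  \phi\colon \mathfrak{C}(\Gamma) \otimes_{\CC} \CC[\Sigma\Gamma] \longrightarrow C_0(\Gamma),
  \qquad \chi_a \otimes f \mapsto \chi_a \cdot f,
\]
which lands in $C_0(\Gamma)$ because $|\chi_a| \equiv 1$. First I would equip the tensor product with the Heisenberg algebra structure that $\phi$ is forced to carry: the pointwise product is $(\chi_a \otimes f)(\chi_b \otimes g) = \chi_{a+b} \otimes fg$, the modulation action of $x \in G$ is $x \act (\chi_a \otimes f) = \chi_{x+a} \otimes f$, and the translation action of $\xi \in \Gamma$ is $\xi \act (\chi_a \otimes f) = \inner{\xi}{a}^{-1} \chi_a \otimes (f \act \xi)$. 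The phase factor $\inner{\xi}{a}^{-1}$ comes from the identity $\chi_a \act \xi = \inner{\xi}{a}^{-1} \chi_a$, itself a one-line consequence of bilinearity of the Pontryagin pairing~$\pomega$. A routine check then verifies that $\phi$ is a Heisenberg morphism.

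Next I would identify the image with $\bar{\Sigma}$. Since $\phi(\chi_0 \otimes \sigma) = \sigma$, the image contains $\Sigma$, and the displayed formulae show it is stable under pointwise multiplication, translation by $\Gamma$, and modulation by $G$; hence $\mathrm{image}(\phi)$ is a Heisenberg subalgebra of $C_0(\Gamma)$ extending $\Sigma$, so $\mathrm{image}(\phi) \supseteq \bar{\Sigma}$. For the reverse inclusion, $\bar{\Sigma}$ must absorb all translates of $\Sigma$ and all pointwise products thereof, whence it contains the subalgebra $\CC[\Sigma\Gamma]$; it must also be stable under multiplication by each $\chi_x$ ($x \in G$), and by $\CC$-linearity under multiplication by every element of $\mathfrak{C}(\Gamma)$. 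Therefore $\bar{\Sigma} \supseteq \mathfrak{C}(\Gamma) \cdot \CC[\Sigma\Gamma] = \mathrm{image}(\phi)$, giving equality.

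The crux, which I expect to be the main obstacle, is injectivity of $\phi$: one must show that a relation $\sum_i \chi_{a_i} f_i = 0$ in $C_0(\Gamma)$, with pairwise distinct $a_i \in G$ and $f_i \in \CC[\Sigma\Gamma]$, forces every $f_i$ to vanish. This is a linear-independence-of-characters statement with coefficients in the function ring $\CC[\Sigma\Gamma]$, strictly stronger than the classical Dedekind-Artin independence over $\CC$, and a naive translate-and-subtract elimination does not reduce the number of terms. My plan is to pass through the inverse Fourier transform, so that the relation becomes $\sum_i \check{f}_i(x - a_i) = 0$ as tempered distributions on $G$: distinct $a_i$-translates of the $\check{f}_i$ must collectively cancel. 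The algebraic structure of $\CC[\Sigma\Gamma]$ controls their Fourier supports ($\Gamma$-translates in $\Sigma$ become $G$-modulations of $\check{\Sigma}$, and pointwise products become convolutions), and one must leverage this to rule out cancellation. The delicate point is that the argument is sensitive to the concrete nature of $\Sigma$, and making it fully general appears to be the sole nontrivial step—the remainder of the proof is structural bookkeeping.
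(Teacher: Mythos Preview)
Your structural analysis—realizing $\bar\Sigma$ as the image $\mathfrak{C}(\Gamma)\cdot\CC[\Sigma\Gamma]$ of the multiplication map $\phi$, verifying closure under the pointwise product and the Heisenberg action via the formulae you display, and checking minimality—is exactly what the paper does. The paper's proof is essentially your first two paragraphs, with the same computations.

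Where you are more scrupulous than the paper is on injectivity of $\phi$. The paper simply asserts that the multiplication law ``shows at once \dots\ the isomorphism with $\mathfrak{C}(\Gamma) \otimes_\CC \CC[\Sigma\Gamma]$'' and moves on without further argument. Your instinct that this is the only nontrivial step is correct, but in fact injectivity \emph{fails} in general, so no argument along the lines you sketch can succeed. Take $\Sigma$ to be any subalgebra of $C_0(\Gamma)$ already stable under multiplication by characters—for instance $\Sigma = C_0(\Gamma)$ itself, or $\Sigma = \overline{\Gau(\RR)}$. Then $\CC[\Sigma\Gamma] = \Sigma$, and for any nonzero $a \in G$ and nonzero $f \in \Sigma$ the element $\chi_a \otimes f - \chi_0 \otimes (\chi_a f)$ lies in $\ker\phi$. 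So the proposition as stated needs an extra hypothesis (roughly, that $\Sigma$ is not already modulation-invariant); what the paper's argument genuinely establishes is the \emph{equality} $\bar\Sigma = \mathfrak{C}(\Gamma)\cdot\CC[\Sigma\Gamma]$ inside $C_0(\Gamma)$. In the intended application to $\Sigma = \Gau(\RR)$ the tensor description does hold, but that rests on the separate asymptotic-scale linear-independence arguments, not on this proposition. Your Fourier-side plan cannot rescue the general case because the obstruction is real rather than technical; the sensitivity to ``the concrete nature of $\Sigma$'' that you flagged is precisely where the counterexamples live.
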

\begin{proof}
  For the purpose of this proof, let us write the translates of a spectrum~$\sigma \in \Sigma$
  by~$\eta \in \Gamma$ as $\sigma^\eta := \eta \act \sigma$ so that every element
  of~$\CC[\Sigma\Gamma]$ can be written as a polynomial in the~$\sigma^y$. It is then clear
  that~$\mathfrak{C}(\Gamma) \, \CC[\Sigma\Gamma]$, the $\CC$-linear span of all
  products~$\chi_y \, \sigma_1^{\eta_1} \cdots \sigma_k^{\eta_k} = y \act \sigma_1^{\eta_1} \cdots
  \sigma_k^{\eta_k}$,
  is contained in the pointwise algebra~$C_0(\Gamma)$ since the latter is closed under the
  Heisenberg action. We have
  \begin{equation*}
    (\chi_y \, \sigma_1^{\eta_1} \cdots \sigma_k^{\eta_k}) \, (\chi_{\bar y} \, \bar{\sigma}^{\bar{\eta}_1}
    \cdots \bar{\sigma}_l^{\bar{\eta}_l}) = \chi_{y+\bar{y}} \, \sigma_1^{\eta_1} \cdots
      \sigma_k^{\eta_k} \, \bar{\sigma}^{\bar{\eta}_1} \cdots \bar{\sigma}_l^{\bar{\eta}_l} ,
  \end{equation*}
  which shows at once closure under the pointwise product and the isomorphism
  with~$\mathfrak{C}(\Gamma) \otimes_{\CC} \Gamma[\Sigma]$. Finally, closure under the Heisenberg
  action is clear since
  \begin{align*}
    x \act \chi_y \, \sigma_1^{\eta_1} \cdots \sigma_k^{\eta_k} &= \chi_{x+y} \,
    \sigma_1^{\eta_1} \cdots \sigma_k^{\eta_k},\\
    \xi \act \chi_y \, \sigma_1^{\eta_1} \cdots \sigma_k^{\eta_k} &= \inner{\xi}{y}^{-1} \,
  \chi_y \, \sigma_1^{\xi+\eta_1} \cdots \sigma_k^{\xi+\eta_k},
  \end{align*}
  while the torus~$\Tor \subset \CC$ acts trivially.

  We have thus verified
  that~$\mathfrak{C}(\Gamma) \, \Gamma[\Sigma] \cong \mathfrak{C}(\Gamma) \otimes_{\CC}
  \Gamma[\Sigma]$ is a Heisenberg subalgebra of~$C_0(\Gamma)$. It is clearly the smallest such since
  closure under the poitwise product and the action of~$\Gamma$ ensures inclusion of~$\CC[\Gamma]$,
  whereas closure under the action of~$G$ brings in the characters~$\chi_y$.
\end{proof}

\begin{myremark}
  Note that the algebra~$\Sigma$ may be nonuntial. Then~$\CC[\Gamma]$ and the tensor
  algebra~$\mathfrak{C}(\Gamma) \otimes_{\CC} \CC[\Gamma]$ will also be nonunital, and the latter
  does not contain~$\mathfrak{C}(\Gamma)$ as a subalgebra. We have already noted this above for the
  example~$G = \RR$ where we have the oscillating exponentials~$\chi_a = e_a \not\in C_0(\RR)$ .
\end{myremark}

Before passing to the crucial examples of the classical Fourier operators, it will be useful to
introduce one final bit of typology for ``Fourier structures'': When signal and spectral space are
isomorphic, they may be identified so that the \emph{Fourier operator is an endomorphism} in~$\Mod_K$.

\begin{definition}
  \label{def:singlet}
  A \emph{Fourier singlet} is a Fourier doublet~$[\Four\colon S \pto S]$.
\end{definition}

Of course, any singlet may still be viewed as a doublet (see
Example~\ref{ex:pont-doublet} above) by choosing \emph{not} to
identify signals and spectra even when this is possible: Even when the
spaces coincide, one may choose to make them formally distinct (say,
by introducing~$\Sigma := S \times \{ 0 \}$). Note also that regular
doublets \emph{may but need not be} viewed as singlets. The intention
of distinguishing signals from spectra may be the motivation behind
writing signals as functions of~$x$ but spectra as functions of~$\xi$.

While such a distinction may seem pedantic, it should be called to mind that the situation is very
similar for linear maps between vector spaces~$f\colon V \to W$, where the distinction is crucial
when it comes to classification: While homomorphisms are classified for equivalence by the rank,
endomorphisms are classified for similarity by the elementary divisors. In the case of Fourier
structures, the special role of singlets will become apparent when we consider \emph{adjunction} of
new elements (Remark~\ref{rem:hyperbolic-singlet}). For the moment, it suffices to point out that
the classical twain doublet~$L^{1/1}\inner{\Gamma}{G}$ for a Pontryagin
duality~$\pomega\colon \Gamma \times G \to \Tor$ may be viewed as a singlet exactly in the self-dual
case, i.e. when~$G \cong \Gamma$.

\begin{myexample}
  \label{ex:pont-doublet-inverse}
  Returning to the classical Fourier operators of Example~\ref{ex:pont-doublet}, let us review the
  inverse transformation $\Four^\lor\colon L^{1/1}(\Gamma) \to L^{1/1}(G)$, where in each
  case~$\Four^\land\colon L^{1/1}(G) \to L^{1/1}(\Gamma)$ as the restriction of the corresponding
  Fourier operator~$\Four\colon L^1(G) \to C_0(\Gamma)$ according to
  Propostion~\ref{ex:pont-doublet}. In this context, the defining formulae~\eqref{eq:four-int},
  \eqref{eq:four-ser}, \eqref{eq:four-dtft}, \eqref{eq:four-dft} for~$\Four^\land$ are referred to
  as the \emph{analysis equations}, those for~$\Four^\lor$ the \emph{synthesis equations}. We shall
  now state the latter in each of the four cases treated in Example~\ref{ex:pont-doublet}.
  \begin{enumerate}[(a)]
  \item\label{itt:four-int} For the \emph{Fourier integral}
    $\Four^\land\colon L^{1/1}(\RR) \to L^{1/1}(\RR)$, the inverse transformation is of the same
    form, except for the negative sign in the exponential so that
    \begin{equation}
      \label{eq:inv-four-int}
      \Four^\lor \sigma \, (x) = \int_{\RR^n} e^{-i \tau x \cdot \xi} \, \sigma(\xi) \, d\xi .
    \end{equation}
    At this point, it should also be mentioned that \emph{different normalizations} for the Fourier
    integral---corresponding to different normalizations of the underlying Haar measure---are in
    circulation. Some people use instead of the ``ordinary frequency'' variable~$\xi$ the ``angular
    frequency''~$\omega := \tau\xi$, so the exponential becomes~$e^{i x \cdot \omega}$ in the
    forward transformation~\eqref{eq:four-int} and~$e^{-i x \cdot \omega}$ in the above backward
    transformation~\eqref{eq:inv-four-int}, along with a scaling factor~$\tau^{-n}$ in the
    latter. Since this destroys the unitary character (when viewing it on the corresponding Hilbert
    space---see Proposition~\ref{prop:L2-doublet}), the scaling factor is evenly distributed
    as~$\tau^{-n/2}$ in both forward and backward transformation.
  \item\label{itt:four-ser} For the case of \emph{Fourier series}, the synthesis of the Fourier
    coefficients~\eqref{eq:four-ser} is just the summation that builds up their ``Fourier series'',
    namely
    \begin{equation}
      \label{eq:inv-four-ser}
      \Four^\lor \sigma \, (x) = \sum_{\xi \in \ZZ^n} e^{-i\tau x \cdot \xi} \, \sigma(\xi) .
    \end{equation}
    As with the Fourier integral, some modifications are possible. In particular, signals may be
    taken over periodic domains~$[0, T] \cong S^1$ with other \emph{periods}~$T \neq 1$. In that
    case, the exponentials in~\eqref{eq:four-ser} and~\eqref{eq:inv-four-ser}
    become~$e^{\pm i\tau x \cdot \xi/T}$ with an additional scaling factor of~$T^{-n}$ in front of
    the Fourier coefficients~\eqref{eq:four-ser}. In that case, it can be considered suitable to
    introduce the ``angular velocity''~$\omega := \tau/T$, thus simplifying the exponentials
    to~$e^{\pm i x \cdot \omega}$.
  \item\label{itt:four-dtft} The inverse of the \emph{discrete-time Fourier
      transform}~\eqref{eq:four-dtft} is
    \begin{equation}
      \label{eq:inv-four-dtft}
      \Four^\lor \sigma \, (x) = \int_{\II^n} e^{-i\tau x \cdot \xi} \, \sigma(\xi) \, d\xi ,
    \end{equation}
    which is identical to the synthesis equations~\eqref{eq:inv-four-int} of the Fourier integral
    except for the different integration domain (which is actually a parametrization
    of~$\Tor^n$). As with Fourier series, there are straightforward modifications for adopting
    different periods.

    Moreover, the spectra~$\sigma \in L^{1/1}(\Tor^n)$ may be viewed as functions on $n$ complex
    variables~$z_1, \dots, z_n \in \Tor \subset \CC$. Assuming convergence, the $n$
    circles~$\Tor = S^1$ may be expanded to annuli~$A_1, \dots, A_n$ to produce an analytic
    function~$\sigma\colon A_1 \times \cdots \times A_n \to \CC$. Under this interpretation, the
    analysis equation~\eqref{eq:four-dtft} amounts to summing the multivariate \emph{Laurent series}
    while the synthesis equation~\eqref{eq:inv-four-dtft} effects the computation of the Laurent
    coefficients, essentially via Cauchy's integral formula. In other words, the Fourier operators
    $\Four^\land$ and~$\Four^\lor$ coincide with the (bilateral) \emph{$\mathcal{Z}$-transform} and
    its inverse, possibly up to minor adoptions (as usual there is no agreement on the sign in the
    exponential).

    As~$\ZZ^n$ is discrete, there is a twain
    subdoublet~$[\mathfrak{D}(\ZZ^n) \pto \mathfrak{C}(\Tor^n)]$ of
    $[L^{1/1}(\ZZ^n, \Tor^n) \pto L^{1/1}(\Tor^n, \ZZ^n)]$ described in
    Proposition~\ref{prop:char-doublet}. In our case, we have
    \begin{equation*}
      d_a(x) = \delta_{a,x} = \delta_{a_1, x_1} \cdots \delta_{a_n, x_n},\qquad
      \chi_a(\xi) = \xi^a = \xi_1^{a_1} \cdots \xi_n^{a_n},
    \end{equation*}
    with the group algebra~$\big( \mathfrak{D}(\ZZ^n), \star \big) \cong \CC[\ZZ^n]$ consisting of
    Laurent polynomials (restrictions of the Laurent series mentioned above) and the path
    algebra~$\big( \mathfrak{C}(\Tor^n), \cdot \big) \cong \CC\pathalg{\ZZ^n}$ having integer
    $n$-tuples as orthogonal idempotents. In its abstract algebraic form, we have already seen the
    twain algebra $\CC\pathalg{\ZZ^n} \divideontimes \CC[\ZZ^n]$ in Example~\ref{ex:tor-twainalg}.
  \item\label{itt:four-dft} For the \emph{discrete Fourier series}~\eqref{eq:four-dfs} the inversion is
    \begin{equation}
      \label{eq:inv-four-dfs}
      \Four^\lor \sigma \, (x) = \sum_{\xi \in \ZZ_N^n} e^{-i \tau x \cdot \xi} \, \sigma(\xi);
    \end{equation}
    for the \emph{discrete Fourier transform} the analogous expression is
    \begin{equation}
      \label{eq:inv-four-dft}
      \Four^\lor \sigma \, (x) = \frac{1}{N^n} \, \sum_{\xi \in \Tor_N^n} e^{-i \tau x \cdot \xi} \,
      \sigma(\xi).
    \end{equation}
    Note that~\eqref{eq:inv-four-dfs} is a truncated version of~\eqref{eq:inv-four-ser}
    while~\eqref{eq:inv-four-dft} is a discretized form of~\eqref{eq:inv-four-dtft}; as in the
    forward DFS transformation~\eqref{eq:four-dfs}, the factor~$N^{-n}$ arises
    from~$\cum_0^1 \dots \, dx_i \rightsquigarrow \sum_{x_i \in \Tor_N} \dots \, N^{-1}$.  Needless
    to say there are again various conventions for the sign of the exponential and the distribution
    of the scaling factor. And one may of course adopt the viewpoint of identifying~DFS and~DFT as
    discussed at the end of Example~\ref{ex:pont-doublet}.

    The Nicholson version of the Fourier transform in Example~\ref{ex:four-nich} is also invertible.
    The fact that~$\Four\colon \big(R[G], \star\big) \to \big(R^n, \cdot \big)$ is a isomorphism was
    exactly the motivation for the conditions imposed on~$R$, as mentioned in
    Example~\ref{ex:nich-duality}. The inversion formula is analogous to~\eqref{eq:inv-four-dft}
    above, namely~\eqref{eq:four-nich} with negated sign and a factor~$|G|^{-1}$.
  \end{enumerate}
  Note that cases~\eqref{itt:four-int} and~\eqref{itt:four-dft} are self-dual while the dualities
  of~\eqref{itt:four-ser} and~\eqref{itt:four-dtft} are conjuages of each other. Thus one may
  consider the Fourier integral and the DFT as Fourier singlets. In the former case, this appears to
  be universally accepted at least in the one-dimensional case (nobody wants to distinguish
  one-dimensional column vectors from one-dimensional row vectors). But for the DFT, the singlet
  view appears to be tied up with the abstract interpretation mentioned at the end of
  Example~\ref{ex:finite-group}. Treating the DFT as a doublet corresponds to the distinction
  between ``discrete Fourier series'' and ``discrete Fourier transforms'', corresponding to the two
  realizations~$\ZZ_N$ and~$\Tor_N$ of the abstract group~$\ZZ/N$.

  (Conversely, some might even argue that Fourier series and the discrete-time Fourier transform are
  ``the same'' since inversion and parity are ``trivial'' operations; this appears to be the
  viewpoint of~\cite[p.~99]{Folland1994}, where only~\eqref{eq:four-int}, \eqref{eq:four-ser},
  \eqref{eq:four-dtft} of Example~\ref{ex:pont-doublet} are mentioned,
  or~\cite[Ex.~1.2.7]{Rudin2017}, who additionally omits the discrete Fourier transform among the
  ``classical groups of Fourier analysis''. However, most other sources in signal theory, such
  as~\cite{OppenheimWillsky2013}, \cite{Smith1997}, \cite{Roberts2012}, do give a separate treatment
  of Fourier series and the discrete-time Fourier transform.)
\end{myexample}

The $L^2$ functions form another widespread regular doublet, which enjoys great popularity due to
various beneficial properties ensuing from the Hilbert space structure of $L^2$ functions. The price
to be paid for this convenience is a certain impoverishment of the multiplicative structure as $L^2$
functions are in general neither closed under convolution nor under the pointwise product. As a
consequence, we obtain only a slain doublet that we call the \emph{square-integrable doublet} to be
denoted by~$L^2\inner{\Gamma}{G} := [\Four\colon L^2(G) \pto L^2(\Gamma)]$.

\begin{proposition}
  \label{prop:L2-doublet}
  Let~$G$ and~$\Gamma$ be LCA groups under Pontryagin duality
  $\pomega\colon G \times \Gamma \to \Tor$.  Then~$L^2\inner{\Gamma}{G}$ is a regular slain doublet
  containing $L^{1/1}\inner{\Gamma}{G}$ as subdoublet, with~$\Four\colon L^2(G) \to L^2(\Gamma)$
  unitary.
\end{proposition}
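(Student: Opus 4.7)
The plan rests on the classical \emph{Plancherel theorem} for LCA groups: under the dual Haar normalization implicit in~\eqref{eq:fourier-transform}, the restriction $\Four\colon L^{1/1}(G,\Gamma) \to L^{1/1}(\Gamma,G)$ is an $L^2$-isometry, i.e.\ $\|\Four s\|_2 = \|s\|_2$. I would invoke this result from standard references such as Theorem~1.6.1 of~\cite{Rudin2017} or (4.25) of~\cite{Folland1994}, and then carry out four steps.

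First, I would establish that $L^2(G)$ is a Heisenberg module over~$\pomega$. Translation $x \act s$ preserves $\|s\|_2$ by invariance of Haar measure, and modulation $\xi \act s$ preserves $\|s\|_2$ because $|\inner{\xi}{y}| = 1$, so both actions of~\eqref{eq:pont-actions-G} restrict to $L^2(G)$. The torus~$\Tor$ acts via the natural embedding $\Tor \hookrightarrow \nnz{\CC}$. The axioms~\eqref{eq:left-unit}--\eqref{eq:twisted-bimod} follow exactly as in the proof of Theorem~\ref{thm:pont-doublet}, while the algebra axioms~\eqref{eq:left-scal} and~\eqref{eq:right-op} are vacuous for the slain (trivially-multiplied) structure. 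The symmetric argument makes $L^2(\Gamma)$ into a right Heisenberg module over~$\pomega$.

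Second, I would extend $\Four$ to $L^2$. By Lemma~\ref{lem:L11-basics}, $L^{1/1}(G,\Gamma) \subseteq L^2(G)$, and a standard density argument (e.g.\ approximating $C_c(G)$ elements by convolution with a net of positive-type kernels whose Fourier transforms are compactly supported) shows $L^{1/1}(G,\Gamma)$ is dense in $L^2(G)$. Combined with the Plancherel isometry, $\Four$ extends uniquely by continuity to an isometry $\Four\colon L^2(G) \to L^2(\Gamma)$. Applying the analogous construction to the conjugate duality~$\pomega\trp$, one obtains a continuous isometry $L^2(\Gamma) \to L^2(G)$ that agrees with the sign-inverse of $\Four$ on the dense subspace $L^{1/1}$, and hence globally by continuity. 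Therefore $\Four$ is bijective and, being an isometry, unitary.

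Third, to verify that the extended map is a Fourier operator in the sense of Definition~\ref{def:four-op}, the identities~\eqref{eq:cofourop} must be checked on $L^2(G)$. They hold on the dense subspace $L^{1/1}(G,\Gamma)$ by Proposition~\ref{prop:pont-doublet}; since translation, modulation, and $\Four$ itself are continuous in the $L^2$-norm, the identities extend by density. Fourth, the subdoublet relation $L^{1/1}\inner{\Gamma}{G} \subseteq L^2\inner{\Gamma}{G}$ follows because the set-theoretic inclusions $L^{1/1}(G,\Gamma) \hookrightarrow L^2(G)$ and $L^{1/1}(\Gamma, G) \hookrightarrow L^2(\Gamma)$ intertwine the Heisenberg actions (both are defined by the same translation/modulation formulae) and commute with $\Four$ by construction.

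The main obstacle is not really the algebraic bookkeeping but the two entangled \emph{analytic inputs}: the density of $L^{1/1}(G,\Gamma)$ in $L^2(G)$ and the exact Plancherel identity without a spurious scalar prefactor. The latter forces a specific compatibility between the Haar measures on $G$ and $\Gamma$ (the so-called dual Haar normalization), which one must verify is consistent with the integral convention adopted in~\eqref{eq:fourier-transform}; otherwise $\Four$ would only be a scalar multiple of a unitary. Once these are in hand, every other step is essentially a continuity argument from a dense subspace.
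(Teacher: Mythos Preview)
Your proposal is correct and follows essentially the same approach as the paper: both invoke the Plancherel theorem from standard references (the paper cites \cite[(4.25)]{Folland1994} and \cite[\S1.6.1]{Rudin2017}), obtain the $L^2$ Fourier operator by completion from a dense subspace, and derive the subdoublet relation from Lemma~\ref{lem:L11-basics}. The only minor difference is that the paper extends from the dense subspace $L^1(G)\cap L^2(G)$ while you extend from $L^{1/1}(G,\Gamma)$; since the latter is contained in the former and your density sketch is sound, this is inessential. Your explicit verification of the Heisenberg module axioms and the continuity argument for equivariance are spelled out in more detail than the paper's terse ``required closure properties'', but the substance is the same.
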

\begin{proof}
  We have proved in Lemma~\ref{lem:L11-basics} that~$L^{1/1}(G, \Gamma) \subseteq L^2(G)$ and,
  dually, $L^{1/1}(\Gamma, G) \subseteq L^2(\Gamma)$. Regularity and the required closure properites
  of~$L^2\inner{\Gamma}{G}$ are established in most texts on abstract harmonic analysis (they follow
  essentially from the $L^1$ theory by completion); see for example~\cite[(4.25)]{Folland1994}. Of
  course, the $L^2$ doublet enjoys a wealth of further properties (in particular unitarity, see
  below). It should be noted that, in the $L^2$ case, the definition~\eqref{eq:fourier-transform}
  does not generally hold \emph{on the nose}, but must be understood in the sense of a suitable
  limit. This is possible since the classical Fourier transform of Theorem~\ref{thm:pont-doublet}
  restricts to the dense subspace~$L^1(G) \cap L^2(G)$ of the Hilbert space~$L^2(G)$, and its image
  is again dense in~$L^2(G)$; see~\cite[\S1.6.1]{Rudin2017}, where the unitary nature
  of~$\Four\colon L^2(G) \to L^2(\Gamma)$ is also established.
\end{proof}

At this point we can go back to the \emph{classial Fourier operators} detailed in
Example~\ref{ex:pont-doublet}, each of which may be interpreted in their corresponding $L^2$ setting
according to Proposition~\ref{prop:L2-doublet}, and this indeed often used. As mentioned earlier,
the Fourier operator can be suitably normalized so as to become a unitary operator between the
Hilbert spaces~$L^2(G)$ and~$L^2(\Gamma)$. In the case of the DFT, it is then necessary to split the
factor~$1/N^n$ occurring in~\eqref{eq:four-dft} so that it becomes~$1/\sqrt{N^n}$ in front of both
forward and backward transformation. Note that each of the classical Fourier operators, defined
in~\eqref{eq:four-int}, \eqref{eq:four-ser}, \eqref{eq:four-dtft}, \eqref{eq:four-dft} and
reinterpreted as~$\Four\colon L^2(G) \to L^2(\Gamma)$, can be understood as taking the scalar
product of a signal~$s \in L^2(G)$ with the exponential~$e_\xi(x) := e^{i\tau x \cdot \xi}$.
Thinking of~$\Four$ intuitively as a ``matrix'' with columns indexed by positions~$x \in G$ and rows
indexed by momenta~$\xi \in \Gamma$, its $(x,\xi)$-entry would be given by the
character~$\inner{x}{\xi} = e^{i\tau x \cdot \xi}$. While this is can be made precise immediately in
the DFT case (see the remarks in Example~\ref{ex:pont-doublet}\ref{it:four-dft} for the traditional
form of the matrix), one may employ the machinery of rigged Hilbert spaces (also known as Gelfand
triples) for treating the other cases along these lines with full rigor.

\begin{myremark}
  \label{rem:pmech}
  The $L^2$ setting is also the hub of the representation theory of
  Heisenberg groups. Under the physical interpretation mentioned
  earlier (Remark~\ref{rem:physint-heisgrp}), this constitutes the
  standard approach to representing \emph{physical
    observables}. Indeed, the Heisenberg
  group~$\mathbf{H}_n^{\mathrm{pol}}$ over the standard vector
  duality~$\inner{\RR^n}{\RR_n}$ underpins kinematics, both classical
  (Hamilton's equations) and quantum (Heisenberg equation): The
  Schr\"odinger
  representation~$\rho_h\colon H_n \to \mathcal{H}\big( L^2(\RR^n)
  \big)$ yields the latter; it is parametrized by the Planck constant
  $h$ whose so-called semi-classical limit $h \to 0$ leads to
  Hamiltonian mechanics. See also~\cite{Howe1980}, \cite{Howe1980a}
  for more on the representation theory of~$H_n$.
  If one dislikes the idea of constants tending to zero (though one
  might interpret this as taking place in a hypothetical sequence of
  universes with progressively less significant quantum effects), the
  framework of Plain Mechanics (p-mechanics) offers an alternative
  viewpoint~\cite{Kisil1996}: Before specializing to any quantum or
  classical (or hyperbolic quantum) version, physical systems are
  described in the ``plain'' setting of the Heisenberg group~$H_n$,
  using~$\big(L^1(H_n), \star)$ as the algebra of observables
  (including in particular the Hamiltonian). A so-called universal
  equation rules the evolution of the system, which transforms to the
  Heisenberg equation under the representation~$\rho_h$ and to
  Hamilton's equations under~$\rho_0$. In this framework, one may
  develop corresponding brackets~\cite{Kisil2002}, notions of
  state~\cite{Kisil2003} and a detailed mechanical
  theory~\cite{Kisil2004}. In a newer
  presentation~\cite[IV.1]{Kisil2018}, p-mechanics is treated in a
  more geometric context where the elliptic / parabolic / hyperbolic
  cases correspond to different number rings (complex / dual / double
  numbers).

  It would be nice to reformulate the last results in term of suitable
  tori. For example, the classical case should correspond to the
  torus~$\Tor_\epsilon := \{ 1 + b \epsilon \mid b \in \RR \}$ of the
  dual numbers~$\RR_\epsilon := \RR[\epsilon \mid \epsilon^2 = 0]$
  with the duality~$\inner{\RR^n}{\RR_n}_\epsilon$ given
  by~$\inner{x}{\xi}_\epsilon := 1 + \epsilon (x \cdot \xi)$. Its
  $L^2$ representation (encoded in the $L^2$ Fourier doublet) should
  then yield the classical Hamilton's equations just as the standard
  duality yields the Heisenberg equation. In an even more ambitious
  enterprise, one might try to develop a general theory of ``universal
  equations'' for a given duality that yields classical and quantum
  kinematics as special cases. For this purpose, some tools developed
  in~\cite{Akbarov1995a}, \cite{Akbarov1995b} may be of help.

  It should be mentioned that the Heisenberg group also affords some other very famous
  representations, apart from the all-important Schr\"odinger representation: The \emph{phase-space
    representation} is embodied in the symplectic Fourier transform (Remark~\ref{rem:sympl-Four}).
  The \emph{theta representation} is important in algebraic geometry---it is investigated in
  Mumford's monumental opus~\cite{Mumford2007b}, which also mentions the phase-space representation
  on the way~\cite[Thm.~1.2]{Mumford2007b}. The \emph{Segal-Bargmann representation} on Fock space
  invokes a subspace of the entire functions in $n$variables~\cite[\S1.6]{Folland2016}.
\end{myremark}

\begin{myexample}
  \label{ex:L1-doublet}
  We come back to the (normalized) cardinal sine function given in the proof of
  Lemma~\ref{lem:L11-basics}. It is known~\cite[p.~106]{Bracewell1986} that its Fourier transform is
  the (normalized) \emph{rectangle function}~$\Pi := \chi_{(-1/2, 1/2)}$. The squared cardinal sine
  is also important~\cite[p.~108]{Bracewell1986}; its Fourier transform is the (normalized)
  \emph{triangle function}~$\Delta(x) := \max(1-|x|,0)$. Furthermore, in the proof of
  Lemma~\ref{lem:L11-basics} we have seen that~$\sinc \in L^2(\RR)$, thus~$\sinc^2 \in
  L^1(\RR)$.
  Since its transform~$\Delta$ is clearly in~$L^1(\RR)$ as well, this shows that in
  fact~$\sinc^2 \in L^{1/1}(\RR)$. In summary, we have the Fourier pairs
  \begin{align*}
    [\sinc \mapsto \Pi] &\in L^2\inner{\RR}{\RR} \setminus L^{1/1}\inner{\RR}{\RR},\\
    [\sinc^2 \mapsto \Delta] &\in L^{1/1}\inner{\RR}{\RR},
  \end{align*}
  both of which are crucial in signal processing applications. In particular, we
  have~$\sinc \in C_0(\RR) \setminus L^1(\RR)$ whereas
  clearly~$\Pi \in L^1(\RR) \setminus C_0(\RR)$. Thus the spaces $L^1(G)$ and $C_0(G)$ are
  generically seen to have nontrivial intersection.

  For another pair of examples, let us turn to probability. The \emph{Gaussian distribution} with
  Stigler normalization~\cite{Stigler1982} is~$g(x) := e^{-\pi x^2}$; this corresponds to a variance
  of~$1/\tau$. It is an $L^1$ function with~$||g||_1 = 1$ and the remarkable property of being a
  fixed point of the Fourier transformation.  Writing~$l(x) := e^{-|x|}$ for (twice) the
  \emph{Laplace distribution}~$f(x \,|\, 0, 1)$ and~$c(\xi) := \tfrac{2}{1+(\tau\xi)^2}$ for the
  \emph{Cauchy distribution}~$f(\xi \,|\, 0,1/\tau)$, we obtain the two Fourier pairs
  \begin{equation*}
    [g \mapsto g], [l \mapsto c] \in L^{1/1}\inner{\RR}{\RR},
  \end{equation*}
  figuring prominently in probability theory.

  In concluding, let us note that the inclusion~$L^{1/1}(G) \subset L^1(G) \cap C_0(G)$ is in
  general strict, though examples and references seem to be hard to come by.
  Taking~$G = \Gamma = \RR$ again, we have clearly
  \begin{equation*}
    s = x \mapsto \tfrac{-\Pi(x/2)}{\log(x) - \log(1-x)} \in L^1(\RR) \cap C_0(\RR) .
  \end{equation*}
  It is harder to see\footnote{We point to MathStackExchange \#67910 at
    \url{https://math.stackexchange.com/questions/67910/a-fourier-transform-of-a-continuous-l1-function}
    for a detailed estimate.} that~$|\hat{s}(\xi)| \sim \tfrac{\sin(\pi\xi)}{\pi\xi \, \log|\xi|}$
  as~$|\xi| \to \infty$ and thus~$\hat{s} \not\in L^1(\RR)$. This also shows that the Heisenberg
  twain algebra~$L^1(G) \cap C_0(G)$ does not in general map to its
  counterpart~$L^1(\Gamma) \cap C_0(\Gamma)$; it is thus of little use and not considered in Fourier
  analysis.
\end{myexample}

\begin{myexample}
  \label{ex:Lp-doublet}
  If~$\pomega\colon \Gamma \times G \to \Tor$ is again a Pontryagin duality, one may
  generalize~\cite[Prop.~4.27]{Folland1994} the square-integrable doublet by defining a Fourier
  transform from~$L^p(G)$ to~$L^q(\Gamma)$ for any~$p \in [1,2]$ with H\"older conjugate~$q$. Note
  that~$q \ge 2$ in this case, so this cannot be a singlet except for~$p=q=2$ and~$G \cong \Gamma$
  yielding of course~$L^2\inner{G}{G}$. Nevertheless, one does get a Fourier
  doublet~$[\Four\colon L^p(G) \pto L^q(\Gamma)]$ in general.

  This is a slain doublet just as in the famous~$p=q=2$ case. In fact, closure under the pointwise
  product is immediately seen to break down from examples such
  as~$s = x \mapsto x^{-1/3} \in L^{3/2}[0,1]$ with~$s^2 \not\in L^{3/2}[0,1]$. As for the
  convolution product (fixing any particular~$1<p<2$), it is known that closure in~$L^p(G)$ is
  equivalent to~$G$ being discrete, in fact even for nonabelian locally compact
  groups~\cite[Prop.~2.1]{AbtahiNasrIsfahaniRejali2013}. Furthermore,
  $\Four\colon L^p(G) \to L^q(\Gamma)$ is apparently not surjective\footnote{See MathStackExchange
    \#238692 at
    \url{https://mathoverflow.net/questions/238692/fourier-transform-surjective-on-lp-mathbbrn-for-p-in-1-2}
    for an argument.}, so this is in general only a slain singular doublet.
\end{myexample}

Let us summarize the \emph{typology of Fourier structures} as follows: Just as for morphisms in any
other category, we have isolated the two crucial \emph{properties} of Fourier operators---qua linear
maps---as being isomorphisms (regular versus singular doublets) and endomorphism (singlets versus
doublets). Furthermore, we distinguish the \emph{structures} of slain/plain/twain doublets. We
illustrate the various possibilities by listing for each case some natural example under Pontryagin
duality. (We list only doublets; as mentioned above, one may view them as singlets for self-dual
groups if desired. But this singlet/doublet distinction is only important for special purposes such
as adjunction of elements.)

\begingroup
\footnotesize
\begin{table}[h!]
  \renewcommand{\arraystretch}{1.2}
  \begin{tabular}{||l||l|l|l||}
    \hline\hline
    & Slain & Plain & Twain\\\hline
    Singular & $[L^{3/2}(G) \pto L^3(\Gamma)]$ & $[L^1(G) \pto C_0(\Gamma)]$ 
                              & $[L^{1/1}(G) \pto L^1(\Gamma) \cap C_0(\Gamma)]$\\
    Regular & $[L^2(G) \pto L^2(\Gamma)]$ & $[L^1(G) \pto A(\Gamma)]$ 
                              & $[L^{1/1}(G) \pto L^{1/1}(\Gamma)]$\\
    \hline\hline
  \end{tabular}
  \bigskip
  \caption{Examples of Fourier Structures}
  \label{tab:four-struct}
\end{table}
\endgroup

Before turning to our most important example from an alogorithmic perspective in the next Section,
let us mention a few other Fourier doublets.

\subsection{The Schwartz Class for Pontryagin Duality.}
\label{sub:schwartz-class}
Classical Fourier analysis comes into its own when introducing the space of \emph{Schwartz
  functions}~$\Schw(\RR)$ and its dual~$\Schw'(\RR)$, the space of \emph{tempered
  distributions}~\cite{Strichartz1994}. It is a remarkable fact that even this seeminlgy special
construction on real functions can be generalized to the abstract setting of Pontryagin duality,
where Schwartz functions are named \emph{Schwartz-Bruhat} functions, after Laurent Schwartz for the
classical theory~\cite{Schwartz1951} and Fran{\c c}ois Bruhat for the abstract
construction~\cite{Bruhat1961}. Following the latter, one starts from LCA groups~$G$ and~$\Gamma$
under Pontryagin duality~$\pomega\colon G \times \Gamma \to \Tor$ and defines the Schwartz-Bruhat
space~$\Schw(G)$ by appealing to the structure theory of LCA groups. We shall only
outline\footnote{A concise summary may also be found on the \textsf{nlab} page
  \url{https://ncatlab.org/nlab/show/Schwartz-Bruhat+function}.} the essential steps of the
construction~\cite[\S9]{Bruhat1961}, \cite[\S11]{Weil1964}:
\begin{enumerate}
\item\label{it:dir-lim} One starts from the important fact~\cite[p.~162]{Roeder1971} that any LCA
  group~$G$ is an inverse limit of abelian Lie groups~$(G_\alpha \,|\, \alpha \in I)$, obtained by
  dualizing~\cite[Thm.~2.5/Cor.~1]{Moskowitz1967} the representation of~$\Gamma$ as a direct limit
  of its compactly generated subgroups $(\Gamma_\alpha \,|\, \alpha \in I)$. Defining $\Schw(G)$ as
  direct limit of~$\big(\Schw(G_\alpha) \,|\, \alpha \in I\big)$, it remains to define~$\Schw(L)$
  when~$L$ is an abelian Lie group (in a different terminology: ``having no small subgroups'').
\item\label{it:lie-grp} In that case~\cite[Thm.~2.4]{Moskowitz1967}, $L$ is isomorphic
  to~$\RR^m \oplus \Tor^n \oplus D$ for a discrete group~$D$, which may again be written as a direct
  limit of its finitely generated subgroups~$D_\beta$ so that~$L$ is the direct limit of the
  elementary groups~$(\RR^m \oplus \Tor^n \oplus D_\beta \,|\, \beta \in J)$. Applying primary
  decomposition on each~$D_\beta$, one obtains the
  representation~$E_\beta = \RR^m \oplus \Tor^n \oplus \ZZ^{k(\beta)} \oplus H_\beta$ for the
  elementary groups, with~$H_\beta$ finite. One defines~$\Schw(L)$ as functions that are
  Schwartz-Bruhat on some~$E_\beta$ and vanishing elsewhere.
\item\label{it:elem} A function~$f$ on an elementary
  group~$\RR^m \oplus \Tor^n \oplus \ZZ^k \oplus H$ is called Schwartz-Bruhat
  if~$f(-, \xi, -, h)\colon \RR^m \oplus \ZZ^k \to \CC$ is a Schwartz function for
  each~$(\xi, h) \in \Tor^n \oplus H$. As usual, Schwartz functions $\RR^m \oplus \ZZ^k \to \CC$ are
  defined as smooth in the variable~$x \in \RR^m$, and with all
  derivatives~$\partial^\alpha\!f/\partial x^a \: (a \in \NN^m)$ of rapid decay. A function
  $\phi\colon \RR^m \oplus \ZZ^k \to \CC$ is said to be of rapid decay if it satisfies
  $\phi(x, \nu) = o(x^{-a} \nu^{-b})$ for all~$(a, b) \in \NN^m \times \NN^k$.
\end{enumerate}

The direct limit it Item~\eqref{it:dir-lim} above can be realized by
defining~$G \overset{\phi}{\to} \CC$ to be Schwartz-Bruhat iff it factors, for some~$\alpha$, as
$$G \overset{\pi}{\twoheadrightarrow} G_\alpha \overset{\tilde\phi}{\to} \CC$$ with~$\pi$ the
canonical projection onto~$G_\alpha = G/\!\Ann(\Gamma_\alpha)$ and~$\tilde\phi$ any
Schwartz-Bruhat function in the sense of Item~\eqref{it:lie-grp}. See~\cite[Thm.~27]{Morris1977} for
exchanging subgroups and quotients under dualization.

There are two \emph{variations} of this definition. The first is to retain item~\eqref{it:elem}
above, but top it by a more intrinsic characterization of differentiation in terms of one-parameter
subgroups~\cite{Wawrzynczyk1968}. Using some more functional analysis, these ideas can even be
generalized to non-abelian locally compact groups~\cite{Akbarov1995}. The second variation is to
bypass Lie groups altogether, defining~$\Schw(G)$ directly in terms of growth
rates~\cite{Osborne1975}.

We obtain one and the same Schwartz-Bruhat space~$\Schw(G) \subset L^1(G)$ with any of these
definitions.  It turns out that~$\Schw(G)$ is dense in $L^1(G)$ according
to~\cite[Thm.~3.3]{Wawrzynczyk1968}, and dense in~$L^2(G)$ according
to~\cite[p.~42]{Osborne1975}. Of course, this generalizes well-known facts of classical analysis;
they show that~$\Schw(G)$ is in fact ``big enough to be useful''. Let us now relate the
Schwartz-Bruhat space to the classical Fourier singlet and then review the classical Fourier
operators of Example~\ref{ex:pont-doublet}.

\begin{theorem}
  \label{thm:Schwartz-Bruhat}
  Let~$G$ and~$\Gamma$ be LCA groups under Pontryagin duality
  $\pomega\colon \Gamma \times G \to \Tor$.  Then~$[\Four\colon \Schw(G) \pto \Schw(\Gamma)]$ is a
  regular twain doublet contained in the classical twain doublet~$L^{1/1}\inner{\Gamma}{G}$.
\end{theorem}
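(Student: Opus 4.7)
The plan is to piggyback on the $L^{1/1}$ doublet of Proposition~\ref{prop:pont-doublet} by showing that the Schwartz-Bruhat space sits inside $L^{1/1}$ on both sides and is preserved by all the relevant operations. The linchpin is Bruhat's theorem that the classical Fourier transform restricts to a topological linear isomorphism $\Four\colon \Schw(G) \isomarrow \Schw(\Gamma)$ (see~\cite[Thm.~5]{Bruhat1961}, also~\cite{Osborne1975}, \cite{Wawrzynczyk1968}). Since $\Schw(G) \subseteq L^1(G)$ and $\Four$ maps it into $\Schw(\Gamma) \subseteq L^1(\Gamma)$, the defining condition~\eqref{eq:bij-domain} yields $\Schw(G) \subseteq L^{1/1}(G, \Gamma)$ at once; by symmetry, $\Schw(\Gamma) \subseteq L^{1/1}(\Gamma, G)$. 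Hence $[\Four\colon \Schw(G) \pto \Schw(\Gamma)]$ is a set-theoretic subobject of the classical twain doublet~$L^{1/1}\inner{\Gamma}{G}$, and Bruhat's isomorphism makes it regular.

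It remains to verify closure of $\Schw(G)$ under convolution, pointwise multiplication, and the Heisenberg action (and analogously on the spectral side, which follows by symmetry or transport via $\Four$). Following the inductive description of $\Schw(G)$ recalled in Items~\eqref{it:dir-lim}--\eqref{it:elem} before the theorem, I would reduce each closure property to the elementary case~$L = \RR^m \oplus \Tor^n \oplus \ZZ^k \oplus H$, where it becomes a classical fact about Schwartz functions on~$\RR^m \oplus \ZZ^k$ (smoothness in the continuous coordinates, rapid decay in the continuous \emph{and} the discrete coordinates). For translation by $a \in L$, closure is immediate since the Schwartz-Bruhat property is translation-invariant on each elementary factor. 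For modulation by a character $\xi \in \hat L$, one uses that characters on $L$ are smooth with derivatives of polynomial growth on~$\RR^m$ and bounded on the remaining factors, so multiplication by a character preserves rapid decay of all derivatives. For the two products, convolution reduces to the classical fact that $\Schw \star \Schw \subseteq \Schw$ on the continuous factor (with integration/summation over~$\ZZ^k$ causing no trouble by rapid decay), and pointwise multiplication is trivially preserved since smoothness and rapid decay are preserved by products.

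The final check is that $\Four$ restricted to $\Schw(G)$ is a Fourier operator in the sense of Definition~\ref{def:four-op}, i.e.\@ that it respects the twain structure and intertwines the Heisenberg actions via the forward twist. But this is automatic: $\Four\colon \Schw(G) \to \Schw(\Gamma)$ is the restriction of the $L^{1/1}$ Fourier operator from Proposition~\ref{prop:pont-doublet}, and all the required identities---the convolution theorem, the multiplication theorem, and the equivariance formulae~\eqref{eq:cofourop}---are inherited verbatim from the ambient doublet.

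The main obstacle is the second paragraph, specifically ensuring that every closure statement survives the passage through the direct/inverse limits in steps~\eqref{it:dir-lim}--\eqref{it:lie-grp}. The subtlety is that convolution on $G$ involves integration against Haar measure on $G$ rather than on an individual~$G_\alpha$, so one has to confirm that the convolution of two Schwartz-Bruhat functions (each factoring through some $G_\alpha, G_{\alpha'}$) factors through a common quotient and lies in $\Schw$ there---a compatibility statement that is essentially built into Bruhat's construction but nevertheless deserves explicit verification. Alternatively, one may sidestep the limit apparatus altogether by invoking Osborne's intrinsic characterization~\cite{Osborne1975}, in which smoothness and rapid decay are phrased directly on~$G$, making the closure properties transparent.
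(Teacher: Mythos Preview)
Your proof is correct and follows essentially the same overall strategy as the paper: invoke the literature (Bruhat/Osborne/Wawrzy\'nczyk) for the Fourier isomorphism $\Schw(G)\isomarrow\Schw(\Gamma)$, deduce $\Schw(G)\subseteq L^{1/1}(G,\Gamma)$, then verify the twain-algebra closure properties and inherit the Fourier-operator identities from the ambient $L^{1/1}$ doublet.

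The one genuine difference is in how you handle closure under the two products. You verify both convolution and pointwise closure by reduction to elementary groups, which is valid but---as you yourself flag---requires care with the limit passage. The paper bypasses this entirely: it cites Osborne~\cite[p.~42]{Osborne1975} directly for convolution closure and translation invariance, and then obtains pointwise closure for free via the identity $s\cdot s' = \Four^\lor(\Four^\land s \star \Four^\land s')$, using that $\Four$ is already known to be a bijection $\Schw(G)\to\Schw(\Gamma)$. This trick is worth internalizing: once you have the Fourier isomorphism and closure under \emph{one} product, the other comes automatically, so there is no need to descend to elementary groups a second time. Your reduction-to-elementary-groups argument is more self-contained and illuminates the structure, but the paper's route is shorter and sidesteps the limit subtlety you identified.
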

\begin{proof}
  In the above-mentioned treatments of Schwartz-Bruhat space it is
  proved that~\cite[Thm.~3.2]{Wawrzynczyk1968} the (forward) Fourier
  operator~$\Four^\land$ of~$L^{1/1}\inner{\Gamma}{G}$ restricts to a
  map~$\Schw(G) \to \Schw(\Gamma)$. By symmetry, it is also true that
  the backward Fourier operator~$\Four^\lor$
  of~$L^{1/1}\inner{\Gamma}{G}$ restricts
  to~$\Schw(\Gamma) \to \Schw(G)$.  Since~$\Four^\land$
  and~$\Four^\lor$ are mutually inverse in the classical Fourier
  singlet~$L^{1/1}\inner{\Gamma}{G}$, the same must be true for the
  corresponding restrictions in~$\Schw(G) \pto \Schw(\Gamma)$.

  Closure of~$\Schw(G)$ under convolution is established
  in~\cite[p.~42]{Osborne1975}, where $\Schw(G)$ is (temporarily)
  denoted by~$\mathcal{C}(G)$ and the even stronger closure property
  $\mathcal{A}(G) \star \mathcal{C}(G) \subseteq \mathcal{C}(G)$ for a
  certain space~$\mathcal{A}(G) \subseteq L^\infty(G)$
  containing~$\mathcal{C}(G)$ is stated.  Since we have
  $s \cdot s' = \Four^\lor(\Four^\land s \star \Four^\land s')$
  for~$s, s' \in \Schw(G) \subseteq L^{1/1}(G)$, closure under
  convolution~$\star$ implies closure under the pointwise
  product~$\cdot$ as well.

  It is also mentioned in~\cite[p.~42]{Osborne1975} that~$\Schw(G) = \mathcal{C}(G)$ is
  translation-invariant, meaning closed under the action of~$G \subset H(\pomega)$. By symmetry, the
  same holds for the action of~$\Gamma \subset H(\pomega)$. Since the torus action
  of~$\Tor \subset \CC$ is trivial, this shows that~$\Schw(G)$ is closed under the Heisenberg
  action.
\end{proof}

We call~$\Schw \inner{\Gamma}{G} := [\Four\colon \Schw(G) \pto \Schw(\Gamma)]$ the \emph{Schwartz
  singlet} over the given Pontryagin duality~$\pomega\colon G \times \Gamma \to \Tor$. As stated
earlier, this generalizes some classical facts that we can now formulate on the basis of the forward
and backward Fourier operators~$\Four^\land$ and~$\Four^\lor$ of Example~\ref{ex:pont-doublet}.

\begin{myexample}
  Since~$\Four^\land\colon \Schw(G) \to \Schw(\Gamma)$
  and~$\Four^\lor\colon \Schw(\Gamma) \to \Schw(G)$ are restrictions
  of the corresponding Fourier operators in the classical Fourier
  singlet~$L^1\inner{\Gamma}{G}$, we need only review the signal
  space~$\Schw(G)$ and the spectral space~$\Schw(\Gamma)$ in each of
  the four cases:
  \begin{enumerate}[(a)]
  \item For the \emph{Fourier integral} (Example~\ref{ex:pont-doublet}\ref{itt:four-int})
    on~$G = \Gamma = \RR^n$, we obtain of course the classical Schwartz class~$\Schw(\RR^n)$
    mentioned in item~(3) above. So we have~$\phi \in \Schw(\RR^n)$ iff~$\phi$ is smooth and of
    rapid decay, in the sense that~$\phi(x) = o(x^{-\alpha})$ for all~$\alpha \in \NN^n$.
  \item For \emph{Fourier series} (Example~\ref{ex:pont-doublet}\ref{itt:four-ser}) we
    have~$G = \Tor^n$ and~$\Gamma = \ZZ^n$; in this case the elements of~$\Schw(\Tor^n)$ are just
    smooth functions (since~$\Tor^n$ is compact) while those of~$\Schw(\ZZ^n)$ are the
    sequences~$\eta\colon \ZZ^n \to \CC$ of rapid decay, so~$\eta(k) = o(k^{-\alpha})$
    for all~$\alpha \in \NN^n$.
  \item For the \emph{discrete-time Fourier transform}
    (Example~\ref{ex:pont-doublet}\ref{itt:four-dtft}), the situation
    is of course the same as for Fourier series, but with the roles
    of~$G$ and~$\Gamma$ interchanged.
  \item For the \emph{discrete Fourier transform}
    (Example~\ref{ex:pont-doublet}\ref{itt:four-dft}), we have the
    finite groups~$G = \Tor_N^n$ and~$\Gamma = \ZZ_N^n$, so signal and
    spectral spaces both stay the same as in the classical DFT Fourier
    singlet~$L^{1/1}\inner{\Tor_N^n}{\ZZ_N^n}$ because finite
    sequences are trivially ``smooth'' and ``rapidly decaying'', hence
    we have~$\Schw(G) \cong \Schw(\Gamma) \cong (\CC^N)^n$. The same
    is of course true for discrete Fourier series.
  \end{enumerate}
  As explained in item~(2) above, the \emph{elementary groups} are
  composed exactly of the component groups listed above. But the
  corresponding Schwartz-Bruhat functions must have the prescribed
  decay of item~(3), for all relevant variables \emph{jointly}. For
  example, consider the smooth (but non-analytic) function
  \begin{equation*}
    f\colon \CC \to \RR, z = x + iy \mapsto |\!\exp(z^4)| = e^{-x^4+6x^2y^2-y^4}.
  \end{equation*}
  Then~$f(x_0,y) = O(e^{-y^4})$ and~$f(x,y_0) = O(e^{-x^4})$ are
  in~$\Schw(\RR)$ for any fixed~$x_0, y_0 \in \RR$; but we
  have~$f(t,t) = e^{4t^2} \to \infty$ so
  that~$f \not\in \Schw(\RR^2)$. Similarly, the
  restriction~$\tilde{f}\colon \RR \times \ZZ \to \RR$ is
  in~$\Schw(\RR)$ when fixing the discrete variable and
  in~$\Schw(\ZZ)$ when fixing the continuous variable; nevertheless we
  have again~$\tilde{f} \not\in \Schw(\RR \times \ZZ)$. This shows
  that the decay properties of Schwartz-Bruhat functions on elementary
  groups, as per item~(3) above, are in general stronger than the
  corresponding decay properties for each component group separately.
\end{myexample}

Having introduced the Schwartz space~$\Schw(G)$ for a locally compact group, one may of course
define the \emph{space of tempered distributions}~$\Schw'(G)$ in the usual
way~\cite[Def.~4.1]{Wawrzynczyk1968}, and the induced Fourier transform~$\Four$ is again an
automorphism of $\CC$-vector spaces~\cite[p.~61]{Bruhat1961}. Of course, one cannot expect closure
under pointwise multiplication (as $\delta_0 \in \Schw'(\RR)$ shows) or convolution (as
$1 \in \Schw'(\RR)$ shows). In summary, one obtains the following result.

\begin{proposition}
  \label{prop:temp-distr}
  Let~$G$ and~$\Gamma$ be LCA groups under Pontryagin duality
  $\pomega\colon \Gamma \times G \to \Tor$. Then~$\Four\colon \Schw'(G) \pto \Schw'(\Gamma)$ yields
  a regular slain doublet $\Schw'\inner{\Gamma}{G}$ containing the Schwartz
  doublet~$\Schw\inner{\Gamma}{G}$.\qed
\end{proposition}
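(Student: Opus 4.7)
The plan is to build the distributional doublet by transposing everything from the Schwartz doublet of Theorem~\ref{thm:Schwartz-Bruhat}. First I would define the distributional Fourier operator $\Four\colon \Schw'(G) \to \Schw'(\Gamma)$ as the transpose of the backward Schwartz Fourier operator, setting $\langle \Four T, \phi \rangle := \langle T, \Four^\lor \phi \rangle$ for $T \in \Schw'(G)$ and $\phi \in \Schw(\Gamma)$. Since Theorem~\ref{thm:Schwartz-Bruhat} gives that $\Four^\lor\colon \Schw(\Gamma) \to \Schw(G)$ is a continuous $\CC$-linear isomorphism (with inverse $\Four^\land\colon \Schw(G) \to \Schw(\Gamma)$), its transpose is automatically a continuous $\CC$-linear isomorphism $\Schw'(G) \isomarrow \Schw'(\Gamma)$, establishing regularity. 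This is the standard device (cf.~\cite[p.~61]{Bruhat1961}) by which Fourier analysis extends from test functions to tempered distributions.

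Next I would verify the Heisenberg module structure. The actions of $G$, $\Gamma$ and $\Tor$ on $\Schw(G)$ described in~\eqref{eq:pont-actions-G} are, respectively, translations, modulations and scalar multiplications; each is a continuous $\CC$-linear automorphism of $\Schw(G)$. They therefore lift to $\Schw'(G)$ by the contragredient prescription $(h \act T)(\phi) := T(h^{-1} \act \phi)$, which is well known to satisfy the identities \eqref{eq:left-unit}--\eqref{eq:twisted-bimod} (the phase factor emerges from the commutation of translation and modulation on the test-function side). The analogous statement for $\Schw'(\Gamma)$ uses~\eqref{eq:pont-actions-Gamma}. The equivariance of the Schwartz Fourier operators then forces $\Four$ to be a Heisenberg morphism over the forward twist, since for $T \in \Schw'(G)$, $\phi \in \Schw(\Gamma)$ and $h \in H(\pomega)$ one has $\langle \Four(h \act T), \phi \rangle = \langle T, h^{-1} \act \Four^\lor \phi \rangle = \langle T, \Four^\lor(\hat{h}^{-1} \act \phi) \rangle = \langle \Four T \act \hat{h}, \phi \rangle$ by~\eqref{eq:fourop-par-id} applied to $\Four^\lor$.

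To establish the containment $\Schw\inner{\Gamma}{G} \subseteq \Schw'\inner{\Gamma}{G}$, I would use the regular embedding $\iota_G\colon \Schw(G) \hookrightarrow \Schw'(G)$, $s \mapsto \bigl( \phi \mapsto \cum_G s(y) \phi(y) \, dy \bigr)$, and its analog $\iota_\Gamma$. Injectivity is standard (Schwartz--Bruhat functions separate points via integration against the Haar measure). The crucial compatibility $\iota_\Gamma \circ \Four = \Four \circ \iota_G$ is the Parseval-type identity $\cum_\Gamma \Four s \cdot \phi \, d\xi = \cum_G s \cdot \Four^\lor \phi \, dy$, which reduces to Fubini's theorem as in~\eqref{eq:fourop-mult}. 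Heisenberg equivariance of $\iota_G$ follows from translation invariance of Haar measure (for the $G$-action) and the identity $\cum_G (\xi \act s) \phi \, dy = \cum_G s (\xi \act \phi) \, dy$ (for the $\Gamma$-action), which is transparent from~\eqref{eq:pont-actions-G}. Hence $(\iota_G, \iota_\Gamma)$ is a Fourier morphism of slain doublets.

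The main obstacle is the bookkeeping associated to the forward twist~$\hat{J}$: since $\Four\colon S \pto \Sigma$ relates a left to a right Heisenberg module, the transpose must be taken with the appropriate sign conventions to land in a genuine left module $\Schw'(G)$ and a right module $\Schw'(\Gamma)$ with actions consistent with~\eqref{eq:pont-actions-G} and~\eqref{eq:pont-actions-Gamma}. This is why I use $\Four^\lor$ rather than $\Four^\land$ in the definition of the distributional transform—so that the twist on the test-function side cancels against the twist introduced by transposition, yielding exactly the forward twist on the distributional side. Once this convention is fixed, no further analytic work is needed: the slain (as opposed to plain or twain) nature of the doublet is automatic since no multiplication is asserted, side-stepping the well-known failure of convolution and pointwise multiplication on $\Schw'$.
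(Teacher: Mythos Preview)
Your overall strategy---define the distributional Fourier transform and Heisenberg action by transposition, then verify equivariance and the embedding---is the standard one and is exactly what the paper invokes (the proposition carries no proof beyond the citations to Bruhat and Wawrzy{\'n}czyk, and the conventions are spelled out only later in the proof of Proposition~\ref{prop:temp-distr-twmod}). So the route is right.

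There is, however, a genuine sign error in your bookkeeping that breaks the containment claim. The Parseval-type identity coming from Fubini is
\[
\cum_\Gamma (\Four^\land s)(\xi)\,\phi(\xi)\,d\xi
=\cum_G s(y)\Big[\cum_\Gamma \inner{\xi}{y}\,\phi(\xi)\,d\xi\Big]\,dy
=\cum_G s(y)\,(\Four_{\pomega\trp}^{\land}\phi)(y)\,dy,
\]
not $\cum_G s\cdot\Four^\lor\phi$. In the paper's shorthand (cf.\ the proof of Proposition~\ref{prop:temp-distr-twmod}) this reads $\funcinner{\Four s}{\phi}=\funcinner{s}{\Four\phi}$, with the \emph{forward} transform on both sides. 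Your choice of $\Four^\lor=\Four_{\pomega\trp}^{\lor}$ differs from $\Four_{\pomega\trp}^{\land}$ by the parity $\Par$, so your distributional $\Four$ extends $\Four^\lor$ rather than $\Four^\land$; the diagram $\iota_\Gamma\circ\Four=\Four\circ\iota_G$ then fails, and $\Schw\inner{\Gamma}{G}$ is not a subdoublet as claimed.

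The same issue appears in your Heisenberg action: the contragredient $(h\act T)(\phi)=T(h^{-1}\act\phi)$ does not extend the action on $\Schw(G)$, because the formal adjoint of modulation $L_\xi$ under the bilinear pairing is $L_\xi$ itself (not $L_{-\xi}$). Computing the adjoint of $L_{c(x,\xi)}$ gives $L_{\check{J}(c(x,\xi))}$ with the backward twist~\eqref{eq:bwd-twist}, so the correct prescription is $\funcinner{h\act T}{\phi}=\funcinner{T}{\check{J}(h)\act\phi}$; in components this is exactly the paper's $\funcinner{x\act s}{\phi}=\funcinner{s}{(-x)\act\phi}$ and $\funcinner{\xi\act s}{\phi}=\funcinner{s}{\xi\act\phi}$. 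Once you replace $h^{-1}$ by $\check{J}(h)$ and $\Four^\lor$ by $\Four_{\pomega\trp}^{\land}$, your equivariance computation goes through and the argument is complete.
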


In fact, one may show~\cite[Thm.~4.3]{Wawrzynczyk1968} that~$\Schw'\inner{\Gamma}{G}$ contains the
regular doublet~$[M(G) \pto BC(\Gamma)]$ of Proposition~\ref{prop:measure-alg}. 

The above statement that~$\Schw\inner{\Gamma}{G} \le \Schw'\inner{\Gamma}{G}$ misses what is often
perceived as the most crucial property of the pointwise multiplication and convolution of tempered
distributions: While one may not apply these to \emph{two} arbitrary signals~$s, s' \in \Schw'(G)$
or spectra~$\sigma, \sigma' \in \Schw'(G)$, it is always possible to apply them to~$s \in \Schw(G)$
and~$s' \in \Schw'(G)$, or to~$\sigma \in \Schw(\Gamma)$ and~$\sigma' \in \Schw'(\Gamma)$.

We are thus led to introduce the structure of a \emph{twain module}~$(M, \star, \cdot)$ over a twain
algebra~$(A, \star, \cdot)$, defined as an abelian group~$(M, +)$ together with two scalar
multiplications~$\star\colon A \times M \to M$ and~$\cdot\colon A \times M \to M$ such
that~$(M, \star)$ is a module over~$(A, \star)$ while~$(M, \cdot)$ is a module over~$(A, \cdot)$.
Clearly, we recover modules in the usual sense (``plain modules'') if~$A$ is a plain algebra (one of
its multiplications being trivial) and naked abelian groups (``slain modules'') if~$R$ is a slain
algebra (both multiplications being trivial).

If~$A$ is moreover a Heisenberg twain algebra over~$\beta$, we obtain a \emph{Heisenberg twain
  module}~$M$ provided that~$(M, \star)$ is a recto Heisenberg module over~$\beta$ and~$(M, \cdot)$
is a verso Heisenberg modules over~$\beta$. Here the recto/verso distinction is analogous to the
case of Heisenberg twain algebras. Writing the action of both~$H(\beta)$ on~$A$ and~$M$ by
juxta\-position, $(M, \star)$ being recto thus
means~$x \act (a \star s) = (x \act a) \star s = a \star (x \act s)$ and
$\xi \act (a \star s) = (\xi \act a) \star (\xi \act s)$ for all~$a \in A, s \in M$
and~$x \in G, \xi \in \Gamma$, where $H(\beta) = TG \rtimes \Gamma$; for~$(M, \cdot)$ being verso
one just reverses the roles of scalars and operators.

Now let~$M'$ be another Heisenberg twain module for the same Heisenberg group~$H(\beta)$ such
that~$[\Four\colon M \pto M']$ is a slain doublet and let~$A'$ be another Heisenberg twain module
such that~$[\FFour\colon A \pto A']$ is a twain doublet. Then we call~$[\Four\colon M \pto M']$ a
\emph{twain doublet over} $[\FFour\colon A \pto A']$ if~$\Four (a \star s) = \FFour a \cdot \Four s$
and~$\Four (a \cdot s) = \FFour a \star \Four s$ for~$a \in A$ and~$s \in M$. By abuse of notation,
the Fourier operator~$\FFour$ on~$A$ is commonly denoted by~$\Four$ as well. We can now capture the
observation made above in the following statement.

\begin{proposition}
  \label{prop:temp-distr-twmod}
  Let~$G$ and~$\Gamma$ be LCA groups under Pontryagin duality
  $\pomega\colon \Gamma \times G \to \Tor$. Then~$\Schw'\inner{\Gamma}{G}$ is a regular
  Heisenberg twain doublet over the Schwartz doublet~$\Schw\inner{\Gamma}{G}$.\qed
\end{proposition}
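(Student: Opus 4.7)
The plan is to verify the three data in the statement separately: first the twain $\Schw(G)$-module structure on $\Schw'(G)$ (and dually on $\Schw'(\Gamma)$), second its Heisenberg compatibility, and third the convolution/multiplication exchange under~$\Four$. Regularity is already covered by Proposition~\ref{prop:temp-distr}, so only the twain module structure over the Schwartz doublet needs to be checked.

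First I would recall the classical mixed operations: for $a\in\Schw(G)$ and $T\in\Schw'(G)$, the pointwise product $a\cdot T$ is defined by $(a\cdot T)(\phi):=T(a\cdot\phi)$ and the convolution $a\star T$ by $(a\star T)(\phi):=T(\bar a\star\phi)$, where $\bar a$ is the parity reversal. Both assignments land in $\Schw'(G)$ since $a\cdot\phi,\bar a\star\phi\in\Schw(G)$ by Theorem~\ref{thm:Schwartz-Bruhat}; in fact $a\star T$ is represented by a smooth function of moderate growth. Associativity $a\star(a'\star T)=(a\star a')\star T$ and $a\cdot(a'\cdot T)=(a\cdot a')\cdot T$ is a straightforward dualization of the corresponding identities inside $\Schw(G)$, so $(\Schw'(G),\star,\cdot)$ is a twain module over $(\Schw(G),\star,\cdot)$ in the sense defined just above the statement. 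The analogous construction on the momentum side gives the twain $\Schw(\Gamma)$-module $\Schw'(\Gamma)$.

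Next I would check that these two twain modules are Heisenberg twain modules, with one carrying the recto convention for $\star$ and the verso convention for $\cdot$, as inherited from $\Schw(G)$ itself. The Heisenberg action on $\Schw'(G)$ is the transpose of the action on $\Schw(G)$ given in \eqref{eq:pont-actions-G}, so the identities $x\act(a\star T)=(x\act a)\star T=a\star(x\act T)$, $\xi\act(a\star T)=(\xi\act a)\star(\xi\act T)$, and the dual identities for~$\cdot$, all reduce by duality pairing to the corresponding identities for $a$ and for test functions $\phi$, which hold in the Heisenberg twain algebra~$\Schw(G)$ by the axioms \eqref{eq:left-scal}, \eqref{eq:right-op} in recto/verso form. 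The torus~$T\subset\nnz\CC$ continues to act via scalars.

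The main step---and the place where some care is required---is the \emph{mixed convolution theorem}: for $a\in\Schw(G)$ and $T\in\Schw'(G)$ one must show $\Four(a\star T)=\Four a\cdot\Four T$ and $\Four(a\cdot T)=\Four a\star\Four T$, where on the left $\Four$ is the distributional Fourier operator of Proposition~\ref{prop:temp-distr} and on the right the Fourier operators of the Schwartz doublet and of $\Schw'(\Gamma)$ are used. The strategy is to evaluate both sides against a test function $\phi\in\Schw(\Gamma)$, transport the Fourier operator to the test-function side via its self-adjointness (Parseval), and reduce to the pure Schwartz case $\Four(a\star a')=\Four a\cdot\Four a'$, which is the twain convolution theorem already encoded in $\Schw\inner{\Gamma}{G}$ by Theorem~\ref{thm:Schwartz-Bruhat}. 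The only subtlety is to justify the passage of $\Four$ across the pairing, but this is precisely the defining property of the distributional Fourier operator; all the necessary closure of $\Schw(G)$ under $\star$, $\cdot$ and the Heisenberg action is supplied by Theorem~\ref{thm:Schwartz-Bruhat}. Once these identities hold, $[\Four\colon\Schw'(G)\pto\Schw'(\Gamma)]$ satisfies the definition of a twain doublet over $\Schw\inner{\Gamma}{G}$, and regularity is inherited from Proposition~\ref{prop:temp-distr}.
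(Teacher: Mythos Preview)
Your proposal is correct and follows essentially the same route as the paper: define the mixed operations and the Heisenberg action on $\Schw'(G)$ by duality, then reduce the mixed convolution theorem to the Schwartz case via the self-adjointness of~$\Four$. The paper is somewhat terser, writing out only the defining pairings $\funcinner{\Four s}{\phi}=\funcinner{s}{\Four\phi}$, $\funcinner{a\cdot s}{\phi}=\funcinner{s}{a\cdot\phi}$, $\funcinner{a\star s}{\phi}=\funcinner{s}{\Par a\star\phi}$ and noting that the reduction also uses $\Four^2=\Par$; you should make this last point explicit when you carry out the computation for $\Four(a\star T)=\Four a\cdot\Four T$, since unwinding both sides on a test function lands you on identities of the form $\Par a\star\Four\phi=\Four(\Four a\cdot\phi)$ that require it.
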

\begin{proof}
  We have seen that~$\Schw\inner{\Gamma}{G}$ is a twain doublet (Theorem~\ref{thm:Schwartz-Bruhat})
  and that~$\Schw'\inner{\Gamma}{G}$ is a regular slain doublet
  (Proposition~\ref{prop:temp-distr}). The compatibility relation required for their Fourier
  operators follows from the usual duality definitions (using parentheses for the natural pairing),
  namely $\funcinner{\Four s}{\phi} = \funcinner{s}{\Four \phi}$,
  $\funcinner{a \cdot s}{\phi} = \funcinner{s}{a \cdot \phi}$,
  $\funcinner{a \star s}{\phi} = \funcinner{s}{\Par{a} \star \phi}$ for $a, \phi \in \Schw(G)$
  and~$s \in \Schw'(G)$, using also~$\Four^2 = \Par$. Similar remarks pertain to showing
  compatibility of the Heisenberg actions, defined by
  $\funcinner{x \act s}{\phi} = \funcinner{s}{(-x) \act \phi}$ and
  $\funcinner{\xi \act s}{\phi} = \funcinner{s}{\xi \act \phi}$ for~$x \in G$ and~$\xi \in \Gamma$.
\end{proof}

For applications in analysis it is very important that we can use certain relations between Fourier
and differential operators. Roughly speaking, differentiating a signal corresponds to multiplying
the spectrum by a polynomial (the \emph{symbol} of the differential operator applied to the signal),
and vice versa. It is thus crucial for us to incorporate these relations in our algebraic
framework. In doing so, we will not only capture the classical situation of Schwartz
functions~$\Schw(\RR^n)$ and tempered distributions~$\Schw'(\RR^n)$ but also their generalizations
in the \emph{Schwartz-Bruhat setting}. Since we are mainly interested in the classical case, though,
we shall only sketch the general procedure.

For a convenient and uniform treatment of differential operators on~$\Schw(G)$ and thus, by duality,
on~$\Schw'(G)$, we refer to the paper~\cite{Akbarov1995} already mentioned in
\S\ref{sub:schwartz-class}. For handling differential operators in general LCA groups, one
must admit functions that may depend on arbitrarily many variables $(x_q \mid q \in Q)$. Their
cardinality $|Q| = \dim(G)$ can be uncountable---clearly this is a theoretical seting not a
constructive framework. At any rate, differential operators are described by \emph{multi-indices},
namely functions~$\mu\colon Q \to \NN$ with~$\supp \mu = \{ q \in Q \mid \mu_q \neq 0\}$ being
finite. We follow~\cite{Akbarov1995} in denoting the set of all such multi-indices by~$\NN_Q$. Then
a differential operator has the general form
\begin{equation}
  \label{eq:diffop-akbarov}
  T = \sum_{\mu \in \NN_Q} p_\mu \der^\mu,
\end{equation}
where the $p_\mu \in C^\infty(G)$ are in the first instance arbitrary \emph{smooth functions} in the
sense of Bruhat~\cite[\S1.2]{Akbarov1995}, \cite[Def.~2]{Bruhat1961}; we shall later restrict them
to polynomials. The differential operator~\eqref{eq:diffop-akbarov} is \emph{locally of finite
  order}, meaning in a sufficiently small neighborhood of any point in~$G$, there are only finitely
many nonzero summands that contribute.

For fleshing this out in some more detail, let us briefly go through
the development of~\cite{Akbarov1995}. For any LCA group~$G$, one can
define the \emph{Lie algebra}~$\Lie(G)$ as its system of one-parameter
subgroups~\cite[\S1]{Akbarov1995}. One obtains a topological Lie
algebra~\cite[Thm.~1.1]{Akbarov1995} whose dimension coincides with
that of~$G$ in case it is finite. Defining a notion of basis suitable
for this setting~\cite[\S0]{Akbarov1995}, the vector space~$\RR^Q$ for
arbitrary index sets~$Q$ has the usual Kronecker basis, and every
other basis has the same cardinality~$|Q|$. For the Lie algebra one
gets~\cite[(1.1)]{Akbarov1995} as expected~$\Lie(G) \cong \RR^Q$, thus
allowing to fix a basis~$(e_q \mid q \in Q)$.

According to the description outlined in \S\ref{sub:schwartz-class}, the LCA group~$G$ may be
viewed as an inverse limit of Lie groups~$(G_\alpha \mid \alpha \in I)$. It is possible to realize
the latter as~$G_\alpha = G/H_\alpha$, where~$\lambda(G) := (H_\alpha \mid \alpha \in I)$ forms a
decreasing filtration of so-called good subgroups of~$G$. This complies with the terminology
of~\cite[\S1]{Bruhat1961}, where compact subgroup are called \emph{good} if their quotients are Lie
groups. Just as with the LCA group~$G$ itself, also its Lie algebra~$\Lie(G)$ is then an inverse
limit of the corresponding genuine Lie groups~$\Lie(G/H_\alpha)$; see~\cite[(1.5)]{Akbarov1995},
keeping in mind that our present setting is somewhat simpler since~$G$ is commutative and thus an LP
group~\cite[Prop.~1.10ii]{Wawrzynczyk1968}.

For defining the \emph{algebra of smooth functions}~$\E(G) \equiv C^\infty(G)$, it
suffices~\cite[\S1.2]{Akbarov1995} to define the subalgebra~$\D(G)$ of \emph{compactly supported}
ones since each function in~$\E(G)$ agrees with a function of~$\E(G)$ in a small neighborhood of any
fixed point~$x \in G$. One defines first the space~$\D(G : H_\alpha)$ of compactly supported smooth
functions invariant on a good subgroup~$H_\alpha \le G$. Functions~$\chi$ invariant on~$H_\alpha$
are in bijective correspondence with functions~$\tilde\chi$ on the Lie
group~$G_\alpha = G/H_\alpha$, so it is natural to declare~$\chi \in \D(G : H_\alpha)$
iff~$\tilde\chi \in C^\infty(G_\alpha)$. Then the representation of~$G$ as an inverse limit
over~$\lambda(G)$ translates~\cite[(1.6)]{Akbarov1995} into the direct limit
\begin{equation}
  \label{eq:smooth-bruhat}
  \D(G) = \bigcup_{\alpha \in I} \D(G : H_\alpha),
\end{equation}
so~$\psi \in \D(G)$ iff~$xH_\alpha \mapsto \psi(x)$ is a smooth Lie group
function~$G_\alpha \to \CC$ for some~$\alpha \in I$.

We explain now the \emph{action of the differential operator}~$T$ in~\eqref{eq:diffop-akbarov} on a
smooth function, which equals locally some~$\psi \in \D(G)$ around a point~$x \in G$. Each
one-parameter subgroup~$u \in \Lie(G)$ induces a derivation~$\psi \mapsto \psi'$ by the pointwise
limit~$\psi' := \lim_{t \to 0} \big(u(t) \act \phi-\phi\big)/t$, where $\act$ denotes the
Heisenberg action of~$G \le H(\pomega)$ on $\Schw(G) \le L^1(G)$. The derivations induced by the
basis vectors~$e_i$ are regarded as partial derivatives and denoted by~$\der_i$. 

By iterating and averaging over all possible differentiation orders~\cite[(2.2)]{Akbarov1995}, this
is then generalized to \emph{higher-order partial derivatives}~$\der^\alpha$ for~$\alpha \in \NN_I$.
Since~$\psi \in \D(G)$ is invariant on some~$H_\alpha$ as per~\eqref{eq:smooth-bruhat},
with~$G_\alpha = G/H_\alpha$ being a Lie algebra of finite dimension~$n$, one can find a
neighborhood~$U$ of~$x$ that is invariant under~$H_\alpha$ and a projection
map~$\rho\colon U \to \bar{U} \subseteq \RR^n$ such that~$\psi \in \D(G : H_\alpha)$
iff~$\psi = \bar{\psi} \circ \rho$ for a smooth function~$\bar\psi\colon \bar{U} \to \CC$ in the
usual sense~\cite[Lem.~2.13]{Akbarov1995}. In the neighborhood~$U$, the action of~$T$ is given in
terms of the standard partial derivatives in~$\RR^n$ by
\begin{equation*}
  T\psi = \sum_{|\mu| \le N} p_\mu \, \frac{\der^\mu \bar\psi}{\der q^\mu} \circ \rho,
\end{equation*}
where the local order~$N$ may depend on~$U$.

Let us now define the LCA version of the \emph{Weyl algebra}~$A_G(\CC)$ as the operator ring
consisting of all those~$T$ whose representation~\eqref{eq:diffop-akbarov} involves only
\emph{polynomial} coefficient functions~$p_\mu$. Here a smooth function~$G \to \CC$ is called a
polynomial~\cite[\S1]{Akbarov1994} if its restriction to each compactly generated closed
subgroup~$C$ of~$G$ can be written as a polynomial in a finite collection of real characters
on~$C$, here defined just as the characters but with the additive reals instead of~$\Tor = S^1$ as
their target group.

\begin{myremark}
  \label{rem:real-char}
  The \emph{theory of real characters} is important for LCA groups as well as more general
  topological groups~\cite{DiemWright1967} \cite{Ardanza-TrevijanoChascoDominguez2008}. They are
  also crucial for building up the Laplace transformation in the LCA setting~\cite{Mackey1948}
  \cite{Liepins1976}; see Remark~\ref{rem:laplace-transf} below. Real characters come in three
  different guises:
  \begin{itemize}
  \item As defined above, they may be taken as continuous homomorphisms~$G \to (\RR, +)$. Also
    \cite[Def.~(24.33)]{HewittRoss1994} calls such objects ``real characters'' while
    \cite[Def.~2]{Liepins1976} refers to them as ``linear functionals''.
  \item Sometimes, real characters are characterized as continuous
    homomorphisms~$G \to (\RR^+, \cdot)$; this is the stance taken in~\cite{Mackey1948}
    and~\cite[Def.~1]{Liepins1976}. Their objective is to define Laplace transforms in the LCA
    setting, where \emph{complex characters}~$G \to (\nnz{\CC}, \cdot)$ take the role of the usual
    characters~$G \to \Tor = S^1 \le \nnz{\CC}$. Following~\cite{Liepins1976} in calling the latter
    ``unitary characters'', it is clear that a complex character has a unique polar decomposition
    into a unitary character and a real character in the Mackey-Liepins sense of a continous
    homomorphism~$G \to (\nnz{\RR}, \cdot)$. It is clear that Mackey-Liepins characters and
    Hewitt-Ross characters may be identified via~$\log\colon (\RR^+, \cdot) \to (\RR, +)$.
  \item Each real character of~$G$ corresponds bijectively~\cite[Lem.~1]{Liepins1976} to a
    one-para\-meter subgroup of~$\hat{G}$; by definition, this is a continuous
    homomorphism~$(\RR, +) \to \hat{G}$.
  \end{itemize}

  The bijection between real characters and one-parameter subgroups goes as
  follows~\cite[(24.43)]{HewittRoss1994}. Given a real character~$\chi\colon G \to (\RR, +)$, the
  induced one-parameter subgroup $\hat\chi\colon (\RR, +) \to \hat{G}$ is defined by
  $\inner{\hat\chi(t)}{x} = \inner{\chi(x)}{t}$, meaning $\hat\chi(t)(x) = e^{i\tau\chi(x)t}$
  for~$t \in \RR$ and~$x \in G$. In the notation of~\cite[\S3]{Osborne1975}, this
  means~$\hat{\chi}(t) = \mathrm{Exp}(t\chi)$.

  Conversely, if~$\hat\chi\colon (\RR, +) \to \hat{G}$ is a one-para\-meter subgroup, the map
  $t \mapsto \inner{\hat\chi(t)}{x}$ for fixed~$x \in G$ yields a continuous homomorphism
  $\inner{\hat\chi}{x}\colon (\RR, +) \to \Tor$ and thus a character on~$(\RR, +)$. Since the LCA
  group~$(\RR, +)$ is self-dual~\cite[Ex.~1.2.7a]{Rudin2017}, the character~$\inner{\hat\chi}{x}$
  corresponds to a unique real number that we take to define~$\chi(x)$. By the definition of the
  correspondence~$\hat{\RR} \cong \RR$, this yields~$\inner{\hat\chi(t)}{x} = \inner{\chi(x)}{t}$,
  which establishes the claimed bijection.
\end{myremark}

In this terminology, Bruhat's definition of the \emph{Schwartz space} means that~$\psi \in \Schw(G)$
iff~$T\psi$ is bounded for each~$T \in A_G(\CC)$. It is then easy to see that the Weyl
algebra~$A_G(\CC)$ acts on~$\Schw(G)$ and thus---by duality---also on~$\Schw'(G)$.

\subsection{Classical Schwartz Functions and the Weyl Algebra.}
\label{sub:alg-schwartz-pol}
We shall from now on focus on the case most important for applications, the standard vector
duality~$\inner{\RR^n}{\RR^n}$ from Example~\ref{ex:classical-vector-group} whose Pontryagin
duality~$\pomega$ is the exponentiated inner product on~$\RR^n$. Thus setting $G = \Gamma = \RR^n$,
the \emph{classical Weyl algebra}~$A_G(\CC) = A_n(\CC)$ acts on the Schwartz class $\Schw(\RR^n)$ of
rapidly decaying functions as well as its associated space~$\Schw'(\RR^n)$ of tempered
distributions.

Let us first study the action of~$A_n(\CC)$ on Schwartz functions, which is induced by the original
Heisenberg action. Thus we are in fact confronted with two actions
\begin{equation}
  \label{eq:dbl-actn}
  A_n(\CC) \times \Schw(\RR^n) \to \Schw(\RR^n)
  \quad\text{and}\quad
  H(\pomega) \times \Schw(\RR^n) \to \Schw(\RR^n).
\end{equation}
Let~$e_{\bar\alpha}$ denote the \emph{modulation action} of~$\alpha \in \Gamma \le H(\pomega)$,
which corresponds to multiplication by~$e_{\bar\alpha}(x) := e^{i\tau x \bar\alpha}$.
If~$\bar\alpha$ is $\alpha$ times the $j$-th standard basis vector of~$\RR^n$ for some
scalar~$\alpha \in \RR$, we write~$e_j^\alpha$ in place of~$e_{\bar\alpha}$ so that
$e_{\bar\alpha} = e_1^{\alpha_1} \cdots e_n^{\alpha_n}$ for general modulations. We write
also~$t_{\bar a}$ for the \emph{translation action} of~$\bar{a} \in G \le H(\pomega)$ so
that~$t_{\bar a}s \, (x) = s(x-a)$. In analogy to the modulations, $t^a_k$ denotes~$t_{\bar a}$
with~$\bar{a}$ equal to $a$ times the $k$-th basis vector and~$a \in \RR$,
hence~$t_{\bar a} = t_1^{a_1} \cdots t_n^{a_n}$ for general translations. As a complex algebra, the
operators of~$H(\pomega) = \Tor G \rtimes \Gamma$ are generated by the~$e_j^\alpha$
and~$t_k^a$. While modulations and translations commute amongst each other, they are linked by the
crucial \emph{Heisenberg relations}~$e_j^\alpha t_k^a = \delta_{jk} \, \alpha a \, t_k^a e_j^\alpha$
harking back to~\eqref{eq:twisted-bimod}.

It is clear that the noncentral generators~$e_j := e_j^1$ and~$t_k := t_k^1$ of the Heisenberg
group~$H(\pomega)$ induce, respectively, the generators~$x_j$ and~$\der_k$ of the Weyl
algebra~$A_n(\CC)$. In the operator algebra~$\End\,\Schw(\RR^n)$, each~$x_j$ commutes with each of
the~$e_j^\alpha$ and each~$\der_k$ with each of the~$t_k^a$. To avoid awkward $i\tau$ factors (in
most places), it is customary to introduce the scaled partials~$i \tau D_k := \der_k$. Then the
Heisenberg relations correspond to the \emph{Weyl relations}~$[x_j, D_k] = \delta_{jk} \, i\tau$,
and we have the \emph{cross-relations}~$[D_k, e_j^\alpha] = \delta_{kj} \, \xi e_j^\alpha$
and~$[t_k^a, x_j] = \delta_{jk} \, a_j$.

In more detail, the said correspondence between the generators of
the $H(\pomega) = \Tor\RR^n\rtimes\RR^n$ and $A_n(\CC) = \CC\langle \der, x\rangle$ actions arises
from viewing the latter as the universal enveloping algebra of the Lie algebra of the former.  Since
we have seen in \S\ref{sub:heis-twist} that the \emph{Heisenberg
  twists}~$\hat{J}, \check{J}\colon H(\pomega) \to H(\pomega)^o$ play an important role
for~$H(\pomega)$, it is plausible to expect something similar for the induced map on~$A_n(\CC)$,
which we shall again denote by~$\hat{J}$.

On the level of Lie algebras, the induced map is the differential at the unit element, and it is
easy to see that as such, $\hat{J}$ flips the sign of momentum vectors and fixes position vectors
while~$\check{J}$ works the other way round. Hence the corresponding
anti-automorphisms~$A_n(\CC) \to A_n(\CC)$ are given
by~$\hat{J}(x^\alpha D^\beta) = (-1)^{|\beta|} D^\beta x^\alpha$
and~$\check{J}(x^\alpha D^\beta) = (-1)^{|\alpha|} D^\beta x^\alpha$, respectively. It is easy to
see that these maps are involutions (= involutive anti-automorphisms, as in
\S\ref{sub:heis-twist}), where~$\hat{J}$ is known as the \emph{(standard)
  transposition}~\cite[\S16.2]{Coutinho1995}, \cite[\S V1a]{Sabbah2007}.

One can turn both maps into honest automorphisms. Recall from~\S\ref{sub:heis-twist} the
identification~$H(\pomega) \cong H(\pomega)^o$ given
by~$c \, (x, \xi) \leftrightarrow c \, (\xi, x)$. The induced map on the enveloping algebra yields
the corresponding identification~$A_n(\CC) \cong A_n(\CC)^o$ with $x \leftrightarrow \der$. If we
combine it with~$\hat{J}\colon A_n(\CC) \to A_n(\CC)^o$ and the scaling factor~$i\tau = \der/D$, we
obtain the automorphism
\begin{equation}
  \label{eq:four-transf-weyl}
  \hat{\mathfrak{f}}\colon A_n(\CC) \to A_n(\CC)\quad\text{with}\quad
  x \mapsto D_x, D_x \mapsto -x
\end{equation}
called the \emph{Fourier transform} on the Weyl algebra~\cite[\S5.2]{Coutinho1995}, \cite[\S
V2a]{Sabbah2007}. To be precise, we should perhaps call~$\hat{\mathfrak{f}}$ the forward Fourier
transform, whereas the automorphism~$\check{\mathfrak{f}}$ induced
by~$\check{J}\colon A_n(\CC) \to A_n(\CC)^o$ would be called the backward Fourier transform---it
turns out to be~$\hat{\mathfrak{f}}^{-1}$. On basis vectors, the Fourier transforms act
via~$\hat{\mathfrak{f}}(x^\alpha D^\beta) = (-1)^{|\beta|} \, D^\alpha x^\beta$
and~$\check{\mathfrak{f}}(x^\alpha D^\beta) = (-1)^{|\alpha|} \, D^\alpha x^\beta$

It should be noted that the Fourier transform~$\hat{\mathfrak{f}}$, unlike the involutive
transposition~$\hat{J}$, has \emph{periodicity four} just as its group-theoretic parent on the
Heisenberg clock (Figure~\ref{fig:heis-clock}). It
square~$\hat{\mathfrak{f}}^2 = \hat{J} \check{J} = \check{J} \hat{J}$ is the
automorphism~$A_n(\CC) \to A_n(\CC)$ induced by~$x \mapsto -x, \der \mapsto -\der$; it is denoted by
an overbar in~\cite[\S V2b]{Sabbah2007}. Combined with the above-mentioned
identification~$A_n(\CC) \cong A_n(\CC)^o$ with~$x \leftrightarrow \der$, this yields the so-called
\emph{principal anti-automorphism}~\cite[\S I.2.4]{Bourbaki1989} of the Weyl algebra (induced by the
inversion map on the group level).

It remains to study the interaction between the Weyl algebra~$A_n(\CC)$ and the \emph{twain algebra
  structure} of~$\Schw(\RR^n)$ as well as the Fourier
operator~$\Four\colon \Schw(\RR^n) \pto \Schw(\RR^n)$. In both cases, the relevant relations follow
from the corresponding action of the Heisenberg group. Thus each~$D_k$ acts as a derivation
on~$\Schw(\RR^n)_{\tdot}$ but as a scalar on~$\Schw(\RR^n)_\star$ while each~$x_j$ is a scalar
for~$\Schw(\RR^n)_{\tdot}$ but a derivation for~$\Schw(\RR^n)_\star$; thus the recto/verso
distinction of \S\ref{sub:cat-heisalg} transfers from~$H(\pomega)$ to~$A_n(\CC)$.

While the relations between~$A_n(\CC)$ and the pointwise structure are clear, those for the
convolution follow by the Fourier transform and the well-known \emph{differentiation laws}
\begin{equation}
  \label{eq:diff-laws}
  \hat{\Four}(T \cdot s) = \hat{\mathfrak{f}}(T) \cdot \hat{\Four}(s),\quad
  \check{\Four}(T \cdot s) = \check{\mathfrak{f}}(T) \cdot \check{\Four}(s)
\end{equation}
for all~$T \in A_n(\CC)$ and~$s \in \Schw(\RR^n)$; see for example~\cite[\S6]{Bracewell1986},
\cite[\S3.3]{Strichartz1994}.

As for the Heisenberg action, one may also couch these laws in terms of left and right modules: If
the Fourier operators are considered as linear homomorphisms from a left to a right Heisenberg
module, we may additionally view these as \emph{left and right $D$-modules} (with $D = A_n(\CC)$
being the Weyl algebra). As for the Heisenberg situation, the right action then corresponds to a
left action via~$\hat{J}, \check{J}\colon A_n(\CC) \to A_n(\CC)^o$ for the Fourier
operators~$\hat{\Four}, \check{\Four}$.

Let us summarize our findings by introducing some tentative terminology. By a \emph{Weyl action}
on~$\Schw(\RR^n)$ we mean an action of~$A_n(\CC)$ satisfying the
cross-relations. Since~$\Schw(\RR^n)$ is a twain algebra, the generators~$D_k$ and~$x_j$ must
interact accordingly (derivations/scalars) with the multiplications~$\star$ and~$\cdot$; for plain
or slain algebras, these requirements would be diminished or cancelled, as the case may be. If we
have a Fourier operator between Heisenberg algbras such that the differentation
laws~\eqref{eq:diff-laws} are satisfied, we speak of a \emph{compatible} Weyl action. It is easy to
see that similar statements can be made, via the corresponding transpose operators, about the twain
module~$\Schw'(\RR^n)$ of tempered distributions. Altogether, we have then the following result.

\begin{proposition}
  \label{prop:comp-weyl}
  The Fourier singlets~$[\Four\colon \Schw(\RR^n) \pto \Schw(\RR^n)]$ as well as
  $[\Four\colon \Schw'(\RR^n) \pto \Schw'(\RR^n)]$ are endowed with compatible Weyl action.
\end{proposition}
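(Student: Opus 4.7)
The plan is to obtain the Weyl action from the already established Heisenberg action by passage to the infinitesimal level, verify the required compatibilities on generators, and then transfer the result to tempered distributions by duality.

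First I would construct the action $A_n(\CC) \times \Schw(\RR^n) \to \Schw(\RR^n)$ directly from the Heisenberg action $H(\pomega) \times \Schw(\RR^n) \to \Schw(\RR^n)$. The one-parameter subgroups $a \mapsto t_k^a$ and $\alpha \mapsto e_j^\alpha$ of $H(\pomega)$ act continuously on $\Schw(\RR^n)$ (a standard fact that one reduces to Bruhat's seminorm estimates, or in the classical case to dominated convergence after expanding $e^{i\tau x_j \alpha}$ in its power series). Differentiating these one-parameter actions at the origin yields the operators $-\der_k$ and $i\tau \, x_j$ on $\Schw(\RR^n)$, which I identify with $-i\tau D_k$ and $i\tau x_j$. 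Because the enveloping algebra of the Heisenberg Lie algebra is precisely $A_n(\CC)$, this extends to an action of the whole Weyl algebra. The cross-relations $[D_k, e_j^\alpha] = \delta_{kj}\,\alpha\, e_j^\alpha$ and $[t_k^a, x_j] = \delta_{jk}\, a$ then follow by a one-line computation by differentiating the Heisenberg relations $e_j^\alpha t_k^a = \delta_{jk}\,\alpha a\, t_k^a e_j^\alpha$ in one variable while holding the other fixed, so the action is indeed a Weyl action in the sense defined before the proposition.

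Next I would verify that the twain-algebra generators $x_j$ and $D_k$ play the prescribed recto/verso roles with respect to $\star$ and $\cdot$. For $\cdot$ this is the Leibniz rule for $D_k$ (a derivation on $\bigl(\Schw(\RR^n), \cdot\bigr)$) and the obvious scalar behaviour of $x_j$; for $\star$ one uses the standard identities $D_k(s \star s') = (D_k s) \star s' = s \star (D_k s')$ and $x_j(s\star s') = (x_j s)\star s' + s \star (x_j s')$, which follow by differentiating under the convolution integral (justified on $\Schw$ by the Schwartz seminorm bounds) and by the substitution $y \mapsto x-y$ combined with $x_j = (x_j-y_j) + y_j$. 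This is conceptually the same content as the already-recorded scalar/operator laws for the Heisenberg generators $t_k^a$ and $e_j^\alpha$ in $\AlgH{\pomega}$; in fact one can shortcut the verification by observing that derivations and scalars are stable under differentiation of one-parameter subgroups, so the required properties of $x_j,D_k$ descend from those of $e_j^\alpha,t_k^a$ established in the proof of Theorem~\ref{thm:pont-doublet}.

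For the differentiation laws $\hat\Four(T\cdot s) = \hat{\mathfrak f}(T)\cdot \hat\Four(s)$ and $\check\Four(T\cdot s) = \check{\mathfrak f}(T) \cdot \check\Four(s)$, it suffices by linearity and the algebra structure of $A_n(\CC)$ to check them on the generators $T = x_j$ and $T = D_k$. These are precisely the infinitesimal forms of the equivariance relations~\eqref{eq:cofourop} for $\hat\Four = \Four^\land$ (translations exchange with modulations up to sign, as recorded in the shift and modulation theorems~\eqref{eq:mod-shift-thm}); differentiating at $a=0$ and $\alpha=0$ in the Heisenberg equivariance converts $e_j^\alpha$ to multiplication by $x_j$ and $t_k^a$ to $-D_k$ on the signal side, and dually on the spectral side, yielding exactly $\hat{\mathfrak f}(x_j) = D_{\xi_j}$ and $\hat{\mathfrak f}(D_k) = -\xi_k$ as in~\eqref{eq:four-transf-weyl}. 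The argument for $\check\Four$ is identical with the opposite signs.

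Finally, to pass to $\Schw'(\RR^n)$ I transpose everything: the Weyl action on tempered distributions is defined, as usual, by $\langle Ts, \phi\rangle = \langle s, T^t \phi\rangle$ where $T^t$ is the standard transposition of $T$ discussed above, and the Fourier operator on $\Schw'(\RR^n)$ is defined by $\langle \Four s, \phi\rangle = \langle s, \Four\phi\rangle$. Compatibility of the Weyl action with the twain-module structure of $\Schw'(\RR^n)$ over the twain algebra $\Schw(\RR^n)$ (Proposition~\ref{prop:temp-distr-twmod}) and compatibility with $\Four$ then follow mechanically from the corresponding facts already established on $\Schw(\RR^n)$ by moving operators under the pairing. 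I expect the main technical obstacle to be verifying continuous differentiability of the Heisenberg one-parameter subgroups in the Schwartz-Bruhat topology well enough to differentiate inside the action --- in the classical setting $G=\RR^n$ this is routine, but maintaining complete rigour (as opposed to hand-waving via ``well-known'') requires care with seminorm estimates on $\Schw(\RR^n)$.
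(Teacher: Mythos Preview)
Your proposal is correct and follows essentially the same path as the paper, which in fact does not supply a formal proof: the preceding discussion records the cross-relations, cites the differentiation laws~\eqref{eq:diff-laws} to standard references, and dispatches the distribution case with ``it is easy to see \dots\ via the corresponding transpose operators''. Your write-up is more explicit than the paper's own treatment---in particular your derivation of the Weyl action by differentiating the Heisenberg one-parameter subgroups and your recovery of~\eqref{eq:diff-laws} as the infinitesimal form of~\eqref{eq:cofourop} make precise what the paper merely sketches---but the underlying approach is the same.
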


We conjecture that Proposition~\ref{prop:comp-weyl} has a \emph{generalization} to compatible Weyl
actions of~$A_G(\CC)$ and~$A_{\hat{G}}(\CC)$ on the Schwartz-Bruhat functions
$[\Four\colon \Schw(G) \pto \Schw(\hat{G})]$ and tempered
distributions~$[\Four\colon \Schw'(G) \pto \Schw'(\hat{G})]$ for an arbitrary Pontryagin
duality~$\pomega\colon \hat{G} \times G \to \CC$. In that case, the Fourier transform of the Weyl
algebra~$\hat{\mathfrak{f}}\colon A_G(\CC) \to A_{\hat{G}}(\CC)$ would map the real
character~$\chi\colon G \to (\RR, +)$ to the derivation~$D_\chi := i\tau \, \der{\hat{\chi}}$
induced by the one-parameter group~$\hat{\chi} \in \Lie(\hat{G})$, as detailed in
Remark~\ref{rem:real-char}, while correspondingly mapping~$D_\chi$ to~$-\chi$.

\begin{remark}
  \label{rem:schwartz-integrodiff}
  Since the Schwartz class~$\Schw(\RR^n)$ and its distribution module~$\Schw'(\RR^n)$ play a
  prominent role in the theory of Fourier operators while at the same time having the structure of a
  differential algebra with derivation~$\der$, a short foray into their ``integro-differential
  structure'' is certainly not out of place. We shall see, however, that in this case we do not have
  integro-differential algebras~\cite{RosenkranzRegensburger2008a,BuchbergerRosenkranz2012} or
  integro-differential modules~\cite[Lem.~14]{RosenkranzSerwa2019}, not even differential
  Rota-Baxter algebras or modules~\cite[Ex.~3.8c]{GaoGuoRosenkranz2017}.

  Let us therefore recall the terminology of~\cite[\S4.1]{GuoRegensburgerRosenkranz2014}. Given a
  differential algebra~$(\galg, \der)$, a (linear) section of~$\der$ is called an antiderivative and
  a (linear) quasi-inverse a quasi-antiderivative. Note that antiderivatives are a special case of
  quasi-antiderivatives. What we shall need in the sequel is their converse: (linear) retractions of
  (injective but non-surjective!) derivations. Let us refer to these, tentatively, as
  \emph{retroderivatives}.

  For the sake of simpler notation, we state only the \emph{ordinary case} with
  derivations~$\der\colon \Schw(\RR) \to \Schw(\RR)$ and~$\der\colon \Schw'(\RR) \to \Schw'(\RR)$
  and (quasi)inverses to be given. It is then straightforward to set up the corresponding partial
  operators~$\der_x, \der_y, \dots$ with their (quasi)inverses as in~\cite{RosenkranzGaoGuo2019}.

  Recall that the \emph{space of bump functions}~$\D(\RR)$, meaning smooth functions of compact
  support, is a (nonunital) differential algebra that does not admit antiderivatives since its
  derivation is indeed non-surjective: Writing~$\ocum\colon \D(\RR) \to \CC$ for the definite
  integral $\ocum s := \cum_{-\infty}^\infty s(\xi) \, d\xi$, one
  has~$s \in \im(\der) \Leftrightarrow \ocum s = 0$ according
  to~\cite[Lem.~2.5.1]{Stakgold1979}. This condition also characterizes the image
  of~$\der\colon \Schw(\RR) \to \Schw(\RR)$. Indeed, if~$s = S'$ for some~$S \in \Schw(\RR)$, we
  have~$S(-\infty) = 0$ and thus
  \begin{equation}
    \label{eq:schwartz-antider}
    S(x) = \cum_{-\infty}^x s(\xi) \, d\xi,
  \end{equation}
  which for~$x \to +\infty$ implies~$\ocum s = 0$. Conversely, assuming the latter condition on~$s$
  implies for~$S$ defined by~\eqref{eq:schwartz-antider} that for any~$n > 1$ we
  have~$|S(x)| \le C |x|^{-n+1}$ where~$C = C_n$ is chosen so that~$|s(\xi)| \le C |\xi|^{-n}$,
  using the fact that~$s \in \Schw(\RR)$. This implies~$S = O(|x|^{-k})$ as~$x \to -\infty$ for
  any~$k > 0$ while~$S = O(|x|^0) = O(1)$ follows already from~$S(-\infty) = 0$ by the
  definition~\eqref{eq:schwartz-antider}. For the asymptotics as~$x \to +\infty$, we
  use~$\ocum s = 0$ to write the integral as~$S(x) = \cum_x^\infty s(\xi) \, d\xi$, and as before we
  obtain $S(x) = O(x^{-k})$ for any~$k > 0$, and this time the case~$k = 0$ follows
  from~$\ocum s = 0$.
  % Proof posted here:
  % https://math.stackexchange.com/questions/815842/integration-of-a-function-in-schwartz-space

  We can set up a retroderivative~$\vcum$ both on~$\D(\RR)$ and on~$\Schw(\RR)$.
  Following~\cite[Exc.~2.6.17]{Strichartz1994} and~\cite[\S2.5]{Stakgold1979}, we choose a bump
  function $s_0 \in \D(\RR) \subseteq \Schw(\RR)$ with~$\ocum s_0 = 1$,
  say~$s_0(x) := c \, \exp(\tfrac{1}{x^2-1}) \, H(1-|x|)$, where~$c$ is a normalization
  constant~\cite[Exc.~2.1.1]{Stakgold1979}.
  % It may be expressed in terms of elliptic functions:
  % https://math.stackexchange.com/questions/145015/evaluate-definite-integral-int-11-exp1-x2-1-dx
  Then we define~$\vcum$ on~$\D(\RR)$ and~$\Schw(\RR)$ by applying the indefinite integral
  operator~$s \mapsto S$ of~\eqref{eq:schwartz-antider} to~$s - (\ocum s) \, s_0 \in \ker \ocum =
  \im \der$ instead of~$s$. Thus we set
  \begin{equation}
    \label{eq:1}
    \vcum s := \cum_{-\infty}^x \Big( s(\xi) - (\ocum s) \, s_0(\xi) \Big) \, d\xi,
  \end{equation}
  and it is easy to see that~$\vcum \der = 1$ so that~$\vcum$ is surjective. But~$\vcum$ is not
  injective since clearly~$\ker \vcum = [s_0]$. We obtain the direct
  decompositions~$\im \der \dotplus \ker \vcum = \D(\RR)$
  and~$\im \der \dotplus \ker \vcum = \Schw(\RR)$ associated with the
  projector~$s \mapsto s - (\ocum s) \, s_0$. As we have seen, these spaces
  are~$\im \der = \ker \ocum$ and~$\ker \vcum = [s_0]$. The choice of~$s_0$ picks out a complement
  of the deficient image of the derivative, which engenders the kernel of the
  retroderivative~$\vcum$.

  It is not to be expected that~$\vcum$ be a Rota-Baxter operator, in the sense of
  satisfying~$(\vcum s_1)(\vcum s_2) = \vcum s_1 \vcum s_2 + \vcum s_2 \vcum s_1$. Indeed, the
  correction term~$(\ocum s_1) \, \vcum s_0 \vcum s_2 + (\ocum s_2) \, \vcum s_0 \vcum s_1$ is
  required (on the left-hand side), so weight terms~\cite[Def.~2.1b]{GuoRegensburgerRosenkranz2014}
  are of no avail in this case. Instances with a nonzero correction are easy to come by: For
  example, taking $s_1 = s_0$ and~$s_2 = s_0'$ leads to the correction~$\epsilon = \cum s_0^2$ with
  $|\epsilon(0)| > 0.03$.

  Let us now turn to the distribution spaces~$\D'(\RR)$ and~$\Schw'(\RR)$, where one has a
  well-known antiderivative~\cite[(5.6)]{Stakgold1979}, namely the transpose of~$-\vcum$. By abuse
  of notation, we shall also denote it by~$\vcum$. Since the derivation~$\der$ on~$\D'(\RR)$
  and~$\Schw'(\RR)$ are also defined as the tranpose of the derivation~$-\der$ on the corresponding
  primal spaces, it is clear that~$\vcum$ is an \emph{antiderivative} on the distribution
  spaces. Moreover, we see that~$\der$ is surjective and~$\vcum$ is injective for distributions,
  just as in the familiar differential Rota-Baxter and integro-differential
  settings~\cite[Ex.~3.8cd]{GaoGuoRosenkranz2017}. Indeed, the kernel of~$\der$ is easily
  seen~\cite[\S 2.5]{Stakgold1979} to be the constant
  distributions~$\RR \subset \Schw'(\RR) \subset \D'(\RR)$, and the image of~$\vcum$ the
  distributions annihilating~$s_0$. In other words, the \emph{initialization}~$1 - \vcum \der$ is
  the projector that maps a distribution~$\tilde{s}$ to the constant
  distribution~$\tilde{s}(s_0) \in \RR$.

  In view of the results for~$\D(\RR)$ and~$\Schw(\RR)$, one will anticipate that~$\vcum$
  on~$\D'(\RR)$ and~$\Schw'(\RR)$ does not satsify the Rota-Baxter axiom in the
  form~$(\vcum s)(\vcum \tilde{s}) = \vcum \tilde{s} \vcum s + \vcum s \vcum \tilde{s}$, where~$s$
  ranges over (bump or Schwartz) functions and~$\tilde{s}$ over (general or tempered) distributions,
  with the convenient notation~$\vcum$ accordingly overloaded. This axiom would make the
  corresponding distribution spaces into differential Rota-Baxter
  modules~\cite[Ex.~3.8c]{GaoGuoRosenkranz2017}, but again we have (on the left-hand side) a
  correction term $(\ocum s) \, s_0 \, \vcum \tilde{s} + \tilde{s}(\vcum s_0 \, \vcum s)$, which can
  easily be worked out from the corresponding correction on the primal spaces.
\end{remark}

Before ending this chapter, a few words seem to be in order about the \emph{Laplace transform},
where the all-important differentiation laws above continue to hold. In fact, they could be said to
really come into their own since analyticity enters the picture. Since the relevant ideas have been
developed in the general setting of Pontryagin duality, we shall now leave the specific setting of
Schwartz class on~$\RR^n$, returning to the general Schwartz-Bruhat space~$\Schw(G)$ on an LCA
group.

\begin{myremark}
  \label{rem:laplace-transf}
  Recall the \emph{complex characters} discussed in Remark~\ref{rem:real-char}. For an LCA group~$G$
  under Pontryagin duality~$\pomega\colon \hat{G} \times G \to \Tor$, these are continuous
  homomorphisms~$\zeta\colon G \to \nnz{\mathbb{C}} = \RR^+ \times \Tor$. If the space of
  characters~$G \to (\RR, +)$ is denoted by~$G^\#$ as in~\cite[Def.~2]{Liepins1976}, every complex
  character has a unique polar decomposition~$\zeta = e^\rho \xi$ for~$\rho \in G^\#$
  and~$\xi \in \hat{G}$.

  We set~$\Gamma := G^\# \times \hat{G}$ and define the \emph{extended Pontryagin duality}
  $\tilde\pomega\colon \Gamma \times G \to \nnz{\CC}$
  by~$\inner{\rho, \xi}{x}_{\tilde\pomega} := e^{\rho(x)} \, \inner{\xi}{x}_\pomega$. It is indeed a
  duality over the torus~$\nnz{\CC}$ as can check easily. Note also
  that~$\Tor G \rtimes \hat{G} \le \nnz{\CC} G \rtimes \Gamma$,
  meaning~$H(\pomega) \le H(\tilde\pomega)$, so Heisenberg actions over~$\tilde\pomega$ restrict to
  those over~$\pomega$.  The Laplace transformation of~\cite{Liepins1976} is expected to carry over
  to the setting of Heisenberg algebras in the following sense.

  Writing $\dot{L}^2(G) \subseteq L^2(G)$ for strongly $L^2$ functions~\cite[Def.~2]{Liepins1976}
  definining $C^\omega(\tilde{G})$ as the functions analytic in the sense
  of~\cite[Def.~9]{Liepins1976} on~$U \times \hat{G}$ for a convex open zero
  neighborhood~$U \subseteq G^\#$, we obtain a regular Fourier doublet
  $[\mathcal{L}\colon \dot{L}^2(G) \pto C^\omega(\tilde{G})]$ as per Theorem~7
  of~\cite{Liepins1976}. To be precise, the ``functions'' of~$C^\omega(\tilde{G})$ should be
  understood as suitably defined partial germs (direct limit indexed over
  subsets~$U \times \hat{G} \subset \Gamma$ of the form specified above), with the $L^2$ Laplace
  transformation~\cite{Mackey1948}, \cite{Liepins1976} given by
  \begin{equation*}
    \mathcal{L}s \, (\rho, \xi) = \Four(e^\rho s)(\xi) = \int_G e^{\rho(x)} \, \inner{\xi}{x}_\pomega \,
    s(x) \, dx
  \end{equation*}
  in terms of the usual $L^2$ Fourier transform (Proposition~\ref{prop:L2-doublet}). It should be
  clear, however, that the \emph{Laplace
    doublet}~$[\mathcal{L}\colon \dot{L}^2(G) \pto C^\omega(\tilde{G})]$ is defined over~$\pomega$
  rather than~$\tilde\pomega$ since the action of~$G^\# \le H(\tilde\pomega)$ is only local.

  The notion of differential operator~\eqref{eq:diffop-akbarov} sketched above is also applicable to
  functions (locally) defined on~$\Gamma = G^\# \times \hat{G}$. Indeed, a one-parameter subgroup is
  given by~$\RR \times \Gamma \to \Gamma$,
  $t \cdot (\rho, \xi) = (\rho + t \underline{\rho}, \xi + t \underline{\xi})$ for an
  arbitrary~$(\underline{\rho}, \underline{\xi}) \in \Gamma$. In contrast
  to~\cite[(1)]{Liepins1976} we write~$\hat{G}$ additively, identifying one-parameter subgroups
  on~$\hat{G}$ as in Remark~\ref{rem:real-char} with real characters~$\underline{\xi}$ so
  that~$(\xi+t\underline{\xi})(x) = e^{i\tau\underline{\xi}(x)t} \xi(x)$ for~$x \in G$
  and~$t \in \RR$. According to~\cite[(1)]{Liepins1976}, the \emph{partial derivative} of a
  spectrum~$\sigma \in C^\omega(\tilde{G})$ is then defined as
  \begin{equation*}
    \frac{\der\sigma}{\der\underline{\zeta}} \, (\zeta) = \lim_{t \to 0} \frac{\sigma(\rho
      + t \underline{\rho}, \xi + t \underline{\xi})}{t}
  \end{equation*}
  at the point~$\zeta = (\rho, \xi) \in \tilde{G} \subseteq \Gamma$ in the
  direction~$\underline{\zeta} = (\underline{\rho}, \underline{\xi})$. Writing the
  latter~$\underline{\zeta} = \underline{\rho} + i \underline{\xi}$ induces a complex structure
  on~$\Gamma$ as per~\cite[(2)]{Liepins1976}, so that the notion of analyticity means
  $\CC$-linearity along with certain $L^2$ requirements.

  The upshot is that the \emph{differentiation
    law}~$\der_{\underline{\zeta}} \mathcal{L} = \mathcal{L} \underline{\zeta}$ holds
  on~$\dot{L}^2(G)$, as reported in~\cite[Thm.~5]{Liepins1976}. (But note a typo: the function on
  the left-hand side there misses the Laplace sign.) Here the natural action
  of~$\underline{\zeta} = \underline{\rho} + i \underline{\xi} \in \Gamma$ on~$\dot{L}^2(G)$ is
  componentwise, endowing~$C^\omega(\tilde{G})$ with the structure of a~$\CC[\Gamma]$-module. Of
  course, also~$\dot{L}^2(G)$ is a $\CC[\Gamma]$-module
  via~$\underline{\zeta} \cdot s = \der_{\underline{\zeta}} s$, and
  then~$\mathcal{L}\colon \dot{L}^2(G) \to C^\omega(\tilde{G})$ is a $\CC[\Gamma]$-linear
  homomorphism.

  There are two essential instances of the Laplace doublet (each with its obvious multidimensional
  generalizations):
  \begin{itemize}
  \item Choosing~$G=\ZZ$ leads to \emph{Laurent series}~\cite[Ex.~1]{Liepins1976}. One
    obtains~$\hat{G} = \Tor$ and~$G^\# = \RR$. The Laplace transform is
    \begin{equation*}
      \mathcal{L}s \, (\rho, \xi) = \sum_{n=-\infty}^\infty s_n \, \rho^n \, e^{i\tau n\xi},
    \end{equation*}
    where we may read~$\rho \in \RR^+$ as radius and~$\xi \in \Tor \cong \RR/\ZZ$ as angle
    measure. The region of convergence (ROC) is some annulus~$\tilde{G} \subseteq U \times \RR^+$
    with~$\Tor \subseteq U$.
  \item On the other hand, taking~$G=\RR$ yields the classical \emph{bilateral Laplace
      transformation}~\cite[Ex.~2]{Liepins1976}. Its ROC is a vertical
    strip~$\tilde{G} \subseteq U \times \RR$ with horizontal span~$U \subseteq \RR$.
  \end{itemize}

  For the univariate bilateral Laplace transformation (also known as ``Fourier-Laplace
  transformation'' when applied to distributions), one has the so-called \emph{Paley-Wiener
    theorems}~\cite[7.2.1-4]{Strichartz1994} characterizing the compactly supported elements
  of~$L^2(\RR)$, $C^\infty(\RR), \D'(\RR), L^2(\RR^+)$ via growth rates of their transforms.

  Laplace transforms are of course ubiquitous in diverse applications. The bilateral version
  mentioned above is related to the more common \emph{unilateral Laplace transform}~$\mathcal{L}^+$
  by~$\mathcal{L} = \mathcal{L}^+ + \mathcal{L}^-$, where one sets~$\mathcal{L}^- := \Par
  \mathcal{L}^+ \Par$ with the reversal operator~$\Par$ of
  Theorem~\ref{thm:pont-doublet}. Conversely, one obtains the unilateral transformation from the
  bilateral one via~$\mathcal{L}^{\pm} = \mathcal{L} h_{\pm}$ where~$h_{\pm}$ is (the multiplication
  operator associated with) the characteristic function of the half-line~$\RR^{\pm}$, also known as
  the (ascending/descending) Heaviside function~\cite[\S3]{RosenkranzSerwa2019}. The presence of the
  Heavisides leads to Dirac terms upon differentation, which are in turn responsible for the
  characteristic initial values in differential relations such
  as~$\mathcal{L} \der_{\underline{\zeta}} = \underline{\zeta} \mathcal{L} - \ev_{0+}$,
  where~$\ev_{0+}$ means evaluation at~$0$ as a right-handed limit. For solving initial value
  problems, the evaluation term is crucial since it allows to incorporate the given initial
  data. Another advantage of the unilateral transformation over its bilateral sibling is that its
  ROC is usually larger. (Intuitively speaking, the bilateral transformation converges only on a
  strip but the unilateral one on a half-plane.)

  Note that the differentation law just mentioned for the unilateral Laplace transformation is the
  converse of the one mentioned above for the bilateral transformation. It would be worthwhile to
  work out suitable extended function spaces based on the tempered distribution spaces~$\Schw'(G)$
  and~$\Schw'(\hat{G})$ such that both differential laws hold for the generalized unilateral Laplace
  transform (without evalation terms), as with the compatible Weyl action of
  Proposition~\ref{prop:comp-weyl}. We expect that such function spaces would enjoy a compatible
  action of an \emph{extended Weyl algebra}, which is ``thickened'' to contain complex
  multipliers~$\underline{\zeta} = \underline{\rho} + i \underline{\xi}$ on signals and the
  corresponding differential operators~$\der_{\underline{\zeta}}$ on spectra.

  Physically speaking, this extension from Fourier to Laplace transformations may be interpreted as
  follows: The Fourier approach is based on analyzing/synthesizing signals into/from spectra that
  are essentially pure oscillations. The Laplace approach generalizes pure oscillations to
  \emph{damped oscillations}, with the possibility to control the damping factor as an additional
  parameter (via real characters).

  From the algebraic point of view, we may summarize the role of the Laplace transform as follows:
  As opposed to the Fourier transform, it appears to be less amenable to a global description such
  as we have for the Fourier transform (where we have many classical examples of Fourier doublets in
  various gradations). For the time being, we tend to see it more as a \emph{local tool} that
  facilitates the close-up study of a \emph{fixed} function by the tools of complex analysis.

  Another algebraic aspect of the Laplace transform would be interesting to pursue further in the
  present context: In the classical case of $G = \Gamma = \RR$, the functions supported on~$\RR^+$
  form an important subalgebra~$\big(L^2(\RR^+), \star\big)$ that is known to be an integral domain
  (the so-called Titchmarsh Theorem). Its fraction field is the main object of study for the
  \emph{Mikusi{\'n}ski calculus}, a kind of algebraic formulation of the Laplace
  transform~\cite{Mikusinski1959}, \cite{Yosida1984}. In particular, the Heaviside
  function~$h \equiv \{1\}$ of~\cite[\S2]{Yosida1984} has as its reciprocal~$s$ a kind of
  (bijective!)  differentiation operator. One may then adjoin further hyperfunctions such
  as~$e^{-\sqrt{s}}$ in~\cite[\S27]{Yosida1984}, which is an algebraic version of the heat
  propagator.
  
  It would be very intersting to investigate the relationship between this calculus and localized
  Fourier doublets. For example, it is clear that the Gaussian $D$-module~\eqref{eq:weyl-closure}
  developed in~\S\ref{sub:min-schwartz-algebra} below is an integral domain and so could be
  localized. For getting something like the hyperfunction~$s$, one might start
  with~$h\Schw(\RR) := \{ h_t f \mid f \in \Schw(\RR), t \in \RR\}$, where~$h_t := t \act h$ are the
  translated Heaviside functions. By a version of the Titchmarsh Theorem,
  $h\Schw(\RR) \subset \big( L^1(\RR), \star \big)$ is an integral domain and thus has a fraction
  field that might profitably be compared with the Mikusi{\'n}ski field.
\end{myremark}

\section{Constructive Fourier Analysis via Schwartz Functions}
\label{sec:constructive-schwartz}
% =====================================================================

\subsection{A Minimal Subalgebra of the Schwartz Class.}
\label{sub:min-schwartz-algebra}
We shall now focus on the Schwartz singlet~$\Schw\inner{\RR^n}{\RR^n}$ corresponding to the standard
vector duality of Example~\ref{ex:classical-vector-group} since this is the most important case for
applications of Fourier analysis to partial differential equations. We will define two
subsinglets~$\bar{\Gau}\inner{\RR}{\RR} \le \Schw(\RR^n)$, continuing at first with the base
field~$\RR$. This treatment is \emph{relatively constructive}, assuming an oracle for computations
in~$\RR$. In the subsequent \S\ref{sub:gelfond-field}, we shall build up a constructive
subfield~$\QQ^\pi$ of~$\RR$ that will allow us to build up the restricted Fourier
singlet~$\bar{\Gau}\inner{\QQ}{\QQ}$ with the constructive base field~$\QQ^\pi$ so as to allow a
purely algebraic and algorithmic description in \S\ref{sub:alg-schwartz-pol}.

For simplicity, we shall take~$n=1$. As we have seen in~\eqref{eq:fourop-mult}, this is no essential
restriction since the multivariate case can be reduced to the \emph{univariate} one. In fact, the
classical Fourier integral~\eqref{eq:four-int} exhibits this reduction, which works the same in the
classical twain singlet $L^{1/1}\inner{\RR^n}{\RR^n}$, and the latter contains the Schwartz singlet
$\Schw\inner{\RR^n}{\RR^n}$ by Proposition~\ref{thm:Schwartz-Bruhat}.

Our intention is to set up a subalgebra of~$\Schw(\RR)$ that is as simple as possible. Since we need
functions of rapid decay, polynomials will not do. The simplest choice that comes to mind is given
in terms of the \emph{Gaussian normal distribution}. Fixing mean~$\mu$ and
variance~$1 \big/ \tau\rho$, the corresponding probability density is~$\rho^{1/2} \, g_{\mu,\rho}$
with $g_{\mu,\rho}(x) := e^{-\pi\rho(x-\mu)^2}$. We prefer to avoid normalization factors that
involve~$\sqrt{\pi}$, so we shall only work with the unnormalized Gaussian distribution
functions~$g_{\mu,\rho}$, which we shall briefly call \emph{Gaussians}. Their $\CC$-linear span
yields the important algebra
\begin{equation}
  \label{eq:gaussians}
  \Gau_0(\RR) := [ \, g_{\mu, \rho} \mid (\mu, \rho) \in \RR \times \RR_{\ge 0} \, ]_\CC \le C^\infty(\RR)
\end{equation}
under pointwise multiplication. Indeed, we have the explicit product
law~$g_{\mu_1, \rho_1} g_{\mu_2, \rho_2} = c \, g_{\mu, \rho}$ for~$\rho_1 \rho_2 > 0$, where the
Gaussian parameters are~$\rho := \rho_1 + \rho_2$ and $\mu := (\rho_1 \mu_1 + \rho_2 \mu_2)/\rho$,
and where~$c := e^{-\pi\rho_{12} (\mu_1 - \mu_2)^2}$ with~$\rho_{12} := \rho_1 \rho_2 / \rho$ is a
(relative) normalization constant. Of course, the case~$\rho_1 = \rho_2 = 0$ is trivial
since~$g_{\mu,0} = 1$. In fact, this is the only element that is \emph{not} contained in the Schwartz
class~$\Schw(\RR)$ since~$1$ does not have rapid decay (well---no decay at all). Hence we shall
prefer to work with the (nonunital!) subalgebra
\begin{equation}
  \label{eq:2}
  \Gau(\RR) := [ \, g_{\mu, \rho} \mid (\mu, \rho) \in \RR \times \RR_{> 0} \, ]_\CC \le \Gau_0(\RR),
\end{equation}
whose unitalization is of course~$\Gau_0(\RR) = \RR \oplus \Gau(\RR)$ since~$g_{\mu,0} = 1$ for
all~$\mu \in \RR$. As mentioned before, we have now~$\Gau(\RR) \le \Schw(\RR)$ as a subalgebra under
the pointwise product; we call this the \emph{Gaussian algebra}.

For establishing $\CC$-linear independence of the Gaussians, one may usefully appeal to
\emph{asymptotic notions} for $x \to +\infty$; of course one could equally use $x \to -\infty$. In
the sequel, all limits and germs are for $x \to +\infty$. For any such functions
$f, g\colon \RR \to \CC$, we recall that $f$ is called negligible with respect to~$g$
when~$\lim f/g = 0$; this is either written as a relation~$f \Prec g$ following Hardy or with
Landau's little oh notation~$f = o(g)$.

While the collection of \emph{all} germs is a ring, the smooth ones clearly form a differential
ring~$R_\infty$. It is often expedient, though, to work with differential subfields of~$R_\infty$,
known as \emph{Hardy fields}~\cite[\S1]{AschenbrennerDries2005}: They come with a canonical total
order, so are ordered fields; moreover, their germs all have definite limits on the extended real
line~$\RR \cup \{\pm\infty\}$. For example, the Hardy field of logarithmic-exponential
functions~\cite[\S1]{AschenbrennerDries2005}, \cite[Ex.~1]{GrunspanHoeven2017} is certainly large
enough for our modest purposes.

Given any Hardy field~$\F$, one may want to isolate an \emph{asymptotic scale}~$\ase$
within~$\F$. Similar to~\cite[\S2.5]{GrunspanHoeven2017}, we define this to be a multiplicative
subgroup of~$\F_+ := \{ f \in \F \mid f > 0\}$ totally ordered under~$\Prec$. Then the
subfield~$\CC(\ase)$ generated by~$\ase$ is endowed with a valuation, induced from the natural
valuation~\cite[V1--3]{AschenbrennerDries2005} of~$\F$. Its valuation ring consists of all
(germs of) functions in~$\CC(\ase)$ that remain finite for~$x \to +\infty$. 

Following~\cite[\S{}III.2]{Dieudonne1973}, we consider specifically the asymptotic scale~$\ase$
consisting of the unity germ~$1$ and all germs of the form
\begin{equation*}
  f(x) = x^\alpha \log^\beta{x} e^{P(x)}
\end{equation*}
with~$\alpha, \beta \in \RR$ and~$P(x) \in \RR[\RR_{>0}]$. The representation of non-unity germs is
unique if we stipulate that~$\alpha, \beta \neq 0$. Furthermore, we agree to write the exponents in
descending order~$P(x) = a_1 x^{\gamma_1} + \cdots + a_k x^{\gamma_k}$, so
that~$\gamma_1 > \cdots > \gamma_k > 0$ and~$a_1, \dots, a_k \in \RR$. It is clear that all such
germs are positive, and they form a group under multiplication (and one may also take powers with
arbitrary real exponents). Their crucial property for asymptotic investigations, however, is that
any such germ~$f(x) \neq 1$ either tends to~$0$ or to~$\infty$. In detail, we have
\begin{equation}
  \label{eq:asrel}
  x^\alpha (\log{x})^\beta e^{P(x)} \Succ x^{\tilde\alpha} (\log{x})^{\tilde\beta} e^{\tilde P(x)}
  \quad\text{iff}\quad
  (P(x), \alpha, \beta) > (\tilde P(x), \tilde\alpha, \tilde\beta)
\end{equation}
under the lexicographic order on~$\RR[\RR_{>0}] \times \RR \times \RR$. Here the monoid
ring~$\RR[\RR_{>0}]$ is itself given the lexicographic
order~$a_1 x^{\gamma_1} + \cdots + a_k x^{\gamma_k} \Succ \tilde{a}_1 x^{\gamma_1} + \cdots +
\tilde{a}_k x^{\gamma_k}$
iff~$(a_1, \cdots, a_k) > (\tilde{a}_1, \cdots, \tilde{a}_k)$, where one pads~$P(x)$
and~$\tilde{P}(x)$ with zero coefficients~$a_j$ so that they exhibit the same exponent
sequence~$\gamma_1 > \cdots > \gamma_k > 0$.

% Inspiration for the following proof from this MSE website:
% https://math.stackexchange.com/questions/1081350/how-to-prove-the-gaussian-functions-are-linear-independent

The elements of~$\ase$ are all linearly independent over~$\CC$, so they form a $\CC$-basis for the
subspace of germs generated by~$\ase$. For seeing this, assume
\begin{equation}
  \label{eq:linrel}
  c_1 f_1 + \cdots + c_n f_n = 0
\end{equation}
is any linear relation among distinct germs~$f_1, \dots, f_n \in \ase$
with coefficients~$c_1, \dots, c_n \in \nnz{\CC}$ and length~$n >
0$. Without loss of generality, we may assume
that~$f_1 \Succ \cdots \Succ f_n$. Multiplying~\eqref{eq:linrel}
with~$c_1^{-1} f_1^{-1}$, we obtain
\begin{equation}
  \label{eq:linrelmod}
  1 + \tilde c_2 \tilde f_2 + \cdots + \tilde c_n \tilde f_n = 0
\end{equation}
with~$\tilde c_i := c_1^{-1} c_i$ and~$\tilde f_i := f_1^{-1} f_i$
for~$i = 2, \dots, n$. Note that we have again descending asmptotic
growth~$1 \Succ \tilde f_2 \Succ \cdots \Succ \tilde f_n$. By
transitivity of~$\Succ$, this implies
that~$\tilde f_2, \dots, \tilde f_n \to 0$ so
that~\eqref{eq:linrelmod} yields~$1 + 0 = 0$ upon taking the
limit~$x \to +\infty$.

It is easy to see that the induced valuation on~$\CC(\ase)$ is essentially given by the exponents
in~\eqref{eq:asrel}, except for the conventional sign (so the valuation measures decay rather than
growth at infinity). In other words, we may define\footnote{The extra factor~$\pi^{-1}$ for the
  polynomial exponent is only convenience for later purposes.}
\begin{equation}
  \label{eq:3}
  \nu(x^\alpha \log^\beta{x} e^{P(x)}) = \big( -P(x)/\pi, -\alpha, -\beta \big),
\end{equation}
for the scale functions of~$\ase$. This extends to $\CC$-linear combinations of scale
functions~$f_1, f_2 \in \ase$ via~$\nu(c_1 f_1 + c_2 f_2) = \min\{\nu(f_1), \nu(f_2)\} $ by the
ultrametric inequality. In this way, we have defined the valuation on the
subalgebra~$\CC[\ase] \subset \CC(\ase)$, which determines the general
valuation~$\nu\colon \CC(\ase) \to \RR[\RR_{>0}] \times \RR \times \RR$
by~$\nu(f_1/f_2) = \nu(f_1) - \nu(f_2)$. Clearly, $\CC[\ase]$ is the valuation ring corresponding
to~$\nu$, hence in particular an integral domain. It is moreover the group ring over the ordered
abelian group~$\G$, so the valuation~$\nu\colon \CC[\ase] \to \RR[\RR_{>0}]$ is essentially a
special case of~\cite[Exc.~11.4]{Eisenbud1995}.

For our purposes, we shall only need the \emph{exponentials} with standard polynomials in the
exponent. Since we have to ensure that the constant function~$1$ is the only germ not tending to~$0$
or to~$\infty$, we shall use the additive group~$\RR[x]^+$ of polynomials with positive degree to
introduce
\begin{equation*}
  \ase_0 := \{ e^{P(x)} \mid P(x) \in \RR[x]^+ \} \subset \ase
\end{equation*}
which is easily seen to form an asymptotic subscale with corresponding field~$\CC(\ase_0)$ and
valuation ring~$\CC[\ase_0]$. Again, the exponentials of~$\ase_0$ are then linearly independent
over~$\CC$, so the $\CC$-linear span of~$\ase_0$ is (isomorphic to) the group
algebra~$\CC[\RR[x]^+]$, a rare case of iterated monoid algebra. The valuation restricts
to~$\nu\colon \CC(\ase_0) \to \RR[x]^+$.

Focusing on the subalgebra~$\Gau(\RR) \subset \CC[\RR[x]^+]$, the valuation restricts further to a
semigroup epimorphism~$\nu\colon \nnz{\CC}\Gau_{\tdot} \twoheadrightarrow \RR_{>0} \, x^2 + \RR x$,
with $\nnz{\CC}\Gau_{\tdot} := \{ cg \mid c \in \nnz{\CC}, g \in \Gau_{\tdot} \}$ the multiplicative
subsemigroup of~$\Gau(\RR)$ generated by the
Gaussians~$\Gau_{\tdot} := \{ g_{\mu,\rho} \mid (\mu, \rho) \in \RR \times \RR_{>0} \}$. In detail,
we have~$\nu(cg_{\mu,\rho}) = \rho x^2 - 2\rho\mu x$, and its kernel by the
relation~$\sim$ with~$cg_{\mu,\rho} \sim \tilde{c} g_{\tilde\mu,\tilde\rho}$
iff~$(\mu,\rho) = (\tilde\mu, \tilde\rho)$, which implies
$\nnz{\CC}\Gau_{\tdot} / \mathord\sim \cong \Gau_{\tdot}$. Identifying~$\Gau_{\tdot}$
with~$\RR \times \RR_{>0}$ via~$g_{\mu,\rho} \leftrightarrow (\mu, \rho)$, the quotient monoid
structure is given by
\begin{equation}
  \label{eq:pw-gauss}
  (\mu_1, \rho_1) \cdot (\mu_2, \rho_2) = (\mu_1 +_\rho \mu_2, \rho_1 + \rho_2),
\end{equation}
where~$\mu_1 +_\rho \mu_2 := (\mu_1\rho_1 + \mu_2\rho_2) / (\rho_1 + \rho_2)$ denotes the
$\rho$-weighted arithmetic mean of~$\mu_1$ and~$\mu_2$ for the weight
vector~$\rho := (\rho_1, \rho_2)$. By the first isomorphism theorem (for semigroups), the
semigroup~$\Gau_{\tdot}$ is just the additive
semigroup~$\RR_{>0} \, x^2 + \RR x \cong \RR_{>0} \oplus \RR$ in disguise. In the sequel, we shall
refer to~$\Gau_{\tdot}$ as the \emph{pointwise Gaussian semigroup}.

Let us now reconstruct the full algebra structure on~$\Gau_0(\RR)$. Since this is just the
unitalization of~$\Gau(\RR)$, it suffices to build up the structure of the latter. We shall use the
strategy of group homology~\cite[\S\,VI]{HiltonStammbach1971} for this purpose, but adopted to our
present setting. Hence assume
\begin{equation}
  \label{eq:exseq-semigrp}
  \xymatrix { 0 \ar[r] & C\, \ar@{^{(}~>}[r] & H \ar[r]^-\pi & G \ar^s @/^/[l] \ar[r] & 0}
\end{equation}
is an \emph{exact sequence of semigroups} in the sense that~$\ker(\pi)$ is the congruence on~$H$
given by~$h \sim h'$ iff~$h = ch'$ for a unique~$c \in C$. Then the quotient~$h/s\pi(h) \in C$ is
well-defined for any~$h \in H$. While here we shall only need the fully abelian setting, it is
natural to allow~$H$ to be nonabelian as long we retain commutativity on the orbits (see below for
the precise set of axioms). In this way, we include central extensions of abelian groups such as the
ones used for studying Heisenberg groups in \S\heiscite{sub:nilquadratic-groups}.

Since quotients are thus well-defined, we may form the \emph{cocycle} (or ``factor set'')
\begin{equation}
  \label{eq:cocycle}
    \psi\colon G \times G \to C,\quad
    (g,g') \mapsto \frac{s(g) \, s(g')}{s(gg')},
\end{equation}
just as in the group case~\cite[Exc.~10.1]{HiltonStammbach1971}. It should be emphasized, however,
that~$C$ is \emph{not} required to be embedded in~$H$; we only require a compatible semigroup
action~$\cdot\colon C \times H \to H$, where compatibility here means that~$\cdot$ is a homomorphism
of semigroups (with~$C \times H$ being the direct product). This is what the wavy arrow
in~\eqref{eq:exseq-semigrp} is supposed to convey. Let us note that the
action~$\cdot\colon C \times H \to H$ is automatically \emph{free} in the sense that~$c \neq c'$
implies~$c \cdot h \neq c' \cdot h$ for all~$h \in H$. Equivalently, we can also stipulate freeness
while giving up uniqueness in the exactness requirement. Altogether, we have imposed
on~\eqref{eq:exseq-semigrp} the \emph{axioms}
\begin{align*}
  & c \cdot c' \cdot h = cc' \cdot h && c \neq c' \Rightarrow c \cdot h \neq c' \cdot h\\
  & (c \cdot h)(c' \cdot h') = cc' \cdot hh' && h \sim h' 
  \Leftrightarrow \exists_c c \cdot h = h'\\
  & h \sim h' \Rightarrow hh' = h'h && C, G ~\text{abelian}  
\end{align*}
of action, freeness, compatibility, exactness, orbit commutativity, and abelian flanks. It is easy
to check the calculation rules~$\tfrac{h_1}{h_1'} \tfrac{h_2}{h_2'} = \tfrac{h_1 h_2}{h_1' h_2'}$
and~$\smash{\tfrac{c \cdot h_1}{h_1'} = c \, \tfrac{h_1}{h_1'}}$ for~$c \in C$ and~$h_1 \sim h_1'$,
$h_2 \sim h_2'$. Moreover, we have the cancellation
rules~$\tfrac{h}{k} \tfrac{k}{h'} = \tfrac{h}{h'}$ as well
as~$\tfrac{h}{k} \tfrac{k}{h'} = \tfrac{h}{h'}$ for~$h, h' \sim k$. While we have not required any
of the three semigroups to be monoids or groups, it turns out that~$C$ is in fact a group.

\begin{lemma}
  \label{lem:semigrp-grp}
  Given an exact sequence as in~\eqref{eq:exseq-semigrp}, the semigroup~$C$ is a group whose
  identity element acts trivially.
\end{lemma}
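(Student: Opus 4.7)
The plan is to extract identity-like elements of~$C$ from the reflexivity of the fiber relation~$\sim$, glue them into a single global identity, and then produce inverses from the same exactness axiom. Throughout I will rely on the combination of freeness, compatibility, and abelianness of~$C$.

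First, for each~$h \in H$ reflexivity~$h \sim h$ together with the exactness axiom $h \sim h' \Leftrightarrow \exists_c\, c \cdot h = h'$ yields some~$c \in C$ with~$c \cdot h = h$; freeness of the action makes this element unique, and I will denote it by~$e_h$. The next step is to show~$e_h$ is independent of~$h$. For any~$c \in C$ the computation
\begin{equation*}
  c \cdot h = c \cdot (e_h \cdot h) = (c e_h) \cdot h
\end{equation*}
combined with freeness yields~$c e_h = c$, so~$e_h$ is a right identity in~$C$; abelianness of~$C$ then makes it a two-sided identity. Applied to~$c = e_{h'}$ this gives~$e_{h'} e_h = e_{h'}$, and symmetrically~$e_h e_{h'} = e_h$; commutativity now forces~$e_h = e_{h'}$. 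I denote this common element by~$e \in C$; by construction it acts trivially on every element of~$H$ and is a two-sided identity for~$C$.

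For inverses, pick any~$c \in C$ and any~$h \in H$. Since~$c \cdot h$ lies in the same $\pi$-fiber as~$h$, the exactness axiom furnishes some~$c' \in C$ with~$c' \cdot (c \cdot h) = h$, whence~$(c' c) \cdot h = h = e \cdot h$, and freeness delivers~$c' c = e$. Abelianness of~$C$ then gives~$c c' = e$ as well, so~$c'$ is a two-sided inverse of~$c$; its uniqueness (and in particular its independence from the auxiliary choice of~$h$) is automatic in any commutative monoid.

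The only real subtlety I expect is the choreography of freeness, the action-compatibility~$c \cdot (c' \cdot h) = (cc') \cdot h$, and commutativity of~$C$ in the passage from the locally defined~$e_h$ to a single identity of~$C$. Once that passage is made, the triviality of the action of~$e$ is built into the construction, and the production of inverses is forced by the same exactness axiom that produced the identities, so nothing further is needed.
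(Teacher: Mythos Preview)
Your proof is correct and follows essentially the same route as the paper's: your element~$e_h$ is exactly the paper's quotient~$\tfrac{h}{h}$, and both arguments use the action axiom together with freeness to show it is an identity for~$C$, then produce inverses from exactness. The only cosmetic difference is that the paper invokes uniqueness of identities in a semigroup directly rather than spelling out the $e_h = e_{h'}$ step, and it phrases inverses via the quotient notation~$\tfrac{h'}{h}$ instead of your explicit~$c'$.
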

\begin{proof}
  Choose any~$h \in H$. Then we have~$\tfrac{h}{h} \cdot h = h$ by definition and
  therefore~$c \, \tfrac{h}{h} \cdot h = c \cdot \tfrac{h}{h} \cdot h = c \cdot h$ for
  any~$c \in C$. This yields~$\tfrac{h}{h} \, c = c$ by freeness. Since~$c \in C$ was arbitrary,
  $\tfrac{h}{h}$ is seen to be an identity element for~$C$. But such an element is unique in any
  semigroup, so we may unambiguously write~$1 := \tfrac{h}{h}$. Thus~$C$ is a monid. It is in fact a
  group since every element~$c \in C$ can be written as~$\tfrac{h}{h'}$ for suitable~$h, h' \in H$,
  taking for example~$h := c \cdot h'$ for arbitrary~$h'$. Clearly, such an element
  has~$\tfrac{h'}{h}$ for its inverse by the calculation rules mentioned above. Moroever,
  if~$h \in H$ is arbitrary, we have~$1 \cdot h = \tfrac{h}{h} \cdot h = h$ so that~$1$ acts
  trivially as claimed.
\end{proof}

With the help of Lemma~\ref{lem:semigrp-grp}, we can derive two more useful \emph{calculation
  rules}: As usual, we have the equality condition for fractions saying
that~$\tfrac{h_1}{h_1'} = \tfrac{h_2}{h_2'}$ iff~$h_1 h_2' = h_1' h_2$ for~$h_1 \sim h_1'$
and~$h_2 \sim h_2'$, and we have the mixed associativity law~$(c \cdot h) \, h' = c \cdot hh'$ for
all~$h, h' \in H$ and~$c \in C$. The latter follows immediately from the compatibility axiom by
substituting~$c' = 1$. For showing the former, assume first the equality of fractions and
set~$c := h_1/h_1' = h_2/h_2'$ to obtain~$h_1 h_2' = (c \cdot h_1')(c^{-1} \cdot h_2) = h_1' h_2$.
Conversely, assuming this condition we
get~$h_2 = \tfrac{h_1 h_2'}{h_1' h_2} \cdot h_2 = \tfrac{h_1}{h_1'} \cdot \tfrac{h_2}{h_2'} \cdot
h_2 = \tfrac{h_1}{h_1'} \cdot h_2'$,
which implies the desired equality of fractions by the uniqueness of quotients.

Even though~$H$ is only a (nonabelian) semigroup, our axioms on~\eqref{eq:exseq-semigrp} allows us
to define the \emph{commutator}~$[,]\colon H \times H \to C$ via~$[h,k] := \tfrac{hk}{kh}$. Note
that this quotient is well-defined since we have~$hk \sim kh$ by the commutativity of~$G$. Now
assume~$h \sim h'$ and~$k \sim k'$. We claim that~$[h',k'] = [h,k]$ since this is equivalent
to~$h'k'kh = k'h'hk$, which is in turn equivalent to~$cd \cdot hkkh = cd \, khhk$ with~$c := h'/h$
and~$d := k'/k$. But the latter equality follows immediately from orbit commutativity
since~$hk = kh$. In analogy to the group case in Definition~\heiscite{def:comm-form}, we can thus
introduce the \emph{commutator form}~$\omega\colon G \times G \to C$
by~$\omega(g,g') = [s(g), s(g')]$, where the choice of the (set-theoretic) section~$s\colon G \to H$
is immaterial by what we have just shown. Moreover, $\omega$ is cleary antisymmetric and it is
linear since even~$[h_1 h_2, h] = [h_1, h] \, [h_2, h]$ for all~$h_1, h_2, h \in H$. Indeed, the
latter is true iff
\begin{equation}
  \label{eq:fraceq}
  \tfrac{h_1 h_2 h}{hh_1 h_2} = \tfrac{h_1 h}{h h_1} \, \tfrac{h_2 h}{h h_2},
\end{equation}
which we establish now. Writing~$c := \tfrac{h_1h}{hh_1}$ and~$d := \frac{h_2h}{hh_2}$ for the
right-hand quotients, we
get~$cd \cdot hh_1h_2 = (c \cdot hh_1)(d \cdot h_2) = h_1h (d \cdot h_2) = h_1h_2h$;
now~\eqref{eq:fraceq} follows as usual by the uniqueness of quotients.

Let us now state how the cocycle~\eqref{eq:cocycle} encodes the structure of the semigroup~$H$ and
its extension as algebra.

\begin{lemma}
  \label{lem:exseq-semigrp}
  For an exact sequence~\eqref{eq:exseq-semigrp} with fixed section~$s\colon G \to H$, the
  cocycle~\eqref{eq:cocycle} induces a semigroup~$C \times_\psi G$ isomorphic to~$H$.

  \smallskip

  \noindent If $C = (\nnz{K}, \cdot)$ for a commutative unital ring~$K$, the
  product of~$H$ has a unique $K$-linear extension to~$K^{(G)} \cong K^\psi[G]$.
\end{lemma}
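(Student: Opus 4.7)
The plan is to define the evident bijection $\Phi\colon C \times G \to H$ by $\Phi(c, g) := c \cdot s(g)$ and transport the semigroup structure of $H$ along this map; this will simultaneously yield the isomorphism and dictate the product on $C \times_\psi G$. First I would verify bijectivity: injectivity follows from freeness of the $C$-action together with $\pi \circ s = 1_G$, while surjectivity uses that every $h \in H$ satisfies $h \sim s(\pi(h))$ by exactness, so there is a unique $c \in C$ with $h = c \cdot s(\pi(h)) = \Phi(c, \pi(h))$.

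Next I would compute the $H$-product in these coordinates. Using compatibility of the action with multiplication and the definition of $\psi$, one gets
\[
  \Phi(c, g) \, \Phi(c', g') \;=\; cc' \cdot s(g)\, s(g') \;=\; cc'\, \psi(g, g') \cdot s(gg'),
\]
where mixed associativity is used to move the scalar $cc'$ through the product of section values. This forces the definition $(c, g) \cdot (c', g') := (cc'\, \psi(g, g'),\, gg')$ on $C \times G$ and makes $\Phi$ a semigroup isomorphism; associativity of the transported product is automatic from the transport, and it is equivalent to the familiar $2$-cocycle identity $\psi(g, g')\, \psi(gg', g'') = \psi(g', g'')\, \psi(g, g' g'')$ obtained by comparing the two bracketings of $s(g)\, s(g')\, s(g'')$.

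For the $K$-linear extension with $C = \nnz{K}$, I would embed $C \times G$ into the free module $K^{(G)}$ via $(c, g) \mapsto c\, e_g$ and extend the transported product by $K$-bilinearity. Well-definedness is automatic since $K^{(G)}$ is free on $G$, uniqueness follows from $K$-linearity, and associativity descends from the cocycle identity already established in the previous paragraph. On basis vectors the product reads $e_g \cdot e_{g'} = \psi(g, g')\, e_{gg'}$, matching the twisted group algebra $K^\psi[G]$ of \S\ref{sub:heis-twist}. The main bookkeeping obstacle throughout is orchestrating the action, freeness, compatibility, and mixed associativity axioms so that scalars in $C$ can be passed cleanly through products of section values; once this is organized, everything reduces to the classical cocycle calculation.
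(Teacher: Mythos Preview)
Your proof is correct and follows essentially the same approach as the paper's: the paper defines the same bijection (written in the inverse direction as $h \mapsto (h/s\pi(h), \pi(h))$), transports the product to obtain $(c,g) \cdot_\psi (c',g') = (\psi(g,g')\,cc', gg')$, and then for the $K$-linear extension works componentwise via maps $m_{g,g'}\colon Ke_g \otimes Ke_{g'} \to Ke_{gg'}$ given by multiplication with $\psi(g,g')$, which is exactly your bilinear extension spelled out. One minor terminological slip: the step $(c \cdot s(g))(c' \cdot s(g')) = cc' \cdot s(g)s(g')$ is the compatibility axiom rather than mixed associativity, but this does not affect the argument.
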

\begin{proof}
  As one checks immediately, one obtains on the level of sets a bijection
  \begin{equation}
    \label{eq:smgrp-factorset}
    H \cong C \times G:\quad
    \begin{cases}
      h & \mapsto \big(h/s\pi(h), \pi(h)\big),\\
      c \, s(g) & \mapsfrom (c,g).
    \end{cases}
  \end{equation}
  which becomes an isomorphism of semigroups by transporting the operation from~$H$ to~$C \times G$.
  It is easy to see that the new operation~$\cdot_\psi$ is given by
  $(c, g) \cdot_\psi (c', g') = \big(\psi(g,g') \, cc', gg'\big)$, using~\eqref{eq:smgrp-factorset}
  and the compatible action~$\cdot\colon C \times H \to H$. We write~$C \times_\psi G$ for the
  resulting semigroup. Note that every element of~$h \in H$ can be written uniquely
  as~$h = c \, s(g)$ with~$c \in C$ and~$g \in G$, where~$h \leftrightarrow (c, g)$
  in~\eqref{eq:smgrp-factorset}. Identifying~$G$ via~$s$ as a subset of~$H$, we shall write this
  simply as~$h = cg$.

  Now assume~$C = \nnz{K}$ for a commutative unital ring~$K$. In that case, the unitarizations~$Kg$
  of the congruence classes~$[g]_\sim = Cg$ of~$H/{\mathord\sim}$ are free $K$-modules and direct
  components of~$K^{(G)} \cong \bigoplus_{g \in G} Kg$. Using the partition of~$H$ into its
  congruence classes, there is an injective map
  \begin{equation*}
    H = \biguplus_{g \in G} \, [g]_\sim \hooklongrightarrow  K^{(G)}
  \end{equation*}
  sending~$cg \in H$ to~$ce_g$, where~$(e_g \mid g \in G)$ is the basis
  of~$K^{(G)}$. Identifying~$H$ as a subset of~$K^{(G)}$, its product induces for any~$g, g' \in G$
  the unique $K$-linear map
  \begin{equation}
    \label{eq:mult-comp}
    m_{g,g'}\colon Ke_g \otimes Ke_{g'} \to Ke_{gg'},
  \end{equation}
  given by~$\psi(g,g') \in C$ with respect to the bases~$\{e_g \otimes e_{g'}\}$
  and~$\{e_{gg'}\}$. Since~$K^{(G)} \otimes K^{(G)}$ is the direct sum of
  all~$Ke_g \otimes Ke_{g'}$, the maps~$m_{g,g'}$ combine to a bilinear product
  map~$\cdot\colon K^{(G)} \times K^{(G)} \to K^{(G)}$. Endowed with this product, the
  algebra~$(K^{(G)}, +, \cdot)$ must now be shown isomorphic to the twisted semigroup
  algebra~$K^\psi[G]$. Writing the generators of the latter as~$\theta_g \: (g \in G)$, the
  map~$e_g \mapsto \theta_a$ is obviously a $K$-linear
  isomorphism~$\Phi\colon K^{(G)} \isomarrow K^\psi[G]$, so it remains to prove that~$\Phi$ respects
  multiplication. But this is immediate from
  \begin{align*}
    \Phi(e_g \, e_{g'}) &= \Phi( \psi(g,g') \, e_{gg'}) = \psi(g,g') \, \Phi(e_{gg'})\\
    &= \psi(g,g') \, \theta_{gg'} = \theta_g \theta_{g'} = \Phi(e_g) \, \Phi(e_{g'}),
  \end{align*}
  using~\eqref{eq:mult-comp} and the definition of multiplication in~$K^\psi[G]$.
\end{proof}

We define the \emph{category of exact sequences of semigroups}~\eqref{eq:exseq-semigrp}~$\SESsg$ by
taking as morphism~$(\chi, \Phi, \phi)\colon E_1 \to E_2$ those~$\chi \in \Hom(C_1, C_2)$,
$\Phi \in \Hom(H_1, H_2)$, $\phi \in \Hom(G_1, G_2)$ where the right square commutes and
where~$\Phi$ is equivariant over~$\chi$. In detail, we require~$\pi_2 \circ \Phi = \phi \circ \pi_1$
and~$\Phi(c \cdot h) = \chi(c) \cdot \Phi(h)$ for~$c \in C$ and~$h \in H$. It is easy to see that
this implies~$\chi(\tfrac{h}{h'}) = \tfrac{\Phi(h)}{\Phi(h')}$ for all~$h \sim h' \in H$, hence in
particular~$\chi(1) = 1$. For Heisenberg groups, one can characterize the morphisms in terms of
coboundaries; see Proposition~\heiscite{prop:ses-morph}. Here we have an analogous result for the
semigroup case.

\begin{proposition}
  \label{prop:exseq-sg}
  Let~$E$ be the exact sequence~\eqref{eq:exseq-semigrp} and~$E'$ its primed version with
  cocycles~$\psi, \psi'$ from sections~$s\colon G \to H$ and~$s'\colon G' \to H'$,
  respectively. Then $\Hom(E,E') = \{ (\chi, \Phi, \phi)\colon E \to E' \}$ is in bijective
  correspondence with
  \begin{align*}
    \{ (\chi, \phi, \zeta) \mid {} & \chi \in \Hom(C, C') \land \phi \in \Hom(G, G') \land 
    \zeta \in C^1(G, C')\\
    & \qquad \land d^2(\zeta) = \tfrac{\chi_* \psi}{\phi^* \psi'} \}
  \end{align*}
  such that~$(\chi, \Phi, \phi)$ corresponds to~$(\chi, \phi, \zeta)$ with~$\zeta(g) =
  \Phi(sg)/s'\phi(g)$ and~$\Phi(h) = \zeta(\pi h) \, \chi(h/s\pi h) \cdot s' \phi(\pi h)$.
\end{proposition}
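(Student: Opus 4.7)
The plan is to exploit the isomorphism $H \cong C \times_\psi G$ provided by Lemma~\ref{lem:exseq-semigrp}, and similarly $H' \cong C' \times_{\psi'} G'$. Under these identifications, any morphism $\Phi$ is determined by $\chi$ (acting on the $C$-coordinate via equivariance), by $\phi$ (acting on the $G$-coordinate via the right-square commutativity), and by a ``defect'' $\zeta \in C^1(G, C')$ that measures how much $\Phi$ fails to carry the chosen section $s$ to $s' \circ \phi$. The cocycle identity $d^2\zeta = \chi_*\psi/\phi^*\psi'$ will then be exactly what is needed for the ``defect'' to be compatible with the twisted multiplication on both sides.

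I would begin by checking that $\zeta(g) := \Phi(sg)/s'\phi(g)$ is well-defined. The right-square commutativity gives $\pi'(\Phi(sg)) = \phi(\pi(sg)) = \phi(g) = \pi'(s'\phi(g))$, so $\Phi(sg) \sim s'\phi(g)$ in $H'$ and the quotient lies in $C'$ by exactness. Next I would derive the cocycle condition: applying $\Phi$ to the defining identity $s(g)\,s(g') = \psi(g,g') \cdot s(gg')$ and using $\chi$-equivariance yields $\Phi(sg)\,\Phi(sg') = \chi(\psi(g,g')) \cdot \Phi(s(gg'))$. Substituting $\Phi(sg) = \zeta(g) \cdot s'\phi(g)$ and invoking the compatibility axiom together with the analogous factorization $s'(\phi g)\,s'(\phi g') = \psi'(\phi g, \phi g') \cdot s'(\phi(gg'))$ in $H'$, one reads off
\[
\zeta(g)\,\zeta(g')\,\psi'(\phi g, \phi g') \;=\; \chi(\psi(g,g'))\,\zeta(gg'),
\]
which rearranges to $(d^2\zeta)(g,g') = \chi(\psi(g,g'))/\psi'(\phi g, \phi g')$, i.e.\ $d^2\zeta = \chi_*\psi/\phi^*\psi'$ as required.

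For the inverse direction, given a triple $(\chi, \phi, \zeta)$ satisfying this identity, define $\Phi(h) := \zeta(\pi h)\,\chi(h/s\pi h) \cdot s'\phi(\pi h)$ and verify in turn: (i) $\pi' \circ \Phi = \phi \circ \pi$, which is immediate since $\pi'$ kills the $C'$-factor; (ii) $\chi$-equivariance, which follows from the calculation rule $(c \cdot h)/s\pi(c \cdot h) = c\,(h/s\pi h)$ together with the homomorphism property of $\chi$; and (iii) that $\Phi$ is a semigroup homomorphism. Step (iii) is the heart of the proof: one writes $h = c \cdot s(g)$, $h' = c' \cdot s(g')$ in $H$, computes both $\Phi(hh')$ and $\Phi(h)\Phi(h')$ by pushing everything into $C' \times_{\psi'} G'$, and uses the cocycle identity $d^2\zeta = \chi_*\psi/\phi^*\psi'$ at precisely the spot where the factors $\psi(g,g')$ on one side and $\psi'(\phi g, \phi g')$ on the other must be reconciled. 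Finally, the two constructions are mutually inverse: the composite in one direction returns $\zeta$ by direct substitution since $sg/s\pi(sg) = 1$, and in the other direction one recovers $\Phi$ from the decomposition $h = (h/s\pi h) \cdot s\pi h$.

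The main obstacle will be the bookkeeping in step (iii), because one must juggle twisted products on both sides, transport via $\chi$ and $\phi$, and make sure the commuting of $C'$-factors past $H'$-elements is justified by orbit commutativity (which holds because $\Phi(sg)$ and $s'\phi(g)$ lie in the same $\sim$-class, so they commute in $H'$). The cocycle identity must enter exactly once, and keeping the direction of the quotient consistent with the conventions of $d^2$ will require care. Once this is done, the rest of the verification is a straightforward unwinding using the calculation rules for fractions established after Lemma~\ref{lem:semigrp-grp}.
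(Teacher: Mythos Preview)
Your proposal is correct and follows essentially the same route as the paper's proof: both use the explicit formulas $\zeta(g) = \Phi(sg)/s'\phi(g)$ and $\Phi(h) = \zeta(\pi h)\,\chi(h/s\pi h)\cdot s'\phi(\pi h)$, and both identify the cocycle condition $d^2\zeta = \chi_*\psi/\phi^*\psi'$ as precisely the obstruction to $\Phi$ being a semigroup homomorphism. The only organizational difference is that the paper first establishes the bijection between \emph{all} $\zeta\colon G\to C'$ and \emph{all} $\chi$-equivariant maps $\Phi$ satisfying $\pi'\Phi=\phi\pi$ (without yet requiring either the homomorphism property or the cocycle identity), and then shows these two extra conditions are equivalent; you instead treat the two directions separately and check mutual inversion at the end. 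One small remark: the commuting of $C'$-factors past $H'$-elements in your step~(iii) is justified by the compatibility axiom (mixed associativity $(c\cdot h)h' = c\cdot hh'$), not by orbit commutativity.
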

\begin{proof}
  For fixed~$\chi \in \Hom(C, C')$ and~$\phi \in \Hom(G, G')$, we will show that the given
  relation~$\Phi \leftrightarrow \zeta$ sets up a bijective correspondence between arbitrary
  functions $\zeta\colon G \to C'$ and those functions~$\Phi\colon H \to H'$ that
  satisfy~$\pi_2 \circ \Phi = \phi \circ \pi_1$ and~$\Phi(c \cdot h) = \chi(c) \cdot \Phi(h)$ while
  not necessarily being a homomorphism (so we neither require~$(\chi, \Phi, \phi) \in \SESsg$
  nor~$d^2(\zeta) = \tfrac{\chi_* \psi}{\phi^* \psi'}$).

  First of all, it is easy to see that, for given~$\zeta\colon G \to C'$, the induced
  function~$\Phi$ does satisfy $\pi_2 \circ \Phi = \phi \circ \pi_1$ as well
  as~$\Phi(c \cdot h) = \chi(c) \cdot \Phi(h)$. Let us now check bijectivity of the
  relation~$\Phi \leftrightarrow \zeta$. Defining~$\zeta$ for a given~$\Phi$, we obtain
  \begin{align*}
    \tilde\Phi(h) &= \zeta(\pi h) \, \chi(h/s\pi h) \cdot s' \phi(\pi h) =
    \chi(h/s\pi h) \, \tfrac{\Phi(s\pi h)}{s' \phi(\pi h)} \cdot s' \phi (\pi h)\\
    &= \chi(h/s\pi h) \cdot \Phi(s\pi h) = \tfrac{\Phi(h)}{\Phi(s\pi h)} \cdot \Phi(s \pi h) = \Phi(h);
  \end{align*}
  starting from a given~$\zeta$, we get the corresponding~$\Phi$ and then
  \begin{align*}
    \tilde\zeta(g) &= \tfrac{\Phi(sg)}{s'\phi(g)} = \tfrac{\zeta(\pi s g) \, \chi(sg/s\pi s g) \cdot s'\phi(\pi sg)}{s'\phi(g)}
    = \tfrac{\zeta(g) \, \chi(sg/sg) \cdot s'\phi(g)}{s'\phi(g)} = \zeta(g) .
  \end{align*}
  Thus we have established bijectivity. Next we compute
  \begin{align*}
    \Phi(h\bar{h}) &= \zeta(g\bar{g}) \, \chi\big( h\bar{h} / s(g\bar{g}) \big) \cdot s'(\phi g \, \phi \bar{g}),\\
    \Phi(h) \, \Phi(\bar{h}) &= \zeta(g) \, \zeta(\bar{g}) \, \chi(h/sg) \, \chi(\bar{h}/s\bar{g}) \cdot s'(\phi g) \, s'(\phi \bar{g}),
  \end{align*}
  with~$g := \pi h$ and~$\bar{g} := \pi \bar{h}$, which
  implies~$\Phi\colon H \to H'$ is a homomorphism
  iff~$\chi\big( h\bar{h} / s(g\bar{g}) \big) \cdot s'(\phi g \, \phi
  \bar{g})$ equals
  $\chi(h\bar{h}/sg \, s\bar{g}) \, \delta \cdot s'(\phi g) \, s'(\phi
  \bar{g})$, where we
  set~$\delta := d^2(\zeta)(g,\bar{g}) = \zeta(g) \, \zeta(\bar{g}) /
  \zeta(g\bar{g})$. Multiplying through
  by~$\chi(sg \, s\bar{g} / h\bar{h}) \, \delta^{-1} \in C'$, this is
  equivalent
  to~$\chi_* \psi \, (g,\bar{g}) \, \delta^{-1} \cdot s'(\phi g \,
  \phi \bar{g})$ being equal to~$s'(\phi g) \, s'(\phi \bar{g})$. By
  definition of the quotients for~$E'$, this is in turn equivalent
  to~$\phi^* \psi' \, (g,\bar{g}) = \chi_* \psi \, (g,\bar{g}) \,
  \delta^{-1}$
  or~$\delta = \tfrac{\chi_* \psi}{\phi^* \psi'} \, (g,\bar{g})$, as
  was to be shown.
\end{proof}

It is clear that the setting of~$\SESsg$ comprises the common setting of \emph{central extensions of
  abelian groups}~$1 \to C \overset{\iota}{\to} H \overset{\pi}{\to} G \to 0$, where~$C$ and~$G$ are
abelian groups but where~$H$ may be any (possibly nonabelian) group such that~$C \le \zentrum(H)$.
The setting~$\SESsg$ generalizes this in two respects: We allow~$H$ and~$G$ to be semigroups (while
we have seen that~$C$ is automatically a group), and we require only a compatible free action
of~$C \times H \to H$. The latter is induced by setting~$c \cdot h := \iota(c) \, h$. Note that
compatibility (as well as orbit commutativity) follows from~$C$ being central. Writing~$\SES$ for
the category of central extensions of abelian groups as defined in
\S\heiscite{sub:nilquadratic-groups}, it is also clear that the morphisms of the latter
become morphisms in~$\SESsg$. Thus one sees that~$\SES$ is a (non-full) subcategory of~$\SESsg$, and
we shall identify the corresponding exact sequences along with their morphisms. In this sense,
Proposition~\ref{prop:exseq-sg} is in fact a special case of Proposition~\heiscite{prop:ses-morph}.

The category~$\SESsg$ arises naturally from function fields such as the ones generated by
Gaussians. The key to understanding the connection involves some \emph{valuation
  theory}~\cite[\S9]{Cohn2003}, which we briefly recall for fixing terminology. Given a field~$\K$
and an additively written totally ordered group~$\G$, a \emph{valuation} on~$\K$ with value
group~$\G$ is a map~$\nu\colon \K \to \G \cup \{\infty\}$ such that
\begin{align*}
  & \nu(\nnz{\K}) = \G\quad\text{and}\quad\nu(0) = \infty,\\
  & \nu(ff') = \nu(f) + \nu(f'),\\
  & \nu(f+f') \ge \min \{ \nu(f), \nu(f') \}
\end{align*}
In that case, $(\K, \nu)$ is called a \emph{valuated field},
$\A_\nu := \{ f \in \K \mid \nu(f) \ge 0\}$ its \emph{valuation ring},
$\mathfrak{a}_\nu := \{ f \in \K \mid \nu(f) > 0\}$ its \emph{maximal ideal},
and~$\K_\nu := \A_\nu/\mathfrak{a}_\nu$ its \emph{residue field}.

We use the notation~$\G_{\ge g} := \{ h \in \G \mid h \ge g\}$, with similar definitions
for~$\G_{>g}$, $\G_{\le g}$ and~$\G_{<g}$. For a fixed field~$\K$, the valuation rings~$\A_\nu$ are
characterized~\cite[Prop.~9.1.1]{Cohn2003} by the property that~$a \in \A_\nu$
or~$a^{-1} \in \A_\nu$ for all~$a \in \K$, so the term ``valuation ring'' may be taken to refer to
this class of subrings of~$\K$. The valuation~$\nu$ corresponding to such a subring is unique up to
equivalence~\cite[Thm.~9.1.4]{Cohn2003}.

Given an integral domain~$\A$ with field of fractions~$\K$ and a
maximal ideal~$\mathfrak{a}$, Chevalley's lemma~\cite[9.4.3]{Cohn2003}
guarantees the existence of a valuation ring~$\A_\nu$ with maximal
ideal~$\mathfrak{a}_\nu$ such that~$(\A_\nu, \mathfrak{a}_\nu)$ is a
maximal pair dominating~$(\A, \mathfrak{a})$ in the sense
that~$\A_\nu \ge \A$ and~$\mathfrak{a}_\nu \supseteq \mathfrak{a}$. We
observe that the valuation~$\nu\colon \K \to \G$ is nonnegative
on~$\A$ and write $\G_{\A} := \nu(\nnz{\A}) \subseteq \G_{\ge0}$ for
the \emph{monoid of~$\A$} as in~\cite[\S2.1]{Teissier2003}. Then we
have
\begin{equation}
  \label{eq:exseq-valring}
  \xymatrix @M=0.5pc @R=1pc @C=1.5pc { 
    0 \ar[r] & \unit{\A} \ar@{^{(}->}[r] \ar@{=}[d] & \unit{\K} \ar[r]^\nu & \G \ar[r] & 0,\\
    0 \ar[r] & \unit{\A} \ar@{^{(}~>}[r] & \nnz{\A} \ar[r] \ar@{_{(}->}[u] & \G_{\A} \ar[r]
    \ar@{_{(}->}[u] & 0, }
\end{equation}
where~$\nnz{\A}$ is in general not a group (this is why we use a wavy
arrow in the second row and view the above as a morphism
in~$\SESsg$). The lower row of~\eqref{eq:exseq-valring} encodes the
chosen valuation in the semigroup category, without recourse to the
fraction field. (Note that~$\G_{\A}$, despite begin called a semigroup
in~\cite[\S2.1]{Teissier2003}, is in fact a monoid that generates~$\G$
as a group.)

Following the terminology of~\cite[\S2.3]{Teissier2003} in algebraic geometry, we define the
\emph{associated graded algebra} of~$\A$ by
\begin{equation}
  \label{eq:assoc-gradalg}
  \gr_\nu(\A) = \bigoplus_{g \in \G_{\A}} \A_{\ge g} \big/ \A_{>g}
\end{equation}
where~$\A_{\ge g}$ for~$\{ a \in \A \mid \nu(a) \in \G_{\ge g}\}$ and
similarly for~$\A_{> g}$ etc. Note that~$(\A_{\ge g})_{g \in \G_{\A}}$
represents a filtration by ideals with the~$\A_{>g}$ serving as
``successors''~\cite[\S1]{Teissier2003}. Here~$(\G_{\A}, \le)$ need
not be a well-order, as in the well-known
case~$(\G_{\A}, \G) = (\NN, \ZZ)$ common in algebraic
geometry. Nevertheless, $\gr_\nu(\A)$ is always an integral domain, as
long as~$\A$ is.

The grading of~\eqref{eq:assoc-gradalg} can be extended to
all~$g \in \G$ by assigning a zero homogeneous component
for~$g \not\in \G_{\A}$. Since~$\mathfrak{a}$ is maximal, one may form
the \emph{residue field}~$\K_{\A} := \A/\mathfrak{a}$, and
$\gr_\nu(\A)$ is an algebra over the residue field, the latter being
the component in~\eqref{eq:assoc-gradalg} with grade~$0 \in \G_{\A}$.

Given~$a \in \nnz{A}$, there is a unique~$g \in \G_{\A}$
with~$a \in \A_{\ge g} \setminus \A_{>g}$, namely~$g =
\nu(a)$. Defining the \emph{initial form}~$\bar{a} := a + \A_{>g}$,
one thus obtains the map~$\inif_\nu\colon \nnz{\A} \to \gr_\nu(\A)$,
$a \mapsto \bar{a}$. Moreover, $\gr_\nu(\A)$ is also endowed with an
\emph{induced valuation}, denoted by~$\nu$ again and defined as
\begin{equation*}
  \bar\nu\Big(\sum_{g \in \G_\A} \bar{a}_g\Big) := \min \, \{ \nu(a_g) \mid g \in \G_\A \} ,
\end{equation*}
where one checks that the choice of representatives~$a_g$ is
irrelevant.

Being a valution ring, $\gr_\nu(\A)$ has a unique maximal
ideal~$\gr_{\nu}(\mathfrak{a})$ consisting of
all~$\bar{a} \in \gr_{\nu}(\A)$ with~$\bar{\nu}(\bar{a})>0$. Its image
under the valuation will be denoted
by~$\G_{\mathfrak{a}} := \nu(\mathfrak{a})$ and referred to as the
\emph{semigroup of~$\A$}. Clearly, we
have~$\G_{\A} = \{ 0 \} \uplus \G_{\mathfrak{a}}$, which corresponds
to the direct
decomposition~$\gr_{\nu}(\A) = \K_\A \oplus
\gr_{\nu}(\mathfrak{a})$. Thus it seems more intuitive to
view~$\gr_{\nu}(\mathfrak{a})$ as a \emph{nonunital}
$\K_\A$-subalgebra of~$\gr_{\nu}(\A)$. (This is important in analysis,
where it can be of advantage to work with nonunital algebras: We have
seen that pointwise algebras are nonunital for non-compact domain and
convolution algebras for non-discrete ones.)

As for every graded ring, one obtains the \emph{associated graded
  monoid} by merging all nonzero initial terms via
\begin{equation*}
  \gr_{\nu.}(\A) := \biguplus_{g \in \G_{\A}} \nnz{(\A_{\ge g} \big/ \A_{>g})} <
  \big( \nnz{\gr_\nu(\A)}, \cdot \big).
\end{equation*}
and the \emph{associated graded semigroup} via
\begin{equation*}
  \gr_{\nu.}(\mathfrak{a}) := \biguplus_{g \in \G_{\mathfrak{a}}} {(\A_{\ge g} \big/ \A_{>g})} < \gr_{\nu.}(\A) ,
\end{equation*}
by restricting the leading terms to the maximal
ideal~$\gr_{\nu}(\mathfrak{a}) \lhd \gr_{\nu}(\A)$. Indeed, the
identity~$1 \in \K_\A$ occurs in~$\gr_{\nu.}(\A)$ as the degree-zero
component but not in~$\gr_{\nu.}(\mathfrak{a})$, so that the former is a monoid while the latter is not.

The group~$\A^*$ acts naturally both on~$\gr_\nu(\A)$ and
on~$\gr_{\nu.}(\A)$, and this action preserves the grading. One checks
also that the initial form extends to a $\SESsg$ morphism from the
second row of the exact sequence~\eqref{eq:exseq-valring}, namely
\begin{equation}
  \label{eq:exseq-valmon}
  \xymatrix @M=0.5pc @R=1pc @C=1.5pc { 
    0 \ar[r] & \unit{\A} \ar@{^{(}~>}[r] \ar@{=}[d] & \nnz{\A} \ar[r]^\nu \ar[d]_{\inif_\nu} & 
    \G_{\A} \ar[r] \ar@{=}[d] & 0,\\
    0 \ar[r] & \unit{\A} \ar@{^{(}~>}[r] & \gr_{\nu.}(\A) \ar[r]^-{\bar\nu} & \G_{\A} \ar[r] & 0.}
\end{equation}
The lower row of this diagram can be restricted to the positive
degrees to yield
\begin{equation}
  \label{eq:exseq-valsemigrp}
  \xymatrix @M=0.5pc @R=1pc @C=1.5pc {%
    0 \ar[r] & \unit{\A} \ar@{^{(}~>}[r] & \gr_{\nu.}(\mathfrak{a})
    \ar[r]^-{\bar\nu} & \G_{\mathfrak{a}} \ar[r] & 0, }
\end{equation}
which is the exact sequence we instantiate for the Gaussians.

In detail, we shall choose the \emph{valuation ring}~$\A = \Gau_0(\RR)$ with \emph{maximal
  ideal}~$\mathfrak{a} = \Gau(\RR)$, unit group~$\unit{\A} = \nnz{\CC}$ and \emph{residue
  field}~$\K_\A \cong \CC$. The quotient field~$\K$ is a subfield of the field~$\CC(\ase_0)$
introduced earlier; it consists of all rational functions in the~$g_{\mu,\rho}$. The value
group~$\G$ may be viewed as the additive subgroup~$\RR x^2 + \RR x \le \RR[x]$ with lexicographic
order (first quadratic then linear term), with \emph{semigroup}
$\G_{\mathfrak a} = \RR_{>0} x^2 + \RR x \cong \Gau_{\tdot}$ corresponding
to~$\mathfrak{a} = \Gau(\RR)$. It is easy to see that in the present
case~$\gr_{\nu}(\A) \cong \Gau_0(\RR)$ with components~$\A_0 \cong \CC$ and
$\A_{\mu,\rho} \cong [g_{\mu,\rho}]$, so here~$\bar\nu$ essentially coincides with~$\nu$. Therefore
we obtain also~$\gr_{\nu.}(\A) \cong \nnz{\CC} \uplus \nnz{\CC} \Gau_{\tdot}$. Finally, we
have~$\gr_{\nu}(\mathfrak{a}) \cong \Gau(\RR)$
and~$\gr_{\nu.}(\mathfrak{a}) \cong \nnz{\CC} \Gau_{\tdot}$ for the nonunital counterparts, which
yields
\begin{equation}
  \label{eq:exseq-monoid}
  \xymatrix { 0 \ar[r] & \nnz{\CC} \ar@{^{(}~>}[r] & \nnz{\CC}\Gau_{\tdot} \ar[r]^-\pi 
    & \Gau_{\tdot} \ar^s @/^/[l]!<3pt,0pt> \ar[r] & 0}
\end{equation}
for the exact sequence~\eqref{eq:exseq-valsemigrp} encoding the
quotient~$\Gau_{\tdot} \cong \nnz{\CC}\Gau_{\tdot} / \nnz{\CC}$.
Here~$\pi\colon \nnz{\CC}\Gau_{\tdot} \to \Gau_{\tdot}$ is the canonical projection
$cg_{\mu,\rho} \mapsto (\mu,\rho)$ while $s\colon \Gau_{\tdot} \to \nnz{\CC}\Gau_{\tdot}$ the
\emph{set-theoretic section} of~$\pi$ given by~$(\mu,\rho) \mapsto g_{\mu,\rho}$.

Applying Lemma~\ref{lem:exseq-semigrp} to~\eqref{eq:exseq-monoid} and using the above standard
section $s\colon \Gau_{\tdot} \hookrightarrow \nnz{\CC}\Gau_{\tdot}$, we obtain
$\nnz{\CC}\Gau_{\tdot} \cong \nnz{\CC} \times_{\gamma_{\tdot}} \Gau_{\tdot}$, the \emph{point\-wise
  Gaussian
  cocycle}~$\gamma_{\tdot} := \!{\gamma_{\tdot}}(\mu_1, \rho_1; \mu_2, \rho_2) = s(\mu_1, \rho_1) \,
s(\mu_2, \rho_2) / s\big( (\mu_1, \rho_1) \cdot (\mu_2, \rho_2) \big)$ in the form
$\gamma_{\tdot} = g_{\mu_1,\rho_1} \, g_{\mu_2, \rho_2}/g_{\mu_1 +_\rho \mu_2, \rho_1+\rho_2}$ so that
\begin{equation}
  \label{eq:ptw-cocyc}
  \gamma_{\tdot} =  g_{0,\rho_1\boxedplus\rho_2}(\mu_1-\mu_2) = \exp
  \Big(\frac{-\pi\rho_1\rho_2(\mu_1-\mu_2)^2}{\rho_1+\rho_2}\Big),
\end{equation}
which is the normalization constant for the $(\mu, \rho)$ parametrization\footnote{At this point,
  the chosen parametrization may seem awkward, but it will come in handy when introducing the
  convolution structure and Fourier operator.} with product law~\eqref{eq:pw-gauss}. Note
that~$\gamma_{\tdot} = g_{\mu_1,\rho_1\boxedplus\rho_2}(\mu_2) =
g_{\mu_2,\rho_1\boxedplus\rho_2}(\mu_1)$

We obtain from Lemma~\ref{lem:exseq-semigrp} also a characterization of the twisted semigroup
algebra~$\CC^{\gamma_{\tdot}}[\Gau_{\tdot}]$, namely as the complex vector space~$\CC^{(\Gau_{\tdot})}$ with
multiplication induced by~$\nnz{\CC}\Gau_{\tdot}$. We obtain~$\big(\Gau(\RR), \cdot\big) \cong \CC^{\gamma_{\tdot}}[\Gau_{\tdot}]$, so
the Gaussians form a basis over~$\CC$ with
\begin{equation}
  \label{eq:gauss-ptw-prod}
  g_{\mu_1, \rho_1} \, g_{\mu_2, \rho_2} =
  \gamma_{\tdot} \, g_{\mu,\rho}
  \qquad\text{with}\qquad
  \mu = \mu_1 +_\rho \mu_2, \quad\rho = \rho_1+\rho_2
\end{equation}
and~\eqref{eq:ptw-cocyc} for the multiplication law.

It is well-known that the Gaussians are also closed under \emph{convolution} (another reason for
discarding the constant function~$g_{\mu,0} = 1$). Using normalized Gaussians, the result is again a
normalized Gaussian whose mean and variance are given by, respectively, adding the original means
and variances. The explicit expression is
\begin{equation}
  \label{eq:conv-gauss}
  g_{\mu_1, \rho_1} \star g_{\mu_2, \rho_2} = \gamma_\star \, g_{\mu,\rho}
  \qquad\text{with}\qquad
  \mu = \mu_1 + \mu_2, \quad \rho = \rho_1 \boxedplus \rho_2
\end{equation}
where~$\gamma_\star := \gamma_\star(\mu_1, \rho_1; \mu_2, \rho_2)$ is the convolutive Gaussian
cocycle to be introduced below in~\eqref{eq:convol-cocyc}.

In analogy to the pointwise structure, we may also introduce the
semigroup~$\Gau_\star = (\RR, +) \oplus (\RR_{>0}, \boxedplus)$ where~$(\RR_{>0}, \boxedplus)$ is
the \emph{harmonic semigroup}. In other words, the product law of~$\Gau_\star$ is given by
\begin{equation*}
  (\mu_1, \rho_1) \star (\mu_2, \rho_2) = (\mu_1 + \mu_2, \rho_1 \boxedplus \rho_2) ,
\end{equation*}
and we obtain
\begin{equation}
  \xymatrix { 0 \ar[r] & \nnz{\CC} \ar@{^{(}~>}[r] & \nnz{\CC}\Gau_\star \ar[r]^-\pi 
    & \Gau_\star \ar^s @/^/[l]!<3pt,0pt> \ar[r] & 0}
\end{equation}
as the convolutive analog of~\eqref{eq:exseq-monoid}. Here~$\nnz{\CC}\Gau_\star$ is the same
as~$\nnz{\CC}\Gau_{\tdot}$ as a \emph{set} but endowed with the convolution~$\star$ inherited
from~$\Schw(\RR)$. Moreover, the maps~$\pi$ and~$s$ are also the same as in~\eqref{eq:exseq-monoid},
but they are now homomorphisms with respect to the convolution rather than pointwise product. In
analogy to the pointwise structure, we obtain once more a twisted semigroup
algebra~$\big(\Gau(\RR), \star\big) \cong \CC^{\gamma_\star}[\Gau_\star]$ with corresponding
\emph{convolutive Gaussian cocycle}
\begin{equation}
  \label{eq:convol-cocyc}
  \gamma_\star = \tfrac{1}{\sqrt{\rho_1+\rho_2}\rule{0pt}{7.5pt}}
\end{equation}
already used in~\eqref{eq:conv-gauss} above.

Altogether,
$\big(\Gau(\RR), \star, \cdot\big) \cong \CC^{\gamma_\star}[\Gau_\star] \divideontimes
\CC^{\gamma_{\tdot}}[\Gau_{\tdot}]$ is thus seen to be a (nonunital) \emph{twain subalgebra} of the
Schwartz class~$\big(\Schw(\RR), \star, \cdot\big)$. To get a Fourier singlet, we need closure under
the Heisenberg action. As~$\Gau(\RR)$ is closed under translation, we have
$\CC[\Gau(\RR) \RR] = \Gau(\RR) \le C_0(\RR)$, and Proposition~\ref{prop:heis-closure} yields the
\emph{Gaussian closure}
\begin{equation}
  \label{eq:gaussian-closure}
  \overline{\Gau(\RR)} = \mathfrak{C}(\RR) \, \Gau(\RR) \cong \mathfrak{C}(\RR) \otimes_{\CC} \Gau(\RR),
\end{equation}
where~$\mathfrak{C}(\RR)$ is the $\CC$-linear span of the \emph{oscillating exponentials}~$e_\alpha$
with frequency~$\alpha \in \RR$, as mentioned before Proposition~\ref{prop:heis-closure}. The tensor
product structure of~\eqref{eq:gaussian-closure} shows that~$\overline{\Gau(\RR)}$ has the complex
basis $\big(e_\alpha \, g_{\mu,\rho} \mid \alpha \in \RR, (\mu, \rho) \in \RR \times
\RR_{>0}\big)$.

Proposition~\ref{prop:heis-closure} ensures on general grounds that~$\overline{\Gau(\RR)}$ is a
Heisenberg (plain) subalgebra of~$\Schw(\RR)$ under the product law
\begin{equation}
  \footnotesize
  \label{eq:ext-pw-gauss}
  \left\{
  \begin{aligned}
    & e_{\alpha_1} g_{\mu_1, \rho_2} \, \cdot \, e_{\alpha_2} g_{\mu_2, \rho_2}
    = \bar{\gamma}_{\tdot} \, e_\alpha \, g_{\mu, \rho},\\
    & (\rho_1, \mu_1, \alpha_1) \cdot (\rho_2, \mu_2, \alpha_2)
    = (\rho_1 + \rho_2, \mu_1 +_\rho \mu_2, \alpha_1 + \alpha_2) =: (\rho, \mu, \alpha)
  \end{aligned}
  \right.
\end{equation}
based on~\eqref{eq:gauss-ptw-prod} and with the same cocycle~$\bar{\gamma}_{\tdot} = \gamma_{\tdot}$
as in~\eqref{eq:ptw-cocyc} because the product is direct.

While it is not clear \emph{a priori} that it is in fact a \emph{Heisenberg twain
  subalgebra}~$\big( \overline{\Gau(\RR)}, \star, \cdot \big) \le (\Schw(\RR), \star, \cdot)$, this
may be seen from the convolution law
\begin{equation}
  \footnotesize
  \label{eq:ext-conv-gauss}
  \left\{
  \begin{aligned}
    & e_{\alpha_1} g_{\mu_1, \rho_2} \, \star \, e_{\alpha_2} g_{\mu_2, \rho_2} =
    \bar{\gamma}_\star \, e_\alpha \, g_{\mu, \rho},\\
    & (\rho_1, \mu_1, \alpha_1) \star (\rho_2, \mu_2, \alpha_2)
    = (\rho_1 \boxedplus \rho_2, \mu_1 + \mu_2, \alpha_2 +_\rho \alpha_1) =: (\rho, \mu, \alpha)
  \end{aligned}
  \right.
\end{equation}
based on~\eqref{eq:conv-gauss}, but with the normalization constant given by the \emph{extended
  convolutive Gaussian cocycle}
\begin{align}
  \label{eq:ext-convol-cocyc}
  &\bar{\gamma}_\star := \gamma_\star \, \inner{\alpha_1-\alpha_2\,}{\,\mu_1 -_\rho \mu_2}
  \, g_{0,1/(\rho_1+\rho_2)}(\alpha_1-\alpha_2)\\
  &= \tfrac{1}{\sqrt{\rho_1+\rho_2}\rule{0pt}{7.5pt}} \, 
  \exp \Big( i\tau(\alpha_1-\alpha_2)(\mu_1\rho_1-\mu_2\rho_2)/(\rho_1+\rho_2) 
  - \tfrac{\pi (\alpha_1-\alpha_2)^2}{\rho_1+\rho_2} \Big) \notag
\end{align}
as may be seen via the relation~%
$g_{\mu,\rho} \, e_\alpha = \exp \, (i\tau\alpha\mu-\pi\alpha^2/\rho) \, g_{\mu+i\alpha/\rho,\rho}$,
where on the right-hand side we have brusquely usurped the Gaussian notation with a complex-valued
``mean''. For the record, let us also state the explicit \emph{Heisenberg action}
\begin{equation}
  \label{eq:normal-heisact}
  x \act e_\alpha \, g_{\mu,\rho} = \exp \, (-i\tau\alpha x) \, e_\alpha \, g_{\mu+x,\rho},\qquad
  \xi \act e_\alpha \, g_{\mu,\rho} = e_{\alpha+\xi} \, g_{\mu,\rho}
\end{equation}
for~$x \in \RR$ and~$\xi \in \RR \cong \RR^*$. 

We have already seen (Example~\ref{ex:L1-doublet}) that~$g = g_{0,1}$ is a fixed point of the
\emph{Fourier transform}~$\Four\colon L^{1/1}(\RR) \pto L^{1/1}(\RR)$ or the corresponding
restriction~$\Four\colon \Schw(\RR) \pto \Schw(\RR)$. Continuing from the singlet perspective
(i.e.\@ viewing~$\Four$ as an \emph{endo}morphism on the Schwartz class), the fixed point
result~$\Four g_{0,1} = g_{0,1}$ generalizes to
\begin{equation}
  \label{eq:Four-gaussian}
  \Four (e_\alpha g_{\mu,\rho}) = c \, e_\mu \, g_{-\alpha,1/\rho}
  \quad\text{with}\quad
  c := \inner{\alpha}{\mu} / \sqrt{\rho} = \rho^{-1/2} \, e^{i\tau\alpha\mu},
\end{equation}
as one confirms by applying the similarity theorem (mentioned after
Proposition~\ref{prop:pont-automorphism}) to~$g_{0,\rho} = S_{\sqrt{\rho}} \, g_{0,1}$ and the
Heisenberg relations~\eqref{eq:cofourop} to $g_{\mu,\rho} = \mu \act g_{0,\rho}$
and~$e_\alpha \, g_{\mu,\rho} = \alpha \act g_{\mu,\rho}$.

Thus the Fourier transform on~$\Schw(\RR)$ restricts to an endomorphism on~$\overline{\Gau(\RR)}$,
we obtain a twain subsinglet, which is clearly regular since~\eqref{eq:Four-gaussian} is
invertible. Due to the prominent role played by the Gaussian normal distribution, we call the
resulting singlet the \emph{Gaussian singlet}~$\bar{\Gau}\inner{\RR}{\RR}$.

\begin{proposition}
  \label{prop:gauss-singlet}
  The Gaussian
  singlet~$\bar{\Gau}\inner{\RR}{\RR} = [\overline{\Gau(\RR)} \pto \overline{\Gau(\RR)}]$ is a
  regular twain subsinglet of~$\Schw\inner{\RR}{\RR} = [\Schw(\RR) \pto \Schw(\RR)]$.\hfill\qed
\end{proposition}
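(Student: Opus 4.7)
The plan is to verify the four ingredients that together establish the claim: (a) $\overline{\Gau(\RR)}$ is closed under the two products $\star$ and $\cdot$ inherited from $\Schw(\RR)$; (b) it is closed under the Heisenberg action of $H(\pomega)$ with $\pomega = \inner{\RR}{\RR}$; (c) the Fourier operator $\Four\colon \Schw(\RR) \pto \Schw(\RR)$ restricts to an endomorphism of $\overline{\Gau(\RR)}$; and (d) this restriction is bijective. Since signal and spectral spaces coincide, this yields the claimed regular twain subsinglet.

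For (a), I would invoke the product formulae \eqref{eq:ext-pw-gauss} and \eqref{eq:ext-conv-gauss} derived in the excerpt, which already exhibit the pointwise and convolution product of two generating elements $e_{\alpha_i} g_{\mu_i,\rho_i}$ as a scalar multiple of a further such element. Since the $e_\alpha g_{\mu,\rho}$ form a $\CC$-basis of $\overline{\Gau(\RR)}$ by the tensor decomposition \eqref{eq:gaussian-closure}, bilinearity extends closure from generators to the full space. For (b), the Heisenberg action laws \eqref{eq:normal-heisact} already show that translation by $x \in \RR$ and modulation by $\xi \in \RR$ both send a generator to a scalar multiple of another generator; the torus action is trivial via $\Tor \hookrightarrow \nnz{\CC}$. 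Alternatively, one may appeal directly to Proposition~\ref{prop:heis-closure}, which describes $\overline{\Gau(\RR)}$ precisely as the Heisenberg closure of $\Gau(\RR)$ in $C_0(\RR)$.

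For (c), the key computation is \eqref{eq:Four-gaussian}, which I would derive cleanly in three steps using tools already established: start from the fixed-point identity $\Four g_{0,1} = g_{0,1}$ recalled in Example~\ref{ex:L1-doublet}; apply the similarity theorem from Proposition~\ref{prop:pont-automorphism} and the remark following it to the rescaling $g_{0,\rho} = S_{\sqrt{\rho}} g_{0,1}$ in order to pick up the factor $\rho^{-1/2}$; then use the equivariance relations \eqref{eq:cofourop} of Fourier operators with respect to the translation $\mu \act g_{0,\rho} = g_{\mu,\rho}$ and the modulation $\alpha \act g_{\mu,\rho} = e_\alpha g_{\mu,\rho}$, which introduce the oscillating factor $\inner{\alpha}{\mu} = e^{i\tau\alpha\mu}$ and swap positions with momenta in the parameters. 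The result $\Four(e_\alpha g_{\mu,\rho}) = \rho^{-1/2} e^{i\tau\alpha\mu} e_\mu g_{-\alpha,1/\rho}$ sends a generator to a (nonzero) scalar multiple of another generator of $\overline{\Gau(\RR)}$, so $\Four$ restricts as required.

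For (d), the explicit form of $\Four$ on generators in \eqref{eq:Four-gaussian} shows it permutes the $\CC$-basis $\{e_\alpha g_{\mu,\rho}\}$ up to nonzero scalars via the map $(\alpha,\mu,\rho) \mapsto (\mu,-\alpha,1/\rho)$; this map is an involution on $\RR \times \RR \times \RR_{>0}$ up to a sign, and in fact iterating $\Four$ four times returns the identity, matching the periodicity on the Heisenberg clock (Figure~\ref{fig:heis-clock}). Hence $\Four$ is bijective on $\overline{\Gau(\RR)}$. I expect no serious obstacle: the principal work has already been absorbed into the product laws \eqref{eq:ext-pw-gauss}--\eqref{eq:ext-conv-gauss} and the Heisenberg action \eqref{eq:normal-heisact}; the only genuinely new computation is \eqref{eq:Four-gaussian}, and even there the combination of the fixed-point property of $g_{0,1}$, the similarity theorem, and Heisenberg equivariance reduces it to bookkeeping of the phase factors.
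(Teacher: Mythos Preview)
Your proposal is correct and mirrors the paper's approach exactly: the paper places all of the work in the discussion preceding the proposition (the product laws \eqref{eq:ext-pw-gauss}--\eqref{eq:ext-conv-gauss}, the Heisenberg action \eqref{eq:normal-heisact}, and the Fourier formula \eqref{eq:Four-gaussian} derived via the similarity theorem and equivariance), then states the proposition with a bare \qed. Your derivation of \eqref{eq:Four-gaussian} in three steps---fixed point, similarity rescaling, Heisenberg equivariance---is precisely the paper's route, and your regularity argument from the explicit invertibility of the parameter map agrees with the paper's one-line remark that ``\eqref{eq:Four-gaussian} is invertible.''
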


It may be interesting to view also the Gaussian closure~$\overline{\Gau(\RR)}$ with its Fourier
transform~$\Four\colon \overline{\Gau(\RR)} \pto \overline{\Gau(\RR)}$ from the viewpoint of
\emph{exact sequences} in~$\SESsg$. It is easy to see that the exact
sequence~\eqref{eq:exseq-monoid} extends to
\begin{equation}
  \label{eq:exseq-monoid-four}
  \xymatrix { 0 \ar[r] & \nnz{\CC} \ar@{=}[d] \ar@{^{(}~>}[r] &
    \nnz{\CC}\bar{\Gau}_\star \ar[r]^-\pi \ar[d]_\Four
    & \bar{\Gau}_\star \ar[r] \ar[d]_{\four} & 0\\
  0 \ar[r] & \nnz{\CC} \ar@{^{(}~>}[r] & \nnz{\CC}\bar{\Gau}_{\tdot} \ar[r]^-\pi 
    & \bar{\Gau}_{\tdot} \ar[r] & 0}
\end{equation}
where~$\bar{\Gau}_{\tdot} := \Gau_{\tdot} \times \mathfrak{C}(\RR)$ is a direct product of monoids
while~$\nnz{\CC}\bar{\Gau}_{\tdot}$ is semidirect (with the ``same'' cocycle as
in~\eqref{eq:exseq-monoid} since the direct product does not contribute to cocycles). The elements
of~$\bar{\Gau}$ may be taken as triples~$(\rho,\mu,\alpha) \in \Gau_{\tdot} \times \RR$, on which
the Fourier reflex operates by $\four(\rho,\mu,\alpha) = (1/\rho, -\alpha, \mu)$. Thus we
have~$\four = i \times j$ with the
isomorphism~$i\colon (\RR_{>0}, \boxedplus) \isomarrow (\RR_{>0}, +)$ and the tilt
map~$j\colon \RR^2 \to \RR^2$ discussed in \S\ref{sub:heis-twist}. The latter makes sense
because the parameters $(\mu,\alpha) \in \RR^2 = G \oplus \Gamma$ may be seen as position/momentum
pairs with~$G$ acting naturally on~$\mu$ and~$\Gamma$ on~$\alpha$.

By Proposition~\ref{prop:exseq-sg}, the semigroup homomorphism~$\four$ determines~$\Four$ up to the
$1$-chain~$\zeta \in C^1(\bar{\Gau}_\star, \nnz{\CC})$ given by the constant~$c$
of~\eqref{eq:Four-gaussian} as $\zeta(\rho, \mu, \alpha) = \inner{\alpha}{\mu}/\sqrt{\rho}$. By a
straightforward calculation using the convolution law~\eqref{eq:ext-conv-gauss} as well as the
(extended) cocycles~\eqref{eq:ext-convol-cocyc} and~\eqref{eq:ptw-cocyc}, one may verify that
\begin{equation*}
  d^2\zeta(\rho_1\mu_1\alpha_1; \rho_2\mu_2\alpha_2)
  = \zeta(\rho_1\mu_1\alpha_1) \, \zeta(\rho_2\mu_2\alpha_2)
  / \zeta(\rho_1\mu_1\alpha_1 \star \rho_2\mu_2\alpha_2)
\end{equation*}
is given
by~$\bar{\gamma}_\star \,/\, \four^{\!*} \bar{\gamma}_{\tdot} = \inner{\rho_1'\alpha_1}{\mu_1}
\inner{\rho_2'\alpha_2}{\mu_2} / \inner{\alpha_1}{\rho_2'\mu_2} \inner{\alpha_2}{\rho_1'\mu_1}$ as
required in Proposition~\ref{prop:exseq-sg}.

\subsection{The Gelfond Field for Coefficients.}
\label{sub:gelfond-field}
As we have seen in \S\ref{sub:min-schwartz-algebra}, the normal Fourier
singlet~$\bar{\Gau}\inner{\RR}{\RR}$ leads to rational expressions in~$e^{\tau\xi}$, where~$\xi$ is itself a
rational expression in the parameters. In the next subsection we shall build up an algorithmic
subdomain of~$\bar{\Gau}\inner{\RR}{\RR}$ generated by allowing only rational values for the parameters. We are
thus led to consider~$\QQ\big( e^{\tau\xi} \mid \xi \in \QQ(i) \big)$ as coefficient field. In this
small subsection we will collate various number-theoretic facts about this field, which we define now
in the equivalent form
\begin{equation}
  \label{eq:Qpi}
  \QQ^\pi := \QQ\big( e^{\pi\xi} \mid \xi \in \QQ(i) \big).
\end{equation}
In other words, $\QQ^\pi$ is generated by all powers of Gelfond's constant~$e^\pi$ having Gaussian
rationals as exponents, and we shall thus refer to~$\QQ^\pi$ as the \emph{Gelfond field}. We analyze
now the algebraic structure of this field, giving special emphasis to the important fact that it is
well suited for the algorithmic treatment.

The crucial observation is that~$\QQ^\pi$ is built up from two rather dissimilar subfields, which we
shall write as
\begin{equation}
  \label{eq:Qtrab}
  \QQtr := \QQ(e^{\pi\alpha} \mid \alpha \in \QQ)
  \qquad\text{and}\qquad
  \QQab := \QQ(e^{i\pi\beta} \mid \beta \in \QQ) .
\end{equation}
Obviously, the Gelfond field is the compositum~$\QQ^\pi = \QQtr \mathbin{.} \QQab$. Its second
factor~$\QQab$ is the \emph{maximal abelian extension}~\cite[\S8.1(j)]{KatoKurokawaSaito2011} of the
rational field, obtained by adjoining all roots of unity to~$\QQ$. In other words, we have the
direct limit of cyclotomic fields
\begin{equation}
  \label{eq:maxabext}
  \QQab = \bigcup_{n \in \NN_{>0}} \QQ(\zeta_n),
\end{equation}
where~$\zeta_n := e^{i\tau/n}$ is the $n$-th standard primitive root
of unity. While~$\QQab$ is thus an algebraic extension field (in fact,
a Galois extension), its companion~$\QQtr$ is a \emph{transcendental
  extension} of~$\QQ$, for which it is easy to provide a natural
transcendece basis.

\begin{lemma}
  \label{lem:QQtr}
  The field~$\QQtr$ is transcendental over~$\QQ$ with transcendence
  basis~$\{ e^\pi \}$. It may be realized as the fraction field of the
  group ring~$\QQ[\eta_\alpha \mid \alpha \in \QQ] \hookrightarrow \QQtr$.
\end{lemma}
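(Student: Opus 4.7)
The plan is to dispatch the two assertions separately. For transcendence, I would first invoke Gelfond's theorem to see that $e^\pi$ itself is transcendental over $\QQ$: writing $e^\pi=(-1)^{-i}$ and applying Gelfond--Schneider with $a=-1$ and the algebraic irrational $b=-i$ gives transcendence directly. Next, for any $\alpha=p/q\in\QQ$ with $q>0$, the element $e^{\pi\alpha}$ is a root of $x^{q}-(e^\pi)^{p}\in\QQ(e^\pi)[x]$, hence algebraic over $\QQ(e^\pi)$. Since $\QQtr$ is generated by these elements, this shows $\trdeg_\QQ\QQtr=1$ with $\{e^\pi\}$ as transcendence basis.

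For the group ring realization, I would first observe that the map of multiplicative monoids $\QQ\to\QQtr$, $\alpha\mapsto e^{\pi\alpha}$, is a homomorphism from $(\QQ,+)$ into $(\nnz{\QQtr},\cdot)$, and therefore extends by the universal property of the group algebra to a $\QQ$-algebra homomorphism
\[
  \Phi\colon \QQ[\eta_\alpha\mid\alpha\in\QQ]\longrightarrow\QQtr,\qquad
  \eta_\alpha\mapsto e^{\pi\alpha}.
\]
Surjectivity onto the subalgebra $\QQ[e^{\pi\alpha}\mid\alpha\in\QQ]$ is tautological, and $\QQtr$ is by definition the fraction field of this image. So the whole statement reduces to proving that $\Phi$ is injective, equivalently that the family $\{e^{\pi\alpha}\}_{\alpha\in\QQ}$ is $\QQ$-linearly independent.

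The main obstacle, such as it is, consists in this linear independence claim. I would handle it by reduction to a single transcendental: given a finite linear relation $\sum_{i=1}^{n}c_i\,e^{\pi\alpha_i}=0$ with $c_i\in\QQ$ and pairwise distinct $\alpha_i\in\QQ$, pick a common denominator $N$ so that $\alpha_i=m_i/N$ with $m_i\in\ZZ$ distinct. Setting $t:=e^{\pi/N}$ the relation becomes $\sum c_i\,t^{m_i}=0$, which after multiplication by $t^{-\min_i m_i}$ is a polynomial identity in $t$ over $\QQ$. Now $t$ is transcendental over $\QQ$: otherwise $t^N=e^\pi$ would be algebraic, contradicting Gelfond's theorem. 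Hence every $c_i$ vanishes, $\Phi$ is injective, and passing to fraction fields yields the claimed isomorphism
\[
  \operatorname{Frac}\bigl(\QQ[\eta_\alpha\mid\alpha\in\QQ]\bigr)\isomarrow\QQtr.
\]
Note that the group ring is an integral domain since $(\QQ,+)$ is torsion-free abelian, so taking fractions on the source is legitimate.
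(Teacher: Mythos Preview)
Your proposal is correct and follows essentially the same approach as the paper's proof: both establish the transcendence basis by citing Gelfond's result for $e^\pi$ and showing each $e^{\pi\alpha}$ is algebraic over $\QQ(e^\pi)$ via a polynomial of the form $x^q-(e^\pi)^p$, and both prove injectivity of the group-ring map by clearing denominators in the exponents to reduce a putative linear relation to a polynomial identity in the single transcendental $e^{\pi/N}$. The only cosmetic difference is that you justify the integrality of the group ring via torsion-freeness of $(\QQ,+)$, whereas the paper deduces it from the embedding into the field $\QQtr$.
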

\begin{proof}
  For showing that~$\{ e^\pi \}$ is a transcendence basis, we must
  show that~$e^\pi$ is transcendental and that~$\QQtr$ is algebraic
  over~$K := \QQ(e^\pi)$. The transcendence of Gelfond's
  constant~$e^\pi$ is well-known~\cite[\S2.1]{Baker1975}, so we need
  only prove the second claim. It suffices to check that every
  generator~$e^{\pi\alpha} \: (\alpha \in \QQ)$ of~$\QQtr$ is
  algebraic over~$K$. Writing~$\alpha = m/n$ with~$m \in \ZZ$
  and~$n \in \NN_{>0}$, we note first that~$\eta := e^{\pi/n}$ is
  algebraic over~$K$ since it is annihilated
  by~$x^n - e^\pi \in K[x]$. But then~$e^{\pi\alpha} = \eta^m$ is
  algebraic over~$K$ as well since algebraic elements are closed under
  field operations.

  Now let~$R := \QQ[\eta_\alpha \mid \alpha \in \QQ]$ be the group
  ring having~$\QQ$ both as a coefficient field and as the underlying
  (additive) group. We show that the $\QQ$-linear
  map~$\iota\colon R \to \QQtr, \eta_\alpha \mapsto e^{\pi\alpha}$ is
  injective. Hence assume
  \begin{equation*}
    \lambda_0 \, e^{\pi\alpha_0} + \cdots + \lambda_n \, e^{\pi\alpha_n} = 0
  \end{equation*}
  for coefficients~$\lambda_0, \dots, \lambda_n \in \nnz{\QQ}$ and
  exponents~$\alpha_0, \cdots, \alpha_n \in \QQ$. We write the latter
  with common denominator~$N \in \NN_{>0}$ and with the
  numerators~$k_0 < \dots < k_n$ so that we
  have~$\lambda_0 \, \eta^{k_0} + \cdots + \lambda_n \, \eta^{k_n} =
  0$ for $\eta := e^{\pi/N}$. Multiplying this equation by~$\eta^{-k}$
  for~$k := \min(k_1, \dots, k_n)$, we may ensure
  that~$0 = k_0 < \cdots < k_n$. But then~$\eta$ is annihilated by
  $\lambda_0 + \lambda_1 x + \cdots + \lambda_n x^n \in \QQ[x]$, so
  that~$\eta$ and hence~$e^\pi = \eta^N$ is algebraic, contradicting
  Gelfond's result. We conclude that~$\iota$ is indeed injective,
  so~$R$ is an embedded $\QQ$-subalgebra of the field~$\QQtr$ and thus
  in particular an integral domain. So we may form the fraction
  field~$\bar{R}$ of~$R$, and we obtain the extended
  map~$\bar{\iota}\colon \bar{R} \to \QQtr$
  since~$0 \not\in \iota(\nnz{R})$;
  confer~\cite[Thm.~III.4.5]{Hungerford1980}. Being a homomorphism of
  fields, $\bar{\iota}$ is clearly injective as well. But it is also
  surjective since we have~$e^{\pi\alpha} = \bar{\iota}(\eta_\alpha)$
  for each generator of the
  field~$\QQtr = \QQ(e^{\pi\alpha} \mid \alpha \in \QQ)$.
\end{proof}

The \emph{group ring} mentioned in the above lemma (as well as its
image in~$\QQtr$) shall be written as~$\QQ[\QQ, +]$. The lemma also
points the way to generalizing this kind of extension: For arbitrary
fields~$K, L$ of characteristic zero, one may introduce the group
ring~$K[L, +]$ with coefficients in~$K$ over the additive
group~$(L, +)$. Since the latter is always torsionfree and
cancellative, one may infer~\cite[Prop.~4.20(b)]{BrunsGubeladze2009}
that the group ring~$K[L, +]$ is an integral domain. Choosing in
particular~$K = L$, we can introduce~$\Ktr$ as the fraction field
of~$K[K, +]$, an intrinsic \emph{transcendental extension} by a
``saturated'' set of exponential-like generators. The special case
of~$K = \QQ$ is recovered via the identification mentioned in
Lemma~\ref{lem:QQtr}. In this case, $\QQtr$ is an ordered subfield
of~$\RR$ and hence formally real.

\begin{figure}
  \centering\vspace{1.6em}
  $\xymatrix @M=0.5pc @R=1pc @C=2pc {
    & \QQ^\pi \ar@{-}_0[dl] \ar@{-}^1[ddr]\\
    \QQtr \ar@{-}_0[dd]\\
    && \QQab \ar@{-}^0[ddl]\\
    \QQ(e^\pi) \ar@{-}_1[dr]\\
    & \QQ}$
  \medskip
  \caption{Subfields of the Gelfond Field}
  \label{fig:field-extns}
\end{figure}
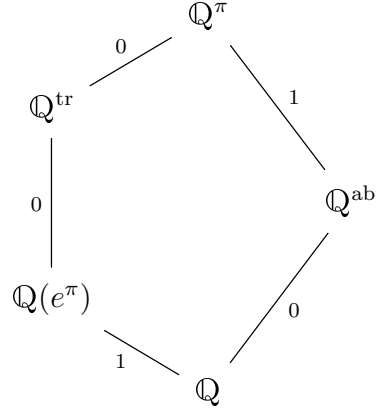

See Figure~\ref{fig:field-extns} for an \emph{extension diagram} of
the fields~$\QQtr$ and~$\QQab$. The edges of this diagram are labelled
by transcendence degrees (zero means algebraic extension): On the left
branch we have $\trdeg\big(\QQ(e^\pi)/\QQ\big) = 1$ by Gelfond's
result, $\trdeg\big(\QQtr/\QQ(e^\pi)\big) = 0$ by the proof of
Lemma~\ref{lem:QQtr}, and $\trdeg(\QQ^\pi/\QQtr) = 0$ since~$\QQ^\pi$
arises from~$\QQtr$ by adjoing the algebraic elements~$\QQab$. For the
right branch, we have~$\trdeg(\QQab/\QQ) = 0$ since the maximal
abelian extension~\eqref{eq:maxabext} is algebraic, and this forces
$\trdeg(\QQ^\pi/\QQab) = 1$ because the transcendence degrees must add
up to~$1$. With this background knowledge, we can now establish the
relationship between the subfields~$\QQtr$ and~$\QQab$ of the Gelfond
field~$\QQ^\pi$, namely that there is essentially no interaction
between them.

\begin{lemma}
  \label{lem:QQtr-QQab}
  The $\QQ$-algebras~$\QQ[\QQ, +]$ and~$\QQab$ are linearly disjoint.
\end{lemma}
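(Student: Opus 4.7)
The plan is to unpack linear disjointness as a concrete independence statement and then reduce it to Gelfond's transcendence of $e^\pi$. Recall that two $\QQ$-subalgebras $A,B$ of a field $F$ are linearly disjoint over $\QQ$ iff the multiplication map $A\otimes_\QQ B\to F$ is injective, and equivalently iff some $\QQ$-basis of $A$ remains $B$-linearly independent in $F$. I will take the obvious $\QQ$-basis $\{\eta_\alpha\mid\alpha\in\QQ\}$ of $\QQ[\QQ,+]$, which by Lemma~\ref{lem:QQtr} embeds into $\QQ^\pi$ as the family $\{e^{\pi\alpha}\mid\alpha\in\QQ\}\subset\QQtr$, and show this family is $\QQab$-linearly independent in $\QQ^\pi$.

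To this end I will assume, for contradiction, a nontrivial finite relation
\begin{equation*}
  b_0\,e^{\pi\alpha_0}+b_1\,e^{\pi\alpha_1}+\cdots+b_n\,e^{\pi\alpha_n}=0
\end{equation*}
with distinct $\alpha_i\in\QQ$ and coefficients $b_i\in\QQab$ not all zero. Exactly as in the proof of Lemma~\ref{lem:QQtr}, clear denominators by writing $\alpha_i=k_i/N$ with a common $N\in\NN_{>0}$ and ordered numerators $k_0<k_1<\cdots<k_n$, and set $\eta:=e^{\pi/N}$. Multiplying by $\eta^{-k_0}$ (a unit) one obtains a nontrivial polynomial relation
\begin{equation*}
  b_0+b_1\,\eta^{k_1-k_0}+\cdots+b_n\,\eta^{k_n-k_0}=0
\end{equation*}
with $b_i\in\QQab$, exhibiting $\eta$ as a root of a nonzero polynomial in $\QQab[x]$. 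Hence $\eta$ is algebraic over $\QQab$, and therefore so is $e^\pi=\eta^N$.

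The contradiction then comes by transitivity of algebraicity: since $\QQab/\QQ$ is algebraic (being the union \eqref{eq:maxabext} of cyclotomic fields), $e^\pi$ would be algebraic over $\QQ$, violating Gelfond's theorem already invoked in Lemma~\ref{lem:QQtr}. Thus no nontrivial such relation exists, establishing that the chosen basis of $\QQ[\QQ,+]$ remains $\QQab$-linearly independent and yielding the claimed linear disjointness.

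The only mildly delicate point is the bookkeeping that ensures the polynomial $\sum_i b_i x^{k_i-k_0}$ is genuinely nonzero in $\QQab[x]$; this is immediate once the $k_i$ are taken distinct and at least one $b_i$ is nonzero, so no real obstacle arises. Everything else is the standard reformulation of linear disjointness together with the transcendence degree information already encoded in Figure~\ref{fig:field-extns}.
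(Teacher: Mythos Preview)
Your proof is correct and follows essentially the same approach as the paper: reformulate linear disjointness as $\QQab$-linear independence of the basis $\{e^{\pi\alpha}\mid\alpha\in\QQ\}$, reduce a putative dependence to $e^\pi$ being algebraic over $\QQab$, and derive a contradiction. The only cosmetic difference is in the final step: the paper concludes via the transcendence degree count $\trdeg(\QQ^\pi/\QQab)=1$ from Figure~\ref{fig:field-extns}, whereas you invoke transitivity of algebraicity to contradict Gelfond directly---a slightly more elementary route to the same end.
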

\begin{proof}
  According to~\cite[Prop.~11.6.1]{Cohn2003}, it suffices to prove
  that the
  canonical~$\QQ$-basis~$\{ e^{\pi\alpha} \mid \alpha \in \QQ \}$
  of~$\QQ[\QQ, +]$ is also linearly independent over~$\QQab$. Assuming
  otherwise, we may proceed as in the proof of Lemma~\ref{lem:QQtr} to
  obtain a relation of the
  form~$\lambda_0 + \lambda_1 \, \eta + \cdots + \lambda_n \, \eta^n =
  0$ with~$\eta = e^{\pi/N}$ and~$N \in \NN_{>0}$, but with
  coefficients~$\lambda_0, \dots, \lambda_n \in \QQab$. This implies
  that~$\eta$ and hence~$e^\pi = \eta^N$ is algebraic
  over~$\QQab$. Since~$\QQ^\pi$ has~$\{ e^\pi \}$ for a transcendence
  basis, the whole Gelfond field~$\QQ^\pi$ is then algebraic
  over~$\QQab$, contradicting the extension relations established
  above (Figure~\ref{fig:field-extns}).
\end{proof}

From the results of the preceding lemmata, we can now provide a fairly
explicit description of the Gelfond field in terms of a tensor
product.

\begin{proposition}
  \label{prop:gelfond-fracfield}
  Up to isomorphism, the Gelfond field $\QQ^\pi$ is given as the
  fraction field of~$\QQab[\QQ, +]$.
\end{proposition}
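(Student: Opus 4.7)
The plan is to mimic the construction of $\QQtr$ as the fraction field of $\QQ[\QQ,+]$ from Lemma~\ref{lem:QQtr}, but now with $\QQab$ replacing $\QQ$ as the ring of coefficients. To begin, I observe that $\QQab[\QQ,+]$ is an integral domain: it is the group ring of a torsion-free cancellative abelian group over an integral domain (in fact a field), to which \cite[Prop.~4.20(b)]{BrunsGubeladze2009} applies. Consequently, its field of fractions $F$ exists.

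Next I would construct a ring homomorphism $\iota\colon \QQab[\QQ,+] \to \QQ^\pi$ by declaring it to be the identity on the coefficient field $\QQab \subseteq \QQ^\pi$ and by sending the canonical generator $\eta_\alpha$ to $e^{\pi\alpha}$ for each $\alpha \in \QQ$. The key step of the proof is to show $\iota$ is injective. An element of the kernel has the form $\sum_{j} \lambda_j \, \eta_{\alpha_j}$ with $\lambda_j \in \QQab$ and pairwise distinct $\alpha_j \in \QQ$, giving the relation $\sum_j \lambda_j \, e^{\pi\alpha_j} = 0$ in $\QQ^\pi$. This says precisely that the canonical $\QQ$-basis $\{e^{\pi\alpha} \mid \alpha \in \QQ\}$ of $\QQ[\QQ,+] \hookrightarrow \QQtr$ is not $\QQab$-linearly independent, contradicting Lemma~\ref{lem:QQtr-QQab}. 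Hence $\iota$ is injective, and since $0 \notin \iota(\nnz{\QQab[\QQ,+]})$, it extends to an injective field homomorphism $\bar\iota\colon F \to \QQ^\pi$ by the universal property of localization (cf.\ \cite[Thm.~III.4.5]{Hungerford1980}).

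It remains to verify that $\bar\iota$ is surjective. The image contains all $e^{\pi\alpha} = \bar\iota(\eta_\alpha)$ for $\alpha \in \QQ$ and all elements of $\QQab$, in particular all $e^{i\pi\beta}$ for $\beta \in \QQ$. Every generator of $\QQ^\pi$ has the form $e^{\pi\xi}$ with $\xi = \alpha + i\beta \in \QQ(i)$, and splitting gives $e^{\pi\xi} = e^{\pi\alpha} \cdot e^{i\pi\beta}$, which therefore lies in $\im(\bar\iota)$. Since $\im(\bar\iota)$ is a subfield of $\QQ^\pi$ containing the generating set of $\QQ^\pi$, we conclude $\bar\iota(F) = \QQ^\pi$, and $\bar\iota$ is the required isomorphism.

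The main obstacle is the injectivity of $\iota$; everything else is routine verification. However, the obstruction dissolves immediately once Lemma~\ref{lem:QQtr-QQab} is invoked, so the real work for this proposition has already been done in establishing the linear disjointness of $\QQ[\QQ,+]$ and $\QQab$ over $\QQ$.
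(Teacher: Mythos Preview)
Your proof is correct and follows essentially the same approach as the paper: both hinge on Lemma~\ref{lem:QQtr-QQab} (linear disjointness of $\QQ[\QQ,+]$ and $\QQab$) to identify $\QQab[\QQ,+]$ with a subring of $\QQ^\pi$, and both verify that the generators $e^{\pi\xi}=e^{\pi\alpha}e^{i\pi\beta}$ lie in this subring. The only cosmetic difference is that the paper invokes the tensor-product characterization of linear disjointness (\cite[Prop.~5.4.2]{Cohn2003}) to write $\QQab[\QQ,+]\cong\QQ[\QQ,+]\otimes\QQab\cong A$ for the subalgebra $A$ generated by both factors inside $\QQ^\pi$, whereas you construct the embedding $\iota$ by hand and check injectivity directly from the basis-independence formulation of linear disjointness; the content is the same.
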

\begin{proof}
  We have~$\QQab[\QQ, +] \cong \QQ[\QQ, +] \otimes \QQab$
  as~$\QQ$-algebras, and this tensor product is in turn
  isomorphic~\cite[Prop.~5.4.2]{Cohn2003} to the $\QQ$-algebra~$A$
  generated by~$\QQ[\QQ, +]$ and~$\QQab$. Hence it suffices to show
  that~$\QQ^\pi$ is the fraction field~$\bar{A}$ of~$A$. It is clear
  that~$\bar{A} \le \QQ^\pi$ since~$\QQ[\QQ, +], \QQab \le
  \QQ^\pi$. For the converse, it suffices to check that each
  generator~$e^{\pi\xi} \: \big(\xi \in \QQ(i) \big)$ is contained
  in~$A$, which is obvious since~$\xi = \alpha + \beta i$
  with~$\alpha, \beta \in \QQ$ so
  that~$e^{\pi\xi} = e^{\pi\alpha} e^{i\pi\beta}$ with
  $e^{\pi\alpha} \in \QQ[\QQ, +]$ and $e^{i\pi\beta} \in \QQab$.
\end{proof}

For obtaining a fully algorithmic description, we have to provide
canonical forms. This is possible by using \emph{direct limits} with
respect to inclusions, similar to the maximal abelian
extension~\eqref{eq:maxabext}. In the latter case, basic
theory~\cite[p.~187]{AlacaWilliams2003} suggests restricting to the
cyclotomic fields~$\QQ(\zeta_n)$ with~$n \not\equiv 2 \pmod 4$ for
avoiding duplication, and then $\QQ(\zeta_n) \le \QQ(\zeta_{n'})$
holds true iff $n | n'$. In the case of the transcendental extension,
we have
\begin{equation}
  \label{eq:trext}
  \QQtr = \bigcup_{n \in \NN_{>0}} \QQ(e^{\pi/n})
\end{equation}
on the set-theoretic level, which may again be interpreted as a direct
limit as follows.

\begin{lemma}
  \label{lem:transc-incl}
  We have a direct limit~\eqref{eq:trext} with respect to the inclusions
  $\QQ(e^{\pi/n}) \le \QQ(e^{\pi/n'})$, which hold true iff~$n | n'$. In fact, for any algebraic
  subextension~$\QQ \le K \le \QQ^\pi$, one has~$K(e^{\pi/n}) \le K(e^{\pi/n'})$ iff~$n | n'$.
\end{lemma}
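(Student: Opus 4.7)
Plan: I will establish the biconditional for the more general statement on $K(e^{\pi/n}) \le K(e^{\pi/n'})$ with $K$ algebraic over $\QQ$; specialising to $K = \QQ$ then recovers the first claim. The direct-limit structure of~\eqref{eq:trext} follows because divisibility directs $\NN_{>0}$ (any pair $n,m$ is bounded by $\operatorname{lcm}(n,m)$), while the set-theoretic equality in~\eqref{eq:trext} is immediate since every generator $e^{\pi k/n} = (e^{\pi/n})^{k}$ of $\QQtr$ lies inside some $\QQ(e^{\pi/n})$.

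The forward implication is routine: if $n' = qn$, then $e^{\pi/n} = (e^{\pi/n'})^{q} \in K(e^{\pi/n'})$.

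For the converse, set $\eta := e^{\pi/n'}$. Since $K$ is algebraic over $\QQ$, any element algebraic over $K$ is algebraic over $\QQ$; as $\eta^{n'} = e^\pi$ is transcendental over $\QQ$ by Gelfond's theorem (already invoked in Lemma~\ref{lem:QQtr}), the element $\eta$ is transcendental over $K$. Hence $K[\eta] \cong K[X]$ is a polynomial ring, and $K(\eta)$ is its field of fractions. Assuming $e^{\pi/n} \in K(\eta)$, write $e^{\pi/n} = P(\eta)/Q(\eta)$ with coprime $P, Q \in K[X]$. Raising to the $n$-th power and substituting $e^\pi = \eta^{n'}$ gives, by transcendence of $\eta$, the polynomial identity
\begin{equation*}
  P(X)^n = X^{n'}\, Q(X)^n
\end{equation*}
in the UFD $K[X]$. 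Since $\gcd(P,Q) = 1$ implies $\gcd(P^n, Q^n) = 1$, the divisibility $Q^n \mid P^n$ forces $Q$ to be a unit. Absorbing it into a constant $c := Q^n \in K^\times$ gives $P(X)^n = c\, X^{n'}$, and unique factorisation in $K[X]$ then forces $P(X) = u\, X^{m}$ with $u^n = c$ and $n m = n'$. Thus $n \mid n'$, as required.

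The only subtle point in this plan is the transcendence of $\eta$ over the algebraic base $K$, but that reduces cleanly to Gelfond's transcendence result as above---there is no need to identify the algebraic closure of $\QQ$ inside $\QQ^\pi$ explicitly, nor to invoke the structural description of $\QQ^\pi$ from Proposition~\ref{prop:gelfond-fracfield}. The rest is pure UFD arithmetic in the polynomial ring $K[X]$, which I expect to go through without further complication.
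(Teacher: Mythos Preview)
Your proof is correct and follows essentially the same strategy as the paper: reduce the inclusion $e^{\pi/n}\in K(e^{\pi/n'})$ to a polynomial identity using that the relevant exponential is transcendental over~$K$, then extract the divisibility~$n\mid n'$. The only cosmetic difference is that the paper introduces $\eta=e^{\pi/nn'}$ so that both $e^{\pi/n}=\eta^{n'}$ and $e^{\pi/n'}=\eta^{n}$ are monomials in~$\eta$, obtaining $q(x^n)\,x^{n'}=p(x^n)$ and concluding by a one-line degree count, whereas you work with $\eta=e^{\pi/n'}$, raise to the $n$-th power, and finish with a short UFD argument; both endgames are equally valid. Your treatment of the direct-limit claim (directed union over the divisibility poset) is terser than the paper's explicit verification of the universal property but is a standard fact and suffices.
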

\begin{proof}
  The implication from right to left is clear. Hence let us assume $e^{\pi/n} \in K(e^{\pi/n'})$ to
  show~$n | n'$. Then we have~$e^{\pi/n} = r(e^{\pi/n'})$ for some rational
  function~$r(x) = p(x)/q(x) \in K(x)$ with~$p(x)$ and~$q(x)$ coprime.
  Setting~$\eta := e^{\pi/nn'}$, this is equivalent to~$q(\eta^n) \, \eta^{n'} = p(\eta^n)$.
  Since~$\eta$ is transcendental over~$K$, the evaluation
  homomorphism~$K(x) \twoheadrightarrow K(\eta)$, $x \mapsto \eta$ is injective so that we have
  also~$q(x^n) \, x^{n'} = p(x^n)$. Taking degrees, we obtain~$n \, \deg(q) + n' = n \, \deg(p)$ and
  hence~$n | n'$.

  For showing that the corresponding direct limit is indeed the same as the union~\eqref{eq:trext},
  it suffices to verify the following universal property: Given a family of
  homomorphisms~$\big( f_n\colon \QQ(e^{\pi/n}) \to K \big)_{n \in \NN}$ to an arbitrary field~$K$
  with the coherence constraints~$f_{n'} |_{\QQ(e^{\pi/n})} = f_n$ whenever~$n | n'$, there is a
  unique homomorphism~$\lambda\colon \QQtr \to K$ such that~$f_n = \lambda|_{\QQ(e^{\pi/n})}$. The
  latter condition determines~$\lambda$ on the subalgebra~$\QQ[\QQ, +] \le \QQtr$
  as~$\lambda(\eta^{\pi\alpha}) = f_n(\eta^{m/n})$, where we have set~$\eta := e^{\pi/n}$ and
  where~$\alpha = m/n \in \QQ$ is written in terms of~$m \in \ZZ$ and~$n \in \NN_{>0}$. This is
  well-defined since~$m/n = m'/n'$ implies
  \begin{equation*}
    f_n(\eta^{m/n}) = f_{nn'}(\eta^{mn'/nn'}) = f_{nn'}(\eta^{m'n/nn'}) =
    f_{n'}(\eta^{m'/n'})
  \end{equation*}
  by the coherence constraints, hence also~$\lambda(\eta^{m/n}) = \lambda(\eta^{m'/n'})$ as
  required. But then~$\lambda$ is also determined on~$\QQtr$ since this is the fraction field
  of~$\QQ[\QQ, +]$ by Lemma~\ref{lem:QQtr} so that~$\lambda(p/q) = \lambda(p)/\lambda(q)$ for
  any fraction~$p/q \in \QQtr$ with~$p, q \in \QQ[\QQ, +]$ and~$q \neq 0$. Thus we have established
  existence and uniqueness of the homorphism~$\lambda\colon \QQtr \to K$.
\end{proof}

We may now put together the two direct limits into a single one, which correspondingly has the form
\begin{equation}
  \label{eq:gelfond-dirlim}
  \QQ^\pi = \bigcup_{m \in \NN_{>0}} \bigcup_{n \in \NN_{>0}} \QQ(e^{i\pi/m}, e^{\pi/n})
\end{equation}
with the respect to the expected natural inclusion maps.

\begin{proposition}
  \label{prop:gelfond-dirlim}
  We have the direct limit~\eqref{eq:gelfond-dirlim} with respect to the
  inclusions~$\QQ(e^{i\pi/m}, e^{\pi/n}) \le \QQ(e^{i\pi/m'}, e^{\pi/n'})$, which hold true iff~$m |
  m'$ and~$n | n'$.
\end{proposition}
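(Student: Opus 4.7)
The plan is to verify two things, mirroring the two assertions of the proposition: (a) the characterization of inclusions among the bifiltered subfields $\QQ(e^{i\pi/m}, e^{\pi/n})$, and (b) the universal property that turns the resulting filtered system into a direct limit with colimit $\QQ^\pi$. The "if" direction of (a) is immediate since $e^{i\pi/m} \in \QQ(e^{i\pi/m'})$ when $m | m'$ and likewise on the transcendental side. For the "only if" direction, I will split the argument into the algebraic and transcendental contributions.

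Assume $\QQ(e^{i\pi/m}, e^{\pi/n}) \le \QQ(e^{i\pi/m'}, e^{\pi/n'})$. First I would show $m | m'$. Since $e^{i\pi/m}$ is algebraic over $\QQ$, it is in particular algebraic over $\QQ(e^{i\pi/m'})$. The key intermediate fact is that $e^{\pi/n'}$ is transcendental over $\QQ(e^{i\pi/m'})$: if it were algebraic, then $e^\pi = (e^{\pi/n'})^{n'}$ would be algebraic over $\QQab$, contradicting Lemma~\ref{lem:QQtr-QQab} (equivalently, the transcendence degree $\trdeg(\QQ^\pi/\QQab)=1$ displayed in Figure~\ref{fig:field-extns}). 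Hence $\QQ(e^{i\pi/m'}, e^{\pi/n'}) = \QQ(e^{i\pi/m'})(e^{\pi/n'})$ is a simple purely transcendental extension of $\QQ(e^{i\pi/m'})$. By the standard fact that the algebraic closure of the base field in a simple transcendental extension is the base field itself, we conclude $e^{i\pi/m} \in \QQ(e^{i\pi/m'})$. Since $\QQ(e^{i\pi/k}) = \QQ(\zeta_{2k})$, cyclotomic theory (the containment $\QQ(\zeta_{2m}) \le \QQ(\zeta_{2m'})$ iff $2m | 2m'$) forces $m | m'$. For the bound $n | n'$, once $m | m'$ is known, the given inclusion may be rewritten as $K(e^{\pi/n}) \le K(e^{\pi/n'})$ with the algebraic subextension $K := \QQ(e^{i\pi/m'}) \le \QQ^\pi$, and the second assertion of Lemma~\ref{lem:transc-incl} yields $n | n'$ directly.

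For (b), the set-theoretic equality $\QQ^\pi = \bigcup_{m,n} \QQ(e^{i\pi/m}, e^{\pi/n})$ is straightforward: any generator $e^{\pi\xi}$ with $\xi = \alpha + i\beta \in \QQ(i)$ factors as $e^{\pi\alpha} \, e^{i\pi\beta}$, which lies in $\QQ(e^{i\pi/m}, e^{\pi/n})$ once $m\beta, n\alpha \in \ZZ$. The universal property is proved in the same style as in Lemma~\ref{lem:transc-incl}: given a coherent family of homomorphisms $f_{m,n}\colon \QQ(e^{i\pi/m}, e^{\pi/n}) \to K$, one defines a candidate map $\lambda$ on the subring $\QQab[\QQ,+] \le \QQ^\pi$ of Proposition~\ref{prop:gelfond-fracfield} by $\lambda(\zeta \, \eta_\alpha) := f_{m,n}(\zeta \, e^{\pi\alpha})$ for $\zeta \in \QQ(e^{i\pi/m})$ and $\alpha = k/n \in \QQ$; independence of $(m,n)$ follows from the coherence conditions exactly as in the one-parameter argument. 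One then extends $\lambda$ uniquely to the fraction field $\QQ^\pi$ via $\lambda(p/q) = \lambda(p)/\lambda(q)$.

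The only genuinely nontrivial step is the transcendence of $e^{\pi/n'}$ over $\QQ(e^{i\pi/m'})$, which reduces by $n'$-th powers to transcendence of $e^\pi$ over $\QQab$. This rests on Gelfond's theorem as packaged in Lemma~\ref{lem:QQtr-QQab}; everything else is bookkeeping with simple transcendental extensions, cyclotomic inclusions, and the fraction-field description of Proposition~\ref{prop:gelfond-fracfield}.
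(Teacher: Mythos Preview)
Your proof is correct and follows essentially the same strategy as the paper's. Both arguments establish that $e^{\pi/n'}$ is transcendental over $\QQ(e^{i\pi/m'})$ and then use the fact that algebraic elements of a simple transcendental extension $K(t)$ already lie in $K$; the paper reaches the transcendence via linear disjointness while you go directly through the transcendence degree, and for the universal property the paper writes elements as rational functions in the generators rather than passing through $\QQab[\QQ,+]$, but these are cosmetic differences.
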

\begin{proof}
  The implication from right to left is again clear, so assume the
  inclusion~$\QQ(e^{i\pi/m}, e^{\pi/n}) \le \QQ(e^{i\pi/m'}, e^{\pi/n'})$. Since linear disjointness
  is preserved under subalgebras~\cite[\S11.6]{Cohn2003}, we see that the group
  ring~$\QQ[e^{\pi/n}] \le \QQ[\QQ, +]$ and the field~$\QQ(e^{i\pi/m}) \le \QQab$ are also linearly
  disjoint. As in Proposition~\ref{prop:gelfond-fracfield}, it follows
  that~$\QQ(e^{i\pi/m'}, e^{\pi/n'})$ is the fraction field of the
  $\QQ$-algebra~$A := \QQ(e^{i\pi/m'})[e^{\pi/n'}]$, which clearly has
  $\QQ(e^{i\pi/m'})$-basis~$\{ e^{(k/n')\pi} \mid k \in \ZZ\}$.

  Now consider~$e^{i\pi/m} \in \QQ(e^{i\pi/m'}, e^{\pi/n'})$.
  Any~$e^{i\pi/m} \not\in \QQ(e^{i\pi/m'})$ would be a nonconstant element of the transcendental
  extension~$K(e^{\pi/n'})$ of the field~$K := \QQ(e^{i\pi/m'})$, which
  implies~\cite[p.~217]{Waerden2003a} that~$e^{i\pi/m}$ is trans\-cendental over~$K$ and hence
  over~$\QQ$; but~$e^{i\pi/m}$ is in fact algebraic. Hence we see
  that~$e^{i\pi/m} \in \QQ(e^{i\pi/m'})$, which forces~$m | m'$ by the observations
  before~\eqref{eq:trext}. We have also~$e^{\pi/n} \in \QQ(e^{i\pi/m'}, e^{\pi/n'})$, which
  implies~$K(e^{\pi/n}) \le K(e^{\pi/n'})$ for~$K := \QQ(e^{i\pi/m'})$. By
  Lemma~\ref{lem:transc-incl}, this yields~$n | n'$.

  The proof of the direct limit statement~\eqref{eq:gelfond-dirlim}
  proceeds as for Lemma~\ref{lem:transc-incl}. Hence
  let~$\big( f_{m,n}\colon \QQ(e^{i\pi/m}, e^{\pi/n}) \to K \big)_{m,
    n \in \NN}$ be any family of homomorphisms to some field~$K$,
  satisfying the corresponding coherence
  constraints~$f_{m',n'} |_{\QQ(e^{i\pi/m}, e^{\pi/n})} = f_{m,n}$ in
  case of~$m|m'$ and~$n|n'$. We must show that there is a unique
  homomorphism~$\lambda\colon \QQ^\pi \to K$ such
  that~$f_{m,n} = \lambda |_{\QQ(e^{i\pi/m}, e^{\pi/n})}$. By its
  definition~\eqref{eq:Qpi}, the Gelfond field~$\QQ^\pi$ is the
  rational function field in the transcendental
  generators~$e^{\pi\xi}$ with $\xi = \alpha + \beta i$
  and~$\alpha, \beta \in \QQ$. Therefore any~$a \in \QQ^\pi$ can be
  written as~$a = r(e^{\pi\xi_1}, \dots, e^{\pi\xi_N})$ for a rational
  function~$r \in \QQ(x_1, \dots, x_N)$
  and~$\xi_j = \tfrac{k_j}{n_j} + \tfrac{l_j}{m_j} \, i \; (j = 1,
  \dots, N)$; in this case we set
  \begin{equation*}
    \lambda(a) := r\big(f_{m_1,n_1}(e^{\pi\xi_1}), \dots, f_{m_N,
      n_N}(e^{\pi\xi_N})\big) .
  \end{equation*}
  Note that~$\lambda$ is well-defined thanks to the coherence
  constraints on the~$f_{m,n}$. By its construction, it is also clear
  that~$\lambda$ satisfies the required restriction
  property~$\lambda |_{\QQ(e^{i\pi/m}, e^{\pi/n})} =
  f_{m,n}$. Moreover, $\lambda$ is uniquely determined by this
  condition since~$\QQ^\pi$ is covered (set-theoretically) by the
  component fields~$\QQ(e^{i\pi/m}, e^{\pi/n})$. We have thus
  established the universal property, and~\eqref{eq:gelfond-dirlim} is
  indeed a direct limit.
\end{proof}

The \emph{practical significance} of
Proposition~\ref{prop:gelfond-dirlim} is the following. Any term~$T$
denoting an element in~$\QQ^\pi$ can be located in some component
field~$K_{m,n} := \QQ(e^{i\pi/m}, e^{\pi/n})$, and all possible
choices of~$m$ are multiples of each other; likewise for the choices
of~$n$. Choosing the minimal~$m_0$ and~$n_0$, we can thus rewrite the
given term in the form of a rational function~$T'$ in
only~$e^{i\pi/m_0}$ and~$e^{\pi/n_0}$. The
transformation~$T \mapsto T'$ is then clearly a \emph{canonical
  simplifier}~\cite{BuchbergerLoos1982}, provided we have a canonical
simplifier for the fields~$K_{m,n}$.

Canonical simplifiers on~$K_{m,n}$ are readily available: As we have
seen in the proof of Proposition~\ref{prop:gelfond-dirlim}, each
field~$K_{m,n}$ is the fraction field of the Laurent polynomial
ring~$K_m[\eta, \eta^{-1}]$ in the indeterminate~$\eta := e^{\pi/n}$
over the coefficient field~$K_m = \QQ(\zeta)$,
where~$\zeta := e^{i\pi/m}$ is the $m$-th standard primitive root of
unity. Arithmetic is~$K_m$ is straightforward since it is an algebraic
extension of~$\QQ$ with basis~$\{ 1, \zeta, \dots, \zeta^{d-1} \}$ and
dimension given by the Euler totient function as~$d :=
\Phi(n)$. Canonical forms in the Laurent polynomial
ring~$K_m[\eta, \eta^{-1}]$ can be achieved e.g.\@ by
expanding/reducing fractions (multiplying by a coefficient from~$K_m$
and a suitable power of~$\eta$) such that numerator and denomoninator
are polynomials in~$\eta$ with minimial degree, and such that the
denominator is monic. Computing field operations with any
representatives and subsequent reduction to canonical form
establishes~\cite[p.~13]{BuchbergerLoos1982} that~$K_{m,n}$ and thus
also~$\QQ^\pi$ is an \emph{effective field}.

As we shall see below (Lemma~\ref{lem:rational-gauss-gen}), the Gelfond field~$\QQ^\pi$ is indeed
sufficient for the basic operations of the normal Fourier singlet~$\bar{\Gau}\inner{\RR}{\RR}$. For
the additional action of the Weyl algebra, however, we shall need the \emph{extended Gelfond
  field}~$\QQ^\pi(\pi)$ in~\eqref{eq:gaussian-weyl-closure} since powers of~$\pi$ are cropping up
when differentiating gaussians. Hence we need to ensure that we may still treat~$\pi$ as a
transcendental indeterminate when computing over~$\QQ^\pi$. Fortunately, this is the case.

\begin{proposition}
  \label{prop:gelf-with-pi}
  The number~$\pi$ is transcendental over~$\QQ^\pi$.
\end{proposition}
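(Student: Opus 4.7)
The plan is to reduce the claim to the algebraic independence of $\pi$ and $e^\pi$ over $\QQ$ (a theorem of Nesterenko from 1996, itself a consequence of his algebraic independence theorem for values of modular forms). Since the whole section has been emphasizing that $\{e^\pi\}$ is a transcendence basis of $\QQ^\pi$ over $\QQ$ (see Figure~\ref{fig:field-extns} and Lemma~\ref{lem:QQtr}), the extended Gelfond field cannot ``secretly'' swallow $\pi$ without forcing an algebraic relation between $\pi$ and $e^\pi$.

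First I would argue by contradiction: assume $\pi$ is algebraic over $\QQ^\pi$. Then $\QQ^\pi(\pi)$ is an algebraic extension of $\QQ^\pi$, so by the additivity of transcendence degree one has
\begin{equation*}
  \trdeg\bigl(\QQ^\pi(\pi)/\QQ\bigr) = \trdeg(\QQ^\pi/\QQ) = 1,
\end{equation*}
the last equality being the content of the left branch of Figure~\ref{fig:field-extns} (Gelfond's transcendence of $e^\pi$ together with Lemma~\ref{lem:QQtr} and the algebraicity of $\QQab$ over $\QQ$). In particular, the intermediate field $\QQ(\pi, e^\pi) \subseteq \QQ^\pi(\pi)$ satisfies $\trdeg(\QQ(\pi, e^\pi)/\QQ) \le 1$, so $\pi$ and $e^\pi$ are algebraically dependent over~$\QQ$.

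The second step is to invoke Nesterenko's theorem, which asserts that $\pi$ and $e^\pi$ (in fact even $\pi$, $e^\pi$ and $\Gamma(1/4)$) are algebraically independent over $\QQ$. This directly contradicts the conclusion drawn in the previous paragraph, completing the proof.

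The ``hard part'' is imported wholesale from transcendence theory: algebraic independence of $\pi$ and $e^\pi$ is deep (a consequence of Nesterenko's 1996 result on values of Ramanujan's Eisenstein-like series $P, Q, R$ at $q = e^{-2\pi}$), and no elementary substitute seems to be known. Once this black box is accepted, the remainder of the argument is purely formal, combining additivity of transcendence degree with the transcendence-degree analysis of $\QQ^\pi/\QQ$ already carried out in this subsection; no further structural input about $\QQtr$ or $\QQab$ is required.
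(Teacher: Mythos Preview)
Your proof is correct and follows essentially the same approach as the paper: assume $\pi$ algebraic over $\QQ^\pi$, use that $\{e^\pi\}$ is a transcendence basis of $\QQ^\pi$ over $\QQ$ to force $\{\pi, e^\pi\}$ algebraically dependent, and contradict Nesterenko's theorem. The paper's version is terser but the logic is identical.
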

\begin{proof}
  Suppose~$\pi$ is algebraic over~$\QQ^\pi$. Since~$\{ e^\pi \}$ is a
  transcendence basis of~$\QQ^\pi$ by
  Proposition~\ref{prop:gelfond-dirlim}, this implies that the
  set~$\{ \pi, e^\pi \}$ is algebraically dependent. But this
  contradicts a well-known result by
  Nesterenko~\cite[\S1.5.7]{ManinPanchishkin2006}.
\end{proof}

\subsection{The Rational Fourier Singlet of Gaussians.}
\label{sub:rational-gauss}
In order to isolate a computable Fourier singlet from the uncountable Schwartz
singlet~$\Schw\inner{\RR}{\RR}$, we shall need two restrictions:
\begin{itemize}
\item We restrict the Heisenberg action~$H_\RR \times \Schw(\RR) \to \Schw(\RR)$ from the original
  Heisenberg group~$H_\RR := \Tor \RR \rtimes \RR$ to the
  subgroup~$H_\QQ := \Tor_\QQ \QQ \rtimes \QQ$, where~$\Tor_\QQ \cong \QQ/\ZZ$ is the torsion
  subgroup of~$\Tor$. Note that~$\Tor_\QQ$ consists of all roots of unity so that the extension
  field of~$\QQ$ generated by~$\Tor_\QQ$ is just the maximal abelian extension~$\QQab \le \QQ^\pi$
  considered in \S\ref{sub:gelfond-field}. As it is clear that~$H_\QQ$ is a Heisenberg
  group in the sense of Definition~\ref{def:assoc-heisgrp}, we may refer to it as the
  (one-dimensional) \emph{rational Heisenberg group}.
\item The scaling parameter~$\rho$ of the Gaussians~$g_{\mu,\rho}$ is also restriced to rational
  numbers. It is then clear that the action of the rational Heisenberg group~$H_\QQ$ will only
  produce $\QQ^\pi$-linear combinations of~$e_\alpha \, g_{\mu,\rho}$ with the
  parameters~$\alpha, \mu, \rho$ all rational. Referring to the multiplication laws for~$\star$
  and~$\odot$ in \S\eqref{sub:min-schwartz-algebra}, one checks immediately that they form
  a twain subalgebra of the Gaussian closure~$\overline{\Gau(\RR)}$, which we call the
  \emph{rational Gaussian closure}~$\overline{\Gau(\QQ)}$.
\end{itemize}

It is furthermore clear that~$[\Four\colon \overline{\Gau(\QQ)} \pto \overline{\Gau(\QQ)}]$ is a
Fourier subsinglet
of~$[\Four\colon \overline{\Gau(\RR)} \pto \overline{\Gau(\RR)}] = \bar{\Gau}\inner{\RR}{\RR}$ since
the Fourier operator~$\Four$ in~\eqref{eq:Four-gaussian} creates only rational parameters from the
given rational parameters. We shall therefore refer
to~$[\overline{\Gau(\QQ)} \pto \overline{\Gau(\QQ)}]$ as the \emph{rational Gaussian
  singlet}~$\bar{\Gau}\inner{\QQ}{\QQ}$. It should be emphasized that~$\bar{\Gau}\inner{\QQ}{\QQ}$
is a \emph{computable Fourier singlet} since the coefficient field~$\QQ^\pi$ is computable and all
operations (especially convolution, pointwise product, Heisenberg action, Fourier operator) are
algorithmic.

Recall that the \emph{unit Gaussian} in Stigler normalization is denoted by
$g := g_{0,1} \in \Schw(\RR)$. We claim that the rational Gaussian
singlet~$\bar{\Gau}\inner{\QQ}{\QQ}$ may also be characterized as the Fourier singlet
\emph{generated} by~$g$ in the Schwartz singlet~$\Schw\inner{\RR}{\RR}$, of course over the rational
Heisenberg action~$\beta_\QQ\colon H_\QQ \times \Schw(\RR) \to \Schw(\RR)$.

\begin{lemma}
  \label{lem:rational-gauss-gen}
  The rational Gaussian closure~$\bar{\Gau}\inner{\QQ}{\QQ}$ is the smallest Fourier subsinglet
  in~$\Schw\inner{\RR}{\RR} \in \Fou{\beta_\QQ}$ over the field~$\QQ^\pi$ that contains the unit
  Gaussian~$g$.
\end{lemma}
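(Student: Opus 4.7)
The plan is to establish this characterization in two directions: first that $\bar{\Gau}\inner{\QQ}{\QQ}$ is indeed a Fourier subsinglet over $\QQ^\pi$ containing $g$, and second that it is contained in any other such subsinglet. The first direction was essentially set up in the preamble to the lemma, so it remains chiefly to verify that every structural operation preserves $\QQ^\pi$-coefficients, and this is where the number-theoretic work of \S\ref{sub:gelfond-field} pays off: the pointwise cocycle~$\gamma_{\tdot}$ in~\eqref{eq:ptw-cocyc} lies in $\QQtr \subseteq \QQ^\pi$ for rational parameters, the convolutive cocycle~$\gamma_\star = 1/\sqrt{\rho_1+\rho_2}$ lies in $\QQab \subseteq \QQ^\pi$ since by Kronecker-Weber $\sqrt{q} \in \QQab$ for every $q \in \QQ_{>0}$, the extra factors in $\bar{\gamma}_\star$ from \eqref{eq:ext-convol-cocyc} split into a root of unity and a factor of $\QQtr$-type, the Heisenberg action factors $\exp(-i\tau\alpha x)$ in \eqref{eq:normal-heisact} are roots of unity, and the Fourier constant $c = \rho^{-1/2} e^{i\tau\alpha\mu}$ from \eqref{eq:Four-gaussian} lies entirely in $\QQab$.

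For the minimality direction, I would denote by $S$ an arbitrary Fourier subsinglet of $\Schw\inner{\RR}{\RR}$ over $\QQ^\pi$ (with respect to the rational Heisenberg action $\beta_\QQ$) that contains $g = g_{0,1}$, and derive step by step that each basis element $e_\alpha g_{\mu,\rho}$ with $\alpha, \mu \in \QQ$ and $\rho \in \QQ_{>0}$ belongs to $S$:
\begin{enumerate}
\item Iterated convolution: from the rule \eqref{eq:conv-gauss}, one has $g \star g = (1/\sqrt{2}) \, g_{0,1/2}$, and inductively $g_{0,1/n} \star g = \sqrt{n/(n+1)} \, g_{0,1/(n+1)}$. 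Since each scalar is a nonzero element of $\QQ^\pi$, the $\QQ^\pi$-linearity of $S$ lets us divide by it to extract $g_{0,1/n} \in S$ for every $n \in \NN_{>0}$.
\item Iterated pointwise product: for zero-mean Gaussians the cocycle $\gamma_{\tdot}$ equals $1$, so $g_{0,\rho_1} \cdot g_{0,\rho_2} = g_{0,\rho_1+\rho_2}$. Starting from $g_{0,1/n}$, this delivers $g_{0,k/n} \in S$ for all $k \in \NN_{>0}$, and since every positive rational has the form $k/n$, we conclude $g_{0,\rho} \in S$ for all $\rho \in \QQ_{>0}$.
\item Translation and modulation: the formulas \eqref{eq:normal-heisact} give $\mu \act g_{0,\rho} = g_{\mu,\rho}$ and $\alpha \act g_{\mu,\rho} = e_\alpha g_{\mu,\rho}$, so acting by elements of $\QQ \le G$ and $\QQ \le \Gamma$ produces every basis element of $\overline{\Gau(\QQ)}$ inside $S$.
\end{enumerate}
Taking $\QQ^\pi$-linear combinations then yields $\overline{\Gau(\QQ)} \subseteq S$, as required.

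The hard part is not any single calculation but the \emph{clean tracking of coefficients}: one must make sure at each step that the scalar factors produced by cocycles, Heisenberg actions, and the Fourier operator are all nonzero elements of $\QQ^\pi$ (so that division is legitimate inside $S$) and that they do not accidentally escape $\QQ^\pi$ into the larger field $\CC$. Once the decomposition $\QQ^\pi \supseteq \QQtr \cdot \QQab$ from Proposition~\ref{prop:gelfond-dirlim} is invoked, the checking becomes routine: radicals $\sqrt{q}$ and roots of unity $e^{i\pi q}$ land in $\QQab$, whereas the ``rapid decay'' factors $e^{-\pi q}$ land in $\QQtr$. No additional irrational constants intrude—in particular no spurious factor of $\pi$ appears at this stage, which is why the base field $\QQ^\pi$ rather than its extension $\QQ^\pi(\pi)$ suffices (that will only be needed once the Weyl action enters in the next subsection).
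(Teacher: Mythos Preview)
Your proposal is correct and follows essentially the same route as the paper: reduce to showing that every basis element $e_\alpha g_{\mu,\rho}$ lies in the generated subalgebra, obtain all $g_{0,\rho}$ with $\rho\in\QQ_{>0}$ by combining convolutive and pointwise powers of $g$, and then reach the general element via the Heisenberg action. The only cosmetic difference is the order of operations---you convolve first to reach $g_{0,1/n}$ and then multiply pointwise to reach $g_{0,k/n}$, whereas the paper takes the pointwise power first and then convolves, recording the two resulting identities $(g^n)^{\star m} = (mn^{m-1})^{-1/2}\,g_{n/m}$ and $(g^{\star m})^n = m^{-n/2}\,g_{n/m}$; your explicit coefficient-tracking in $\QQ^\pi$ is handled by the paper in the discussion preceding the lemma rather than inside the proof itself.
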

\begin{proof}
  For the moment, let us write~$\Schw^g \le \Schw\inner{\RR}{\RR}$ for the smallest Fourier singlet
  containing~$g$, meaning the intersection of all such singlets. Since the Fourier operator $\Four$
  of~$\Schw\inner{\RR}{\RR}$ fixes~$g$, we have
  $\Schw^g = [(g)_{\Schw(\RR)} \pto (g)_{\Schw(\RR)}]$, where~$(g)_{\Schw(\RR)}$ is the Heisenberg
  twain algebra generated by~$g$ in~$\Schw(\RR)$ over~$H_\QQ$. It is clear
  that~$(g)_{\Schw(\RR)} \le \overline{\Gau(\QQ)}$, so it remains to prove the converse
  inclusion. But this follows from $e_\alpha \, g_{\mu,\rho} = \alpha \act \mu \act g_\rho$,
  where~$g_\rho := g_{0,\rho}$ may be obtained from~$g$ via convolution and pointwise multiplication
  in the following two equivalent ways: Writing~$\rho = n/m \in \QQ$, one has
  \begin{equation}
    \label{eq:conv-ptw-prop}
    (g^n)^{\star m} = (m n^{m-1})^{-1/2} \, g_{n/m}
    \quad\text{and}\quad
    (g^{\star m})^n = m^{-n/2} \, g_{n/m}
  \end{equation}
  as one may easily check. In other words, the pointwise and convolutive powers coincide within a
  numerical factor---a property that we shall use below.\footnote{This holds for plain
    Gaussians~$g_{\mu,\rho}$, but not for linear combinations: Taking $f := g_{1,0}-g_{3,0}$, say,
    leads to~$h := (f^2)^{\star 2} - (f^{\star 2})^2 \neq 0$ with~$|h_{2,2}(1/2)|>0.19$.}
\end{proof}

The rational Gaussian closure $\bar{\Gau}\inner{\QQ}{\QQ}$ is in fact a regular \emph{twain
  subsinglet} of~$\Schw\inner{\RR}{\RR} \in \Fou{\beta_\QQ}$, and it turns out that it can be
described as a \emph{quotient of the free Heisenberg twain algebra}~$\freetwn(S)$ introduced
in~Proposition\eqref{prop:free-heis-twn}. As usual, we write~$\freetwn(u)$ for the case of a
singleton~$S = \{ u \}$.

Quotients of a Heisenberg twain algebra~$A$ must be taken with respect to \emph{twain Heisenberg
  ideals}, meaning twain ideals (ideals simultaeously with respect to convolution and pointwise
product) that are closed under the Heisenberg action. Given any subset~$S \subseteq A$, the twain
Heisenberg ideal generated by~$S$ is of course the smallest Heisenberg twain ideal (= intersection
of all Heisenberg twain ideals) containing~$S$; we denote it by~$\langle S \rangle$.

Using this setup, we can define the corresponding relations as the Heisenberg twain
ideal~$\langle\mathcal{R}_u\rangle \subset \freetwn(u)$ generated by
$$\mathcal{R}_u = \mathcal{R}_u(\Gamma,\star) \cup \mathcal{R}_u(G,\cdot) \cup
\mathcal{R}_u(\star,\cdot)$$
using the three relation sets to be given shortly. Anticipating the desired isomorphism, we
introduce the abbreviation~$u_\rho := \sqrt{m n^{m-1}} \, (u^n)^{\star m}$ for~$\rho = n/m \in \QQ$.
Then the relations~$\mathcal{R}_u(\Gamma,\star)$ are gleaned from the multiplication law of~$\star$
given in \S\eqref{sub:min-schwartz-algebra}, appearing here as
\begin{equation}
  \label{eq:conv-rel}
  \begin{aligned}
    \mathcal{R}_u(\Gamma,\star) &:= \{ (\alpha \act u_{\rho}) \star (\alpha' \act u_{\rho'}) -
    c_{\alpha,\alpha',\rho,\rho'} \, \big( \tfrac{\alpha\rho'+\alpha'\rho}{\rho+\rho'} \act
    u_{\rho\boxedplus\rho'} \big) \}\\
    &\quad\text{with}\quad c_{\alpha,\alpha',\rho,\rho'} :=
    e^{-\pi(\alpha-\alpha')^2/(\rho+\rho')}/\sqrt{\rho+\rho'},
  \end{aligned}
\end{equation}
where\footnote{Though~$G = \Gamma = \QQ$ as sets, we write~$\alpha \in \Gamma$ and~$\mu \in G$ to
  identify $\mu = 1(\mu, 0)$ and~$\alpha = 1(0, \alpha)$, respectively, via the standard embeddings
  in $H_{\beta_\QQ}$.} the parameters~$\alpha,\alpha',\rho,\rho'$ range
over~$\Gamma = \QQ \le TG \rtimes \Gamma = H_{\beta_\QQ}$. Similarly, the
relations~$\mathcal{R}_u(G,\cdot)$, extracted from the multiplication law of~$\cdot$, manifest
themselves as
\begin{equation}
  \label{eq:ptw-rel}
  \begin{aligned}
    \mathcal{R}_u(G,\cdot) &:= \{ (\mu \act u_{\rho}) \cdot (\mu' \act u_{\rho'}) -
    c_{\mu,\mu',\rho,\rho'} \, \big( \tfrac{\mu\rho+\mu'\rho'}{\rho+\rho'} \act
    u_{\rho+\rho'} \big) \}\\
    &\quad\text{with}\quad c_{\mu,\mu',\rho,\rho'} :=
    e^{-\pi(\rho\boxedplus\rho')(\mu-\mu')^2}.
  \end{aligned}
\end{equation}
We notice the intriguing symmetry between~\eqref{eq:conv-rel} and~\eqref{eq:ptw-rel}, ultimately due
to the convolution theorem (but note the distinct relative position of the primes in the two
Heisenberg actors). The last relation set~$\mathcal{R}_u(\star,\cdot)$ comes from the
commutation~\eqref{eq:conv-ptw-prop} of convolution and pointwise powers, yielding
\begin{equation}
  \label{eq:conv-ptw-rel}
  \mathcal{R}_u(\star,\cdot) := \{ (u^{\star m})^n - \sqrt{n^{m-1}/m^{n-1}} \, (u^n)^{\star m} \mid
  n \in \ZZ, m \in \ZZ_{>0} \} .
\end{equation}
Now we can state the promised isomorphism that characterizes~$\bar{\Gau}\inner{\QQ}{\QQ}$ explicitly
as a quotient of the free twain Heisenberg algebra.

\begin{theorem}
  \label{thm:gaussians-via-free-structure}
  We have~$\bar{\Gau}\inner{\QQ}{\QQ} \cong \freetwn(u)/\langle\mathcal{R}_u\rangle$ via the
  Heisenberg isomorphism induced by~$g \leftrightarrow u$.
\end{theorem}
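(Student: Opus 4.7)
The plan is to prove the isomorphism in two stages: first construct a surjective Heisenberg twain morphism $\phi: \freetwn(u)/\langle\mathcal{R}_u\rangle \to \bar{\Gau}\inner{\QQ}{\QQ}$, then establish its injectivity via a normal form argument harnessing the $\CC$-linear independence of the Gaussians already proved in \S\ref{sub:min-schwartz-algebra}.

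For the surjection, I would start by invoking the universal property of $\freetwn(u)$ from Proposition~\ref{prop:free-heis-twn}: since $\bar{\Gau}\inner{\QQ}{\QQ}$ is a Heisenberg twain algebra over $\beta_\QQ$ and $g \in \overline{\Gau(\QQ)}$, the Heisenberg module morphism $\{u\} \to \overline{\Gau(\QQ)}$, $u \mapsto g$ extends uniquely to a Heisenberg twain morphism $\tilde\phi\colon \freetwn(u) \to \overline{\Gau(\QQ)}$. I would then verify that $\tilde\phi$ annihilates the three families of generators of $\mathcal{R}_u$: the relations in \eqref{eq:conv-rel} translate under $u \mapsto g$ and $u_\rho \mapsto g_\rho$ (via \eqref{eq:conv-ptw-prop}) into the extended convolution law~\eqref{eq:ext-conv-gauss} specialised to zero translation parameters, the relations in \eqref{eq:ptw-rel} become the extended pointwise law~\eqref{eq:ext-pw-gauss} specialised to zero modulation, and \eqref{eq:conv-ptw-rel} is a direct restatement of~\eqref{eq:conv-ptw-prop}. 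Hence $\tilde\phi$ descends to $\phi\colon \freetwn(u)/\langle\mathcal{R}_u\rangle \to \overline{\Gau(\QQ)}$. Surjectivity of $\phi$ is immediate from Lemma~\ref{lem:rational-gauss-gen}, which asserts that the image of $g$ under the Heisenberg twain operations already exhausts $\overline{\Gau(\QQ)}$.

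For injectivity, the plan is to exhibit a normal form for the quotient. I would show that every element of $\freetwn(u)/\langle\mathcal{R}_u\rangle$ is a $\QQ^\pi$-linear combination of residues $\overline{e_\alpha \act \mu \act u_\rho}$, indexed by $(\alpha,\mu,\rho) \in \QQ \oplus \QQ \oplus \QQ_{>0}$. To reach this form, one first exploits the Heisenberg axioms \eqref{eq:left-scal} and \eqref{eq:right-op} (valid on both $\star$ and $\cdot$ in a twain algebra) to push every occurrence of $x\act$ and $\xi\act$ outward past the products, leaving purely $\star$/$\cdot$ words built from $u$; \eqref{eq:conv-ptw-rel} then converts all iterated pure powers to the canonical $u_\rho$; finally one alternates \eqref{eq:conv-rel} and \eqref{eq:ptw-rel} to collapse nested $\star$- and $\cdot$-products of translated–modulated $u_\rho$'s, each reduction step collapsing two factors into one at the cost of a $\QQ^\pi$-coefficient (the cocycles $c_{\alpha,\alpha',\rho,\rho'}$ and $c_{\mu,\mu',\rho,\rho'}$ are Gelfond numbers). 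An induction on the rank $k$ of the direct limit~\eqref{eq:dirlim} in Proposition~\ref{prop:free-heis-twn} ensures termination. Uniqueness of the normal form follows by applying $\phi$: the canonical form $\overline{e_\alpha \act \mu \act u_\rho}$ maps to $e_\alpha \, g_{\mu,\rho}$, and the family $(e_\alpha g_{\mu,\rho})_{\alpha,\mu \in \QQ, \rho \in \QQ_{>0}}$ is $\CC$-linearly independent in $\Schw(\RR)$ by the asymptotic-scale analysis of \S\ref{sub:min-schwartz-algebra} (they are distinct elements of $\bar\G_{\A}$ up to unit scalars), hence \emph{a fortiori} $\QQ^\pi$-linearly independent. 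A nonzero normal-form element in the quotient therefore has nonzero image under $\phi$, giving injectivity.

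The main obstacle is the reduction-to-normal-form step: one must verify that the three relation sets, together with the axioms of a Heisenberg twain algebra built into $\freetwn(u)$, are \emph{complete}—i.e.\ that no ambiguity arising from the interaction of $\star$ and $\cdot$ creates residual terms outside the $\QQ^\pi$-span of the canonical $\overline{e_\alpha\act\mu\act u_\rho}$. The cleanest way I see to handle this is a structural induction on the twain rank from \eqref{eq:twn-heis-asc}: at rank $0$ all elements are already $\QQ^\pi$-multiples of Heisenberg translates of $u$, and the inductive step uses the bilinearity of both products to reduce a rank-$(k{+}1)$ term to a linear combination of products of rank-$k$ normal forms, each collapsible by \eqref{eq:conv-rel} or \eqref{eq:ptw-rel} after peeling off the outer Heisenberg actions. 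Because $\phi$ is known to be well-defined on the quotient and surjective, any hidden relation would force a $\CC$-linear dependence among Gaussians downstream, contradicting the valuation-theoretic independence already secured, so the normal-form claim is in fact equivalent to the linear-independence input rather than requiring additional combinatorial work.
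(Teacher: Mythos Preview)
Your approach is correct and structurally parallel to the paper's: both reduce to showing that the classes of the~$\alpha \act \mu \act u_\rho$ form a $\QQ^\pi$-basis of the quotient, after which the assignment~$u \mapsto g$ carries basis to basis. The spanning half (your rank induction on~\eqref{eq:twn-heis-asc}) matches the paper's bottom-up tree reduction. The genuine difference lies in the independence half. The paper treats~$\mathcal{R}_u$ as a ground term-rewriting system, proves termination via a lexicographic path order with precedence~$u > {\cdot} > {\star} > {\act}$, and obtains confluence by observing that the three rule families have non-overlapping redexes; irreducible terms are then automatically incongruent, so their classes are independent without reference to the Gaussian model. You instead bypass confluence entirely via the model: since~$\phi$ is well-defined on the quotient and the images~$e_\alpha g_{\mu,\rho}$ are $\CC$-independent by the asymptotic-scale argument of~\S\ref{sub:min-schwartz-algebra}, any linear dependence among normal forms in the quotient would be detected downstream. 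Your route is more economical and makes the role of the valuation-theoretic input explicit; the paper's route has the virtue of giving an intrinsic description of the quotient, valid regardless of the Gaussian realisation. One expository caution: not all Heisenberg actors can be pushed \emph{outward} past a product---operator-type actions distribute rather than factor out, which is exactly why \eqref{eq:conv-rel} retains the momentum parameters~$\alpha,\alpha'$ inside~$\star$ and \eqref{eq:ptw-rel} retains the position parameters~$\mu,\mu'$ inside~$\cdot$; the actual reduction (as the paper describes) pushes actors \emph{toward the leaves} and then collapses pairs upward using the appropriate relation after peeling off only the scalar-type action.
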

\begin{proof}
  It is sufficient to show
  that~$\{ \alpha \act \mu \act u_\rho \mid (\mu, \alpha) \in G \times \Gamma, \rho \in \QQ \}$
  forms a system of mutually incongruent representatives whose classes are a $\QQ^\pi$-linear basis
  of~$\freetwn(u)/\langle\mathcal{R}_u\rangle$. For then
  $e_\alpha \, g_{\mu,\rho} \leftrightarrow \alpha \act \mu \act u_\rho$ will deliver the desired
  isomorphism. Indeed, it is clearly a $\QQ^\pi$-linear isomorphism, it respects the Heisenberg
  action since~$e_\alpha \, g_{\mu,\rho} = (\mu, \alpha) \act g_\rho$
  in~$\bar{\Gau}\inner{\QQ}{\QQ}$, and it is also a twain homomorphism: We have factored out the
  corresponding relations~\eqref{eq:conv-rel} and~\eqref{eq:ptw-rel}, which yield the required
  multiplication laws for~$\star$ and~$\cdot$ when combined with the scalar/operator properties
  (see Definition~\ref{def:heis-alg}) valid in~$\freetwn(u)$.

  Oriented from left to right, we view the relations~$\langle\mathcal{R}_u\rangle$ as a term
  rewriting system over the signature~$[{\TwAlgH{\beta_\QQ}}_u]$ introduced in the alternative proof
  of Proposition~\ref{prop:free-heis-twn}. Due to space limitations, we can only sketch the rest of
  the proof. We consider~$\star$ and~$\cdot$ modulo AC (associativity+commutativity), identifying
  unary products with their arguments and nullary ones with~$1 \in \QQ^\pi$. Moreover, we identify
  the torus elements~$c \in \Tor_\QQ < H_{\beta_\QQ}$ with the field scalars~$c \in \QQ^\pi$, so the
  Heisenberg actors are essentially given by~$(\mu, \alpha) \in \QQ \times \QQ$. Note that this
  rewrite system is ground (with $u$ being a constant since it is not subject to substitution). It
  is terminating as one can see by taking the lexicographic path
  order~\cite[\S5.4.2]{BaaderNipkow1998} with~$u > \cdot > \star > \act$ on monomials (i.e.\@ terms
  not containing field scalars or $+$). As usual, this extends to a Noetherian quasi-order on
  $\QQ^\pi$-linear combinations of monomials; confer Theorem~5.12 in~\cite{BeckerWeispfenning1993}.

  The formal arguments in the relations~$\mathcal{R}_u(\Gamma,\star)$, $\mathcal{R}_u(G,\cdot)$,
  $\mathcal{R}_u(\star,\cdot)$ of~\eqref{eq:conv-rel}, \eqref{eq:ptw-rel}, \eqref{eq:conv-ptw-rel}
  indicate the joint occurrence of the corresponding symbols in the redexes; this reveals that there
  is no overlap between the rewrite rules. Hence the rewrite system is confluent, and we conclude
  that every element of~$\freetwn(u)$ has a unique normal
  form~\cite[Lem.~2.1.8]{BaaderNipkow1998}. Since all
  elements~$\alpha \act \mu \act u_\rho \in \freetwn(u)$ are irreducible, they must be mutually
  incongruent~\cite[Thm.~2.1.9]{BaaderNipkow1998}. Any $\QQ^\pi$-linear combination of them is also
  irreducible and hence clearly nonzero; this means that the~$\alpha \act \mu \act u_\rho$ are
  $\QQ^\pi$-linearly independent as elements of~$\freetwn(u)/\langle\mathcal{R}_u\rangle$. It
  remains to show that every term in~$\freetwn(u)$ reduces to a $\QQ^\pi$-linear combination of
  irreducible normal forms, which then implies that the $\alpha \act \mu \act u_\rho$ actually
  form a $\QQ^\pi$-basis.

  For seeing that every normal form~$\phi$ of~$\freetwn(u)$ is in the span of the normal
  forms~$\alpha \act \mu \act u_\rho$, it suffices to expand~$\phi$ into a $\QQ^\pi$-linear
  combination of monomials and then reduce each of them to normal form. We may view the monomial as
  a tree of alternating $[\star]$~and $[\cdot]$~nodes decorated by Heisenberg actors (identifying
  the case of no actors with the action of~$1 \in H_{\beta_\QQ}$). All leaves are occurrences
  of~$u$. Let us speak of a $[\star]$~tree if the root is a $[\star]$~node (and hence every even
  tree level consists of $[\star]$~nodes) and let us speak of a $[\cdot]$~tree otherwise. Starting
  from the leaves, the relations~$\mathcal{R}_u(\Gamma,\star)$ and~$\mathcal{R}_u(G,\cdot)$ together
  with the scalar/operator properties from Definition~\ref{def:heis-alg} serve to eliminate
  Heisenberg actors from internal nodes: The $u_{\rho+\rho'}, u_{\rho\boxedplus\rho'}$ in the
  corresponding right-hand monomials are trees without internal Heisenberg actors, so these
  monomials have Heisenberg actors only in their roots. After exhausting these rewrite steps, we are
  left with~$\alpha \act \mu \act U$, where~$U$ is either of the form~$u_{n/m} = (u^n)^{\star m}$ or
  of the form~$(u^{\star m})^n$. In the former case, we are done; in the latter case, we
  apply~$\mathcal{R}_u(\star,\cdot)$ for reducing to the former case.
\end{proof}

The above proof is probably not as concise as it should be. It would be preferrable to use something
like Gr\"obner(-Shirshov) bases, extended to the case of ``twain polynomials'', for dealing with the
\emph{twain polynomial ideal}~$\mathcal{R}_u$. This would be interesting to develop in future
work. It might also be worthwhile to consider enhanced rewrite approaches that incorporate Gr\"obner
bases such as \cite{BachmairGanzinger1994}.

The algebraic description of $\overline{\Gau(\RR)}$ or~$\overline{\Gau(\QQ)}$ may appear
insignificant at first, but it should be pointed out that in some sense the Gaussians contain the
whole essence of classical Fourier analysis: They are known~\cite[Thm.~2.2]{Thaller2000} to be
\emph{dense in~$L^2(\RR)$}. In the context of our present approach, we view the Gaussian
singlets~$\overline{\Gau(\RR)}$ and~$\overline{\Gau(\QQ)}$ as a base for bootstrapping
\emph{algebraic hierarchies} of more involved Fourier singlets amenable to Symbolic Computation.

\subsection{Holonomic Fourier Extensions.}
\label{sub:hol-four-extns}
A distribution~$s \in \D(\RR^n)$ is called \emph{holonomic} if it is defined through a maximally
overdetermined set of polynomial PDEs. More precisely, writing
$I_s = \{ T \in A_n(\CC) \mid Ts = 0 \}$ for its annihilation ideal, one requires the quotient
module~$A_n(\CC)/I_s$ to be holonomic~\cite[Def.~7.2.1]{Bjork1979},
\cite[\S2.4]{Zeilberger1990}. The collection of all holonomic distributions is known as the
\emph{Bernstein class}~$\B'(\RR^n)$. It is clear~\cite[Prop.~2.2]{Bjork1979} that~$\B'(\RR^n)$ is
then an $A_n(\CC)$-submodule\footnote{Writing~$D := A_n(\CC)$, this is usually called a
  ``$D$-module''. We refrain here from this terminology so as to avoid confusion with the space of
  bump functions~$\D(\RR^n)$.} of~$\D(\RR^n)$.

It is an important fact~\cite[Prop.~2.3]{Bjork1979} that a tempered distribution
$s \in \Schw(\RR^n)$ belongs to~$\B'(\RR^n)$ iff~$\Four s \in \Schw(\RR^n)$ does. This means the
Fourier operator~$\Four$ is a $\CC$-linear automorphism on the $A_n(\CC)$-module
$\Hol'(\RR^n) := \B'(\RR^n) \cap \Schw'(\RR^n)$. The Dirac
distributions~$\delta_\xi \: (\xi \in \RR^n)$ are clearly contained in~$\Hol'(\RR^n)$, and so are
their Fourier transforms $\chi_\xi(x) = e^{i\tau \xi \cdot x}$. Since~$\D'(\RR^n)$ is a module
over~$C^\infty(\RR^n)$, the product of any holonomic distribution~$s \in \Hol'(\RR^n)$
with~$\chi_\xi \in C^\infty(\RR^n)$ is well-defined in~$\D'(\RR^n)$, and since both~$s$
and~$\chi_\xi$ are holonomic, so is their product~\cite[Prop.~3.2]{Zeilberger1990}. This implies
that~$\Hol'(\RR^n)$ is closed under pointwise multiplication by~$\chi_\xi$, which is identical to
the action of the Heisenberg operator~$\xi \in H(\pomega)$. By applying the Fourier operator
$\Four\colon \Hol'(\RR^n) \to \Hol'(\RR^n)$, we obtain also closure under Heisenberg scalars while
the torus action~$\Tor \le \nnz{\CC}$ is anyway trivial. Thus~$\Hol'(\RR^n)$ is a Heisenberg
submodule of~$\Schw'(\RR^n)$, and it inherits the compatible Weyl action. We summarize these facts
on the space~$\Hol'(\RR^n)$ of \emph{holonmic dis\-tributions} as follows.

\begin{proposition}
  \label{prop:hol-distr}
  The holonomic distributions form a regular slain sub\-singlet $\Hol'\inner{\RR^n}{\RR^n}$ of
  $\Schw'\inner{\RR^n}{\RR^n}$ with compatible Weyl actions.
\end{proposition}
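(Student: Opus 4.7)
The proof essentially collates the facts already marshalled in the two paragraphs preceding the statement, so the task is to organize them in the correct categorical order rather than to establish new estimates.

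First I would verify that $\Hol'(\RR^n)$ is a Heisenberg submodule of $\Schw'(\RR^n)$ with the torus $\Tor \le \nnz\CC$ acting naturally by inclusion. Torus closure is automatic. For the Heisenberg operators $\xi \in \Gamma$, their action is pointwise multiplication by the character $\chi_\xi(x) = e^{i\tau \xi \cdot x}$. Since $\chi_\xi \in C^\infty(\RR^n)$ and $\D'(\RR^n)$ is a $C^\infty(\RR^n)$-module, the product $\chi_\xi \cdot s$ is defined for $s \in \Hol'(\RR^n)$; that $\chi_\xi$ and $s$ are both holonomic combined with Zeilberger's closure of $\B'(\RR^n)$ under products of compatible holonomic factors yields $\chi_\xi \cdot s \in \B'(\RR^n)$, while temperedness is preserved because $\chi_\xi$ is bounded. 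This gives $\xi \act s \in \Hol'(\RR^n)$.

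Next I would invoke the Björk-Bernstein result (Prop.~2.3 of~\cite{Bjork1979}) to see that the Fourier operator $\Four\colon \Schw'(\RR^n) \pto \Schw'(\RR^n)$ of Proposition~\ref{prop:temp-distr} restricts to a $\CC$-linear automorphism of $\Hol'(\RR^n)$. Closure under Heisenberg scalars $x \in G$ then follows without further work: translation is conjugate to modulation under $\Four$, i.e.\ $x \act s = \Four^\lor(x \act \Four s)$ by~\eqref{eq:cofourop}, so the previous paragraph (applied on the spectral side) delivers closure under $G$. Combined with torus and $\Gamma$-closure, this establishes $\Hol'(\RR^n) \in \AlgH{\pomega}$ as a slain Heisenberg submodule. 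The restricted Fourier operator, being an automorphism with $\Four^2 = \Par$ inherited from $\Schw'\inner{\RR^n}{\RR^n}$, yields the regular slain subsinglet $\Hol'\inner{\RR^n}{\RR^n}$ as the Fourier morphism $(i,i)$ with $i\colon \Hol'(\RR^n) \hookrightarrow \Schw'(\RR^n)$.

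For the compatible Weyl action, I would check that the generators $x_j$ and $D_k = (i\tau)^{-1} \der_k$ of $A_n(\CC)$ preserve $\Hol'(\RR^n)$. Differentiation preserves $\B'(\RR^n)$ tautologically (if $I_s$ is holonomic so is $I_{D_k s} \supseteq I_s \cdot D_k^{-1}$ in the standard sense), and it preserves $\Schw'(\RR^n)$ by the general theory of tempered distributions; multiplication by $x_j$ is similar, using that polynomials belong to $\B'(\RR^n)$ and temperedness is preserved by polynomial multiplication on the distribution side. The differentiation laws~\eqref{eq:diff-laws} transfer verbatim from the ambient singlet $\Schw'\inner{\RR^n}{\RR^n}$ (Proposition~\ref{prop:comp-weyl}) since the restricted $\Four$ is simply the ambient one on a submodule. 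Hence the compatible Weyl action on $\Hol'\inner{\RR^n}{\RR^n}$ is inherited rather than reconstructed.

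The only non-routine ingredient is the closure of $\Hol'(\RR^n)$ under the Heisenberg operators $\xi$, which requires Zeilberger's product-closure of holonomic distributions~\cite[Prop.~3.2]{Zeilberger1990}; everything else is a bookkeeping exercise in restricting structure from the Schwartz distribution singlet to the Bernstein subspace. I would therefore foreground the operator closure step and treat the scalar closure, Fourier restriction, and Weyl compatibility as immediate corollaries via the Fourier duality that exchanges the two sides.
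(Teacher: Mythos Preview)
Your proposal is correct and follows the same route as the paper: the proposition is stated without a formal proof block precisely because the preceding two paragraphs already assemble the argument you outline---closure under $\Gamma$ via Zeilberger's product-closure with the holonomic character $\chi_\xi$, closure under $G$ by conjugating through the Fourier automorphism (Bj\"ork, Prop.~2.3), and the compatible Weyl action inherited from the ambient singlet since $\B'(\RR^n)$ is an $A_n(\CC)$-submodule (Bj\"ork, Prop.~2.2). One small notational slip: your formula $x \act s = \Four^\lor(x \act \Four s)$ should have a \emph{right} action on the spectral side as in~\eqref{eq:cofourop}, but the underlying argument is unaffected.
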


Writing~$\Hol(\RR^n) := \B'(\RR^n) \cap \Schw(\RR^n)$ for the \emph{holonomic Schwartz class}, the
Fourier operator~$\Four\colon \Hol'(\RR^n) \to \Hol'(\RR^n)$ clearly restricts further to a
$\CC$-linear automorphism~$\Hol(\RR^n) \to \Hol(\RR^n)$. As the intersection of Heisenberg
modules~$\Hol'(\RR^n)$ and~$\Schw(\RR^n)$, the holonomic Schwartz class~$\Hol(\RR^n)$ is clearly a
Heisenberg module itself. But it is moroever a Heisenberg twain algebra since pointwise and
convolution products preserve holonomicity~\cite[Prop.~3.2, 3.2$^\ast$]{Zeilberger1990}. Altogether
we obtain the following facts.

\begin{proposition}
  \label{prop:hol-Schwartz}
  The holonomic Schwartz class~$\Hol\inner{\RR^n}{\RR^n}$ forms a regular twain subsinglet of
  $\Schw\inner{\RR^n}{\RR^n}$ with compatible Weyl action.
\end{proposition}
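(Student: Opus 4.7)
The plan is to verify each structural requirement by combining the previously established results with Zeilberger's closure theorems for holonomic functions. First I would write $\Hol(\RR^n) = \Hol'(\RR^n) \cap \Schw(\RR^n)$ and observe that regularity of the Fourier operator on~$\Hol(\RR^n)$ is almost immediate: the Fourier automorphism $\Four\colon \Hol'(\RR^n) \to \Hol'(\RR^n)$ of Proposition~\ref{prop:hol-distr} restricts to~$\Schw(\RR^n) \subseteq \Schw'(\RR^n)$ by Theorem~\ref{thm:Schwartz-Bruhat}, and conversely $\Four$ maps $\Schw(\RR^n)$ to itself; hence $\Four\big(\Hol(\RR^n)\big) \subseteq \Hol(\RR^n)$ and symmetrically for $\Four^{-1}$, yielding a $\CC$-linear automorphism.

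Next I would verify the Heisenberg module structure. Since $\Schw(\RR^n)$ is a Heisenberg module (Theorem~\ref{thm:Schwartz-Bruhat}) and $\Hol'(\RR^n)$ is one by Proposition~\ref{prop:hol-distr}, their intersection inherits the componentwise action of~$H(\pomega)$; the torus acts trivially via $\Tor \hookrightarrow \nnz{\CC}$, and closure under the action of~$G$ and~$\Gamma$ follows from closure in both parent modules. Analogously, the compatible Weyl action carries over: each $T \in A_n(\CC)$ preserves $\Schw(\RR^n)$ (as part of the Schwartz structure) and preserves $\Hol'(\RR^n)$ (since the latter is even an $A_n(\CC)$-submodule of $\Schw'(\RR^n)$), so the differentiation laws~\eqref{eq:diff-laws} on the ambient singlet $\Schw\inner{\RR^n}{\RR^n}$ remain valid after restriction.

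For the twain algebra structure one appeals to the closure of holonomicity under pointwise multiplication~\cite[Prop.~3.2]{Zeilberger1990} and convolution~\cite[Prop.~3.2$^\ast$]{Zeilberger1990}; combined with the fact that $\big(\Schw(\RR^n), \star, \cdot\big)$ is already a twain algebra (Theorem~\ref{thm:Schwartz-Bruhat}), this yields that $\Hol(\RR^n)$ is closed under both~$\star$ and~$\cdot$ and is thus a Heisenberg twain subalgebra of the Schwartz class. Together with the Fourier automorphism, this produces a regular twain subsinglet, which is what is claimed.

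The main obstacle is really only a bookkeeping one: ensuring that the Bernstein-class intersection behaves well with both sides of the convolution theorem, i.e.\@ that the Fourier image of a holonomic Schwartz function is holonomic in the spectral variable. This is precisely the content of~\cite[Prop.~2.3]{Bjork1979} already invoked for Proposition~\ref{prop:hol-distr}, so nothing new is needed; the remaining verifications reduce to routine inheritance from the parent Schwartz singlet and the established closure properties of the Bernstein class.
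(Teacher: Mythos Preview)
Your proposal is correct and follows essentially the same route as the paper: the argument there (given in the paragraph immediately preceding the proposition) also obtains~$\Hol(\RR^n)$ as the intersection of the Heisenberg modules~$\Hol'(\RR^n)$ and~$\Schw(\RR^n)$, invokes the restriction of the Fourier automorphism, and appeals to Zeilberger's closure results~\cite[Prop.~3.2, 3.2$^\ast$]{Zeilberger1990} for the twain structure, with the Weyl action inherited from the ambient Schwartz singlet. Your write-up is slightly more explicit about the bookkeeping, but the ideas and references are the same.
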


Holonomic distributions and Schwartz functions are very suitable for \emph{Symbolic Computation}
since one can use normal forms for deciding equalities. To be precise, one does not have a canonical
simplifier but a a \emph{normal simplifier} in the sense of~\cite[\S1]{BuchbergerLoos1982}, so
deciding equalities is reduced to zero recognition; see~\cite[\S4.1]{Zeilberger1990} and Algorithm~Z
of \cite[B.2]{FlajoletSedgewick2009}. The crucial tool for computing the Fourier operator~$\Four$
symbolically is the compatible Weyl action: Applying the \emph{differentiation
  laws}~\eqref{eq:diff-laws} allows us to extract the defining PDEs of~$\Four s$ from those of a
holonomic distribution or Schwartz function, as also pointed out in the paragraph
preceding~\cite[Prop.~3.2$^\ast$]{Zeilberger1990}. Sums, pointwise and convolution products are all
effectively computable~\cite[Thm.~6.6]{OakuShirakiTakayama2003}. When dealing with holomorphic
functions (which form a valuation ring), one may also take advantage of the valuation techniques
from \S\ref{sub:min-schwartz-algebra}.

\begin{myexample}
  As a warmup, let us rederive the Fourier transform~\eqref{eq:Four-gaussian} of the modulated
  Gaussians~$e_\alpha g_{\mu,\rho} \in \overline{\Gau(\RR)}$ from the holonomic perspective. By
  differentiating the function once, we immediately obtain its
  annihilator~$T_{\alpha,\mu,\rho} = \der + \tau\rho \, x - \tau(\rho\mu + i \alpha)\in
  A_1(\CC)$.
  Note that~$T := T_{\alpha,\mu,\rho}$ is uniquely determined (if chosen monic, for any ordering
  with~$\der > x$), and that~$\alpha,\mu,\rho$ can then be read off via
  \begin{equation}
    \label{eq:param-from-annih}
    \rho = \tfrac{1}{\tau} \, [x] T,\quad
    \mu = - \tfrac{1}{\tau\rho} \, \Re \, [1] T,\quad
    \alpha = -\tfrac{1}{\tau} \, \Im \, [1] T,
  \end{equation}
  where~$[\der^j x^k]$ denotes the coefficient of~$\der^j x^k$. Applying the Fourier
  transform~$\mathfrak{f}$ of~\eqref{eq:four-transf-weyl}, we see
  that~$\Four(e_\alpha g_{\mu,\rho})$ is annihilated by
  \begin{equation*}
    i \rho^{-1} \, \widehat{T_{\alpha,\mu,\rho}} = \der + \tfrac{\tau}{\rho} \, x +
    \tfrac{\tau}{\rho} \, (\alpha - i\rho\mu),
  \end{equation*}
  which via~\eqref{eq:param-from-annih} yields
  immediately~$\Four(e_\alpha g_{\mu,\rho}) = e_{\hat{\alpha}} g_{\hat\mu,\hat\rho}$
  with~$\hat{\rho} = 1/\rho$, $\hat{\alpha} = \mu$, $\hat{\mu} = -\alpha$ as
  in~\eqref{eq:Four-gaussian}. Thus we see that the holonomic approach recovers the Fourier
  transform up to the overall constant denoted by~$c$ in~\eqref{eq:Four-gaussian}.

  The constant~$c = c_{\alpha,\mu,\rho}$ may be determined as follows. By our choice of
  normalization for the Gaussians~$g_{\mu,\rho}$, the $L^2$ norm square of~$e_\alpha g_{\mu,\rho}$
  is given by~$1/\sqrt{2\rho}$. Since~$\Four$ preserves the $L^2$ norm square, we obtain
  $|c_{\alpha,\mu,\rho}|^2 = \sqrt{\hat{\rho}/\rho} = 1/\rho$, and it remains only to determine the
  phase dependence~$\arg c_{\alpha,\mu,\rho}$. Note first
  that~$\Four g_{0,\rho} = c_{0,0,\rho} \, g_{0,1/\rho}$ implies~$\arg c_{0,0,\rho} = 1$ since the
  Fourier transform of a real-valued even signal is again real valued and
  even~\cite[\S2]{Bracewell1986}. Together with~$|c| = 1/\sqrt{\rho}$ this fixes the
  value~$c_{0,0,\rho} = 1/\sqrt{\rho}$. Applying the twist axiom~\eqref{eq:twisted-bimod}
  yields~$e_\alpha g_{\mu,\rho} = (0,\alpha) \act (\mu, 0) \act g_{0,\rho} = \inner{\alpha}{\mu}
  \, (\mu, \alpha) \act g_{0,\rho}$
  whose Fourier transform
  is~$c_{\alpha,\mu,\rho} \, e_\mu g_{-\alpha,1/\rho} = \inner{\alpha}{\mu} \, (\mu, -\alpha) \act
  \Four g_{0,\rho}$
  by the equivariance laws~\eqref{eq:fourop-par-id}. Using
  again~$\Four g_{0,\rho} = c_{0,0,\rho} \, g_{0,1/\rho}$ and the Heisenberg action on
  spectra~\eqref{eq:pont-actions-Gamma}, we get
  $c_{\alpha,\mu,\rho} \, e_\mu g_{-\alpha,1/\rho} = \inner{\alpha}{\mu} \, c_{0,0,\rho} \, e_\mu
  g_{-\alpha,1/\rho}$
  and thus
  $c_{\alpha,\mu,\rho} = \inner{\alpha}{\mu} \, c_{0,0,\rho} = \inner{\alpha}{\mu}/\sqrt{\rho}$ as
  in~\eqref{eq:Four-gaussian}.
\end{myexample}

\begin{myexample}
  \label{ex:quartic}
  Consider the \emph{quartic Gaussian}~$g(x) := e^{-x^4} \in \Hol(\RR)$, which is relevant in
  various applications ranging from computer graphics, neutral networks and data
  interpolation~\cite{Boyd2014} to energy correlations in random
  matrices~\cite{GorskaPenson2013}. Obviously, $g$ is annihilated by the Weyl
  operator~$T = \der + 4x^3 \in A_1(\CC)$ whose Fourier transform is given
  by~$(i\tau)^3 \, \hat{T} = 4\der^3 - \tau^4 x$. Therefore the Fourier transform~$\hat{g}(x)$
  of~$g(x)$ satisfies the third-order equation $4 \hat{g}'''(x) - \tau^4x \, \hat{g}(x) = 0$. For
  fixing the solution, we need three initial values~$\hat{g}^{(j)}(0)$ for~$j = 0, 1, 2$. Via the
  differentiation laws~\eqref{eq:diff-laws}, we have $\hat{g}^{(j)}(0) = (i\tau)^j \, \mu_j$ with
  the moments~$\mu_j = \cum_{-\infty}^\infty \, g(x) \, x^j \, dx$. The general solution of the
  third-order equation is hypergeometric and can be obtained by standard symbolic software packages.
  Using \mma, one finds with integration constants named~$c_0, c_1, c_2 \in \CC$ that
  \begin{equation}
    \label{eq:gen-hypergeom}
    \hat{g}(x) = c_0 \, \Phi_{1/2,3/4}(x) + c_1\zeta x\, \Phi_{3/4,5/4}(x) 
    + c_2\zeta^2 x^2\, \Phi_{5/4,3/2}(x)
  \end{equation}
  where~$\zeta := \tau\zeta_8/4$ with~$\zeta_8 = e^{i\tau/8}$ the primitive eighth root of unity,
  and where $\Phi_{a,b}(z) := \pFq{0}{2}{}{a,b}{(\tau z/4)^4}$ denotes a (generalized)
  hypergeometric function with lower parameters~$a,b \in \CC \setminus \ZZ^-$. Note
  that~$\Phi_{a,b}$ is an entire function of the complex variable~$z$.

  The moments are~$(\mu_0, \mu_1, \mu_2) = \big(2\Gamma(5/4), 0, \tfrac{1}{2}\Gamma(3/4)\big)$, the
  derivatives of~\eqref{eq:gen-hypergeom}
  by~$\big(\hat{g}_0, \hat{g}'(0), \hat{g}''(0)\big) = (c_0, \zeta c_1, 2\zeta^2 c_2)$, hence the
  constants are given by~$(c_0, c_1, c_2) = \big(2 \Gamma(5/4), 0, 4i \, \Gamma(3/4)\big)$.
  Substitung these in~\eqref{eq:gen-hypergeom} and
  using~$\Gamma(3/4) \, \Gamma(5/4) = \tfrac{\pi}{2\sqrt{2}}$, this yields the representation
  $\hat{g}(x) = \tfrac{\pi}{\Gamma(3/4)\sqrt{2}} \, \Phi_{1/2,3/4}(x) - \Gamma(3/4) \, \pi^2 \, x^2
  \, \Phi_{5/4,3/2}(x)$.
  Applying Legendre's duplication~$\Gamma(3/4) \Gamma(1/4) = \pi \sqrt{2}$ leads to
  \begin{equation}
    \label{eq:quartic-gaussian-four}
    \hat{g}(x) = \tfrac{\gamma}{2} \, \Phi_{1/2,3/4}(x) - \tfrac{\pi^3\sqrt{2}}{\gamma} 
    \, \Phi_{5/4,3/2}(x) \, x^2,
  \end{equation}
  where have set~$\gamma := \Gamma(1/4)$. Adapting to different normalizations,
  \eqref{eq:quartic-gaussian-four} agrees with~\cite[(7)]{Boyd2014} as~$\hat{g}(k/\tau)$ and
  with~\cite[(6)]{GorskaPenson2013} in the form~$\hat{g}(x\sqrt{2}/\tau)$.

  It is known that~$\gamma$ is transcendental. But more is true: Since by Nesterenko's results one
  knows~\cite[\S1.5.7]{ManinPanchishkin2006} that~$\{\pi, e^\pi, \gamma\}$ is algebraically
  independent over~$\QQ$, we can conclude that~$\gamma$ is in fact \emph{transcendental
    over~$\QQ^\pi(\pi)$}. In \S\ref{sub:gelfond-field} we have seen how one may compute in
  the Gelfond field~$\QQ^\pi$. From~\eqref{eq:quartic-gaussian-four} we see that it is sufficient to
  pass to the bivariate rational function field~$\QQ^\pi(\pi, \gamma)$ for working with the Fourier
  transform~$\hat{g}(x)$.
\end{myexample}

In \S\ref{sub:rational-gauss} have seen that the unit Gaussians~$g = g_{0,1}$ plays a
central role in describing the rational Gaussian closure~$\overline{\Gau(\QQ)}$. It has the
convenient property of being a fixed point (i.e.\@ an eigenfunction with eigenvalue~$1$) of the
Fourier operator~$\Four\colon \Schw(\RR) \to \Schw(\RR)$. This is generalized by the \emph{Hermite
  polynomials}, which are eigenvectors for the four possible eigenvalues~$\pm 1, \pm i$ of~$\Four$.

For doing so, we extend the Gaussian closure~$\overline{\Gau(\RR)} \subset \Schw(\RR)$ to a
Heisenberg twain subalgebra of~$\Schw(\RR)$ just large enough for accommodating the action of the
Weyl algebra. Clearly, such an algebra is given by
\begin{equation}
  \label{eq:weyl-closure}
  \overbrk{\Gau(\RR)} := \CC[x] \otimes_{\CC} \overline{\Gau(\RR)},
\end{equation}
where the tensor product creates a nonunital $\CC$-algebra from the unital~$\CC[x]$ and the
nonunital~$\overline{\Gau(\RR)}$. We refer to~$\overbrk{\Gau(\RR)}$ as the \emph{Gaussian
  $D$-module}.

Of course, we may also define its computable core, the \emph{rational Gaussian
  $D$-module}~$\overbrk{\Gau(\QQ)}$, by making the obvious replacements in~\eqref{eq:weyl-closure},
thus defining
\begin{equation}
  \label{eq:gaussian-weyl-closure}
  \overbrk{\Gau(\QQ)} := \tilde{\QQ}[x] \otimes_{\tilde{\QQ}} \overline{\Gau(\QQ)},
\end{equation}
where~$\tilde{\QQ} := \QQ^\pi(\pi)$ is the extended Gelfond field introduced before
Proposition~\ref{prop:gelf-with-pi}. In the sequel, we shall stick to~$\Gau(\RR)$ for ease of
presentation. Its (pointwise) algebra structure is clear from the defintion~\eqref{eq:weyl-closure},
but for its convolution product and Fourier operator are best understood via Hermite
functions~\cite[\S7.1]{Stade2011}, so let us now study these in some more detail.

\begin{myexample}
  \label{ex:Hermite-functions}
  As in~\cite[Exc.~7.1.3]{Stade2011}, we use the so-called physicists' Hermite polynomials
  $H_n(x) := (2x-\tfrac{d}{dx})^n(1) = (-1)^n e^{x^2} \tfrac{d^n}{dx^n} e^{-x^2}$ and the unit
  Gaussian~$g(x) = e^{-\pi x^2}$ to define the scaled \emph{Hermite functions}
  $\eta_n(x) = c_n \, g(x) \, H_n(\tau^{1/2} x)$ with normalization
  constant~$1/c_n^2 = 2^n n! \sqrt{\pi}$ chosen to suit our conventions in this paper. Compared to
  the standard Hermite functions~$\phi_n$ of~\cite[\S7.2.1]{Thaller2000}, which are orthonormal
  in~$L^2(\RR)$, we have~$\eta_n(x) = \phi_n(\tau^{1/2} x)$.
  
  % THIS APPEARS TO BE TOO CUMBERSOME IN THIS EXAMPLE:
  % We can now proceed as in Example~\ref{ex:quartic}.  Thus we compute
  % %
  % \begin{equation*}
  %   (\mu_0[\eta_n], \mu_1[\eta_n]) = \frac{\sqrt{n!}}{2^k k!} \times \begin{cases}
  %      (0, \pi^{-3/4} \, ) & \text{for odd $n=2k+1$}\\
  %      (\pi^{-1/4}, 0) & \text{for even $n=2k$}
  %   \end{cases}
  % \end{equation*}
  % %
  % for the first two moments. Since as always~$(i\tau)^j \mu_j[\eta_n] = \hat{\eta}_n^{(j)}(0)$,
  % this
  % yields the initial values~$\hat{\eta}_n(0) = \mu_0[\eta_n]$
  % and~$\hat{\eta}_n'(0) = i\tau\mu_1[\eta_n]$. Hence~$\eta_n(0)$ is given by $0$ for odd~$n$ and
  % by $(-1)^k \, \pi^{-1/4} \tfrac{\sqrt{n!}}{2^k k!}$ for even~$n=2k$ while $\eta_n'(0)$ is given
  % by $0$ for even~$n$ and by~$(-1)^k \, \pi^{1/4} \tfrac{\sqrt{n!}}{2^{k-1}k!}$ for odd~$n=2k+1$.

  Since~$\Four^4 = 1$, it is clear that~$\Four$ has eigenvalues~$\pm 1, \pm i$ and that~$\Four$ is
  an algebraic operator (an algebraic element of the linear operator algebra according
  to~\cite[Thm.~1.4.1]{PrzeworskaRolewicz1988}. The corresponding eigenfunctions are the~$\eta_n$
  whose scaling was specifically chosen to ensure~$\Four \eta_n = i^n \, \eta_n$. This is a standard
  result of Fourier theory. For example, the proof in~\cite[\S7.6]{Strichartz1994}, with slightly
  different normalization, is essentially algebraic: It involves only the ladder operators
  associated to the \emph{harmonic oscillator}~$H = D^2 + x^2$, whose eigenvalues determine the
  annihilators~$T_n = D^2 + x^2 - \tfrac{2n+1}{\tau} \in A_1(\CC)$ of the corresponding~$\eta_n$.
  Except for the different sign convention for~$\Four$, the Fourier
  pairs~$[\eta_n \mapsto i^n \eta_n]$ coincide with~\cite[Exc.~8.25]{Bracewell1986}.

  As an \emph{algebraic operator} with four simple eigenvalues~$\pm 1, \pm i$, the
  space~$\overbrk{\Gau(\RR)}$ splits~\cite[Cor.~1.4.1]{PrzeworskaRolewicz1988} into the four
  associated eigenspaces $E_k = [\eta_n \mid n \equiv k \pmod 4]$ with the corresponding
  projectors~$P_k$. Over the larger space, the finite linear combinations inherent in the spans and
  projectors are relaxed to series subject to suitable growth
  constraints~\cite[\S7.6]{Strichartz1994}. When the eigenvalues are $n$-th roots of unity
  (in~\cite[Exc.~1.4.1]{PrzeworskaRolewicz1988} the corresponding operators are named ``involutions
  of order $n$''), the projectors associated to~$A$ may be expressed as averages over the distinct
  iterates~$A^k$ weighted by the eigenvalues~$\lambda_k$. The simplest example is the
  symmetrizer~$P_1 = \Par = (1+A)/2$ along with the corresponding antisymmetrizer~$P_{-1} = (1-A)/2$
  in the case~$n=2$. In the present setting with~$n=4$, we have a fourfold
  sum~\cite[(1.4.27)]{PrzeworskaRolewicz1988}; from a purely analytic viewpoint such a decomposition
  is of course ``not considered very interesting''~\cite[p.~132]{Strichartz1994}.

  What is more important, at least from an algebraic perspective, is the fact that there are
  explicit formulae, such as~(13.9) and~Exc.~13.1.6 in~\cite{ArfkenWeberHarris2013}, for
  \emph{changing between the monomial to the Hermite basis} in~$\CC[x]$, and hence beween the
  corresponding bases of~$\overbrk{\Gau(\RR)}$.
\end{myexample}

For verifying that~$\overbrk{\Gau(\RR)}$ is closed under~$\Four$ and also computable (on the
Gaussian $D$-submodule, that is), we compute the action of~$\Four$ on the standard
basis~$x^k e_\alpha \, g_{\mu,\rho}$. Let us first restrict to the case~$\alpha=0$. Using
\eqref{eq:four-transf-weyl} and~\eqref{eq:Four-gaussian}, we
have~$\Four(x^k g_{\mu,\rho}) = (i\tau)^{-k} \tfrac{d^k}{dx^k} \, \rho^{-1/2} \, e_\mu(x) \,
g_{0,1/\rho}(x)$.
At this point, we need the repeated derivatives of Gaussians, which may be expressed in terms of
Hermite polynomials as
\begin{equation}
  \label{eq:gaussian-derivative}
  g_{\mu,\rho}^{(j)}(x) = (-1)^j \, (\pi\rho)^{j/2} \, H_j\big( (\pi\rho)^{1/2}
  (x-\mu) \big) \, g_{\mu,\rho}(x),
\end{equation}
as one can immediately check by induction on~$k$. Using the Leibniz rule for repeated
differentiation, \eqref{eq:diff-laws}, and~$e_\mu^{(j)} = (i\tau\mu)^j \, e_\mu$ yields
\begin{equation}
  \label{eq:four-monom-gauss-raw}
  \Four(x^k g_{\mu,\rho}) = \tfrac{\mu^k}{\sqrt{\rho}} \, e_\mu(x) \, g_{0,1/\rho}(x) 
  \sum_{j=0}^k \tbinom{k}{j} \big( \tfrac{-1}{4\pi\rho\mu^2} \big)^{j/2} H_j(\sqrt{\pi/\rho} \, x) .
\end{equation}
This can be simplified by using the so-called Appell identity~\cite[\S4.2.1]{Roman2005}, which in
our settings is given by
\begin{equation*}
  H_k \big( \tfrac{y+s}{(2\nu)^{1/2}} \big) = \big( \tfrac{\nu}{2} \big)^{-k/2} \sum_{j=0}^k
  \tbinom{k}{j} \big( \tfrac{\nu}{2} \big)^{j/2} H_j \big( \tfrac{y}{(2\nu)^{1/2}} \big) \, s^{k-j} .
\end{equation*}
Substituting~$\nu = -1\,/\,2\mu^2\pi\rho$, $y = ix\,/\rho\mu$, $s=1$ in this,
\eqref{eq:four-monom-gauss-raw} becomes
\begin{equation}
  \label{eq:four-monom-gauss-intermed}
  \Four(x^k g_{\mu,\rho}) = \tfrac{1}{\sqrt{\rho}} \, \big( \tfrac{i}{2\sqrt{\pi\rho}} \big)^k
  H_k \big( \sqrt{\pi/\rho} \, (x-i\mu\rho) \big) \, e_\mu(x) \, g_{0,1/\rho}(x) .
\end{equation}
Before passing on to the general case, it is expedient to introduce the Hermite polynomials with
variance~$\nu$ as in~\cite[\S4.2.1]{Roman2005} by
\begin{equation}
  \label{eq:hermitepoly-var}
  H_k^{[\nu]}(x) := \big( \tfrac{\nu}{2} \big)^{k/2} \, H_k(x/\sqrt{2\nu}) .
\end{equation}
Using these, \eqref{eq:four-monom-gauss-intermed} is given by
\begin{equation}
  \label{eq:four-monom-gauss-final}
  \Four(x^k g_{\mu,\rho}) = \tfrac{(i/\rho)^k}{\sqrt{\rho}} \, H_k^{[\rho/\tau]}(x-i\mu\rho) \,
  e_\mu(x) \, g_{0,1/\rho}(x) ,
\end{equation}
and now the unmodulated special case immediately generalizes via~\eqref{eq:fourop-par-id} to
\begin{equation}
  \label{eq:four-monom-osc-gauss}
  \Four(x^k e_\alpha \, g_{\mu,\rho}) = \tfrac{\inner{\alpha}{\mu}}{\sqrt{\rho}} \, (i/\rho)^k \,
  H_k^{[\rho/\tau]}(x+\alpha-i\mu\rho) \, e_\mu(x) \, g_{-\alpha,1/\rho}(x) ,
\end{equation}
which at the same time generalizes~\eqref{eq:Four-gaussian}.

Closure of~$\overbrk{\Gau(\RR)}$ under $\star$ follows
from~$s_1 \star s_2 = \Four^{-1}(\Four s_1 \cdot \Four s_2)$ for
all~$s_1, s_2 \in \overbrk{\Gau(\RR)}$, where we may use~\eqref{eq:four-monom-osc-gauss} for
computing~$\Four$ as well as $\Four^{-1} = \Four \circ \Par$ in the~$x^k e_\alpha \, g_{\mu,\rho}$
basis. However, this may not be the most promising route for obtaining an explicit form of the
\emph{convolution law} since products of translated Hermite polynomials with different variances
tend to be cumbersome.

Substituting directly into the definition of convolution, one gets the
product~$p(x) := x^{k_1} e_{\alpha_1} \, g_{\mu_1,\rho_1} \star x^{k_2} e_{\alpha_2} \,
g_{\mu_2,\rho_2}$ as
\begin{equation}
  \label{eq:conv-law-start}
  p(x) = \inner{\alpha_1}{x} \int_{-\infty}^\infty \inner{\alpha_2-\alpha_1}{y} \, (x-y)^{k_1} \, y^{k_2}
  g_{x-\mu_1,\rho_1}(y) \, g_{\mu_2,\rho_2}(y) \, dy ,
\end{equation}
using~$g_{\mu_1,\rho_1}(x-y) = g_{x-\mu_1,\rho_1}(y)$ and the definition of oscillating
exponentials. We observe that~\eqref{eq:conv-law-start} has the form of a Fourier transform with
respect to~$y$, and we may apply~\eqref{eq:gauss-ptw-prod} for multipliying the two Gaussians in the
integrand to obtain a Gaussian with paramters~$\hat{\rho} := \rho_1 + \rho_2$
and~$\hat{\mu}_x := \tfrac{\rho_1x-\rho_1\mu_1+\rho_2\mu_2}{\rho_1+\rho_2}$.
Thus we can simplify~\eqref{eq:conv-law-start} to
\begin{equation}
  \label{eq:conv-law-mid}
  p(x) = e_{\alpha_1}(x) \, g_{\mu_1+\mu_2,\rho_1 \boxedplus \rho_2}(x) \, \Four_y \big( (x-y)^{k_1}
  y^{k_2} g_{\hat{\mu}_x,\hat\rho}(y) \big) (\Delta\alpha) ,
\end{equation}
with~$\Delta\alpha := \alpha_2-\alpha_1$, and we can apply~\eqref{eq:four-monom-gauss-final} to
compute the Fourier transform in~\eqref{eq:conv-law-mid} as
\begin{align*}
  & \quad \Four_y \big( \dots \big) (\Delta\alpha) = 
    \sum_{j=0}^{k_1} \tbinom{k_1}{j} \, (-1)^{k_1+j} \, x^j \, \Four_y \big( y^{k-j}
    g_{\hat{\mu}_x,\hat\rho}(y) \big) (\Delta\alpha)\\
  & = \tfrac{(i/\rho)^k \,
    g_{0,1/\hat\rho}(\Delta\alpha)}{\hat\rho^{1/2}} \, \inner{\Delta\alpha}{\hat{\mu}_x}
    \sum_{j=0}^{k_1} \tbinom{k_1}{j} \, (-1)^{k_1} (i \hat\rho x)^j
    H_{k-j}^{[\hat\rho/\tau]} (\Delta\alpha-i\hat{\mu}_x\hat\rho) ,
\end{align*}
where~$k := k_1+k_2$. We have~%
$\inner{\Delta\alpha}{\hat{\mu}_x} \, e_{\alpha_1}(x) =
\inner{\alpha_1-\alpha_2\,}{\,\mu_1 -_\rho \mu_2} \, e_{\alpha}(x)$
with the same compound frequency~$\alpha := \alpha_2 +_\rho \alpha_1$
as for the convolution product~\eqref{eq:ext-conv-gauss} without
monomials. We thus obtain
\begin{equation*}
  p(x) = (-1)^{k_1} \, (i/\rho)^k \, \bar{\gamma}_\star \, \Phi_{k_1,k_2}^{[\rho/\tau]}(i \hat\rho x,
  \Delta\alpha - i\hat{\mu}_x\hat\rho) \, e_\alpha g_{\mu,\rho},
\end{equation*}
where~$\bar{\gamma}_\star$ is the extended convolutive Gaussian
cocycle~\eqref{eq:ext-convol-cocyc} and where
\begin{equation}
  \label{eq:conv-pol}
  \Phi_{m,n}^{[\nu]}(\xi, \eta) := \sum_{j=0}^{m} \tbinom{m}{j} \, \xi^j \, H_{m+n-j}^{[\nu]} (\eta) 
\end{equation}
is a certain polynomial in~$(\xi,\eta)$ of bidegree~$(m,n)$.

For identifying this polynomial, we apply the methods of the umbral
calculus~\cite[\S2.2]{Roman2005}, viewing differential operators as power series in the formal
variable~$t$, for which we may substitute~$\der_\xi$ or~$\der_\eta$ at will. Since the Hermite
polynomials~$H^{[\nu]}$ form an Appell sequence~\cite[\S4.2]{Roman2005} with generator
$g(t) = e^{\nu t^2/2}$, we have the relation
$H_{m+n-j}^{[\nu]}(\eta) = e^{-\nu \der_\eta^2/2} \, \eta^{m+n-j}$ so that~\eqref{eq:conv-pol}
immediately yields the nice representation
\begin{equation}
  \label{eq:phi-rep}
  \Phi_{m,n}^\nu(\xi) = e^{-\nu \der_\eta^2/2} \, (\xi+\eta)^m \, \eta^n .
\end{equation}
Computationally speaking, the task of determining~$\Phi_{m,n}^{[\nu]}$ is in a sense already
achieved at this point: Expanding the exponential series in~$\der_\eta$ up to order~$m+n$, one can
determine every polynomial~$\Phi_{m,n}^{[\nu]}(\xi,\eta)$ explicitly for any specific values
of~$m,n$ and~$\nu$. We will provide a concrete practical calculation scheme below. But for
theoretical purposes, however, it is important to identify the precise ``umbral species''
of~$\Phi_{m,n}^{[\nu]}$.

\begin{proposition}
  \label{prop:sheff-op}
  The polynomials~$\Phi_{m,n}^{[\nu]}(\xi,\eta) \in \QQ(\eta)[\xi]$ form an Appell sequence with
  Sheffer
  operator~$g(t)^{-1} = e^{\eta t-\nu t^2/2} \, H_n^{[\nu]}(\eta-\nu t)$.
\end{proposition}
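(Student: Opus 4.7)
The plan is to verify both halves of the claim by direct computation on the operator representation~\eqref{eq:phi-rep}.

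First I would dispatch the Appell property $\der_\xi \Phi_{m,n}^{[\nu]} = m \, \Phi_{m-1,n}^{[\nu]}$. Since $\der_\xi$ commutes with $e^{-\nu \der_\eta^2/2}$ and with multiplication by $\eta^n$, applying $\der_\xi$ to~\eqref{eq:phi-rep} only affects the factor $(\xi+\eta)^m$, yielding $m(\xi+\eta)^{m-1}$, which is exactly $m\,\Phi_{m-1,n}^{[\nu]}$. Hence $(\Phi_{m,n}^{[\nu]})_{m \ge 0}$ is an Appell sequence in~$\xi$ over the coefficient field~$\QQ(\eta)$.

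Next I would identify the Sheffer operator by computing the exponential generating function
\[
F(\xi,\eta;t) := \sum_{m \ge 0} \Phi_{m,n}^{[\nu]}(\xi,\eta) \, \frac{t^m}{m!}
\]
and showing it equals $g(t)^{-1} e^{\xi t}$. Interchanging sum and operator in~\eqref{eq:phi-rep} gives $F = e^{-\nu \der_\eta^2/2} \bigl( e^{(\xi+\eta)t} \, \eta^n \bigr) = e^{\xi t} \cdot e^{-\nu \der_\eta^2/2} \bigl( e^{\eta t} \, \eta^n \bigr)$. The key intertwining is $\der_\eta \, e^{\eta t} = e^{\eta t} (\der_\eta + t)$, from which $\der_\eta^k \, e^{\eta t} = e^{\eta t} (\der_\eta + t)^k$ by induction, and consequently
\[
e^{-\nu \der_\eta^2/2} \, e^{\eta t} = e^{\eta t} \, e^{-\nu (\der_\eta + t)^2/2} = e^{\eta t} \, e^{-\nu t^2/2} \, e^{-\nu t \der_\eta} \, e^{-\nu \der_\eta^2/2},
\]
where we used that $\der_\eta^2$ and $\der_\eta$ commute to split the exponential.

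Applying this operator identity to $\eta^n$ and invoking the Appell generator $H_n^{[\nu]}(\eta) = e^{-\nu \der_\eta^2/2} \eta^n$ for the variance-$\nu$ Hermite polynomials, together with the elementary shift $e^{-\nu t \der_\eta} f(\eta) = f(\eta - \nu t)$, yields
\[
e^{-\nu \der_\eta^2/2} \bigl( e^{\eta t} \, \eta^n \bigr) = e^{\eta t - \nu t^2/2} \, H_n^{[\nu]}(\eta - \nu t),
\]
so that $F(\xi,\eta;t) = e^{\eta t - \nu t^2/2} H_n^{[\nu]}(\eta - \nu t) \cdot e^{\xi t}$, which is precisely $g(t)^{-1} e^{\xi t}$ for the Sheffer operator stated.

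The only subtlety is justifying the operator manipulations, but since every exponential series acts on polynomials (in $\xi$ of degree~$m$ and in $\eta$ of finite degree once expanded), all series terminate and no analytic convergence issue arises; all identities are purely formal in~$\QQ(\eta)[\xi][[t]]$. I expect this bookkeeping---tracking the commutation of $e^{-\nu \der_\eta^2/2}$ with $e^{\eta t}$ and keeping the Appell generator $g(t)$ cleanly separated from the Sheffer factor---to be the only nonroutine step.
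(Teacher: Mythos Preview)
Your proof is correct and complete. The Appell step is identical to the paper's, but for the Sheffer operator you take a genuinely different route: the paper starts from the explicit sum~\eqref{eq:conv-pol}, rewrites $\Phi_{m,n}^{[\nu]}$ as $\sum_j \tfrac{1}{j!} H_{j+n}^{[\nu]}(\eta)\,\der_\xi^j \xi^m$ to read off $g(t)^{-1} = \sum_j H_{j+n}^{[\nu]}(\eta)\,t^j/j!$, then recognises this as $\der_t^n$ applied to the Hermite generating function $e^{\eta t - \nu t^2/2}$, and finally proves $\der_t^n e^{\eta t - \nu t^2/2} = e^{\eta t - \nu t^2/2} H_n^{[\nu]}(\eta-\nu t)$ by induction on~$n$ via the Hermite recurrence. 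You instead work entirely from the operator representation~\eqref{eq:phi-rep}, conjugating $e^{-\nu\der_\eta^2/2}$ through $e^{\eta t}$ using the Weyl-type intertwining $\der_\eta e^{\eta t} = e^{\eta t}(\der_\eta+t)$ and then splitting the commuting exponential. Your argument is more operator-algebraic and avoids both the index shift and the induction on~$n$; the paper's argument is more series-combinatorial and makes the coefficient structure $H_{j+n}^{[\nu]}(\eta)$ of $g(t)^{-1}$ visible along the way, which feeds directly into the subsequent proposition on the explicit form of $g(t)$.
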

\begin{proof}
  Differentiating~\eqref{eq:phi-rep} with respect to~$\xi$ and with~$\eta=\eta_0$, $n=n_0$ fixed
  leads to $\der_\xi \, \Phi_{m,n_0}^{[\nu]}(\eta_0) = m \, \Phi_{m-1,n_0}^{[\nu]}(\eta_0)$.
  Regarded as sequence in the polynomial ring~$\QQ(\eta)[\xi]$, this exhibits~$\Phi_{m,n_0}(\eta_0)$
  as a Sheffer sequence for~$\big(t, g(t)\big)$ according to~\cite[Thm.~2.3.7]{Roman2005}. Since
  Appell sequences are by definition~\cite[\S2.3]{Roman2005} Sheffer sequences for~$f(t) = t$, this
  establishes the first claim. (We shall henceforth drop the subscript~$0$ indicating the fixed
  values.)

  For identifying the Appell generator~$g(t)$ or, equivalently~\cite[(3.5.1)]{Roman2005}, the
  induced Sheffer operator~$g(t)^{-1}$, we use the fact~\cite[\S3.5]{Roman2005} that the latter maps
  the standard monomials~$\xi^m \in \QQ(\eta)[\xi]$ to the Appell poly\-nomials
  $\Phi_{m,n}^{[\nu]}(\xi,\eta) \in \QQ(\eta)[\xi]$. From~\eqref{eq:conv-pol} we have
  \begin{equation*}
    \Phi_{m,n}^{[\nu]}(\xi, \eta) = \sum_{j=0}^{\infty} \tfrac{1}{j!} \, H_{j+n}^{[\nu]}
    (\eta) \, m\ffact{j} \, \xi^{m-j} ,
  \end{equation*}
  which, together with~$m\ffact{j} \, \xi^{m-j} = t^j \xi^m$ for~$t = \der_\xi$, yields the Sheffer
  operator in the form~%
  $g(t)^{-1} = \sum_j H_{j+n}^{[\nu]}(\eta) \, \tfrac{t^j}{j!} \in \QQ(\eta)[[t]]$. Applying an
  index shift and the relation~$\der_t^n \, \tfrac{t^j}{j!} = \tfrac{t^{j-n}}{(j-n)!}$, this means
  \begin{equation*}
    g(t)^{-1} = \der_t^n \, \sum_{j=0}^\infty H_j^{[\nu]}(\eta) \, \tfrac{t^j}{j!} = \der_t^n \, 
    e^{\eta t-\nu t^2/2},
  \end{equation*}
  where we have used the Hermite generating function~\cite[\S4.2]{Roman2005} for eliminating the
  sum. For finishing the proof, one applies induction to show
  $\der_t^n \, e^{\eta t-\nu t^2/2} = e^{\eta t-\nu t^2/2} \, H_n^{[\nu]}(\eta-\nu t)$. The base case
  follows from $H_0^{[\eta]} = 1$, the induction step from the recurrence
  relation~\cite[(4.2.2)]{Roman2005}.
\end{proof}

We have \emph{determined} the Appell generator~$g(t)$ in so far as we know its
reciprocal~$g(t)^{-1}$ from Lemma~\ref{prop:sheff-op} and can then apply the usual recursive formula
for finding the coefficients of~$g(t)$. For actually~\emph{computing}~$g(t)$, we would prefer an
explicit representation. To this end, we shall employ the so-called \emph{generalized Feldheim
  identity}~\cite[(13.47)]{ArfkenWeberHarris2013} for computing powers of Hermite polynomials, which
we quote here for easier reference with the following notational conventions: For a
multi-index~$\lambda \in \NN^{m-1}$ we set~$|\lambda|_i := \lambda_1 + \cdots + \lambda_{i-1}$
and~$|\lambda| := |\lambda|_m$. Note that this implies~$|\lambda|_1 = 0$.

\begin{lemma}
  \label{lem:feldheim}
  For~$k>0$, the $k$-th Hermite power is given by
  \begin{equation*}
    H_n^{[\nu]}(\zeta)^k = \!\! \sum_{\lambda \in \Lambda(n,k)} a_\lambda \, H_{nk-2|\lambda|}^{[\nu]}(\zeta)
  \end{equation*}
  with coefficients
  \begin{equation*}
    a_\lambda = \prod_{i=1}^{k-1} \binom{n}{\lambda_i} \binom{ni-2|\lambda|_i}{\lambda_i} \,
    \nu^{\lambda_i} \lambda_i!
  \end{equation*}
  and~%
  $\Lambda(n,k) = \{ \lambda \in \NN^{k-1} \mid \smash{\underset{i=1,\dots,k-1\,}{\forall}} \, 
  0 \le \lambda_i \le \min(n, ni-2|\lambda|_i)\}$ as summation range.
\end{lemma}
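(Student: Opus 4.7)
My plan is to prove the identity by induction on $k \ge 1$, using as the base of the recursion the classical two-factor Feldheim identity
\begin{equation*}
  H_m^{[\nu]}(\zeta) \, H_n^{[\nu]}(\zeta) = \sum_{j=0}^{\min(m,n)} \binom{m}{j} \binom{n}{j} \nu^j j! \, H_{m+n-2j}^{[\nu]}(\zeta),
\end{equation*}
which is the variance-$\nu$ version of~\cite[(13.46)]{ArfkenWeberHarris2013}; for the normalization~\eqref{eq:hermitepoly-var} this follows by rescaling from the standard physicists' case. This identity is the true combinatorial heart of the lemma, and once it is in hand the multi-factor statement is a purely bookkeeping assertion about iterated sums.

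For the base case $k=1$, the index set $\Lambda(n,1) = \NN^0 = \{()\}$ contains only the empty tuple, the empty product yields $a_{()}=1$, and $|{()}| = 0$, so the right-hand side collapses to $H_n^{[\nu]}(\zeta)$. For the inductive step, assume the formula for a fixed $k\ge 1$; multiplying both sides by $H_n^{[\nu]}(\zeta)$ and applying the two-factor Feldheim identity to each summand gives
\begin{equation*}
  H_n^{[\nu]}(\zeta)^{k+1}
  = \sum_{\lambda\in\Lambda(n,k)} a_\lambda \sum_{\lambda_k=0}^{\min(n,\,nk-2|\lambda|)}
    \binom{n}{\lambda_k}\binom{nk-2|\lambda|}{\lambda_k}\nu^{\lambda_k}\lambda_k!
    \, H_{n(k+1)-2(|\lambda|+\lambda_k)}^{[\nu]}(\zeta).
\end{equation*}
The task then is to recognize the double sum as a sum over $\Lambda(n,k+1)$ after identifying an extended multi-index $\lambda' = (\lambda_1,\dots,\lambda_{k-1},\lambda_k) \in \NN^k$. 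Under this identification one has $|\lambda'|_k = |\lambda|$ and $|\lambda'| = |\lambda|+\lambda_k$, so the inner summation bound $\lambda_k \le \min(n,\,nk-2|\lambda|) = \min(n,\,nk - 2|\lambda'|_k)$ is exactly the new constraint in $\Lambda(n,k+1)$ for the $k$-th component. The argument of $H^{[\nu]}_{\bullet}$ likewise becomes $n(k+1) - 2|\lambda'|$, matching the shape of the claim.

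The main obstacle, and where most of the care is required, is matching the coefficient. The inductive hypothesis contributes $a_\lambda = \prod_{i=1}^{k-1}\binom{n}{\lambda_i}\binom{ni-2|\lambda|_i}{\lambda_i}\nu^{\lambda_i}\lambda_i!$, while the Feldheim step contributes exactly the factor $\binom{n}{\lambda_k}\binom{nk-2|\lambda|}{\lambda_k}\nu^{\lambda_k}\lambda_k!$. Using $|\lambda| = |\lambda'|_k$ in the latter, the product is precisely $\prod_{i=1}^{k}\binom{n}{\lambda_i}\binom{ni-2|\lambda'|_i}{\lambda_i}\nu^{\lambda_i}\lambda_i! = a_{\lambda'}$, closing the induction.

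As an alternative route that sidesteps the explicit bookkeeping, one may prove the identity by generating functions: the generating function $\sum_n H_n^{[\nu]}(\zeta) \, t^n/n! = e^{\zeta t - \nu t^2/2}$ makes the $k$-fold product amenable to a single Hermite expansion, and extracting the coefficient of $t^{nk}/(nk)!$ against a $k$-variable version of this generating identity would reproduce the same coefficients after a multinomial manipulation. I would favor the inductive route, however, since it keeps the combinatorial constraints in $\Lambda(n,k)$ transparent and makes the match with the Sheffer-operator computations around~\eqref{eq:phi-rep} and Proposition~\ref{prop:sheff-op} explicit.
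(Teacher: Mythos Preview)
The paper does not actually prove this lemma: it is introduced as a quotation of the generalized Feldheim identity from \cite[(13.47)]{ArfkenWeberHarris2013}, and no argument is supplied. Your inductive proof is correct and is exactly the natural way to establish the result: the two-factor identity \cite[(13.46)]{ArfkenWeberHarris2013} handles each multiplication by a further copy of~$H_n^{[\nu]}$, and your bookkeeping---identifying $|\lambda'|_k = |\lambda|$, $|\lambda'| = |\lambda| + \lambda_k$, and checking that the summation bound on~$\lambda_k$ is precisely the new constraint in~$\Lambda(n,k+1)$---is accurate. The coefficient match is also correct, since the new factor from the Feldheim step is exactly the $i=k$ term of the product defining~$a_{\lambda'}$.

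One small remark: you refer to the two-factor identity as the ``base of the recursion'', but strictly speaking your base case is $k=1$ (which is trivially verified, as you note), and the two-factor identity is what drives the inductive step. This is only a matter of phrasing. The alternative generating-function route you mention is also sound in principle, though as you say the inductive approach keeps the structure of~$\Lambda(n,k)$ visible.
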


We can now present the \emph{Appell generator}~$g(t)$ in a fairly explicit representation. Note that
it does contain Hermite reciprocals, but unlike the raw expression of~$g(t)$, they do not involve
the formal parameter~$t$.

\begin{proposition}
  The Appell generator~$g(t)$ of~$\Phi_{m,n}^{[\nu]}(\xi,\eta) \in \QQ(\eta)[\xi]$, as treated in
  Lemma~\ref{prop:sheff-op}, is given by
  \begin{align*}
    g(t) = \sum_{l=0}^\infty \sum_{m=0}^l \sum_{k=0}^m & \sum_{\lambda \in \Lambda(n,k)}
    \tbinom{l}{m} \tbinom{m+1}{k+1} \, (-1)^{k+l} \: \nu^m \, (nk-2|\lambda|)\ffact{m} \;
    a_\lambda \, \times {}\\
    & \tfrac{H_{nk-m-2|\lambda|}^{[\nu]}(\eta) \, H_{l-m}^{[-\nu]}(\eta)}{%
      H_n^{[\nu]}(\eta)^{k+1}}  \, \tfrac{t^l}{l!}
  \end{align*}
  where~$\Lambda(n,k)$ and~$a_\lambda$ are as in Lemma~\ref{lem:feldheim}.
\end{proposition}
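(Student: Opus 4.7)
The plan is to start from Lemma~\ref{prop:sheff-op}, which gives $g(t)^{-1}=e^{\eta t-\nu t^2/2}H_n^{[\nu]}(\eta-\nu t)$, so that
\[
  g(t)=e^{-\eta t+\nu t^2/2}\cdot\frac{1}{H_n^{[\nu]}(\eta-\nu t)}
\]
in the formal power series ring $\QQ(\eta)[[t]]$, and to expand each factor separately before taking the Cauchy product. The first factor is immediate: $H_\cdot^{[-\nu]}$ is the Appell sequence with generator $e^{-\nu t^2/2}$, hence $\sum_p H_p^{[-\nu]}(\eta)\,t^p/p!=e^{\eta t+\nu t^2/2}$; substituting $t\mapsto -t$ gives $e^{-\eta t+\nu t^2/2}=\sum_p(-1)^pH_p^{[-\nu]}(\eta)\,t^p/p!$, which accounts for the $H_{l-m}^{[-\nu]}(\eta)$ factor after reindexing $p=l-m$.

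The second factor requires more work. Setting $P(t):=H_n^{[\nu]}(\eta-\nu t)$ and $h:=H_n^{[\nu]}(\eta)$, the geometric series gives $1/P(t)=\sum_{k\ge0}(-1)^k(P(t)-h)^k/h^{k+1}$, a valid formal identity since $P(t)-h=O(t)$. For the coefficient $[t^m]\,1/P(t)$ only $k\le m$ contributes; expanding by the binomial theorem $(P(t)-h)^k=\sum_{j=0}^k\binom{k}{j}(-h)^{k-j}P(t)^j$, exchanging the order of summation, and applying the hockey-stick identity $\sum_{k=j}^m\binom{k}{j}=\binom{m+1}{j+1}$ produces
\[
  [t^m]\,\frac{1}{P(t)}\;=\;\sum_{k=0}^m\binom{m+1}{k+1}\,\frac{(-1)^k\,[t^m]\,P(t)^k}{h^{k+1}},
\]
which explains the $\binom{m+1}{k+1}$ and $H_n^{[\nu]}(\eta)^{k+1}$ denominators in the target formula.

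It remains to compute $[t^m]\,P(t)^k$. First, the generalized Feldheim identity (Lemma~\ref{lem:feldheim}) applied at the argument $\eta-\nu t$ yields
\[
  P(t)^k \;=\; \sum_{\lambda\in\Lambda(n,k)} a_\lambda\,H_{nk-2|\lambda|}^{[\nu]}(\eta-\nu t).
\]
Next, the Appell shift formula $H_N^{[\nu]}(x+y)=\sum_m\binom{N}{m}H_{N-m}^{[\nu]}(x)y^m$, applied with $x=\eta$, $y=-\nu t$, $N=nk-2|\lambda|$, gives
\[
  [t^m]\,H_N^{[\nu]}(\eta-\nu t)\;=\;\frac{N\ffact{m}}{m!}(-\nu)^m\,H_{N-m}^{[\nu]}(\eta),
\]
so that $[t^m]\,P(t)^k=\sum_{\lambda}a_\lambda\,(nk-2|\lambda|)\ffact{m}(-\nu)^m H_{nk-2|\lambda|-m}^{[\nu]}(\eta)/m!$, explaining the $\nu^m$, the falling-factorial, and the Hermite $H_{nk-m-2|\lambda|}^{[\nu]}(\eta)$ numerator.

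Finally, the Cauchy product $[t^l]\,g(t)=\sum_{m=0}^l[t^{l-m}]\,e^{-\eta t+\nu t^2/2}\cdot[t^m]\,1/P(t)$, multiplied by $l!$ and using $l!/(l-m)!=m!\binom{l}{m}$, yields the stated formula after the sign consolidation $(-1)^{l-m}(-1)^k(-\nu)^m=(-1)^{k+l}\nu^m$, the $m!$ cancelling against the Appell denominator. The main obstacle is purely bookkeeping: ensuring that the three nested expansions (exponential, geometric, Feldheim--Appell) compose correctly, that all sums remain finite at each fixed order $t^l$, and that the hockey-stick collapse is applied at the right moment so as to produce $\binom{m+1}{k+1}$ rather than the naively divergent $\sum_{k\ge j}\binom{k}{j}$.
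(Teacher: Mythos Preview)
Your proof is correct and follows essentially the same architecture as the paper's: factor $g(t)=e^{-\eta t+\nu t^2/2}\cdot H_n^{[\nu]}(\eta-\nu t)^{-1}$, expand the exponential via the Hermite generating function for $H^{[-\nu]}$, expand the reciprocal so as to produce the $\binom{m+1}{k+1}$ coefficients, apply Feldheim to $P(t)^k$, extract the $t^m$-coefficient via the Appell differentiation/shift rule, and convolve. The sign bookkeeping and the final $m!\binom{l}{m}$ consolidation match.

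The one genuine point of divergence is how the identity
\[
  [t^m]\,\frac{1}{P(t)}=\sum_{k=0}^m\binom{m+1}{k+1}\,\frac{(-1)^k}{h^{k+1}}\,[t^m]\,P(t)^k
\]
is obtained. The paper simply cites Leslie's formula for the $m$-th derivative of a reciprocal, $\partial^m(1/h)=\sum_{k=0}^m\binom{m+1}{k+1}(-1)^k\,\partial^m(h^k)/h^{k+1}$, and evaluates at $t=0$. You instead derive it from scratch: geometric series in $(P(t)-h)/h$, binomial expansion of $(P(t)-h)^k$, swap of summation, and the hockey-stick identity $\sum_{k=j}^m\binom{k}{j}=\binom{m+1}{j+1}$. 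Your route is more self-contained and makes transparent why the otherwise mysterious $\binom{m+1}{k+1}$ appears; the paper's route is shorter but relies on an external reference. Either way the remaining steps (Feldheim, Appell shift, Cauchy product) are identical.
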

\begin{proof}
  From Lemma~\ref{prop:sheff-op}, we
  have~$g(t) = e^{-\eta t+\nu t^2/2} \, H_n^{[\nu]}(\eta-\nu t)^{-1}$, so the first task it so find
  the reciprocal of~$\smash{H_n^{[\nu]}(\eta-\nu t)}$. Since we are dealing with formal power
  series, it is sufficient to determine its Taylor coefficients. To this end, we utilize the nice
  generic formula~\cite{Leslie1991} for iterated derivatives of the reciprocal of a function that we
  may take to be a power series~$h \in K[[t]]$.  Writing~$\der = \der_t$, this formula reads
  \begin{equation}
    \label{eq:5}
    \der^m(1/h) = \sum_{k=0}^m \tbinom{m+1}{k+1} \, (-1)^k \: \der^m(h^k)/h^{k+1} ,
  \end{equation}
  and we apply it to~$h = H_n^{[\nu]}(\eta-\nu t)$. We substitute~$\zeta = \eta-\nu t$ in
  Lemma~\ref{lem:feldheim}, whence we
  obtain~$\der^m h^k = \sum_\lambda a_\lambda \, (-\nu)^m \, (nk-2|\lambda|)\ffact{m} \; \tilde{h}$
  with\linebreak%
  $\tilde{h} := \smash{H_{nk-m-2|\lambda|}^{[\nu]}(\eta-\nu t)}$, by the usual differentation rule
  for Appell sequences~\cite[Thm.~2.5.6]{Roman2005} since $H_n^{[\nu]}$ is indeed
  Appell~\cite[\S4.2.1]{Roman2005}. Thus we have for~$h(t)^{-1}$ the Taylor series
  \begin{equation*}
    \sum_{m=0}^\infty
    \sum_{k=0}^m \sum_{\lambda \in \Lambda(n,k)} \tbinom{m+1}{k+1} \, (-1)^{k+m} \: \nu^m \,
    (nk-2|\lambda|)\ffact{m} \; a_\lambda \,
    \tfrac{H_{nk-m-2|\lambda|}^{[\nu]}(\eta)}{H_n^{[\nu]}(\eta)^{k+1}} \, \tfrac{t^m}{m!} .
  \end{equation*}
  Now we need only convolve this with the Taylor series of
  \begin{equation*}
    e^{-\eta t+\nu t^2/2} = \sum_{m=0}^\infty H_m^{[-\nu]}(-\eta) \, \tfrac{t^m}{m!}
  \end{equation*}
  embodying the generating function of Hermite polynomials~\cite[\S4.2.1]{Roman2005}, to end up with
  the formula given in the proposition (after a small sign simplification).
\end{proof}

As mentioned above, an explicit expression for the Appell generator~$g(t)$ is certainly not
necessary for ``computing'' the bivariate polynomials $\Phi_{m,n}^{[\nu]}(\xi, \eta)$. One
straightforward method is the following \emph{recursive scheme}. We note
that~$\Phi_{m,0}^{[\nu]}(\xi, \eta) = H_m^{[\nu]}(\xi+\eta)$
and~$\Phi_{0,n}^{[\nu]}(\xi, \eta) = H_n^{[\nu]}(\eta)$, latter relation following
from~\eqref{eq:conv-pol} immediately, the former by the Appell
identity~\cite[Thm.~2.5.8]{Roman2005}. Furthermore, we have the recurrence
$\Phi_{m+1,n}^{[\nu]}(\xi, \eta) = \xi \, \Phi_{m,n}^{[\nu]}(\xi, \eta) - \Phi_{m,n+1}^{[\nu]}(\xi,
\eta)$, which can be obtained as a simple consequence of~\eqref{eq:phi-rep}. Thus one may compute
all polynomials~$\Phi_{m,n}^{[\nu]}$ with~$(m,n) \in \{ 0, \dots, M \}^2$ for some fixed~$M > 0$
starting with the boundary values for~$(0,n)$ and~$(m,0)$ at the axes, then proceeding in diagonals
with the stencil~$(m,n) \leftarrow \{(m-1,n),(m-1,n+1)\}$.

\IncMargin{1em}
\begin{algorithm}
\SetKwData{Left}{left}\SetKwData{This}{this}\SetKwData{Up}{up}
\SetKwFunction{Union}{Union}\SetKwFunction{FindCompress}{FindCompress}
\SetKwInOut{Input}{Input}\SetKwInOut{Output}{Output}
\Input{$M \in \ZZ_{>0}, \nu \in \RR$}
\Output{$\big(\Phi_{m,n}^{[\nu]}\big)_{m,n = 0, \dots, M}$}
\BlankLine
 \For{$m\leftarrow 0$ \KwTo $2M$}{
   $\Phi_{m,0}^{[\nu]} \leftarrow H_m^{[\nu]}(\xi+\eta)$}
\For{$n\leftarrow 1$ \KwTo $2M$}{
  $\Phi_{0,n}^{[\nu]} \leftarrow H_n^{[\nu]}(\eta)$}
\For{$d\leftarrow 2$ \KwTo $2M$}{
  \For{$j\leftarrow 1$ \KwTo $d-1$}{
    $\Phi_{j,d-j}^{[\nu]} \leftarrow \xi \, \Phi_{j-1,d-j}^{[\nu]} + \Phi_{j-1,d-j+1}^{[\nu]}$}}
\caption{Calculation of~$\Phi_{m,n}^{[\nu]}$}\label{alg:phimn}
\end{algorithm}\DecMargin{1em}

\subsection{The Hyperbolic Fourier Doublet.}
\label{sub:hyp-four-doublet}
Let us briefly look at a last example exhibiting explicit Fourier transforms. It is well-known that
the \emph{hyperbolic secant} is another fixed point of~$\Four\colon \Schw(\RR) \pto \Schw(\RR)$. In
our normalization, $\Four s = s$ for the Schwartz function~$s(t) := \sech(\pi t)$; note
that~$\cum_{-\infty}^{\infty} s(t) \, dt = 1$. We are interested in the (plain) Fourier
doublet~$[S \pto \Sigma]$ generated by~$s$ in~$[\Four\colon \Schw(\RR) \pto \Schw(\RR)]$ \emph{qua}
Fourier doublet. We shall here restrict ourselves to the trivial Heisenberg group~$H(\beta) = 0$,
which means in effect that we admit only~$s$ without adjoining its translates (see the remark
below).

In the case of hyperbolic---as opposed to trigonometric---functions, it is possible to restrict
ourselves to real-valued functions. Indeed, the function~$s$ and all its powers are even and
real-valued, hence so are their Fourier transforms (obtained via~$\Four$ or~$\Four^{-1}$ without
difference). It is therefore suitable, in the scope of this subsection, to reserve the
notation~$\Schw(\RR)$ to \emph{real-valued Schwartz functions} and to describe the desired Fourier
doublets~$[S \pto \Sigma]$ in terms of real-valued function
algebras~$(S, \star) \le \Schw\big(\RR, \star\big)$
and~$(\Sigma, \cdot) \le \big(\Schw(\RR), \cdot\big)$. One may of course subsequently employ
complexification if complex-valued functions are desired.

We shall use the generator~$s$ on the spectral side. In other words, we define the \emph{spectral
  space} as~$\Sigma = \RR[s]_+$, the nonunital algebra of polynomials having positive
degree. This time we shall follow the expedient custom of writing signals as functions in~$x$ and
spectra in~$\xi$. Thus we have~$s(\xi) \in \Sigma$ as generator.

For writing out its Fourier transform, it is useful to introduce a certain \emph{polynomial
  sequence}~$\sigma_n$ that somehow plays the role of the Hermite polynomials in the Gaussian
case. Using binomial coefficients or falling factorials, we can write them as
\begin{equation*}
  \sigma_n(t) := (2i)^k \, \tbinom{(n-1)/2 - it}{n} = \tfrac{(2i)^n}{n!} \,
  (\tfrac{n-1}{2} - it)^{\underline{n}}
\end{equation*}
for~$n \ge 0$. It is easy to see that~$n! \, \sigma_n(t)$ is given
by~$\prod_{j=1}^k \big(4t^2 + (2j+1)\big)$ for~$n=2k$ and by~$2^n \, t \prod_{j=1}^k (t^2+j^2)$
for~$n=2k+1$, so these are in fact real polynomials (which are odd/even exactly when $n$ is
odd/even, just as with the Hermite polynomials).

The Fourier transform of arbitrary positive powers of the hyperbolic secant is a polynomial multiple
of either the hyperbolic secant or \emph{hyperbolic cosecant}, according as the exponent is odd or
even, respectively. Setting~$c(t) := \csch(\pi t)$, we have for~$k \ge 0$ the Fourier transforms
\begin{equation}
  \label{eq:four-hyp}
  \left\{
  \begin{aligned}
    \Four\big(s(\xi)^{2k+1}\big) &= \sigma_{2k}(x) \, s(x) \in \RR[x^2] \, s(x),\\
    \Four\big(s(\xi)^{2k+2}\big) &= \sigma_{2k+1}(x) \, c(x) \in \RR[x^2] \, x \, c(x),
  \end{aligned}
  \right.
\end{equation}
which are all real-valued even functions, as expected.

It is thus clear that the image of the spectral space under~$\Four$, and under~$\Four^{-1}$ alike,
is the $\RR[x^2]$-submodule~$S$ generated by~$s, xc \in \Schw(\RR)$. While this yields the signal
space as a real vector space, we must yet identify the convolution product on it. To this end, note
that $S = \RR[x^2] \, s \oplus \RR[x^2]x \, c$, so every signal may uniquely be written in the
form~$u(x) = p(x^2) \, s(x) + q(x^2) \, x \, c(x)$ with polynomials~$p(t), q(t) \in
\RR[t]$. Incidentally, this shows that~$S \cong \RR[x]$ as real vector spaces (formally
setting~$s=c=1$). Since the~$\sigma_n(t)$ are a basis of~$\RR[t]$, there are unique
coefficients~$c_0, c_1, c_2, \ldots \in \RR$, essentially given by the Stirling partition
numbers~\cite[(4.1.3)]{Roman2005}, such that
\begin{equation*}
  u(x) = \sum_{k=0}^{\deg(p)} c_{2k} \, \sigma_{2k}(x) \, s(x)
  + \sum_{k=0}^{\deg(q)} c_{2k+1} \, \sigma_{2k+1}(x) \, c(x) .
\end{equation*}
Since convolution is bilinear on~$\Schw(\RR)$, it suffices to compute all products
between~$\sigma_{2k}(x) \, s(x) = \Four\big(s(\xi)^{2k+1}\big)$
and~$\sigma_{2k+1}(x) \, c(x) = \Four\big(s(\xi)^{2k+2}\big)$; these are given by
\begin{equation*}
  \Four\big(s(\xi)^m\big) \star \Four\big(s(\xi)^n\big) = \Four\big(s(\xi)^{m+n}\big)
  \in S \quad \text{by~\eqref{eq:four-hyp}}
\end{equation*}
since~$\Four$ is an isomorphism between~$\big(\Schw(\RR), \star\big)$
and~$\big(\Schw(\RR), \cdot\big)$.

\begin{myremark}
  Admitting the full Heisenberg group~$H(\pomega) = \Tor \RR \rtimes \RR$ would commit us to adding
  all translates~$s_b(t) := s(t-b)$ for~$b \in \RR$, which could be restricted for reasons of
  constructivity to rational values of~$b$ as in the case of the rational Gaussian singlet in
  \S\ref{sub:rational-gauss}.

  For seeing that the functions~$s_b$ are algebraically independent over~$\RR$, assume a polynomial
  relation~$r(t) = \sum_{\alpha \in \NN^m} c_\alpha s_{b_1}(t)^{\alpha_1} \cdots
  s_{b_m}(t)^{\alpha_m}$ of minimal total degree, for translates by~$b_1, \dots, b_m \in \RR$ and
  coefficients $c_\alpha \in \RR$. Taking the limit~$|t| \to \infty$ shows that we must
  have~$a_0 = 0$ since all powers of~$s_b$ vanish at~$\infty$. But then we may factor the relation
  as~$s_b(t) \tilde{r}(t) = 0$ for some translate~$s_b$ and a similar relation~$\tilde{r}(t)$ of
  smaller total degree. This is a contradiction since we may divide by $s_b > 0$. Since the~$s_b$
  are algebraically independent, the Heisenberg $H(\pomega)$-algebra generated by~$s$ may be viewed
  as the tensor product of~$\mathfrak{C}(\RR)$ with the algebra~$\RR[s_b \mid b \in \RR]_+$ of
  multivariate polynomials of positive degree. Here~$\mathfrak{C}(\RR)$ is the $\CC$-linear span of
  the \emph{oscillating exponentials}~$e_\alpha$, as we have also used it in
  \S\ref{sub:min-schwartz-algebra} for the Gaussians.

  The problem is that taking Fourier transforms of products such as~$s_b s_{b'}$, or equivalently
  convolving~$s_b$ and~$s_{b'}$, leads to factors involving beta functions whose algebraic treatment
  might be cumbersome. Further investigation on this topic would be needed.
\end{myremark}

We can nevertheless carry the Fourier doublet~$[S \pto \Sigma]$ one step further, achieving closure
under the \emph{action of the Weyl algebra}. To this end, we go back to our usual interpretation
of~$\Schw(\RR)$ as the algebra of complex-valued Schwartz functions on which~$A_1(\RR)$ acts
naturally. Defining the hyperbolic tangent in the form~$h(t) := \tanh(\pi t)$, we
have~$s' = - \pi \, hs$ and~$h' = \pi \, s^2$ so that~$\CC[h][s]_+$ is a differential subalgebra
of~$\Schw(\RR)$. Note that~$\CC[h][s]_+$ consists of all polynomials in~$t$ and~$s$ that have
positive degree in~$s$, but we prefer to see this as enlarging our (complexified) previous spectral
space~$\CC \Sigma = \CC[s]_+$ by extending the scalar ring from~$\CC$ to~$\CC[h]$.
Since~$h^2 = 1 - s^2$, we need only expand~$\CC \Sigma$ by the functions~$hs^n$.

It is easy to find their \emph{Fourier transforms}. Using~$(s^n)' = -n\pi \, hs^n$
and~\eqref{eq:four-transf-weyl}, we find
\begin{equation*}
  \Four^\lor(hs^n) = - \tfrac{2i}{2k+1} \, x \, \Four(s^n),
\end{equation*}
where~\eqref{eq:four-hyp} may be employed for computing the Fourier transform of~$s^n$. Note that
this time we must explicitly use the backward Fourier transform~$\Four^\lor$ since the
functions~$hs^n$ are odd so that their images under~$\Four^\land$ differ from those
under~$\Four^\lor$ by a sign. By the same token, the corresponding (forward or backward) Fourier
transforms are imaginary- rather than real-valuede functions. On the signal side, the (complexified)
space~$\CC S = \CC[x^2] \, s \oplus \CC[x^2]x \, c$ gains the ``missing''
components~$\CC[x^2]x \, s \oplus \CC[x^2]_+ \, c$.

For \emph{obtaining closure under the Weyl action}, the space~$\CC[h][s]_+$ must be extended by the
polynomials~$\CC[x]$; thus we set~$\Sigma' := \CC[x, h][s]_+$ for the enlarged spectral space. Its
image under~$\Four^\lor$ or, equivalently, under~$\Four^\land$ is then enlarged
from~$\CC[x] \, s \oplus \CC[x] \, xc$ by adding in all derivatives. We call
it~$S' \subset \Schw(\RR)$, but unfortunately its explicit characterization appears to be cumbersome
and shall not be given here. We prefer to proceed in a more roundabout way, enlarging~$S'$ by
meromorphic functions outside of~$\Schw(\RR)$.

Iterated derivatives of~$s$ and~$c$ may be computed using so-called \emph{derivative
  polynomials}~\cite{Boyadzhiev2007}. We shall not need their explicit form here; it suffices to
know that the $m$-th derivative of~$s$ is given by~$s$ times a certain $m$-th degree polynomial
in~$h$ while the corresponding derivative of~$c$ is~$c$ times an $m$-th degree polynomial
in~$h^{-1}$. Clearly, all derivatives of~$xc$ are then given by similar
expressions. Writing~$\mathcal{H} := \CC[h,h^{-1}][x]$ for the new coefficient ring, we may thus
define~$S'' := \mathcal{H} s \oplus \mathcal{H} c$. It is easy to see that~$S''$ is a
$D$-module (for~$D = A_1(\CC)$, that is) containing~$S'$. Provided the elements
of~$S'$ are identified in the form~$w \cdot s$ or~$w \cdot xc$ for a Weyl actor~$w \in A_1(\CC)$, is
is straightforward to compute their Fourier transforms via~\eqref{eq:four-transf-weyl}
and~\eqref{eq:four-hyp}. Let us summarize our results.

\begin{proposition}
  The Fourier doublets~$[S \pto \Sigma] \le [S' \pto \Sigma']$ are regular subdoublets
  of~$[\Schw(\RR) \pto \Schw(\RR)]$ qua plain doublet. Moreover, the compatible Weyl action on the
  latter restricts to~$[S' \pto \Sigma']$.
\end{proposition}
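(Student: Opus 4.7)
The plan is to verify the required properties piecewise, exploiting the Fourier isomorphism to reduce statements about one side to the corresponding statements about the other. Throughout, we use that $s,c \in \Schw(\RR)$ together with the fact that $h \in C^\infty(\RR)$ is bounded, so that all spaces constructed below land inside $\Schw(\RR)$.

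First I would establish $[\Sigma \pto S]$ as a regular plain subdoublet. Closure of $\Sigma = \RR[s]_+$ under pointwise multiplication is clear since $s^m s^n = s^{m+n}$. Bijectivity of $\Four |_\Sigma\colon \Sigma \to S$ is built into the construction: by~\eqref{eq:four-hyp}, the even powers $s^{2k+2}$ map to $\RR$-linearly independent elements $\sigma_{2k+1}(x) \, xc(x) \in \RR[x^2] \, xc$ and the odd powers $s^{2k+1}$ map to $\RR$-linearly independent elements $\sigma_{2k}(x) \, s(x) \in \RR[x^2] \, s$, since the $\sigma_n$ form a basis of $\RR[t]$. Because $s$ and $c$ are even and real-valued, $\Four^\land$ and $\Four^\lor$ coincide on $\Sigma$, so the image is correctly identified as $S = \RR[x^2] \, s \oplus \RR[x^2] \, xc$. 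Closure of $S$ under convolution then comes for free from the convolution theorem applied in $\Schw\inner{\RR}{\RR}$: for any $u,v \in \Sigma$, $\Four^{-1}(u) \star \Four^{-1}(v) = \Four^{-1}(uv) \in S$. Since the Heisenberg group here is trivial, equivariance is void, and we obtain $[S \pto \Sigma]$ as a regular plain subdoublet.

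Next, the same scheme applies to $[S' \pto \Sigma']$, but with $\Sigma' = \CC[x, h][s]_+$ replacing $\Sigma$, and $S' := \Four^{-1}(\Sigma') \le \Schw(\RR)$ defined directly as the image. Closure of $\Sigma'$ under pointwise multiplication reduces to the identity $h^2 = 1 - s^2$, which ensures that every element of $\Sigma'$ may be written as a polynomial in $x$ with coefficients in $\CC \, s \oplus \CC \, hs \oplus \CC \, s^2 \oplus \cdots$; products then lie in $\Sigma'$ by distributivity. Bijectivity of the restricted Fourier operator is automatic from $S' := \Four^{-1} \Sigma'$, and closure of $S'$ under convolution again follows from the convolution theorem. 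The inclusion $[S \pto \Sigma] \le [S' \pto \Sigma']$ is immediate from $\Sigma \subset \Sigma'$ (taking constant $x$- and $h$-coefficients), together with the corresponding inclusion $S \subset S'$ obtained by applying $\Four^{-1}$.

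Finally, the compatible Weyl action on $\Schw\inner{\RR}{\RR}$ restricts to $[S' \pto \Sigma']$. On the spectral side, closure under multiplication by $x$ is built into the definition of $\Sigma' = \CC[x, h][s]_+$, while closure under $D = D_\xi$ follows from the two differentiation rules $s'(\xi) = -\pi \, h(\xi) \, s(\xi)$ and $h'(\xi) = \pi \, s(\xi)^2$ together with the Leibniz rule, which give $\der_\xi \, \Sigma' \subseteq \Sigma'$ inductively on the total degree in $x$, $h$, $s$. On the signal side, the differentiation laws~\eqref{eq:diff-laws} together with Fourier's transport of operators~\eqref{eq:four-transf-weyl} ensure that $x \act S' = \Four^{-1}(D_\xi \act \Sigma') \subseteq S'$ and $D_x \act S' = \Four^{-1}((-\xi) \act \Sigma') \subseteq S'$, so the Weyl action restricts to $S'$ as well, and it is compatible by construction. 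The main technical hurdle is the bookkeeping for closure of $\Sigma'$ under $\der_\xi$---one must do an induction that tracks how the derivative rules propagate through polynomials in $x, h, s$ without leaving $\Sigma'$---but since the derivation rules $s' \in \CC hs$ and $h' \in \CC s^2$ remain inside $\CC[h][s]_+$, and $\der_\xi x = 1 \in \CC$, the inductive step goes through cleanly.
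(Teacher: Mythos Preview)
Your argument is correct and matches the paper's own reasoning. Note that the paper does not supply a separate proof for this proposition: it is stated as a summary of the discussion preceding it, where closure of~$\Sigma$ and~$\Sigma'$ under the pointwise product, the identification of~$S$ and~$S'$ as Fourier images, closure under convolution via the convolution theorem, and closure of~$\Sigma'$ under~$\der$ via the rules~$s' = -\pi hs$, $h' = \pi s^2$ are all worked out in the text. Your write-up essentially reconstructs that discussion and organizes it cleanly; the transfer of the Weyl action to~$S'$ through~\eqref{eq:four-transf-weyl} and~\eqref{eq:diff-laws} is exactly the mechanism the paper invokes. One small slip: you write~$[\Sigma \pto S]$ once where the paper's convention is~$[S \pto \Sigma]$, but since the functions involved are even and real-valued (so~$\Four^\land = \Four^\lor$ on them), this is harmless here.
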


\begin{myremark}
  \label{rem:hyperbolic-singlet}
  It would be significantly more ambititious to study the Fourier \emph{singlet} generated by~$s$
  in~$\Schw(\RR)$, even for the trivial Heisenberg group as used above. We shall leave this as a
  challenge for future investigations.

  We might consider~$\A := \CC[x][c, s \mid c^2 s^2 - c^2 - s^2]$, which is an algebra and a
  $D$-module. Then we might adjoin~$h$ and~$h^{-1}$ to~$\A$, subject to the further
  relations~$h^2 = 1 - s^2$, $h^{-2} = 1 + c^2$ and~$h^{-1}- h = sc$. Computations in Mathematica
  suggest that most (or all?) Fourier transforms of such functions can be determined explicitly. But
  certain simplifications such as~$\tanh(\pi t/2) = h^{-1}(t) - c(t)$ and
  $\tfrac{1}{2} \, \sech^2(\pi t/2) = s(t) \, c(t)^2 - c(t)^2 + s(t)$ may be needed. Moroever, one
  may need larger function spaces (perhaps not quite distributions) for justifying~$\Four(h) = ic$
  and~$\Four(h^{-1}) = i h^{-1}$, but maybe one should not adjoin these functions themselves (only
  their products with ``moderating'' functions such as $s$).
\end{myremark}

\section*{Acknowledgments}

We are very grateful to Heinrich Rolletschek for his helpful and
friendly advice on number-theoretic issues.

%\bibliographystyle{plain}
%\bibliography{../AFS}

\end{document}